\newcommand{\ie}{\textit{i}.\textit{e}.}
\DeclareFontFamily{OMX}{MnSymbolE}{}
\DeclareSymbolFont{MnLargeSymbols}{OMX}{MnSymbolE}{m}{n}
\DeclareFontShape{OMX}{MnSymbolE}{m}{n}{
	<-6>  MnSymbolE5
	<6-7> MnSymbolE6
	<7-8>  MnSymbolE7
	<8-9>  MnSymbolE8
	<9-10> MnSymbolE9
	<10-12> MnSymbolE10
	<12->   MnSymbolE12
}{}
\DeclareFontShape{OMX}{MnSymbolE}{b}{n}{
	<-6>   MnSymbolE-Bold5
	<6-7>  MnSymbolE-Bold6
	<7-8>  MnSymbolE-Bold7
	<8-9>  MnSymbolE-Bold8
	<9-10> MnSymbolE-Bold9
	<10-12> MnSymbolE-Bold10
	<12->   MnSymbolE-Bold12
}{}
\let\llangle\@undefined
\let\rrangle\@undefined
\DeclareMathDelimiter{\llangle}{\mathopen}%
{MnLargeSymbols}{'164}{MnLargeSymbols}{'164}
\DeclareMathDelimiter{\rrangle}{\mathclose}%
{MnLargeSymbols}{'171}{MnLargeSymbols}{'171}
\newcommand{\lk}{\operatorname{Lk}}
\DeclareMathOperator{\Lk}{Lk}
\newtheorem{thm}{Theorem}[section]
\newtheorem{definition}[thm]{Definition} % definition numbers are dependent on theorem numbers
\newtheorem{example}[thm]{Example} % same for example numbers
\newtheorem{lemma}[thm]{Lemma}
\newtheorem{proposition}[thm]{Proposition}
\newtheorem{conjecture}[thm]{Conjecture}
\newtheorem{corollary}[thm]{Corollary}
\newtheorem{remark}[thm]{Remark}
\newtheorem{observation}[thm]{Observation}
\theoremstyle{plain}
\def\qed{\hfill$ \square$ \smallskip}
\DeclareMathOperator{\St}{St}
\DeclareMathOperator{\dlk}{{\Lk}{\downarrow}}
\DeclareMathOperator{\alk}{{\Lk}{\uparrow}}
\DeclareMathOperator{\dst}{{\St}{\downarrow}}
\newcommand{\ra}{\rightarrow}
\newcommand{\s}[1]{$#1 $}
\newcommand{\mn}{\mathbb{N}}
\newcommand{\mz}{\mathbb{Z}}
\newcommand{\xstar}{\{0, 1\}^{<\omega}}
\newcommand{\vdeltag}{V(G)}
\newcommand{\mc}{\mathfrak{C}}
\newcommand{\dy}{\mz[\frac{1}{2}]}
\newcommand{\Rmnum}[1]{\expandafter\@slowromancap\romannumeral #1@}
\newcommand{\xinf}{\{0, 1\}^{\omega}}
\newcommand{\vg}{V_\phi(G)}
\newcommand{\germ}{\mathcal{G}}
\newcommand{\pg}{|\mathcal{P}(G,\varphi)|}
\newcommand{\stein}{X_{G,\phi}}
\newcommand\precdot{\mathrel{\ooalign{$\prec$\cr
  \hidewidth\raise0.225ex\hbox{$\cdot\mkern0.5mu$}\cr}}}
\begin{document}

\title{Embedding groups into boundedly acyclic groups}

\author{Fan Wu}
\address{School of Mathematical Sciences, Fudan University, 220 Handan Road, Shanghai 200433, China}
\email{wuf22@m.fudan.edu.cn}

\author{Xiaolei Wu}
\address{Shanghai Center for Mathematical Sciences, Jiangwan Campus, Fudan University, No.2005 Songhu Road, Shanghai, 200438, P.R. China}
\email{xiaoleiwu@fudan.edu.cn}

\author{Mengfei Zhao}
\address{Shanghai Center for Mathematical Sciences, Jiangwan Campus, Fudan University, No.2005 Songhu Road, Shanghai, 200438, P.R. China}
\email{mfzhao22@m.fudan.edu.cn}

\author{Zixiang Zhou}
\address{School of Mathematical Sciences, Fudan University, Handan Road 220, Shanghai, 200433, China}
\email{zxzhou22@m.fudan.edu.cn}

%    General info
\subjclass[2020]{57M07, 20J06}

\date{April 2025}

\keywords{Labeled Thompson groups, twisted Brin--Thompson groups, Splinter groups, topological full groups, finiteness property, boundedly acyclic, uniformly perfect, embedding of groups.}

\begin{abstract}
We show that the \s{\phi}-labeled Thompson groups and the twisted Brin--Thompson groups are boundedly acyclic. This allows us to prove several new embedding results for groups.
First,  every group of type $F_n$ embeds quasi-isometrically into a boundedly acyclic group of type $F_n$ that has no proper finite index subgroups. This improves a result of Bridson and a theorem of  Fournier-Facio--L\"oh--Moraschini. Second, every group of type $F_n$ embeds quasi-isometrically into a $5$-uniformly perfect group of type $F_n$. Third, using Belk--Zaremsky's construction of twisted Brin--Thompson groups, we show that every finitely generated group embeds quasi-isometrically into a finitely generated boundedly acyclic simple  group. We also partially answer some questions of Brothier and Tanushevski regarding the finiteness property of $\phi$-labeled Thompson group $V_\phi(G)$ and $F_\phi(G)$. 
\end{abstract}

\maketitle

\section*{Introduction}

The bounded cohomology of groups was first studied by Johnson and Trauber in the seventies in the context of Banach algebras \cite{Jo72}. The theory was then generalized to topological spaces in Gromov's seminal work \cite{Gr82}. Since then, bounded cohomology has found applications in many different fields, see for example \cite{BM02, Ca09, Gy01, MS06} and \cite[Chapter 12]{Fri17}, and has become an independent and active research field. 

Though the bounded cohomology of a group can be defined rather easily (see for example \cite[Chapter 1]{Fri17}), it is hard to compute in general except in lower degrees. Surprisingly, Johnson proved that all amenable groups are boundedly acyclic \cite{Gr82,Jo72}. Recall that a group $G$ is \emph{boundedly acyclic} if its bounded cohomology $\mathrm{H}_b^i(G,\mathbb{R}) = 0$ for any $i\geq 1$. This leads people to consider bounded acyclicity as a weak form of amenability for sometime.  Besides amenable groups,  Matsumoto and Morita proved in \cite{MM85} that the groups of compactly supported homeomorphisms of  Euclidean spaces are also boundedly acyclic. Similar techniques were developed to prove that mitotic groups \cite{Lo17} and binate groups \cite{FFCM23} are boundedly acyclic.  However, none of these non-amenable examples are finitely generated. Combining mitoses and suitable HNN extensions, Fournier-Facio, L\"oh, and Moraschini provided a  way to functorially embed finitely generated groups into  finitely generated boundedly acyclic groups \cite[Theorem 2]{FFCM21}. They also   constructed the first finitely presented non-amenable example \cite[Theorem 4]{FFCM21}. A  breakthrough was then made by Monod \cite{Mo22}, when he showed that the Lamplighter groups (i.e. any group of the form $ G\wr \mathbb{Z}$) and the Thompson group $F$ are boundedly acyclic. Using new ideas in \cite{Mo22,MN23}, Andritsch was able to show that the Thompson group $V$ is also boundedly acyclic \cite{An22}. Note  that the Thompson group $T$ however is not boundedly acyclic \cite[Corollary 6.12]{FFCM23}. More recently,  Campagnolo, Fournier-Facio, Lodha and Moraschini provided further examples of boundedly acyclic groups  in \cite{CFLM23}, including the finitely generated simple orderable groups constructed in \cite{HL19, LBMB22, MBT20}. In the current paper, we add a few new classes of boundedly acyclic groups to the list.

The first class of groups we consider is the labeled Thompson group $V(G)$. This class of groups was first defined by Thompson in \cite{Tho80} (see also \cite[Section 5.5]{DS22}) under the name Splinter group, then rediscovered by Nekrashevych \cite[Section 3.5]{Nek18} and Tanushevski in \cite{Tan14, Tan16}. See also the work of Brothier  \cite{Br21}, and Section \ref{sec:rel-spli} for a discussion on the relations between these groups. Note that Tanushevski only studies the subgroup $F(G)$ of $V(G)$. Brothier on the other hand considers analytical properties (such as the
Haagerup property),  isomorphism problems  among labeled Thompson groups corresponding to some special types of wreath recursions, and descriptions of their automorphism
groups. However, Thompson did prove several interesting results about labeled Thompson groups related to our work: (1) there is an embedding from $G\to V(G)$ that is Frattini and preserves the solvability of the word problem; (2) $V(G)$ is perfect;  (3) if $G$ is finitely generated (resp. finitely presented), so is $V(G)$. 

The definition of  $V(G)$ is  similar to that of the R\"over--Nekrashevych groups, except  now the group $G$ acts trivially on the infinite rooted binary tree. In particular,  $V(G)$ naturally retracts to $V$. Recall that a group is of \emph{type $F_n$} if it admits an aspherical CW-complex with fundamental group $G$ and finite $n$-skeleton, and  a group is called \emph{$N$-uniformly perfect} if every element can be written as the product of $N$ commutators. Uniform perfection implies injectivity of the comparison map $\mathrm{H}^2_b(-,\mathbb{R}) \to \mathrm{H}^2(-,\mathbb{R})$ \cite[Corollary 2.11]{MM85}. We summarize our results in the following theorem.

\begin{thm}\label{thm:labeled-V}
There exists a functor $\mathbf{groups} \to \mathbf{groups}$ associating to each group $G$ the labeled Thompson group $V(G)$ with the following properties:
    \begin{enumerate}
        \item (\Cref{quasi-retract}) There is an injective group homomorphism $\iota_G: G\to V(G)$ that is also a quasi-isometric embedding when $G$ is finitely generated;
        
        \item (\Cref{cor-VG-perfect}) $V(G)$ has no proper finite index subgroups;

        \item (\Cref{thm:unfm-perfect}) $V(G)$ is $5$-uniformly perfect; 
                
        \item\label{mthm-fin} (\Cref{thm:fin-VG}) $V(G)$ is of type $F_n$ if and only if $G$ is of type $F_n$;

        \item (\Cref{Thm:l-Thompson-ba}) $V(G)$ is boundedly acyclic. 
       
    \end{enumerate}
    
\end{thm}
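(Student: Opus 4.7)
I would model $V(G)$ as tree-pair diagrams in $V$ with a label $g_i\in G$ on each matched leaf pair, subject to the carry rule that expanding a leaf labeled $g$ produces two children both labeled $g$; the label-forgetting map is a retraction $V(G)\twoheadrightarrow V$ with kernel $K$ given by ``labels only'' elements, so $V(G)=V\ltimes K$.

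For (1), the map sending $g$ to the trivial tree-pair with root leaf labeled $g$ is a homomorphism by the multiplication rule on common refinements, and injective by normal-form uniqueness. The quasi-isometric embedding is established by a direct word-length comparison: choosing finite generating sets compatibly, any word of length $n$ in $V(G)$-generators evaluating to $\iota_G(g)$ is refined to a transparent tree-pair whose label data at a fixed leaf produces a word of length $\lesssim n$ in $G$-generators evaluating to $g$. For (4), I would apply the generalized Brown criterion to a labeled Stein--Farley cube complex for $V(G)$: the underlying combinatorics (contractibility, connectivity of descending links, Bux--Farley-type calculations) are identical to the unlabeled case since labels do not disturb the $V$-skeleton, while cell stabilizers pick up factors of $G$ at each leaf (twisted by finite symmetric factors on like-labeled leaves). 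The direction $G$ type $F_n$ $\Rightarrow$ $V(G)$ type $F_n$ then follows because finite direct products of type-$F_n$ groups are type $F_n$; the converse is obtained by a standard filtration on the same complex pulling $F_n$-ness back from $V(G)$ to its vertex stabilizers (each of which contains $G$ as a direct factor).

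For (3), I would carry out the standard tree-doubling commutator decomposition: after refinement, any $v\in V(G)$ splits as a product of elements supported on disjoint halves of the Cantor set, each of which is conjugate to its mirror copy and hence a single commutator. The labels contribute no extra commutators because the carry rule lets any $\iota_G(g)$ be written as a product of single-leaf labels that are themselves commutators of duplication operations, yielding the bound $5$ as in the unlabeled case. For (2), note that $V\le V(G)$ is infinite simple, so any proper finite-index normal subgroup $N\trianglelefteq V(G)$ must contain $V$. Using $V$-conjugation, the image in $V(G)/N$ of any ``single-leaf label $g$ at depth $n$'' is the same element $y\in V(G)/N$ independent of both the leaf position and the depth $n$ (the depth-independence comes from a $V$-element that conjugates a depth-$1$ leaf onto a depth-$n$ leaf). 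Since $\iota_G(g)$ equals the product of $2^n$ commuting single-leaf labels at depth $n$, its image $x$ satisfies $x=y^{2^n}$ for every $n$. In a finite group, comparing $y^{2^n}$ and $y^{2^{n+1}}$ forces $y^{2^n}=1$ for large $n$, so $x=1$; labels map trivially, so $K\subseteq N$ and $V(G)=N$.

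The main obstacle is (5), bounded acyclicity. My plan is to show $V(G)$ is \emph{binate} in the sense of Berrick, a property that implies bounded acyclicity independently of the structure of $G$ (de la Harpe--McDuff; see also the formulation in \cite{FFCM23}). Given a finitely generated $H\le V(G)$, its action is supported on a finite subtree, so I pick a dyadic half of the Cantor set disjoint from that support and an isomorphic shift $s\in V$ moving $H$'s support onto that half, yielding $[H,H^s]=1$. The required duplicator $d\in V(G)$ should satisfy $dhd^{-1}=h\cdot h^s$ for every $h\in H$ simultaneously; I would build it by refining past $H$'s support by one extra level and defining its tree-pair to pair every original leaf with its shifted sibling, transporting labels according to the carry rule. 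The crux is verifying that this $d$ is a single well-defined element of $V(G)$ whose conjugation performs the uniform duplication on all of $H$ at once; this reduces to checking that the carry rule is compatible with the partial shift $s$ across every generator of $H$ simultaneously. Once binateness is established, bounded acyclicity follows from the general theorem that binate groups have vanishing bounded cohomology. As a fallback, if explicit binate verification runs into unavoidable carry asymmetry, one can embed $V(G)$ into a universal binate overgroup via HNN extensions adding the missing duplicators as in \cite{FFCM21}, and argue that this embedding induces an isomorphism on bounded cohomology.
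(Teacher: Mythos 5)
The most serious problem is in your treatment of (5), and it is not a gap that can be patched: $V(G)$ is \emph{not} binate for non-abelian $G$. The issue is that the elements $\iota_G(g)$ have constant label $g$ on the entire Cantor set (their ``labeled support'' is all of $\mathfrak{C}$), even though they act trivially as homeomorphisms. Consequently, for any shift $s\in V$, $\iota_G(g)^s=\iota_G(g)$: conjugation by $V$ does nothing to diagonal labels. If $G$ contains two non-commuting elements $g,h$ and $H=\langle\iota_G(g),\iota_G(h)\rangle$, then $\iota_G(h)^s=\iota_G(h)$ does not commute with $\iota_G(g)$, so the requirement $[H,H^s]=1$ fails for \emph{every} $s\in V$. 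Allowing $s$ with non-trivial labels does not save the argument either: conjugating $\iota_G(g)$ gives a pointwise conjugate of $g$, and requiring that to centralize $\iota_G(h)$ forces pointwise-conjugate copies of $g$ into $C_G(h)$, which has no solution for, say, $G=F_2$, $g=a$, $h=b$. Your fallback of embedding $V(G)$ into a universal binate overgroup is also inconclusive: an embedding into a boundedly acyclic group does not transfer bounded acyclicity back (the restriction map in bounded cohomology is injective only for co-amenable inclusions). The paper avoids all of this by not looking for a binate structure at all; it builds a semi-simplicial set of ``labeled germs'' at $\omega_0=000\cdots$ equipped with a generic relation, shows that $V(G)$ acts transitively on generic tuples, and proves that the stabilizers are boundedly acyclic via the Monod commuting-conjugates criterion applied to their labeled-support subgroups. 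This is the key point of the whole section, and it is genuinely different from (and more robust than) a binate/mitotic argument.

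Two smaller issues. In (3), your claim that ``labels contribute no extra commutators, yielding the bound $5$ as in the unlabeled case'' is internally inconsistent: $V$ itself is $3$-uniformly perfect, so if labels truly contributed nothing the bound would be $3$. The paper's $5$ comes from $3+2$: three commutators for the $V$-part of an element, plus two more to absorb the label block $(g_1,\ldots,g_n)$ (one swap to move $g_1$ off the first leaf, and one Mather-type swindle writing an element of the form $[T,((1,g_2,\ldots,g_n),\mathrm{id}),T]$ as a single commutator). In (4), your converse direction (``pulling $F_n$ back from $V(G)$ to its vertex stabilizers by a filtration'') is not a standard move and I do not see how it would work; the paper gets the converse cleanly from the quasi-retraction $\rho_G:V(G)\to G$ that reads off the first label, together with Alonso's theorem that quasi-retracts of type-$F_n$ groups are type $F_n$. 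Your argument for (2), on the other hand, is correct and is a nice shortcut: the paper proves the stronger statement that every proper normal subgroup is contained in $\lim_{\mathcal T}G$, whereas you go directly at finite-index normal subgroups via $V\le N$ and the $y=y^{2^n}$ trick.
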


\begin{remark}
 \begin{enumerate}[label=(\roman*)]
    \item (\ref{mthm-fin}) answers a question of Brothier in the  subsection ``Comparison of questions studied" in  \cite[A.2]{Br21}. He further asks the finiteness property of labeled Thompson group corresponding to a general wreath recursion of the form $\phi: G\to G\times G$ for which we only have a partial answer, see \Cref{finiteness-main}. 
 
    \item One could also consider $V(G)$'s subgroup $F(G)$ whose finiteness property has been determined in \cite{Tan16} and \cite[Section 6]{WZ18}. However $F(G)$ is  not boundedly acyclic in general since it retracts to $G$, and it has many finite index subgroups since it surjects to the Thompson group $F$. For a general wreath recursion of the form $\phi: G\to G\times G$, Tanushevski  proves that if $G$ is type $F_1$ or $F_2$, so is $F(G)$ and asks the relation between the finiteness property of $G$ and $F_\phi(G)$ \cite[Introduction]{Tan16}. We improve this to any $n$ in \Cref{thm-fin-FT-G}.

    \item More generally, given any wreath recursion $\phi: G \to G \wr \mathrm{S}_2$, we consider the corresponding $\phi$-labeled Thompson group  $V_{\phi}(G)$ (see Section \ref{Section:L-Thomps}) and prove that it is always boundedly acyclic (Theorem \ref{Thm:l-Thompson-ba}). This includes for example the R\"over--Nekrashevych groups corresponding to the infinite rooted binary tree. See also \Cref{cor:RNGroup-bc} for the  case where the tree is not necessarily binary.

    \item Our proof of type $F_n$ of $G$ implies that of $V_{\phi}(G)$ uses the standard approach by constructing first the corresponding Stein--Farly complex, then applying Brown's criterion. One could potentially prove this by first constructing a cloning system (see Remark \ref{rem:act-closy}), then applying \cite[Proposition 5.9]{WZ18}. This works for $V(G)$ but does not work for a general wreath recursion as the cloning system for $V_{\phi}(G)$ may not be properly graded. For example, if $\phi$ is the left wreath recursion $\phi_l$ in \Cref{ex:examples_of_wreath_recursion} (3), one can check that the corresponding cloning system is not properly graded.

        \end{enumerate}
\end{remark}

We immediately have the following corollaries.

\begin{corollary}\label{cor:emb-bac-fin}
    Every group of type $F_n$ embeds quasi-isometrically into a boundedly acyclic group of type $F_n$ that has no proper finite index subgroups.
\end{corollary}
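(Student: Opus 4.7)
The plan is to deduce this corollary directly from Theorem \ref{thm:labeled-V}, which packages all four required properties of the functor $G \mapsto V(G)$. Given a group $G$ of type $F_n$ (with $n \geq 1$, so in particular $G$ is finitely generated), I take the labeled Thompson group $V(G)$ as the target of the embedding, and then verify each desideratum by citing the relevant item of the theorem.

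First, I would invoke part (1) to obtain the injective homomorphism $\iota_G : G \to V(G)$. Since $G$ is of type $F_n$ it is finitely generated, so part (1) guarantees that $\iota_G$ is a quasi-isometric embedding with respect to any word metric on $G$ and any word metric on $V(G)$ (the latter is also finitely generated, since $V(G)$ is of type $F_n$ by part (4) and $n\geq 1$). Next, part (4) of the theorem says that $V(G)$ is of type $F_n$ if and only if $G$ is of type $F_n$; so the hypothesis on $G$ transfers to $V(G)$. Part (2) gives that $V(G)$ has no proper finite-index subgroups, and part (5) gives that $V(G)$ is boundedly acyclic.

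Concatenating these facts yields the corollary: the composition $G \xrightarrow{\iota_G} V(G)$ is the desired quasi-isometric embedding into a boundedly acyclic group of type $F_n$ with no proper finite-index subgroups. There is no real obstacle here, since all the work is done inside Theorem \ref{thm:labeled-V}; the only mild subtlety worth flagging is that the case $n = 0$ does not really arise (type $F_0$ would allow non-finitely-generated groups, for which the quasi-isometric statement is vacuous), but since the interesting content is for $n\geq 1$, the theorem applies verbatim.
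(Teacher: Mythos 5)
Your proposal is correct and takes the same route the paper intends: Corollary~\ref{cor:emb-bac-fin} is stated immediately after Theorem~\ref{thm:labeled-V} with the remark that it follows "immediately," and your proof is exactly that deduction, citing parts (1), (2), (4), and (5) of the theorem applied to the single functor $G \mapsto V(G)$. Your remark about $n \geq 1$ is a reasonable sanity check and does not affect the argument.
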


\begin{remark}
\begin{enumerate}
    \item  Without the boundedly acyclic property, this was established by Bridson \cite{Br98}. Our construction also has the advantage that it is functorial.
    
    \item We also establish a similar embedding result in the setting of $\ell^2$-invisibility, see \Cref{cor-l2-emb}.
\end{enumerate}
   
\end{remark}

\begin{corollary}
    Every group of type $F_n$ embeds quasi-isometrically into a $5$-uniformly perfect group of type $F_n$ that has no proper finite index subgroups.
\end{corollary}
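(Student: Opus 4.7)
The plan is to apply \Cref{thm:labeled-V} directly with the target group being $V(G)$, since the corollary is essentially a simultaneous packaging of properties (1)--(4) of that theorem. Given any group $G$ of type $F_n$, note first that $n\geq 1$ forces $G$ to be finitely generated, so part (1) of \Cref{thm:labeled-V} supplies an injective homomorphism $\iota_G:G\to V(G)$ that is a quasi-isometric embedding. Part (4) tells us that $V(G)$ is itself of type $F_n$, since $G$ is. Parts (2) and (3) give that $V(G)$ has no proper finite index subgroups and is $5$-uniformly perfect. Combining these four statements for the single group $V(G)$ yields the claim.

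The only thing to double-check is that all four properties can be invoked for the \emph{same} functorially constructed group $V(G)$, which is exactly how \Cref{thm:labeled-V} is phrased (a single functor $G\mapsto V(G)$ with all listed properties). There is no real obstacle here: the corollary is a formal consequence of the main theorem, completely analogous to \Cref{cor:emb-bac-fin}, the only difference being that we extract uniform perfection from item (3) instead of bounded acyclicity from item (5). One might remark that $5$-uniform perfection is preserved under taking $V(-)$ uniformly in $G$ (the constant $5$ does not depend on $G$), which is what makes the conclusion clean.
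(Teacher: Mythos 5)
Your proof is correct and takes exactly the approach the paper intends: the corollary is stated immediately after \Cref{thm:labeled-V} as a direct consequence, and you correctly assemble items (1)--(4) of that theorem for the single group $V(G)$. The observation that type $F_n$ with $n\geq1$ guarantees finite generation (so the quasi-isometric embedding clause of item (1) applies) is the only point that required care, and you handled it.
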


The celebrated Higman's embedding theorem asserts that every recursively presented group embeds into a finitely presented group. Combining this with \Cref{cor:emb-bac-fin}, we have:

\begin{corollary}
    Every recursively presented group embeds into a finitely presented boundedly acyclic group that has no proper finite index subgroups.
\end{corollary}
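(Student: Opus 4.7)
The plan is to obtain the desired embedding by composing two embeddings. First, I would invoke Higman's embedding theorem to embed the given recursively presented group $G$ into some finitely presented group $H$. Since being finitely presented is equivalent to being of type $F_2$, the group $H$ is of type $F_2$.

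Next, I would apply the previous corollary (\Cref{cor:emb-bac-fin}) in the case $n=2$ to produce a quasi-isometric embedding $\iota_H \colon H \hookrightarrow V(H)$ where $V(H)$ is of type $F_2$, boundedly acyclic, and has no proper finite index subgroups. Being of type $F_2$, the group $V(H)$ is finitely presented.

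The composition $G \hookrightarrow H \xhookrightarrow{\iota_H} V(H)$ is then an injective homomorphism from $G$ into a finitely presented, boundedly acyclic group with no proper finite index subgroups, which is exactly the claimed embedding. Note that we do not claim anything about the geometry of the composed embedding since Higman's embedding is not in general quasi-isometric; we only need injectivity, which is preserved by composition.

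There is essentially no obstacle here: the statement is a formal consequence of Higman's theorem and \Cref{cor:emb-bac-fin}, with all three desired properties (finite presentability, bounded acyclicity, absence of proper finite index subgroups) transferred from $H$ to $V(H)$ via the functorial construction $G \mapsto V(G)$ of \Cref{thm:labeled-V}.
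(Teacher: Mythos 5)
Your proof is correct and follows exactly the paper's argument: Higman's embedding theorem gives $G \hookrightarrow H$ with $H$ finitely presented (type $F_2$), and then \Cref{cor:emb-bac-fin} with $n=2$ gives $H \hookrightarrow V(H)$ with the required properties. Your observation that the quasi-isometric control is necessarily dropped at the Higman step is a correct and appropriate remark.
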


\begin{remark}
    Without the no proper finite index subgroups property, this has already been  established in \cite[Corollary 5.2]{FFCM21}. On the other hand,  Baumslag, Dyer, Miller proved in \cite[Theorem E]{BDM83} that every recursively presented group embeds into a finitely presented acyclic group.
\end{remark}

Recall that  Collins and Miller constructed in \cite{CM99} a group of type $F$ (it is in fact of geometric dimension $2$) that has unsolvable word problem. Taking the labeled group $G$ to be their group,  we have the following interesting application. 

\begin{corollary}
    There exists a boundedly acyclic group of type $F_\infty$  that has unsolvable word problem and no proper finite index subgroups.
\end{corollary}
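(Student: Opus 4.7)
The plan is to obtain this corollary as an immediate application of \Cref{thm:labeled-V} to the Collins--Miller group. Let $G$ denote the group constructed in \cite{CM99}: it is of type $F$ (in fact of geometric dimension $2$) and has unsolvable word problem. Since type $F$ implies type $F_n$ for every $n$, $G$ is of type $F_\infty$. I would then form the $\phi$-labeled Thompson group $V(G)$ and read off its properties directly from the numbered items of \Cref{thm:labeled-V}.

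Explicitly, part (4) tells us that since $G$ is of type $F_n$ for every $n$, so is $V(G)$, hence $V(G)$ is of type $F_\infty$. Part (5) gives bounded acyclicity, and part (2) states that $V(G)$ has no proper finite index subgroups. The only property not handled directly by the numbered items is unsolvability of the word problem. For this I would invoke Thompson's result recalled in the paragraph preceding \Cref{thm:labeled-V}, namely that the embedding $G \hookrightarrow V(G)$ is Frattini and preserves the solvability of the word problem. Because the embedding is computable on finite generating sets, any algorithm deciding the word problem in $V(G)$ would, after rewriting words in $G$ in terms of the generators of $V(G)$, decide the word problem in $G$; by contraposition, the unsolvability of the word problem in $G$ forces the word problem in $V(G)$ to be unsolvable as well.

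I do not anticipate any serious obstacle here: the argument is a direct plug-in of the Collins--Miller example into the functor $G \mapsto V(G)$, and every property in the statement is matched to a corresponding clause of \Cref{thm:labeled-V} or to Thompson's preservation theorem. The only mildly delicate point is the transfer of the word problem, but this is already built into the definition/construction of the embedding $\iota_G$.
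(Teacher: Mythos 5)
Your proposal is correct and follows exactly the route the paper intends: plug the Collins--Miller group into $V(-)$, read off bounded acyclicity, type $F_\infty$, and no proper finite index subgroups from \Cref{thm:labeled-V}, and transfer unsolvability of the word problem via the embedding $G\hookrightarrow V(G)$ (the paper packages this last step as \Cref{cor:wps-vg}, stated as a biconditional, but it is the same content as your contraposition argument).
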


Having no proper finite index subgroup is the opposite of being residually finite. And a closely related property is Hopfian. Recall that a group $G$ is called Hopfian if every surjective endormorphism $G\to G$ is an isomorphism. We end the discussion of labeled Thompson group with the following conjecture.

\begin{conjecture}
    Let $G$ be a finitely generated group. Then $V(G)$ is hopfian if and only if $G$ is.
\end{conjecture}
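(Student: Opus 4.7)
The conjecture splits naturally into two implications, one of which should be essentially formal and one of which looks substantially deeper.

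For the forward direction ``$V(G)$ Hopfian $\Rightarrow$ $G$ Hopfian'', the plan is to exploit the functoriality of the assignment $G\mapsto V(G)$ asserted in \Cref{thm:labeled-V}. Suppose $\phi:G\to G$ is surjective with $\ker\phi\neq 1$; I would like to show $V(\phi):V(G)\to V(G)$ is then also surjective but non-injective. Applying the functor to the unique map $1\to G$ shows that the inclusion $V=V(1)\hookrightarrow V(G)$ is natural, so $V(\phi)$ restricts to the identity on $V$; naturality of $\iota_G$ gives $V(\phi)\circ\iota_G=\iota_G\circ\phi$, so $V(\phi)(\iota_G(G))=\iota_G(G)$ by surjectivity of $\phi$. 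Since $V(G)$ is generated by $V$ together with $\iota_G(G)$, surjectivity of $V(\phi)$ follows. For non-injectivity, pick $g\in\ker\phi\setminus\{e\}$: then $\iota_G(g)\neq e$ by injectivity of $\iota_G$, while $V(\phi)(\iota_G(g))=\iota_G(\phi(g))=e$. This contradicts the Hopfian property of $V(G)$, so $G$ must be Hopfian.

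For the reverse direction ``$G$ Hopfian $\Rightarrow$ $V(G)$ Hopfian'', I expect a much more delicate analysis. The strategy I would attempt is to understand any surjective endomorphism $\psi:V(G)\to V(G)$ through the canonical retraction $r:V(G)\twoheadrightarrow V$ obtained by applying $V(-)$ to $G\to 1$. First, compose to get $r\circ\psi:V(G)\to V$; because $V$ is simple, the kernel of this composition is either all of $\langle\langle V\rangle\rangle$ in $V(G)$ or is ``small'', and this should pin down how $\psi$ interacts with the copy of $V$ sitting inside $V(G)$. Next, identify a canonical copy of $G$ (or of a direct power of $G$) inside $V(G)$ as the commutant, in $V(G)$, of a suitable finite-support subgroup of $V$; then show that $\psi$ must carry this copy into itself, and deduce from Hopfianness of $G$ that the restriction is an automorphism. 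Finally, combine these two pieces to conclude $\ker\psi=1$.

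The main obstacle is step (a): a priori there is no reason $\psi$ should respect either the distinguished $V$-subgroup or the distinguished labeling subgroups $\iota_G(G)$, and $V(G)$ in general contains many non-conjugate copies of $V$ (for example via the standard self-similar subtree structure). One would need a rigidity result reminiscent of the automorphism-group calculations for R\"over--Nekrashevych and Brin--Thompson groups, presumably leveraging the bounded acyclicity, uniform perfection, and lack of finite quotients established above to rule out pathological endomorphisms. A complementary approach, which may be technically cleaner, is to classify the normal subgroups of $V(G)$: if one can show that every proper normal subgroup of $V(G)$ is sent into a proper normal subgroup arising from a proper normal subgroup of $G$, then $V(G)/\ker\psi\cong V(G)$ would force the image in the normal-subgroup lattice of $G$ to be an isomorphism, from which Hopfianness of $V(G)$ would follow.
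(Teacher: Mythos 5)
This is stated as a \emph{conjecture} in the paper, not a theorem: the authors do not claim a proof. The only thing established in the paper is the remark immediately following the conjecture, which proves exactly one direction — namely, that if $G$ is not Hopfian then $V(G)$ is not Hopfian (equivalently, $V(G)$ Hopfian implies $G$ Hopfian). Your treatment of this forward direction is correct and is the same argument as the paper's remark: a non-injective surjection $\phi\colon G\to G$ induces $V(\phi)\colon V(G)\to V(G)$, which is surjective because the functor $V(-)$ preserves surjections, and non-injective because $V(\phi)\circ\iota_G=\iota_G\circ\phi$ kills $\iota_G(g)$ for $g\in\ker\phi\setminus\{1\}$ while $\iota_G$ is injective by \Cref{quasi-retract}.

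The reverse direction, that $G$ Hopfian implies $V(G)$ Hopfian, is genuinely open — this is precisely why the statement is labeled a conjecture rather than a theorem. Your sketch for this direction identifies the right obstacle (a general surjective endomorphism of $V(G)$ has no a priori reason to respect the distinguished copies of $V$ or $\iota_G(G)$), but the remedy you propose is not carried out, and some of the ingredients do not immediately help: the paper's classification of normal subgroups (every proper normal subgroup sits inside $\lim_{\mathcal{T}}G$) constrains $\ker\psi$ but does not by itself rule out a non-injective self-surjection, since $\lim_{\mathcal{T}}G$ is an infinite directed union of powers of $G$ and its lattice of normal subgroups of $V(G)$ contained in it is rich. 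In short, you are right that one direction is formal and the other requires new ideas; the paper agrees, and leaves the hard direction open.
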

    
\begin{remark}
    Suppose $G$ is not Hopfian and let $f:G\to G$ be a surjection that is not injective. Then the induced map $V(f): V(G)\to V(G)$ is also a surjection that is not injective. This shows if $G$ is not Hopfian, neither is $V(G)$.
\end{remark}

The second class of groups we consider is the twisted Brin--Thompson groups. These groups were first constructed by Belk and Zaremsky in \cite{BZ22} as a generalization of  Brin's high-dimensional Thompson groups \cite{Bri04}. In fact, given any group $G$ acting faithfully on a countable set $S$, they define the associated twisted Brin--Thompson group $\mathrm{SV}_G$ as a subgroup of the group of  homeomorphisms of the Cantor space $\mc^S$. The key feature of twisted Brin--Thompson groups is that they are always simple \cite[Theorem 3.4]{BZ22}.

\begin{thm}[\Cref{cor:tbt-bc}]\label{thm:TBT-ba} 
    The twisted Brin--Thompson groups $\mathrm{SV}_G$ are all boundedly acyclic.
\end{thm}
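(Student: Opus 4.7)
The plan is to prove bounded acyclicity of $\mathrm{SV}_G$ by establishing that it is a mitotic group, and then invoking L\"oh's theorem \cite{Lo17} that every mitotic group is boundedly acyclic. This parallels the strategy used by Andritsch \cite{An22} for the ordinary Thompson group $V$, now carried out in the presence of the $G$-twisting.

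First I would exploit the self-similarity of the Cantor space $\mathfrak{C}^S$: every dyadic brick $B \subseteq \mathfrak{C}^S$ is canonically homeomorphic to $\mathfrak{C}^S$ compatibly with the twisted structure defining $\mathrm{SV}_G$, so that the subgroup of $\mathrm{SV}_G$ supported on $B$ contains a copy of $\mathrm{SV}_G$ itself. Given any finitely generated subgroup $H \leq \mathrm{SV}_G$, the generators admit a common dyadic subdivision $\mathcal{P}$ of $\mathfrak{C}^S$ (since elements of $\mathrm{SV}_G$ are determined by finite tree-pair data together with $G$-labels, see \cite{BZ22}). I would then pick a brick $B$ disjoint from the nontrivial pieces of $\mathcal{P}$ and use the self-similarity to produce an injective homomorphism $\psi : H \to \mathrm{SV}_G$ whose image is supported on $B$, and hence commutes pointwise with $H$.

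Next I would construct a ``swap'' element $s \in \mathrm{SV}_G$ satisfying $s h s^{-1} = \psi(h)$ and a ``doubling'' element $d \in \mathrm{SV}_G$ satisfying $d h d^{-1} = h \cdot \psi(h)$ for every $h \in H$. Both are described explicitly as tree-pair homeomorphisms that further subdivide each brick of $\mathcal{P}$, apply $h$ on one half and the identity on the other, and permute the halves appropriately. These data witness the mitotic structure on every finitely generated subgroup of $\mathrm{SV}_G$, and L\"oh's theorem then concludes the proof.

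The main obstacle is verifying that $\psi$, $s$, and $d$ lie inside $\mathrm{SV}_G$ rather than merely in the ambient group $\mathrm{Homeo}(\mathfrak{C}^S)$. The twisted definition of $\mathrm{SV}_G$ forces the tree-pair data together with their $G$-labels to be compatible with the action of $G$ on $S$; when $G$ acts non-trivially on infinitely many coordinates, this compatibility requires a careful choice of the brick $B$ and of its identification with $\mathfrak{C}^S$, so that all the auxiliary homeomorphisms can be realised as elements of $\mathrm{SV}_G$ in the sense of \cite{BZ22}. This is analogous to, but less intricate than, the analysis carried out earlier in the paper for the labelled Thompson groups $V_\phi(G)$.
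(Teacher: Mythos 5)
Your approach is genuinely different from the paper's, which derives \Cref{thm:TBT-ba} from \Cref{thm:tofull-ba}: twisted Brin--Thompson groups are topological full groups acting extremely proximally on the Cantor space, and every such group is boundedly acyclic by combining the Monod--Nariman germ-complex technique with the commuting cyclic conjugates criterion of Campagnolo--Fournier-Facio--Lodha--Moraschini applied to the \emph{germ stabilizers}, i.e.\ the subgroups supported away from a finite set of points. Your mitosis route, however, has a genuine gap: $\mathrm{SV}_G$ is not mitotic, so L\"oh's theorem does not apply.

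The obstruction is the doubling element $d$. Suppose $H$ is finitely generated and supported on a proper clopen $A$, $\psi(H)$ is supported on a clopen $B$ disjoint from $A$, and $d h d^{-1} = h\,\psi(h)$ for all $h \in H$. Tracking where $d^{-1}$ sends $A$, $B$, and $(A\cup B)^{c}$ shows that $d^{-1}(A)$ and $d^{-1}(B)$ are $H$-invariant, that $d^{-1}\bigl((A\cup B)^{c}\bigr)$ lies in the fixed set of $H$, and that $d$ intertwines $H|_{d^{-1}(A)}$ with $H|_{A}$ and $H|_{d^{-1}(B)}$ with a conjugate of $H|_{A}$. Thus the $H$-action on its support would have to split into two disjoint clopen $H$-invariant pieces, each conjugate to the full action --- a self-replication that fails for a general finitely generated $H \le \mathrm{SV}_G$ (take $H$ cyclic, generated by an element whose restriction to its support has no proper clopen invariant subset carrying a conjugate copy of the whole action). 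Note also that $s$ and $d$ in a mitosis must be fixed elements working simultaneously for \emph{all} $h\in H$, so one cannot build them by ``applying $h$ on one half'' as your sketch suggests. This is exactly why Andritsch's proof for $V$ and the present paper's proof for $\mathrm{SV}_G$ avoid mitoses and instead localize the commuting-conjugate structure to stabilizers, where it genuinely exists; as the introduction notes, the known non-amenable mitotic and binate examples are never finitely generated, whereas $\mathrm{SV}_G$ is even $2$-generated.
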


\begin{remark}
    By taking $G$ to be the Houghton group $\mathcal{H}_n$ and $S$ to be $n$ copies of natural numbers, we get an example of a boundedly acyclic simple group that is of type $F_{n-1}$ but not $F_n$ using \cite[Corollary G]{BZ22}. 
\end{remark}

More generally, we can show that topological full groups acting extremely proximally on the Cantor set are also boundedly acyclic, see \Cref{thm:tofull-ba}.  Combining \Cref{thm:TBT-ba} with \cite[Theorem C]{BZ22}, we have the following.

\begin{corollary}
    Every finitely generated group embeds into a finitely generated (indeed two-generated) boundedly acyclic simple group quasi-isometrically.  
\end{corollary}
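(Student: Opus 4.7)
The plan is to derive the corollary directly from Theorem \ref{thm:TBT-ba} together with the embedding theorem of Belk and Zaremsky. I would first recall the content of \cite[Theorem C]{BZ22}: given any finitely generated group $H$, one can exhibit a faithful action of a two-generated group $G$ on a countable set $S$ (both depending on $H$) such that $H$ embeds quasi-isometrically into the twisted Brin--Thompson group $\mathrm{SV}_G$, and moreover $\mathrm{SV}_G$ itself is two-generated. This is precisely the input that makes the ``finitely generated (indeed two-generated)'' clause available; without it there is no hope to control the number of generators of the target group.

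Given this input, the proof is a short assembly. Starting with a finitely generated group $H$, invoke \cite[Theorem C]{BZ22} to produce the pair $G \curvearrowright S$ together with a quasi-isometric embedding $H \hookrightarrow \mathrm{SV}_G$. Then \cite[Theorem 3.4]{BZ22} guarantees that $\mathrm{SV}_G$ is simple, while Theorem \ref{thm:TBT-ba} (just proved in the excerpt above) guarantees that $\mathrm{SV}_G$ is boundedly acyclic. Since $\mathrm{SV}_G$ is two-generated, it is in particular finitely generated, so all the properties listed in the conclusion are simultaneously verified for the target group $\mathrm{SV}_G$.

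I expect no genuine obstacle in this corollary: all of the substantive work is encapsulated in Theorem \ref{thm:TBT-ba}, whose proof is the technical heart of this part of the paper, and in the Belk--Zaremsky construction providing simplicity and two-generation. The corollary is a clean packaging of these two inputs, and I would write the proof essentially in the single line ``Combine Theorem \ref{thm:TBT-ba} with \cite[Theorem 3.4]{BZ22} and \cite[Theorem C]{BZ22}.'' If anything needs to be verified rather than cited, it would just be that the particular $\mathrm{SV}_G$ produced by \cite[Theorem C]{BZ22} falls within the class of twisted Brin--Thompson groups covered by Theorem \ref{thm:TBT-ba}; this should be immediate from the definitions, since Theorem \ref{thm:TBT-ba} is stated for arbitrary faithful actions $G \curvearrowright S$ on countable sets.
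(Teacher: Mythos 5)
Your proof is essentially identical to the paper's: the authors obtain the corollary by combining \Cref{thm:TBT-ba} with \cite[Theorem C]{BZ22}, which already packages the two-generation, simplicity, and quasi-isometric embedding statements you spell out (your separate invocation of \cite[Theorem 3.4]{BZ22} for simplicity is a harmless redundancy). Nothing is missing.
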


At this point, one might hope that all the  $V$-like Thompson groups are  boundedly acyclic. This however is not true,  as Fournier-Facio, Lodha and Zaremsky showed in \cite{FFLZ22} that the braided Thompson group has infinite dimensional second bounded cohomology. It is also worth pointing out that all the boundedly acyclic groups we have found so far are of infinite cohomological dimension. It is a well-known question whether there is a non-amenable boundedly  acyclic group of type $F$. In fact, we do not even have examples of non-amenable and type $F$ groups that we can calculate their bounded-cohomology in infinitely many degrees.

\subsection*{Acknowledgements.}
XW is currently a member of LMNS and is supported by NSFC \\No.12326601. He thanks Jonas Flechsig for discussions related to the finiteness properties of labeled Thompson groups. ZZ is partially supported by the Linghang Foundation. We thank Josiah Oh for proofreading part of the paper, Konstantin Andritsch, Francesco Fournier-Facio for many useful comments on the paper, Volodymyr Nekrashevych for helpful communications on the history of $V_\phi(G)$. We also thank Matt Zaremsky for pointing out references about $V(G)$  after we posted the first version of our paper on the arXiv. In fact, after some communications with him, we realized that the labeled Thompson groups have already been defined and studied by Thompson in \cite{Tho80} under the name Splinter group. We thank the anonymous referee for suggesting numerous corrections, clarifications, and improvements to an earlier draft of this paper. In particular, the newly added Section \ref{sec:l2-inv} is purely their idea.

\section{$\phi$-labeled Thompson groups}\label{Section:L-Thomps}
In this section, we introduce the basics of $\phi$-labeled Thompson groups, see also \cite[Section 3.5]{Nek18} for more information. Most of the material in this section is not original, but we do have some new results in the last few subsections.

%Note that the construction in \cite[Section 3.5]{Nek18} works for general wreath recursions, but we will focus on injective wreath recursions for simplicity.

\subsection{Wreath recursions} \label{subsection:wrinj}
Let  \s{\{0, 1\}^{<\omega}} be the monoid of finite words with letters in \s{\{0, 1\}} where the empty word is the unit  and the multiplication is given by concatenation of words: $(x_1x_2 \cdots x_m)(y_{1}y_{2} \cdots y_n)=x_1x_2 \cdots x_m y_1y_2 \cdots y_n$. 
 So if \s{u\in \{0, 1\}^{<\omega}},  then \s{u0,u1}  are the concatenations of \s{u} by \s{0} and \s{1}. The length of a finite word \s{u} is denoted by \s{l(u)}.
 
The \emph{infinite rooted  binary  tree \s{\mathcal{T}}} is the countably infinite rooted tree such that every vertex has degree $3$ except the \emph{root} which has degree $2$. Note that \s{\{0, 1\}^{<\omega}} can be identified with the vertex set of  \s{\mathcal{T}}, where the root corresponds to the empty word and edges are of the form $\{u,u0\}$ or $\{u,u1\}$ for any word $u$.

An \emph{automorphism} of \s{\mathcal{T}} is a bijection from \s{\{0, 1\}^{<\omega}} to \s{\{0, 1\}^{<\omega}} that fixes the root and preserves the edge relations, hence it also preserves the length of words. 
The group of automorphisms of \s{\mathcal{T}} is denoted by \s{Aut(\mathcal{T})}.

We  identify the Cantor set \s{\mc} with  \s{\{0, 1\}^{\omega}}, the set of (right) infinite words with letters in \s{\{0, 1\}}. Define $\iota: \{0, 1\}^{\omega} \cup \{0, 1\}^{< \omega}\to [0,1]$ by mapping $x_1x_2\cdots$ to $\Sigma_{i=1}^\infty \frac{x_i}{2^i}$ if the words have infinite length, and $x_1x_2\cdots x_n$ to $\Sigma_{i=1}^n \frac{x_i}{2^i}$ for finite words. Restricting to the subset of eventually $0$ infinite words, $\iota$ is injective with image $\mathbb{Z}[\frac{1}{2}]\cap [0,1)$. We often identify the subset of eventually $0$ infinite words in $\{0, 1\}^{\omega}$   with $\mathbb{Z}[\frac{1}{2}]\cap [0,1)$ using $\iota$. Moreover, we have a lexicographical order on $\{0,1\}^\omega$ and $\{0,1\}^{<\omega}$ by declaring $0<1$, and $\iota$ is  order preserving when restricted to  eventually $0$ infinite words. Note that in general $\iota$ is not injective even restricted to the set of finite words.

The concatenation of a finite word by an infinite word is defined by
$
(x_1x_2 \cdots x_m)(y_{1}y_{2} \cdots )=x_1x_2 \cdots x_my_1y_2 \cdots 
$.  Given a finite word \s{u} and a (finite or infinite) word \s{v}, we say that \s{u} is a \emph{prefix} of \s{v}  if \s{v=uw} for some (finite or infinite)  word \s{w}. The \emph{cone} \s{u\xstar} (resp. \s{u\xinf}) of a finite word \s{u} is defined as the subset of words  in \s{\xstar} (resp.  \s{\xinf}) which has  \s{u} as a prefix.

We will  use \s{ \mathrm{S}_n} to denote the symmetric group of $[n]=\{1,\cdots,n\}$ where   $\mathrm{S}_n$ acts on $[n]$ from the right.

\begin{definition}\label{defn-wr-rec}
 A \emph{wreath recursion} of a group \s{G} is a group homomorphism \s{\phi: G\rightarrow G \wr \mathrm{S}_2}, where $ \mathrm{S}_2$ acts on  $G\times  G$ by permuting the coordinates. Let  \s{\phi_S: G\ra  \mathrm{S}_2} be the composition of \s{\phi} with the natural projection \s{G \wr \mathrm{S}_2\ra  \mathrm{S}_2}. When $\phi$ is injective, we shall call it an \emph{injective wreath recursion}. 
\end{definition}
 %To make our life easier, we will focus  on injective wreath recursions, i.e. when $\phi$ is injective. 

Given a  group $G$, there are some canonical  wreath recursions one can consider: 
\begin{example}\label{ex:examples_of_wreath_recursion}\ \ 
\begin{enumerate}
        \item The diagonal  wreath recursion: \s{\Delta:g\ra((g,g),\mathrm{id}_{\mathrm{S}_2})};
        \item The vanishing wreath recursion: $\nu: g \to ((1,1),\mathrm{id}_{\mathrm{S}_2})$;
        \item  The right  wreath recursion: \s{\phi_r: g\ra{((1, g),\mathrm{id}_{\mathrm{S}_2})}}; similarly, the left wreath recursion \s{\phi_l: g\ra{((g, 1),\mathrm{id}_{\mathrm{S}_2})}}; 
        \item \s{\phi_{\kappa} : g\ra ((g, g),\kappa(g))} where  \s{\kappa} is a homomorphism from \s{G} to \s{ \mathrm{S}_2}.
    \end{enumerate}   
\end{example}

Note that \s{Aut(\mathcal{T})}  has a canonical injective  wreath recursion structure: 
the action of \s{Aut(\mathcal{T})} on length-one words induces a homomorphism
\s{\rho: Aut(\mathcal{T})\ra  \mathrm{S}_2}. Every element in \s{ker(\rho)} can be regarded as an automorphism of the subtrees spanned by 
\s{0\xstar} and $ 1\xstar$, hence \s{ker(\rho)}
is isomorphic to the direct product of two copies of \s{Aut(\mathcal{T})} which \s{ \mathrm{S}_2} acts on by permuting the coordinates. This gives an isomorphism \s{\psi_T: Aut(\mathcal{T})\cong  Aut(\mathcal{T})\wr \mathrm{S}_2.}

   There is a natural (right) action of \s{G} on the rooted binary infinite tree \s{\mathcal{T}} with respect to \s{\phi}. 
The action of \s{G} on \s{\mathcal{T}} is defined by induction on the length of words:  

\begin{enumerate}
    \item \s{G} preserves the empty word.
    \item Suppose that the action of words with length at most \s{n} is defined. Then 
for \s{g \in G,  v=xu} where \s{l(u)=n, x \in \{0, 1\} }, and \s{\phi(g)=((g_0, g_1),\sigma_g)\in G \wr \mathrm{S}_2},  we define \s{v\cdot g :=(x\cdot \sigma_g)(u\cdot g_x)}. 
\end{enumerate}

Therefore a wreath recursion \s{\phi} gives a natural group homomorphism \s{\phi^*} from \s{G} to \s{Aut(\mathcal{T})} such that the following diagram commutes: 
\begin{equation}
\begin{tikzcd}\label{diagram1}
G \arrow{r}{\phi} \arrow[swap]{d}{\phi^\ast} & G \wr \mathrm{S}_2 \arrow{d}{(\phi^*\times \phi^*)\rtimes \mathrm{id}_{ \mathrm{S}_2}} \\
Aut(\mathcal{T}) \arrow{r}{\psi_T}[swap]{\cong} &  Aut(\mathcal{T})\wr \mathrm{S}_2
\end{tikzcd}    
\end{equation}

\begin{remark}
    When \s{\phi^\ast} is injective, \ie,  the action is faithful, \s{G} is called a \emph{self-similar group}, see \cite{Nek05} for more information. Note that the injectivity of $\phi$ is not enough to guarantee the injectivity of $\phi^\ast$. In fact, the diagonal $\Delta$ wreath recursion is injective, but the induced action $\phi^\ast$ is trivial. 
\end{remark}

\subsection{ \s{G}-matrix and the  \s{\phi}-labeled Thompson group. }

Given a wreath recursion \s{\phi} of a group $G$, we proceed to define the \s{\phi}-labeled Thompson group $V_\phi(G)$.

We call a finite subset \s{\Sigma\subset\xstar}  a  {\em partition set}, if one of the following conditions holds (one easily checks these conditions are equivalent): 
\begin{enumerate}
    \item \s{u\{0, 1\}^{<\omega}\cap v\{0, 1\}^{<\omega}=\varnothing} for any $u\neq v$ in $ \Sigma$ and the set
\s{\{0, 1\}^{<\omega}-\bigcup_{u\in \Sigma}u\{0, 1\}^{<\omega}} is finite. 

\item Any infinite word has a unique element in \s{\Sigma} as its prefix. 

\item There exists a number \s{n} such that any finite word of length greater than \s{n} has an unique element in \s{\Sigma} as its prefix. 
\end{enumerate}

 Every partition set \s{\Sigma} gives a partition \s{\{u\{0,1\}^\omega:u\in \Sigma\}} of   $\{0,1\}^\omega$  which we can identify with the boundary \s{\partial\mathcal{T}}  of \s{\mathcal{T}}. Note that it  also gives to a partition of the subset of eventually $0$ infinite words (recall we identify it  with \s{\dy \cap [0,1)}). A partition of \s{\partial\mathcal{T}} or \s{\dy \cap [0,1)} arising from a partition set is called  a \emph{dyadic partition}. The following lemma is immediate from the definition.

\begin{lemma}\label{incomparable words}
Let  \s{u_1,  \cdots , u_k\in \{0, 1\}^{<\omega}} be distinct finite words. Then the cones \s{\{u_i\xstar\}_{i=1}^k} are pairwise disjoint if and only if there is a partition set containing $\{u_1,  \cdots , u_k\}$ as a subset. 
\end{lemma}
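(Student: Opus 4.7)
The plan is to verify the two implications separately, the forward one being immediate from the definition and the reverse being a direct construction.

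For the forward direction, suppose $\{u_1,\dots,u_k\}$ sits inside some partition set $\Sigma$. By condition (1) in the definition of a partition set, the cones $u\{0,1\}^{<\omega}$ for distinct elements $u\in\Sigma$ are pairwise disjoint, so in particular $\{u_i\{0,1\}^{<\omega}\}_{i=1}^k$ are pairwise disjoint. Nothing more is required here.

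For the reverse direction, I would argue by completing $\{u_1,\dots,u_k\}$ with all length-$N$ words that are not refinements of any $u_i$, where $N := \max_i l(u_i)$. Concretely, let
\[
\Sigma := \{u_1,\dots,u_k\} \cup \{w\in\{0,1\}^{<\omega} : l(w)=N,\text{ no } u_i \text{ is a prefix of } w\},
\]
which is finite. I would first observe that disjointness of the cones $u_i\{0,1\}^{<\omega}$ is equivalent to saying no $u_i$ is a prefix of another $u_j$; this standard fact about the infinite rooted binary tree is what makes the construction work. Then I would check condition (3) from the definition of a partition set: for any finite word $v$ of length $>N$, look at its prefix $v'$ of length $N$. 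Either some $u_i$ is a prefix of $v'$, in which case $u_i\in\Sigma$ is the desired (unique) prefix in $\Sigma$; or no $u_i$ is a prefix of $v'$, in which case $v'\in\Sigma$ by construction. Uniqueness of the prefix follows from the pairwise disjointness verification below.

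Finally, I would verify that the cones of elements of $\Sigma$ are pairwise disjoint, confirming that $\Sigma$ really is a partition set. The $u_i$'s have pairwise disjoint cones by hypothesis; two distinct length-$N$ words $w\neq w'$ have disjoint cones trivially; and a pair $(u_i,w)$ with $l(w)=N$ and $u_i$ not a prefix of $w$ must have disjoint cones, since $l(u_i)\le N=l(w)$ forces $w$ not to be a prefix of $u_i$ (otherwise $l(u_i)>l(w)$ unless $u_i=w$, which is excluded as $u_i$ is a prefix of itself). There is no real obstacle here; the only thing to be careful about is the asymmetry in the prefix relation when comparing $u_i$ with the padding words $w$, which is handled by the length bound $l(u_i)\le N$.
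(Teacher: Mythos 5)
Your proof is correct. The paper gives no proof for this lemma—it merely remarks that it is ``immediate from the definition''—so there is no argument in the text to compare against; yours is a reasonable and complete expansion of exactly the reasoning the authors had in mind. Both directions check out: the forward implication is indeed condition (1) of the partition-set definition restricted to a subset, and your construction for the reverse direction, padding with all length-$N$ words that avoid every $u_i$ as a prefix, correctly yields a finite set satisfying condition (3) with $n=N$, with uniqueness of the prefix guaranteed by the pairwise disjointness you verify. The one place requiring care—the asymmetry when comparing a $u_i$ with a padding word $w$—you handle properly by invoking the length bound $l(u_i)\le N=l(w)$.
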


From now on, we will list the elements of a partition set using lexicographic order unless otherwise stated. 

\begin{definition}
    Given \s{\sigma\in  \mathrm{S}_n} and partitions \s{\Sigma_1=\{u_1,  \cdots , u_n\},\Sigma_2=\{v_1,  \cdots , v_n\}}, we can define a homeomorphism of  $\{0,1\}^\omega$:
$$f^\sigma_{\Sigma_1,\Sigma_2}:\xinf\ra\xinf, u_iw\mapsto v_{\sigma(i)}w.$$
where $w$ is any infinite word. The subgroup of such homeomorphisms of $\{0,1\}^\omega$ is called \emph{the Thompson group \s{V}}. 
\end{definition}

%\begin{remark}
%    Unless there are extra assumptions, we will assume the elements of $\Sigma_1$ and $\Sigma_2$ are listed using lexicographic order. 
%\end{remark}

For a partition set \s{\Sigma},  if we replace one of its element \s{u} by \s{\{u0, u1\}},  we get a new partition set \s{\Sigma_u}, called {\em the simple expansion} of \s{\Sigma} at \s{u}. 
We say a partition set $\Sigma_1$ is an {\em expansion} of another partition set $\Sigma_2$ if $\Sigma_1$ can be obtained from $\Sigma_2$ by  finitely many  simple expansions. 

We proceed now to define the $\phi$-labeled Thompson group $V_\phi(G)$. A \emph{\s{G}-matrix} is a block of the following form:
$
\begin{scriptsize}\begin{pmatrix}  
u_1 & u_2 & \cdots & u_n \\
g_1 & g_2 & \cdots  &g_n \\
v_1&v_2 & \cdots  &v_n
\end{pmatrix}\end{scriptsize}  $
where \s{\{u_1,  \cdots , u_n\}}, \s{\{v_1,  \cdots , v_n\}} are two partition sets and \s{g_1,  \cdots , g_n\in G}. \s{\{u_1,  \cdots , u_n\}} is called the \emph{domain}  and \s{\{v_1,  \cdots , v_n\}} is called the \emph{range} of the matrix.  We shall consider two \s{G}-matrices to be the same,  if one is obtained from the other by permuting the columns.  Note that we can always write a \s{G}-matrix in the form \s{
\begin{scriptsize}\begin{pmatrix}  
u_1 & u_2 & \cdots & u_n \\
g_1 & g_2 & \cdots  &g_n \\
v_1&v_2 & \cdots  &v_n
\end{pmatrix}\end{scriptsize}  
} where \s{u_1<\cdots <u_n} (here we are using the lexicographic order on $\{0,1\}^{<\omega}$). Note that the order of \s{\{v_1,  \cdots , v_n\}} further gives  a permutation \s{\sigma}, by mapping $i$ to the position of $v_i$ after we order them from small to large.

\begin{definition}[Expansion rule]
  Let \s{\phi: G\ra G \wr \mathrm{S}_2} be a wreath recursion. 
 The \emph{simple expansion} of the \s{G}-matrix \s{\begin{scriptsize}\begin{pmatrix}    
u_1 & u_2 & \cdots &u_k& \cdots & u_n \\
g_1 & g_2 & \cdots  &g_k& \cdots &g_n \\
v_1&v_2 & \cdots &v_k& \cdots &v_n
\end{pmatrix}\end{scriptsize}  }
at \s{u_k} is defined by: 

\begin{center}
 \s{\begin{scriptsize}\begin{pmatrix}  
u_1 & u_2 & \cdots &u_k0&u_k1& \cdots & u_n \\
g_1 & g_2 & \cdots  &g_{k, 0}&g_{k, 1}& \cdots &g_n \\
v_1&v_2 & \cdots &v_k\cdot (0)\sigma_{g_k} &v_k\cdot (1)\sigma_{g_k} & \cdots &v_n
\end{pmatrix}\end{scriptsize}  }   
\end{center}
where $((g_{k, 0}, g_{k, 1}),\sigma_{g_k}) = \phi(g_k)$. 
\end{definition}

    Given two \s{G}-matrices \s{\alpha, \beta}, we say \s{\beta}  is an \emph{expansion} of \s{\alpha} if it can be obtained by a finite sequence of simple expansions from $\alpha$. The inverse process of an expansion is called a \emph{reduction}.  We say that two \s{G}-matrices are \emph{equivalent} if they have a common expansion.    %By saying a \s{G}-table we mean a equivalent class of \s{G}-matrices.  
    The equivalence class of \s{\begin{scriptsize}\begin{pmatrix}  
u_1 & u_2 & \cdots & u_n \\
g_1 & g_2 & \cdots &g_n \\
v_1&v_2 & \cdots &v_n
\end{pmatrix}\end{scriptsize}  } is denoted by
\s{  \begin{scriptsize}\begin{bmatrix}  
u_1 & u_2 & \cdots & u_n \\
g_1 & g_2 & \cdots &g_n \\
v_1&v_2 & \cdots &v_n
\end{bmatrix}  \end{scriptsize}  }.

Let \s{V_\phi(G)} be the set of equivalence classes of $G$-matrices with respect to \s{\phi}.  
We now  define a multiplication on \s{V_\phi(G)} to make it  a group: For \s{a, b\in V_\phi(G)}, up to finite expansions, we can assume that the two \s{G}-matrices representing \s{a, b} have the following forms: \s{a=  \begin{scriptsize}\begin{bmatrix}  
u_1 & u_2 & \cdots & u_n \\
g_1 & g_2 & \cdots  &g_n \\
v_1&v_2 & \cdots  &v_n
\end{bmatrix}  \end{scriptsize}} and \s{b=  \begin{scriptsize}\begin{bmatrix}  
v_1 & v_2 & \cdots & v_n \\
h_1 & h_2 & \cdots  &h_n \\
w_1&w_2 & \cdots  &w_n
\end{bmatrix}  \end{scriptsize}}, now define \s{ab=  \begin{scriptsize}\begin{bmatrix}  
u_1 & u_2 & \cdots & u_n \\
g_1h_1 & g_2h_2 & \cdots  &g_nh_n \\
w_1&w_2 & \cdots  &w_n
\end{bmatrix}  \end{scriptsize}}. 
\begin{lemma}\label{vg-well-defined}
    The multiplication given above is well-defined, and makes \s{V_\phi(G)} a group.
\end{lemma}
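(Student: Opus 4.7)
My plan is to prove the lemma in three stages: first set up the multiplication (existence of compatible representatives), then show it is independent of the choice of representatives, and finally verify the group axioms.

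For the setup, I will establish two preparatory facts. (a) Any two partition sets $\Sigma_1, \Sigma_2 \subset \{0,1\}^{<\omega}$ admit a common expansion: for $N$ larger than the maximum length of words in $\Sigma_1 \cup \Sigma_2$, the set $\{0,1\}^N$ is an expansion of both. (b) Given a $G$-matrix $A$ and any expansion of its range partition set, there is a corresponding expansion of $A$ realising it on the bottom row; this follows by induction on the number of simple expansions, since a simple top-row expansion at $u_k$ splits $v_k$ into $\{v_k(0)\sigma_{g_k}, v_k(1)\sigma_{g_k}\} = \{v_k 0, v_k 1\}$ regardless of $\sigma_{g_k}$. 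Combining (a) and (b), for any $a, b \in V_\phi(G)$ with representatives $A, B$ I can find equivalent representatives $A', B'$ whose range partition of $A'$ equals the domain partition of $B'$, and then the product as written in the lemma makes sense column by column after a possible reordering.

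For independence from the representatives, since the equivalence relation is generated by simple expansions, it suffices to verify the following compatibility: if $A$ is simple-expanded at column $k$ and $B$ is correspondingly simple-expanded at the matching entry in its domain, then $A' \cdot B'$ is precisely the simple expansion of $A \cdot B$ at column $k$. This reduces to the identity $\phi(g_k h_k) = \phi(g_k)\phi(h_k)$ in $G \wr \mathrm{S}_2$: writing $\phi(g_k) = ((g_{k,0},g_{k,1}),\sigma_{g_k})$ and $\phi(h_k) = ((h_{k,0},h_{k,1}),\sigma_{h_k})$, the wreath-product law yields
\[
\phi(g_k h_k) = \bigl((g_{k,0}\, h_{k,(0)\sigma_{g_k}},\; g_{k,1}\, h_{k,(1)\sigma_{g_k}}),\; \sigma_{g_k}\sigma_{h_k}\bigr),
\]
which matches exactly what column-matching in $A' \cdot B'$ produces, the two subcases corresponding to whether $\sigma_{g_k}$ is trivial or the nontrivial transposition in $\mathrm{S}_2$.

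The group axioms are then routine. The identity is the class of $[\emptyset;\, 1_G;\, \emptyset]$, equivalent by (b) to $[u_1,\ldots,u_n;\, 1_G,\ldots,1_G;\, u_1,\ldots,u_n]$ for any partition set. The inverse of $[u_i;\, g_i;\, v_i]$ is $[v_i;\, g_i^{-1};\, u_i]$, and both products collapse to the identity by direct inspection. Associativity follows by aligning three representatives of $a, b, c$ to a common refinement (iterating the first stage) and invoking associativity of $G$ column by column. The main obstacle will be the compatibility verification in the second stage: one must handle the wreath-product convention carefully, in particular the way $\sigma_{g_k}$ reindexes the second coordinates of $\phi(h_k)$ and the way column-matching in $A' \cdot B'$ shuffles columns when $\sigma_{g_k}$ is nontrivial. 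The computation is elementary but is precisely where the homomorphism property of $\phi$ enters and forces the expansion rule to be compatible with multiplication, so that $V_\phi(G)$ inherits a well-defined group structure.
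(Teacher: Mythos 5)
Your proposal is correct and follows essentially the same route as the paper: reduce well-definedness to simple expansions and verify compatibility via the identity $\phi(g_k h_k)=\phi(g_k)\phi(h_k)$, then supply the identity and the explicit inverse $[v_i;\,g_i^{-1};\,u_i]$ and treat associativity as routine. You spell out a bit more scaffolding on the existence of compatible representatives (common expansion of partition sets, lifting a range expansion to a $G$-matrix expansion), which the paper leaves implicit, but the key computation and overall structure are the same.
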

\begin{proof}
We first show that the multiplication is well-defined. Note first that any two representatives of a $G$-matrix have a common expansion. Thus to prove the multiplication is well-defined, it suffices to prove that if two $G$-matrices $\alpha,\beta$ can perform multiplication already, and their expansions $\alpha',\beta'$ can also do so, then $[\alpha \beta] = [\alpha' \beta']$. Since expansions are compositions of finitely many simple expansions, we can further assume that $\alpha',\beta'$ are simple expansions of $\alpha$ and $\beta$.

Now given two \s{G}-matrices $a,b$, we can assume  \s{\alpha= \begin{scriptsize}\begin{pmatrix}  
u_1 & u_2 & \cdots &u_k& \cdots & u_n \\
g_1 & g_2 & \cdots &g_k& \cdots  &g_n \\
v_1 & v_2 & \cdots &v_k& \cdots & v_n 
\end{pmatrix}  \end{scriptsize}} and \s{\beta= \begin{scriptsize}\begin{pmatrix}  
v_1 & v_2 & \cdots &v_k& \cdots & v_n \\
h_1 & h_2 & \cdots &h_k& \cdots  &h_n \\
w_1&w_2 & \cdots &w_k& \cdots  &w_n
\end{pmatrix}  \end{scriptsize}}.  Let 
$$\alpha'=\begin{scriptsize}\begin{pmatrix}  
u_1 & u_2 & \cdots &u_k0&u_k1& \cdots & u_n \\
g_1 & g_2 & \cdots  &g_{k, 0}&g_{k, 1}& \cdots &g_n \\
v_1&v_2 & \cdots &v_k(0)\sigma_{g_k} &v_k(1)\sigma_{g_k} & \cdots &v_n
\end{pmatrix}\end{scriptsize},~
 \beta'=\begin{scriptsize}
    \begin{pmatrix}
        v_1&v_2 & \cdots &v_k0&v_k1& \cdots &v_n\\
        h_1 & h_2 & \cdots  &  h_{k, 0}   &  h_{k, 1}    & \cdots &h_n\\
        w_1&w_2 & \cdots  &w_k(0)\sigma_{h_k} & w_k(1)\sigma_{h_k}    & \cdots  &w_n
    \end{pmatrix}
\end{scriptsize}$$
be the simple expansions of \s{\alpha} and \s{\beta} at \s{u_k} and \s{v_k} where where $((g_{k, 0}, g_{k, 1}),\sigma_{g_k}) = \phi(g_k)$ and $((h_{k, 0}, h_{k, 1}),\sigma_{h_k}) = \phi(h_k)$.
Then we have $$[\alpha'][\beta']=\begin{scriptsize}
    \begin{bmatrix}
        u_1 & u_2 & \cdots &u_k0&u_k1& \cdots & u_n \\
        g_1h_1 & g_2h_2 & \cdots  &g_{k, 0}h_{k, (0)\sigma_{g_k}}&g_{k, 1} h_{k, (1)\sigma_{g_k}}& \cdots &g_n \\
        w_1&w_2 & \cdots  &w_kx_0 & w_kx_1    & \cdots  &w_n
    \end{bmatrix}
\end{scriptsize} $$
where \s{x_i=(i)\sigma_{g_k}\sigma_{h_k},i\in\{0,1\}}.
Since 
\s{\phi(g_kh_k)=((g_{k, 0}h_{k, (0)\sigma_{g_k}},g_{k, 1} h_{k, (1)\sigma_{g_k}}),\sigma_{g_k}\sigma_{h_k})},
 we get \s{\alpha'\beta'} is an expansion of \s{\alpha\beta}, hence
\s{[\alpha'][\beta']=[\alpha][\beta]}.

 The associative law is obvious,  and for any \s{a= \begin{scriptsize}\begin{bmatrix}  
u_1 & u_2 & \cdots & u_n \\
g_1 & g_2 & \cdots  &g_n \\
v_1&v_2 & \cdots  &v_n
\end{bmatrix}  \end{scriptsize}} we have \s{ \begin{scriptsize}\begin{bmatrix}  
v_1 & v_2 & \cdots & v_n \\
g^{-1}_1 & g^{-1}_2 & \cdots  &g^{-1}_n \\
u_1&u_2 & \cdots  &u_n
\end{bmatrix}  \end{scriptsize}} as its inverse. This completes the proof that $V_\phi(G)$ is a group.
\end{proof}

\begin{definition}
    The group \s{V_\phi(G)} is called   \emph{the \s{\phi}-labeled Thompson group}. When \s{G} is trivial, \s{V_\phi(G)} is just the Thompson group \s{V}.
\end{definition}

\begin{remark}\label{vg right action}
    Note that there is an induced action of $V_\phi(G)$  on the Cantor set $\{0,1\}^\omega$ via the almost automorphisms on \s{\mathcal{T}}:
given \s{\alpha= \begin{scriptsize}\begin{bmatrix} 
u_1 & u_2 & \cdots & u_n \\
g_1 & g_2 & \cdots  &g_n \\
v_1&v_2 & \cdots  &v_n
\end{bmatrix}  \end{scriptsize}}, define \s{(u_iw) \alpha=v_i(w)\phi^*(g_i)} where $\phi^\ast$ is defined at the end of Subsection \ref{subsection:wrinj}. This defines a right \s{V_\phi(G)} action on \s{\mc}, we write \s{(w)\alpha} as the image of \s{w} under the action of \s{\alpha}.

\end{remark}

 \begin{remark}\label{rem:act-closy}
          When the induced map $\phi^\ast$ in the diagram (\ref{diagram1}) is injective, $V_\phi(G)$ is known as the R\"over--Nekrashevych group \cite{Nek04}. In \cite{SZ21} Skipper and Zaremsky  prove that all R\"over--Nekrashevych groups arise naturally from a cloning system  \cite{WZ18}.   In our setup, we have the flexibility that the action is not necessarily faithful. But one could also use the cloning system  to define our group \s{V_\phi(G)}. In fact \s{V_\phi(G)} can be defined by the system of  groups \s{G_n=G \wr \mathrm{S}_n} with suitable  cloning maps induced by \s{\phi}. In particular this gives another proof that $V_\phi(G)$ is a group.  We leave the details to the reader.

\end{remark}

\subsection{Tree diagrams and G-matrices} \label{subsection:tree-diagram-G-matx}
We discuss in this subsection how one can represent elements in   \s{\phi}-labeled Thompson groups by  $G$-labeled paired tree diagrams.

Recall that a \emph{finite  rooted binary tree} is a connected subtree of \s{\mathcal{T}} containing the root, where every vertex has degree \s{1} or \s{3} except the root which has degree $2$ (or degree 0 if the root is also a leaf). A vertex with degree $1$ is called a \emph{leaf}. A non-leaf vertex
together with the two vertices directly below it is called a \emph{caret}.

\begin{observation}
   The collection of partition sets of $\{0,1\}^\omega$ corresponds bijectively to the collection of  finite  rooted  binary trees, the operation of simple expansions on partition sets corresponds to adding carets to the corresponding trees. See  \cref{fig:expansion} for an example.
   \begin{figure}[htbp]
    \centering
\def\svgwidth{0.5\columnwidth} 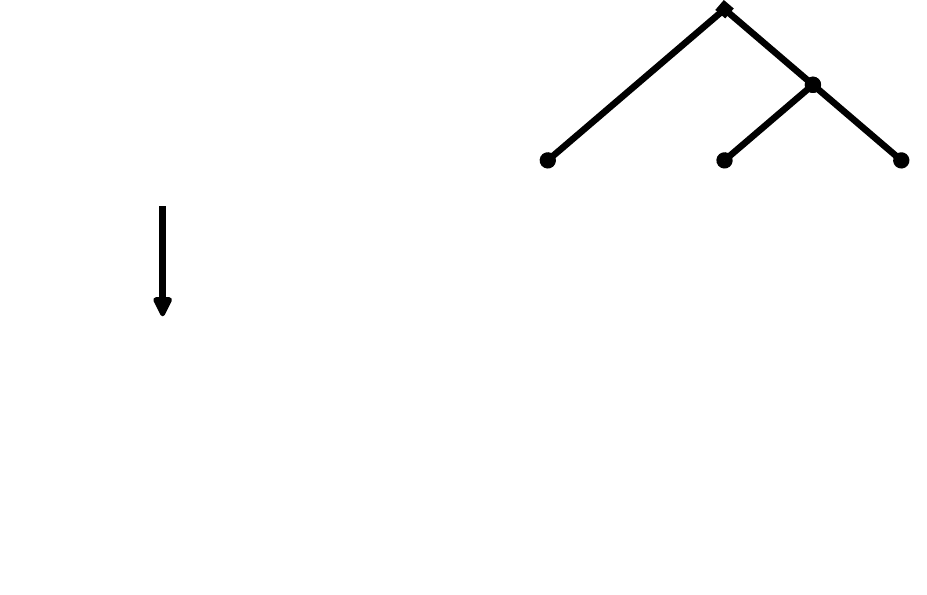
\caption{An example of the correspondence between partition set and trees under an expansion.}
\label{fig:expansion}
\end{figure}
\end{observation}

To see this, for every partition set \s{\Sigma},  there is a unique finite rooted   binary tree, denoted by \s{T_{\Sigma}},  such that \s{\Sigma} is precisely the leaves ( i.e. vertices of degree one) in \s{T_{\Sigma}}, or $T$ is the root when $\Sigma$ only contains the empty word.  And for each finite rooted binary  tree \s{T}, the set of leaves in \s{T} is exactly a partition set, denoted by \s{\Sigma(T)}. 
 
 Given two finite rooted binary  trees
\s{T_1, T_2}, we say \s{T_1}
is a \emph{simple expansion} of \s{T_2} if \s{T_1} is obtained from the \s{T_2} by adding a caret to one of the leaves;
we say \s{T_1} is an \emph{expansion} of \s{T_2}
if \s{T_1} is obtained from \s{T_2} by applying finitely many  simple expansions. Obviously \s{\Sigma_1} is a (simple) expansion of \s{\Sigma_2} if and only if \s{T_{\Sigma_1}} is a (simple) expansion of \s{T_{\Sigma_2}}.

    We can now represent elements of  \s{V_\phi(G)} by $G$-labeled paired tree diagrams. In fact, given a \s{G}-matrix \s{ a =\begin{scriptsize}\begin{bmatrix} 
u_1 & u_2 & \cdots & u_n \\
g_1 & g_2 & \cdots  &g_n \\
v_1&v_2 & \cdots  &v_n
\end{bmatrix}  \end{scriptsize}}, there are two trees $T_u$ and $T_v$ representing the partition sets $\{u_1,\cdots,u_n\}$ and $\{v_1,\cdots,v_n\}$. Let $\sigma$ be the corresponding bijection of leaves that maps $u_i$ to $v_i$ for $1\le i\le n$, then we can write $a$ as $(T_u, ((g_1,\cdots, g_n),\sigma)), T_v)$. In \cref{fig:correspondence}, we draw a $G$-labeled paired tree diagram with $T_u$ on the top and $T_v$ on the bottom where $\sigma$ is indicated by the arrow and $g_i$ is indicated by the labeling of the arrows.
\begin{figure}[h]
    \centering
\def\svgwidth{0.6\columnwidth} %% Creator: Inkscape 1.3.2 (091e20e, 2023-11-25, custom), www.inkscape.org
%% PDF/EPS/PS + LaTeX output extension by Johan Engelen, 2010
%% Accompanies image file '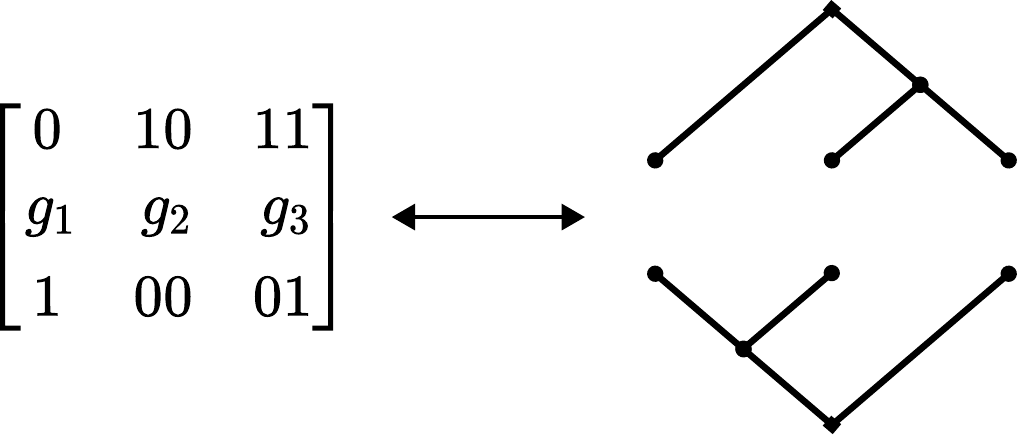' (pdf, eps, ps)
%%
%% To include the image in your LaTeX document, write
%%   \input{<filename>.pdf_tex}
%%  instead of
%%   \includegraphics{<filename>.pdf}
%% To scale the image, write
%%   \def\svgwidth{<desired width>}
%%   \input{<filename>.pdf_tex}
%%  instead of
%%   \includegraphics[width=<desired width>]{<filename>.pdf}
%%
%% Images with a different path to the parent latex file can
%% be accessed with the `import' package (which may need to be
%% installed) using
%%   \usepackage{import}
%% in the preamble, and then including the image with
%%   \import{<path to file>}{<filename>.pdf_tex}
%% Alternatively, one can specify
%%   \graphicspath{{<path to file>/}}
%% 
%% For more information, please see info/svg-inkscape on CTAN:
%%   http://tug.ctan.org/tex-archive/info/svg-inkscape
%%
\begingroup%
  \makeatletter%
  \providecommand\color[2][]{%
    \errmessage{(Inkscape) Color is used for the text in Inkscape, but the package 'color.sty' is not loaded}%
    \renewcommand\color[2][]{}%
  }%
  \providecommand\transparent[1]{%
    \errmessage{(Inkscape) Transparency is used (non-zero) for the text in Inkscape, but the package 'transparent.sty' is not loaded}%
    \renewcommand\transparent[1]{}%
  }%
  \providecommand\rotatebox[2]{#2}%
  \newcommand*\fsize{\dimexpr\f@size pt\relax}%
  \newcommand*\lineheight[1]{\fontsize{\fsize}{#1\fsize}\selectfont}%
  \ifx\svgwidth\undefined%
    \setlength{\unitlength}{496.49441528bp}%
    \ifx\svgscale\undefined%
      \relax%
    \else%
      \setlength{\unitlength}{\unitlength * \real{\svgscale}}%
    \fi%
  \else%
    \setlength{\unitlength}{\svgwidth}%
  \fi%
  \global\let\svgwidth\undefined%
  \global\let\svgscale\undefined%
  \makeatother%
  \begin{picture}(1,0.41970136)%
    \lineheight{1}%
    \setlength\tabcolsep{0pt}%
    \put(0,0){\includegraphics[width=\unitlength,page=1]{correspondence.pdf}}%
    \put(0.61464341,0.29317755){\color[rgb]{0,0,0}\makebox(0,0)[lt]{\lineheight{1.25}\smash{\begin{tabular}[t]{l}$0$\end{tabular}}}}%
    \put(0.77772987,0.29466707){\color[rgb]{0,0,0}\makebox(0,0)[lt]{\lineheight{1.25}\smash{\begin{tabular}[t]{l}$10$\end{tabular}}}}%
    \put(0.9528403,0.29467888){\color[rgb]{0,0,0}\makebox(0,0)[lt]{\lineheight{1.25}\smash{\begin{tabular}[t]{l}$11$\end{tabular}}}}%
    \put(0.60863138,0.10546236){\color[rgb]{0,0,0}\makebox(0,0)[lt]{\lineheight{1.25}\smash{\begin{tabular}[t]{l}$00$\end{tabular}}}}%
    \put(0.78019574,0.10335343){\color[rgb]{0,0,0}\makebox(0,0)[lt]{\lineheight{1.25}\smash{\begin{tabular}[t]{l}$01$\end{tabular}}}}%
    \put(0.95885224,0.10546236){\color[rgb]{0,0,0}\makebox(0,0)[lt]{\lineheight{1.25}\smash{\begin{tabular}[t]{l}$1$\end{tabular}}}}%
    \put(0,0){\includegraphics[width=\unitlength,page=2]{correspondence.pdf}}%
  \end{picture}%
\endgroup%

\caption{An example of correspondence between $G$-matrices and tree diagrams.}
\label{fig:correspondence}
\end{figure}

Similarly, one can define simple expansion and expansion for a $G$-labeled paired tree diagrams using the  wreath recursion $\phi$. And we shall denote the equivalence class of such diagrams under expansion by $[T_u, ((g_1,\cdots, g_n),\sigma)), T_v]$. See Subsection \ref{subsection:lT-ptd} for more details.

\subsection{Reduction to injective wreath recursion}\label{subsection:red-inj}

In this subsection, we show that given a wreath recursion $\phi$, we may associate it with an injective wreath recursion and prove that the corresponding labeled Thompson groups are isomorphic. This allows us to reduce the study of the labeled Thompson group from a general wreath recursion to an injective one. Let us start with an motivating example.

\begin{example}\label{ex:ninj-wr}
    Let $G$ be a group, consider the wreath recursion given by $\phi(g) = (1,1,\mathrm{id})$. Then one checks straightforward that $V_{\phi}(G) \cong V$. This implies, for example, when $\phi$ is not injective, the finiteness properties of $V_{\phi}(G)$ may not be directly related to $G$.
\end{example}

Now let $G$ be a group and $\phi: G \to G\wr \mathrm{S}_2$ be any wreath recursion. If $\phi$ is not injective, let $G_1  = G/\mathrm{Ker}(\phi)$ and $\pi_1: G \to G_1$ be the corresponding quotient map. Then we can define a new map ${\pi}^w_1: G\wr \mathrm{S}_2\to G_1\wr \mathrm{S}_2$ by sending $((g,h),\sigma)$ to $((\pi_1(g),\pi_1(h)),\sigma)$. This further induces a new wreath recursion $ \phi_1 : G_1 \to G_1\wr \mathrm{S}_2$ which makes the following diagram commute:

\[\begin{tikzcd}
G \arrow{r}{\phi} \arrow[swap]{d}{\pi_1} & G\wr \mathrm{S}_2  \arrow{d}{{\pi}^w_1} \\
G_1 \arrow{r}{\phi_1} & G_1\wr \mathrm{S}_2
\end{tikzcd}
\]

%This diagram induces a natural homomorphism between the corresponding labeled Thompson groups $V_\phi \to V_{\phi_1}$ which turns out to be an isomorphism.It is clear that $\pi_1^V$ is surjective. To show it is an isomorphism, notice first that if an element $[T_1,\alpha, T_2]$ represents the identity element, then up to finitely many expansions, we can assume it is of the form $[T, ((1,\cdots, 1),\mathrm{id}),T]$

Of course, $\phi_1$ may again be not injective, so we have to do the above process again. Inductively, let $G_{i+1} = G_{i} / \mathrm{Ker}(\phi_{i})$ and $\pi_{i+1}: G_i \to G_{i+1}$ be the quotient map, and  $ \phi_{i+1} : G_{i+1} \to G_{i+1}\wr \mathrm{S}_2$ be the induced map. Let further  $\hat{G}  = \lim G_i$ and $\hat{\phi} :\hat{G} \to \hat{G} \wr \mathrm{S}_2$ be the induced map. We have the following commutative diagram:

\[\begin{tikzcd}
G \arrow{r}{\phi} \arrow[swap]{d}{\pi_1} & G\wr \mathrm{S}_2  \arrow{d}{{\pi}^w_1} \\
G_1 \arrow{r}{\phi_1} \arrow[swap]{d}{\pi_2} & G_1\wr \mathrm{S}_2 \arrow{d}{{\pi}^w_2}\\
G_2 \arrow{r}{\phi_2} \arrow[swap]{d}{\pi_3} & G_2\wr \mathrm{S}_2 \arrow{d}{{\pi}^w_3}\\
G_3  \arrow{r}{\phi_3}  & G_3\wr \mathrm{S}_2 \\
\vdots &  \vdots\\
\hat{G} \arrow{r}{\hat{\phi}} & \hat{G} \wr \mathrm{S}_2
\end{tikzcd}
\]

Let $\hat{\pi}: G\to \hat{G}$ the induced quotient map, $\pi_{i,j} = \pi_j \circ \cdots \pi_{i+1}$, and $\pi^w_{i,j} = \pi^w_j \circ \cdots \pi^w_{i+1}$. We also have a map $\hat{\pi}^w: G\wr \mathrm{S}_2 \to \hat{G} \wr \mathrm{S}_2$.

\begin{lemma}
   $\hat{\phi} :\hat{G} \to \hat{G} \wr \mathrm{S}_2$ is injective. 
\end{lemma}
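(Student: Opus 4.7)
The plan is to show directly that $\mathrm{Ker}(\hat{\phi})$ is trivial by taking an arbitrary element of the kernel, representing it at a finite stage of the tower, and pushing it forward until it vanishes. The essential input is that the direct limit of groups commutes with finite products, so that $\hat{G}\wr \mathrm{S}_2 \cong \varinjlim (G_i\wr \mathrm{S}_2)$ via the maps $\pi^w_i$.

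First, I would fix $\hat{g}\in \mathrm{Ker}(\hat{\phi})$. Since $\hat{G}=\varinjlim G_i$, there exist $i\geq 0$ (with $G_0:=G$, $\hat{\pi}_0:=\hat{\pi}$) and $g_i\in G_i$ with $\hat{\pi}_i(g_i)=\hat{g}$, where $\hat{\pi}_i:G_i\to \hat{G}$ denotes the canonical map to the colimit. Writing $\phi_i(g_i)=((a_i,b_i),\sigma_i)\in G_i\wr \mathrm{S}_2$, commutativity of the big diagram (and functoriality of $(-)\wr \mathrm{S}_2$ on group homomorphisms) gives
\[
\hat{\pi}^w_i(\phi_i(g_i)) \;=\; \hat{\phi}(\hat{\pi}_i(g_i)) \;=\; \hat{\phi}(\hat{g}) \;=\; ((1,1),\mathrm{id}).
\]
Reading off components, this forces $\hat{\pi}_i(a_i)=\hat{\pi}_i(b_i)=1$ in $\hat{G}$ and $\sigma_i=\mathrm{id}$.

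Second, I would use the definition of the direct limit: since $\hat{\pi}_i(a_i)=1$ and $\hat{\pi}_i(b_i)=1$, there exists $m\geq i$ such that $\pi_{i,m}(a_i)=1$ and $\pi_{i,m}(b_i)=1$ in $G_m$ (pick the larger of the two stages at which $a_i$ and $b_i$ independently die). Applying the commutativity of the smaller square at stage $m$, namely $\phi_m\circ \pi_{i,m}=\pi^w_{i,m}\circ \phi_i$, yields
\[
\phi_m(\pi_{i,m}(g_i)) \;=\; \pi^w_{i,m}((a_i,b_i),\mathrm{id}) \;=\; ((1,1),\mathrm{id}).
\]
Hence $\pi_{i,m}(g_i)\in \mathrm{Ker}(\phi_m)$. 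By construction $G_{m+1}=G_m/\mathrm{Ker}(\phi_m)$, so $\pi_{m+1}(\pi_{i,m}(g_i))=1$ in $G_{m+1}$, and therefore $\hat{g}=\hat{\pi}_{m+1}(\pi_{i,m+1}(g_i))=1$ in $\hat{G}$.

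I do not expect any significant obstacle in carrying out this argument; the only point requiring care is verifying that direct limits of groups commute with the semidirect product by the finite group $\mathrm{S}_2$ (equivalently, with the finite power $(-)\times (-)$), which is standard since both functors are filtered colimits of finite constructions. Once this is in place, the proof reduces to the two-line colimit chase outlined above.
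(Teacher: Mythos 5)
Your proof is correct and follows essentially the same approach as the paper: lift the kernel element to a finite stage of the tower, use that $\hat{G}\wr \mathrm{S}_2 \cong \varinjlim (G_i\wr \mathrm{S}_2)$ to push the vanishing to some finite stage $m$, conclude the representative lands in $\mathrm{Ker}(\phi_m)$ and hence dies in $G_{m+1}$. The only cosmetic difference is that the paper lifts directly to $G=G_0$ (using that all $\pi_i$ are surjective quotient maps, hence so is $\hat{\pi}$), whereas you allow an arbitrary stage $i$, which is slightly more robust but does not change the argument.
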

\proof 
Suppose it is not injective, so we have some $\hat{g} \in \hat{G}$ such that $\hat\phi(\hat g) =1$. Firstly we have a $g \in G$ such that $\hat{\pi}(g) = \hat g$. By diagram commuting, we have $ \hat{\pi}^w(\phi(g)) =1$. Since $\hat{G} \wr \mathrm{S}_2 =  \lim G_i\wr \mathrm{S}_2$ under the structure map $\pi_i^w$, $\phi(g)$ must die in finitely many steps, i.e. $\pi^w_{0,k} (\phi(g)) =1$ for some $k\geq 1$. But this means $\pi_{0,k}(g)$ lies in the kernel of $\phi_k$, hence $\pi_{0,{k+1}}(g) =1$ and $\hat{g} =1$.
\qed

\begin{lemma}\label{lem:id}
    Let $g\in G$ such that $\hat\pi(g)=1$, then for any finite rooted binary tree $T$, $[T,((g,1,\cdots,1),\mathrm{id}),T]$ is the identity element in $V_\phi(G)$.
\end{lemma}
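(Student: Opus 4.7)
The plan is to reduce the claim to a recursive characterization of $\ker\hat\pi$ and then argue by induction on the ``depth'' at which $g$ dies in the tower $G \to G_1 \to G_2 \to \cdots$. Set $N_k := \ker \pi_{0,k} \subseteq G$, so that $\ker \hat\pi = \bigcup_{k \geq 0} N_k$ since $\hat G$ is the colimit of the tower. I would first verify, by induction on $k$, the characterization
\[
g \in N_k \iff \phi(g) = ((g_0, g_1),\mathrm{id}) \text{ with } g_0, g_1 \in N_{k-1}.
\]
The base case $N_1 = \ker \phi$ is immediate. For the inductive step, $g \in N_k$ is equivalent to $\pi_1(g)$ lying in the analogue of $N_{k-1}$ inside $G_1$; the commutative diagram $\phi_1 \circ \pi_1 = \pi_1^w \circ \phi$ together with the inductive hypothesis applied in $G_1$ pins down both that $\sigma_g = \mathrm{id}$ and that each $\pi_1(g_i)$ lies in the analogue of $N_{k-1}$, which (using $\ker \pi_1 = \ker \phi$) translates to $g_i \in N_{k-1}$.

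Next I would prove the following strengthening of the lemma: for every $g \in N_k$, every finite rooted binary tree $T$, and every leaf $u$ of $T$, the $G$-matrix with domain $=$ range $=$ leaves of $T$, identity permutation, label $g$ at position $u$, and identity labels elsewhere represents the identity of $V_\phi(G)$. This strengthening is needed because the expansion step will produce a non-trivial label at an arbitrary position of the expanded tree, not only at the leftmost leaf, so the version in the lemma's exact form is not preserved by the induction.

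The induction proceeds on $k$. The case $k=0$ is trivial. For the inductive step, the recursive characterization above provides $\phi(g) = ((g_0, g_1),\mathrm{id})$ with $g_0, g_1 \in N_{k-1}$, so the simple expansion at $u$ (with $\sigma_g=\mathrm{id}$) produces the $G$-matrix with labels $g_0$ at $u0$ and $g_1$ at $u1$, identities elsewhere, still with identity permutation and matching domain and range. Because domain equals range with identity permutation, this matrix factors in $V_\phi(G)$ as the componentwise product of the matrix with $g_0$ at $u0$ (identities elsewhere) and the matrix with $g_1$ at $u1$ (identities elsewhere). Both factors are the identity by the inductive hypothesis applied at the leaves $u0$ and $u1$ of the expanded tree, so the expanded matrix is the identity, and hence so is the original. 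The lemma follows as the special case $u = u_1$. The main obstacle is merely to formulate the induction in the strengthened form; the combinatorics of the expansion rule and the componentwise multiplication then handle the rest.
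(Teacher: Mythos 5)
Your proof is correct and follows essentially the same approach as the paper: both argue by induction on the depth $k$ at which $g$ dies in the tower, both use the recursive fact that $g\in\ker\pi_{0,k}$ forces $\phi(g)=((g_0,g_1),\mathrm{id})$ with $g_0,g_1\in\ker\pi_{0,k-1}$, and both exploit the componentwise factorization of matrices with identity permutation and matching domain and range. Your explicit strengthening to an arbitrary leaf position $u$ makes the induction cleaner than the paper's terse ``WLOG $T=\bullet$'' reduction, but the underlying mechanism is identical.
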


\proof 
Without loss of generality, we can assume that $T$ is the finite rooted binary tree $\bullet$ with a single root. Since $\hat\pi(g)=1$, we can find $k$ such that $\pi_{0,k}(g) =1$. If $k=1$, then by performing an expansion, we see that $[\bullet,((g),\mathrm{id}), \bullet]$ is equivalent to $[C,((1,1),\mathrm{id}), C]$ where $C$ is the caret, hence it is trivial. This further implies elements of the form $(C,((g_1,g_2),\mathrm{id}), C)$ must also be trivial for any $g_1,g_2 \in \mathrm{Ker}{\phi}$. Let us denote the normal subgroup $\mathrm{Ker}{\phi} \times \mathrm{Ker}{\phi} \times \mathrm{id}$ of $G\wr \mathrm{S}_2$ by $K_1$. Then for any element $g\in G$, if $\phi(g) \in K_1$, it must also be trivial. This is the same as saying that if $\pi_{0,2} (g)=1$, then  $[\bullet,((g),\mathrm{id}), \bullet]$  represents the trivial element. Inductively, we have if $\pi_{0,k} (g)=1$, then  $[\bullet,((g),\mathrm{id}), \bullet]$  represents the trivial element.
\qed

\begin{example}
    Let $G $ be the Baumslag--Solitar group $\langle a,t\mid ta^2t^{-1} = a^3\rangle$. Since $G$ is not hopfian, we have a surjection $f: G\to G$ with nontrivial kernel.  Define the wreath recursion $\phi: G\to G \wr S_2$ by mapping $g$ to $(f(g),f(g),\mathrm{id})$. Then $\mathrm{Ker}~\phi  = \mathrm{Ker}~f$ and one checks that the map $\phi_i$ will not stabilize in finitely many steps. 
\end{example}

So far, we have a commutative diagram of wreath recursion:

\[\begin{tikzcd}
G \arrow{r}{\phi} \arrow[swap]{d}{\hat\pi} & G\wr \mathrm{S}_2  \arrow{d}{\hat{\pi}^w} \\
\hat{G} \arrow{r}{\hat \phi} & \hat{G} \wr \mathrm{S}_2,
\end{tikzcd}
\]
where $\hat\pi^w : G\wr \mathrm{S}_2\to  \hat{G} \wr \mathrm{S}_2$ sending  $((g,h),\sigma)$ to $((\hat\pi(g),\hat\pi(h)),\sigma)$.   The diagram induces a map $\pi_V: V_\phi(G) \to V_{\hat\phi}(\hat{G})$ between the corresponding labeled Thompson groups. 

\begin{thm} \label{thm:inj-iso}
  Given any wreath recursion $\phi:G\to G \wr\mathrm{S}_2$, there is an induced surjection $\hat{\pi}: G\to \hat{G}$ and a corresponding injective wreath recursion $\hat{\phi}: \hat{G} \to \hat{G} \wr \mathrm{S}_2$ which together induces an isomorphism $\pi_V: V_\phi(G) \to V_{\hat\phi}(\hat{G})$ between the corresponding labeled Thompson groups. 
\end{thm}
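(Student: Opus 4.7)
The plan is to establish that $\pi_V$ is a well-defined group homomorphism and then verify surjectivity and injectivity in turn. I would define $\pi_V$ on representatives by applying $\hat{\pi}$ to each group-label of a $G$-matrix while leaving the trees and the permutation $\sigma$ unchanged. The key observation is that the commutative square relating $\phi$ and $\hat{\phi}$ via $\hat{\pi}$ and $\hat{\pi}^w$ implies that applying entrywise $\hat{\pi}$ after a simple expansion of a $G$-matrix $\alpha$ produces exactly the simple expansion of $\hat\pi(\alpha)$ at the same leaf in $V_{\hat\phi}(\hat G)$, since $\hat{\pi}^w$ preserves the $\mathrm{S}_2$-coordinate and so the combinatorial shape of the expansion agrees on both sides. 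From this, equivalent $G$-matrices go to equivalent $\hat G$-matrices, so $\pi_V$ is well defined on equivalence classes; multiplicativity follows immediately from the multiplication rule and the fact that $\hat{\pi}$ is a group homomorphism.

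Surjectivity is then straightforward: the map $\hat\pi : G \to \hat G$ is surjective because $\hat G = \lim G_i$ and each $\pi_i$ is a quotient map. Given any $\hat G$-matrix, choose a lift in $G$ of each of its labels to obtain a $G$-matrix with the prescribed image.

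For injectivity, suppose the element $[T_u, ((g_1,\ldots,g_n),\sigma), T_v]$ lies in the kernel. Then there is a sequence of simple expansions in $V_{\hat\phi}(\hat G)$ transforming its image into a trivial representative $[T', ((1,\ldots,1),\mathrm{id}), T']$. I would perform the same sequence of expansions on the original $G$-matrix in $V_\phi(G)$; because the $\mathrm{S}_2$-coordinates governing the shape of each simple expansion are preserved by $\hat{\pi}^w$, the top tree, bottom tree, and permutation of the resulting $G$-matrix coincide with those of the trivial expanded $\hat G$-matrix. This gives a representative of the form $[T', ((g_1',\ldots,g_m'),\mathrm{id}), T']$ with $\hat\pi(g_i') = 1$ for every $i$. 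Writing this as the product $\prod_i [T', ((1,\ldots,g_i',\ldots,1),\mathrm{id}), T']$ and invoking \Cref{lem:id} on each factor (the argument there applies verbatim with the nontrivial label placed at any single leaf) shows the element is trivial in $V_\phi(G)$.

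The main obstacle I anticipate is the bookkeeping in the injectivity step: specifically, verifying that the lifted sequence of simple expansions in $V_\phi(G)$ does produce a diagram whose top and bottom trees coincide and whose permutation is trivial. This reduces to an induction on the length of the expansion sequence, relying at each stage on the fact that $\phi(g_k)$ and $\hat\phi(\hat\pi(g_k))$ share the same $\mathrm{S}_2$-component, so the shapes of the two expansions are identical.
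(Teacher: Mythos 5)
Your proposal is correct and follows essentially the same overall strategy as the paper: reduce to a representative of trivial form $(T',(\vec{g}',\mathrm{id}),T')$ with $\hat\pi(g_i')=1$, factor it as a product of single-label diagrams, and kill each factor using \Cref{lem:id}. Two small stylistic differences are worth noting. First, the paper dispatches the reduction more directly: since $\hat\phi$ is injective, \emph{every} representative of the identity in $V_{\hat\phi}(\hat G)$ already has the form $(T,((1,\ldots,1),\mathrm{id}),T)$ (by uniqueness of the reduced form), so the image $(T_1,((\hat\pi(g_1),\ldots,\hat\pi(g_n)),\sigma),T_2)$ is forced to have $T_1=T_2$, $\sigma=\mathrm{id}$, $\hat\pi(g_j)=1$ without any expansion step; your expansion-lifting argument is sound but does extra bookkeeping to reach the same conclusion, and requires the auxiliary observation (which you correctly identified) that $\pi_V$ commutes with simple expansions because $\phi(g)$ and $\hat\phi(\hat\pi(g))$ share the same $\mathrm{S}_2$-component. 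Second, your remark that \Cref{lem:id} ``applies verbatim with the nontrivial label placed at any single leaf'' is a bit too casual, since the lemma as stated puts $g$ in the first coordinate and its proof proceeds via an expansion at that specific leaf; the cleaner move, which the paper makes, is to conjugate $[T,((1,\ldots,g_j,\ldots,1),\mathrm{id}),T]$ by an element of $V$ to bring $g_j$ to the first coordinate and only then invoke the lemma as stated. Neither point is a genuine gap, just places where the paper's proof is a touch tighter.
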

\proof  It is left to show that $\pi_V$ is an isomorphism. It is clear that $\pi_V$ is surjective. We proceed to show that  $\pi_V$ is injective. One checks first by definition that for any injective wreath recursion $\hat\phi$, the identity element in $V_{\hat\phi}(\hat{G})$ must be of the form $[T,((1,\cdots,1),\mathrm{id}), T]$ where $T$ is a finite rooted binary tree. Now suppose that for some element $[T_1,((g_1,\cdots,g_n),\sigma), T_2]$, its image under $\pi_V$ is the identity, i.e. $[T,((1,\cdots,1),\mathrm{id}), T]$. On the other hand, by definition, $$\pi_V ([T_1,((g_1,\cdots,g_n),\sigma), T_2])  = [T_1, ((\hat\pi(g_1),\cdots \hat\pi(g_n)),\sigma), T_2].$$
This implies that $T_1 =T_2$, $\sigma =  \mathrm{id}$ and $\hat\pi(g_j) = 1$ for any $j$. Since 
$$[T_1,((g_1,\cdots,g_n),\mathrm{id}), T_1])  = \prod_{j=1}^n [T_1,((1,\cdots,g_j,\cdots,1),\mathrm{id}), T_1],$$
it suffices now to prove that $[T_1,((1,\cdots,g_j,\cdots,1),\mathrm{id}), T_1]$ is trivial for any $j$.
Note now that \\$[T_1,((1,\cdots,g_j,\cdots,1),\mathrm{id}), T_1]$ is conjugate to $[T_1,((g_j,1,\cdots,1),\mathrm{id}), T_1]$, we see that it is trivial by Lemma \ref{lem:id} and this finishes the proof.
\qed

\subsection{The  labeled Thompson group $V(G)$} \label{sec:V(G)}

Given a group $G$, recall that the diagonal wreath recursion   \s{\Delta: G\ra G \wr \mathrm{S}_2},  sends \s{g\mapsto ((g, g),id_{\mathrm{S}_2})}. In this case, we shall simply write \s{V_\Delta(G)} as \emph{\textit{\s{V(G)}}}, and call it the \emph{labeled Thompson group}.

Given any two groups $G,H$,  a group homomorphism $f: G\to H$ naturally induces a homomorphism  \s{V(f): V(G)\ra V(H)}, defined by sending \s{  \begin{scriptsize}\begin{bmatrix}  
u_1 & u_2 & \cdots & u_n \\
g_1 & g_2 & \cdots  &g_n \\
v_1&v_2 & \cdots  &v_n
\end{bmatrix}  \end{scriptsize}  } to \s{ \begin{scriptsize}\begin{bmatrix}  
u_1 & u_2 & \cdots & u_n \\
f(g_1) & f(g_2) & \cdots  &f(g_n) \\
v_1&v_2 & \cdots  &v_n
\end{bmatrix}  \end{scriptsize}}. 
Note that \s{V(-)} defines a functor  from the category of groups to itself.

\begin{lemma}
  The functor \s{V(-)}  maps surjections to surjections, injections to injections. 
\end{lemma}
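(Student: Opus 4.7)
The plan is to verify both implications directly from the $G$-matrix calculus of Section \ref{subsection:tree-diagram-G-matx}, exploiting the fact that $V(-) = V_\Delta(-)$ where the diagonal wreath recursion $\Delta(g) = ((g,g),\mathrm{id})$ is manifestly injective (so the reduction of Section \ref{subsection:red-inj} is trivial here).

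First I would dispose of the surjective case: given a surjection $f: G \to H$ and any element $\beta = [T_u, ((h_1,\ldots,h_n),\sigma), T_v] \in V(H)$, pick lifts $g_i \in f^{-1}(h_i)$ and set $\alpha = [T_u, ((g_1,\ldots,g_n),\sigma), T_v] \in V(G)$; then $V(f)(\alpha) = \beta$ by the very definition of $V(f)$, so $V(f)$ is surjective.

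For injectivity, suppose $f: G \to H$ is injective and that $\alpha = [T_u, ((g_1,\ldots,g_n),\sigma), T_v]$ lies in $\ker V(f)$. Then the $H$-matrix $[T_u, ((f(g_1),\ldots,f(g_n)),\sigma), T_v]$ represents the identity in $V(H)$. The key input is the characterization noted in the proof of Theorem \ref{thm:inj-iso}: for any injective wreath recursion, identity elements are represented exactly by matrices of the form $[T, ((1,\ldots,1),\mathrm{id}), T]$. Hence the $H$-matrix above admits a common expansion with such an identity representative. Since each simple expansion under $\Delta$ merely duplicates the label on a column (the column $(u_k, h, v_k)$ becoming $(u_k 0, h, v_k 0)$ and $(u_k 1, h, v_k 1)$), every expansion of the $H$-matrix still has label set contained in $\{f(g_1),\ldots,f(g_n)\}$. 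Matching against an expansion of $[T, ((1,\ldots,1),\mathrm{id}), T]$ forces $f(g_i) = 1$ for every $i$, and injectivity of $f$ gives $g_i = 1$. The trees and permutation data then align to produce the analogous all-identity, diagonal expansion of $\alpha$ itself, so $\alpha = 1$ in $V(G)$.

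The main point requiring care is the bookkeeping in the common-expansion argument, namely that simultaneously matching labels, leaves, and permutations across two expansions genuinely forces all three pieces of data to trivialise. For $\Delta$ this is transparent because the expansion rule decouples the $G$-labels from the tree and permutation data, but I expect this to be the step that requires the most careful writing; once set up, the conclusion is immediate.
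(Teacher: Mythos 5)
Your argument is essentially the same as the paper's: surjectivity by lifting labels through $f$, and injectivity by invoking the characterization that in $V(H)$ (with the injective diagonal wreath recursion) the identity must be represented by an all-$1_H$, diagonal matrix, which forces $f(g_i)=1_H$, whence $g_i=1_G$ by injectivity of $f$ and the tree/permutation data collapse. The paper states this more tersely, while you spell out the common-expansion bookkeeping, but the route and the key input are identical.
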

\begin{proof}
    The surjective part is clear from definition. Let \s{f: G\ra H} be an injective group homomorphism, and suppose that \s{x= \begin{scriptsize}\begin{bmatrix}  
u_1 & u_2 & \cdots & u_n \\
g_1 & g_2 & \cdots &g_n \\
v_1& v_2 & \cdots &v_n
\end{bmatrix}  \end{scriptsize}\in ker(V(f))}, then we have

$$\begin{scriptsize}\begin{bmatrix}  
u_1 & u_2 & \cdots & u_n \\
f(g_1) & f(g_2) & \cdots &f(g_n) \\
v_1& v_2 & \cdots &v_n
\end{bmatrix}  = 1 \end{scriptsize} $$

Since the identity element in $V(H)$ must be of the form   
\s{ \begin{scriptsize}\begin{bmatrix}  
w_1 & w_2 & \cdots & w_m \\
1_H & 1_H & \cdots &1_H \\
w_1& w_2 & \cdots &w_m
\end{bmatrix}  \end{scriptsize} }
(up to expansion) for some partition set \s{\{w_1,  \cdots , w_m\}}, we conclude first that \s{f(g_i)=1_H} (here we rely on the fact that the wreath recursion $\Delta$ is injective), then \s{u_i=v_i} for all \s{1\leq i\leq n}.  But $f$ is injective, so $g_i=1_G$ for all $i$. Hence \s{x=1_{V(G)}}. 
\end{proof}

Another way to see \s{V(G)} is to view it as a subgroup of a wreath product which we explain now.
Recall that the Thompson group \s{V} acts on \s{\mc}, 
let \s{G^{\mc}\rtimes V} be the unrestricted wreath product of \s{G} by \s{V}. Then we can define a group embedding \s{i_G: V(G)\ra G^{\mc}\rtimes V } in the following way:   $$i_G:\begin{scriptsize}\begin{bmatrix}  
u_1 & u_2 & \cdots & u_n \\
g_1 & g_2 & \cdots  &g_n \\
v_1&v_2 & \cdots  &v_n
\end{bmatrix}  \end{scriptsize}\mapsto
( f,\begin{scriptsize}\begin{bmatrix}  
u_1 & u_2 & \cdots & u_n \\
v_1&v_2 & \cdots  &v_n
\end{bmatrix}  \end{scriptsize})$$
 where \s{f\in G^\mc} is defined by \s{f(x)=g_i} for $x\in u_i\{0,1\}^{\omega}$. In fact, to verify it is indeed a group homomorphism we can take a direct computation: $$
 \begin{aligned}
     i_G\bigg(\begin{bmatrix}  
u_1 & u_2 & \cdots & u_n \\
g_1 & g_2 & \cdots  &g_n \\
v_1&v_2 & \cdots  &v_n
\end{bmatrix}\bigg)i_G\bigg(   \begin{bmatrix}  
v_1 & v_2 & \cdots & v_n \\
h_1 &h_2 & \cdots  &h_n \\
w_1&w_2 & \cdots  &w_n
\end{bmatrix}\bigg)&=(f_1, \sigma_1 )(f_2, \sigma_2  )\\
&=(f_1 \cdot (f_2) \sigma_1, \sigma_1\sigma_2)\\
&=i_G\bigg(\begin{bmatrix}  
u_1 & u_2 & \cdots & u_n \\
g_1h_1 & g_2h_2 & \cdots &g_nh_n \\
w_1&w_2 & \cdots   &w_n
\end{bmatrix}\bigg)
 \end{aligned}
 $$
 where \s{f_1(x)=g_i} for \s{x\in u_i\xinf} and \s{f_2(x)=h_i} for \s{x\in v_i\xinf}; And \s{\sigma_1=\begin{bmatrix}  
u_1 & u_2 & \cdots & u_n \\
v_1&v_2 & \cdots   &v_n
\end{bmatrix}} and \s{\sigma_2=\begin{bmatrix}  
v_1 & v_2 & \cdots & v_n \\
w_1&w_2 & \cdots   &w_n
\end{bmatrix}}. The last equality follows from 
 $$(x) (f_1 \cdot (f_2) \sigma_1 )= (x) f_1 \cdot (x)((f_2) \sigma_1 )= (x) f_1 \cdot ((x)\sigma_1)f_2 = g_ih_i, \forall x\in u_i\xinf.$$ 
Of course, here we assumed $\sigma_1$ and $\sigma_2$ are already compossible but this is not a problem after some simple expansions. 

Let $p_G:G\to \{1\}$ be the group homomorphism that maps all elements in $G$ to $1$. We then have an induced map $V(p_G):V(G) \to V(\{1\}) = V$. Denote the kernel by $lim_{\mathcal{T}}G$, then  we have a short exact sequence 
$$
1\ra lim_{\mathcal{T}}G \ra V(G) \xrightarrow{V(p_G)} V \ra 1
$$

Note that one can also describe the kernel $\lim_{\mathcal{T}} G$ as the following: $G_n = \oplus_{2^n} G$ with injection maps $G_n \to G_{n+1}$ given by $(g_1,g_2\cdots g_{2^n}) \to (g_1,g_1,g_2,g_2,\cdots, g_{2^n}, g_{2^n})$, then $\lim_{\mathcal{T}} G= \lim_{n\to \infty} G_n$. View $G^{\mc}$ as the group of functions from $\mc$ to $G$, then the image of $lim_{\mathcal{T}}G$ under the isomorphism $i_G$  is precisely the subgroup of all continuous functions from $\mc$ to $G$, where $G$ is given the discrete topology. Furthermore, we have a splitting for $V(p_G)$, \s{V(r_G): V=V(\{1\})\ra V(G)} induced by the unique injection $r_G:\{1\} \to G$.  This way we also get the splitting \s{V(G)= lim_{\mathcal{T}}G\rtimes V}.

\subsection{The quasi-retract from \s{V(G)} to \s{G}} We define a quasi-retract from \s{V(G)} to \s{G} which implies $G$ is of type $F_n$ if $V(G)$ is. Recall that a function $f:X\to Y$ between two metric spaces is called \emph{coarse Lipschitz} if there exist constants $C,D>0$ so that 
$$d(f(x),f(x')) \leq Cd(x,x')+D \text{ for all } x,x'\in X$$
For example, any homomorphism between finitely generated groups is coarse Lipschitz with respect to the word metrics.
A function $\rho:X\to Y$ is said to be a \emph{quasi-retraction} if it is coarse Lipschitz and there exists a coarse Lipschitz function $\iota:Y\to X$ and a constant $E>0$ so that $d(\rho\circ \iota(y),y)\leq E$ for all $y\in Y$. If such a function $\rho$ exists, $Y$ is said to be a \emph{quasi-retract} of $X$. In this case, it is not hard to show that $\iota$ must be a quasi-isometric embedding of $Y$ into $X$.

\begin{thm}\cite[Theorem 8]{Al94}\label{quasi-re}
    Let $G$ and $Q$ be finitely generated groups such that $Q$ is a quasi-retract of $G$ with respect to word metrics corresponding to some finite generating sets. Then if $G$ is of type $F_n$, so is $Q$. 
\end{thm}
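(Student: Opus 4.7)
The plan is to reduce Alonso's theorem to a geometric condition on Rips complexes that is visibly preserved under quasi-retraction. For a finitely generated group $H$ with finite generating set $S$, let $R_d(H,S)$ denote the Rips complex at scale $d$: the simplicial complex whose simplices are the finite subsets of $H$ of word-metric diameter at most $d$. The natural $H$-action on $R_d(H,S)$ is simplicial, cocompact on every skeleton (by local finiteness, which uses that $H$ is finitely generated), and has finite cell stabilizers. Brown's criterion then yields the characterization: $H$ is of type $F_n$ if and only if $R_d(H,S)$ is $(n-1)$-connected for some, equivalently every, sufficiently large $d$.

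Given the quasi-retraction $\rho:G\to Q$ with coarse Lipschitz section $\iota:Q\to G$ satisfying $d_Q(\rho\circ\iota(y),y)\le E$ for all $y\in Q$, I would extend $\rho$ and $\iota$ to simplicial maps between Rips complexes. Since $\iota$ is coarse Lipschitz with constants $C,D$, it sends any $d$-bounded subset of $Q$ to a $(Cd+D)$-bounded subset of $G$; extending simplex-wise yields $\tilde\iota:R_d(Q)\to R_{d'}(G)$ for any $d'\ge Cd+D$, and analogously $\tilde\rho:R_{d'}(G)\to R_{d''}(Q)$ for a suitable $d''$. The composition $\tilde\rho\circ\tilde\iota$ moves each vertex $y$ to $\rho\circ\iota(y)$, at distance at most $E$ from $y$, and so, provided $d''\ge d+E$, it is homotopic to the natural inclusion $R_d(Q)\hookrightarrow R_{d''}(Q)$ via a straight-line homotopy through single simplices.

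Now suppose $G$ is of type $F_n$, and choose $d'$ so large that $R_{d'}(G)$ is $(n-1)$-connected. Any map $f:S^k\to R_d(Q)$ with $k\le n-1$ is pushed forward to $R_{d'}(G)$ via $\tilde\iota$, filled there with a disk by $(n-1)$-connectedness, and pushed back via $\tilde\rho$ to a disk in $R_{d''}(Q)$ whose boundary is $\tilde\rho\circ\tilde\iota\circ f$; composing with the straight-line homotopy from $\tilde\rho\circ\tilde\iota$ to the inclusion then produces a filling of $f$ in $R_{d''}(Q)$. Hence $R_{d''}(Q)$ is $(n-1)$-connected, and the Rips-complex criterion applied to $Q$ delivers the conclusion that $Q$ is of type $F_n$.

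The main technical obstacle is the careful bookkeeping of the three scale constants $d\le d'\le d''$: one must ensure that all intermediate simplicial approximations and the straight-line homotopy genuinely land in Rips complexes of the chosen scales, which in turn forces $d''$ to be chosen with respect to both $E$ and the coarse Lipschitz constants of $\rho$. A secondary subtlety is establishing the Rips-complex characterization itself, which depends on local finiteness of $R_d(H,S)$ together with finiteness of simplex stabilizers (which are trivially of type $F_\infty$), so that Brown's criterion applies directly in both directions.
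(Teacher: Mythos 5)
The paper does not prove this statement but cites it directly from Alonso \cite[Theorem 8]{Al94}, so there is no internal proof to compare against; your Rips-complex argument is the standard route to the result and is, as far as I can tell, essentially Alonso's own.

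The overall strategy is sound, but the final deduction ``Hence $R_{d''}(Q)$ is $(n-1)$-connected'' is not justified by what precedes it. Your pushforward--fill--pullback loop shows that any $k$-sphere ($k \le n-1$) mapped into $R_d(Q)$ becomes null-homotopic after inclusion into $R_{d''}(Q)$; that is, the map $\pi_k(R_d(Q)) \to \pi_k(R_{d''}(Q))$ is zero. This is a statement about the \emph{inclusion}, not about $R_{d''}(Q)$ in isolation: a sphere living in $R_{d''}(Q)$ but not in the image of $R_d(Q)$ is not handled, so you have not shown $\pi_k(R_{d''}(Q)) = 0$. What you have actually established, since $d$ was arbitrary, is that the directed system $\{R_d(Q)\}_d$ is \emph{essentially} $(n-1)$-connected, and that is exactly the hypothesis in the directed-system formulation of Brown's criterion --- the same one underlying the Rips-complex characterization you invoke --- so the conclusion $F_n$ is still reached, just not via the claimed intermediate. (Genuine $(n-1)$-connectedness of some fixed $R_{d''}(Q)$ becomes available only a posteriori, once $F_n$ for $Q$ is already known, so asserting it mid-argument is circular.) A secondary bookkeeping point of the kind you already flagged: the straight-line homotopy needs $d'' \ge d + 2E$ rather than $d+E$, since for a simplex $\sigma$ of $R_d(Q)$ the diameter of $\sigma \cup (\rho\circ\iota)(\sigma)$ in $Q$ is bounded by $d+2E$, and one must also check that the prism decomposition of $\sigma\times[0,1]$ lands in single simplices at that scale.
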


To apply \Cref{quasi-re}, we first define a set-theoretic embedding $\iota_G: G \to V(G)$ by mapping $g$ to $[C,((g,1),\mathrm{id}),C]$ where $C$ is the caret. We also have a map $\rho_G: V(G)\to G$ by mapping an element $[T,((g_1,\cdots, g_n),\sigma),S]$ in $V(G)$ to $g_1$. One checks that this is a well-defined map, but not a group homomorphism in general. We do have $\rho_G \circ \iota_G =\mathrm{id}_G$. Recall that $V$ embeds naturally in $V(G)$ as the subgroup of $V(G)$ with all elements have labeling $1$.

\begin{lemma}\label{lem:V(G)-gen-set}
     \s{V(G)} is generated by $V$ and $\iota_G(G)$.
\end{lemma}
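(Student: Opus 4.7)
The plan is to use the semidirect product decomposition $V(G) = \lim_{\mathcal{T}} G \rtimes V$ from Subsection \ref{sec:V(G)}. Any element $[T_u,((g_1,\ldots,g_n),\sigma),T_v]$ factors as
\[
[T_u,((g_1,\ldots,g_n),\mathrm{id}),T_u]\cdot[T_u,((1,\ldots,1),\sigma),T_v],
\]
with the first factor in $\lim_{\mathcal{T}} G$ and the second in $V$, so it suffices to show that $\lim_{\mathcal{T}} G$ lies in the subgroup $H$ generated by $V$ and $\iota_G(G)$. Since multiplying two $G$-matrices with a common fixed tree multiplies labels componentwise, a kernel element further decomposes as
\[
[T,((g_1,\ldots,g_n),\mathrm{id}),T] \;=\; \prod_{i=1}^n [T,((1,\ldots,g_i,\ldots,1),\mathrm{id}),T],
\]
reducing the problem to the case of a single-label element $\xi_i := [T,((1,\ldots,g,\ldots,1),\mathrm{id}),T]$ with $g$ sitting at the leaf $u_i$ of $T$.

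The key step is to realize each such $\xi_i$ as a $V$-conjugate of $\iota_G(g)$. Under the embedding $i_G: V(G)\hookrightarrow G^{\mc}\rtimes V$ of Subsection \ref{sec:V(G)}, the elements $\iota_G(g)$ and $\xi_i$ correspond to $(f_0,\mathrm{id})$ and $(f_i,\mathrm{id})$ in the wreath product, where $f_0,f_i:\mc\to G$ take the value $g$ on the cones $0\xinf$ and $u_i\xinf$ respectively and are $1$ elsewhere. Using Lemma \ref{incomparable words}, extend $\{0\}$ to a partition set of size $n$ by subdividing $1\xinf$, and choose $v\in V$ pairing this set bijectively with $\{u_1,\ldots,u_n\}$ via a bijection sending $0\mapsto u_i$. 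Because every label of $v$ is trivial, $v$ acts on $\mc$ by pure prefix substitution on each cone, so $(0\xinf) v = u_i\xinf$. The conjugation formula in $G^{\mc}\rtimes V$ then gives $v^{-1}\iota_G(g)v = ((f_0)v^{-1},\mathrm{id})$, and the identification $(0\xinf)v = u_i\xinf$ forces $(f_0)v^{-1}=f_i$, yielding $v^{-1}\iota_G(g)v = \xi_i \in H$.

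The main obstacle is really one of bookkeeping rather than conceptual difficulty: one must track the direction of the $V$-action on the function factor of $G^{\mc}\rtimes V$ carefully, and verify that a $v\in V$ with only trivial labels acts on $\mc$ by pure prefix substitution on each cone (so that the cone $0\xinf$ really is carried onto $u_i\xinf$). Both points follow routinely from the explicit formula for $i_G$ given in Subsection \ref{sec:V(G)} and from Lemma \ref{vg-well-defined}.
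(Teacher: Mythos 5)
Your proof is correct, and while it follows the same opening moves as the paper's argument (peeling off the $V$-factor, then decomposing $[T,(\vec{g},\mathrm{id}),T]$ into single-labeled elements), you execute the crucial conjugation step differently and more directly. The paper restricts attention to trees $T_r$ obtained by subdividing only the right leaf of the caret, uses the reduction $[T_r,((g,1,\ldots,1),\mathrm{id}),T_r]=[C,((g,1),\mathrm{id}),C]$, builds up to $[C,((1,g),\mathrm{id}),C]$, repeats with $T_l$, and finally expresses a general $[T,(\vec{g},\mathrm{id}),T]$ as a product $[T_r,\ldots][T_l,\ldots]$. You bypass all of this by exhibiting, for each single-labeled element $\xi_i=[T,((1,\ldots,g,\ldots,1),\mathrm{id}),T]$ (with $g$ at leaf $u_i$), a conjugator $v\in V$ with $(0\{0,1\}^\omega)v=u_i\{0,1\}^\omega$ and checking $v^{-1}\iota_G(g)v=\xi_i$ via the embedding $i_G\colon V(G)\hookrightarrow G^{\mathfrak{C}}\rtimes V$. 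This is a genuine simplification; the wreath-product picture makes the conjugation computation transparent, and the care you take to only subdivide $1\{0,1\}^\omega$ when extending $\{0\}$ plays exactly the role of the paper's restriction to $T_r$ (avoiding the unwanted $g\mapsto(g,g)$ expansion at the labeled leaf). The one tiny gap: your recipe to "extend $\{0\}$ to a partition set of size $n$" fails when $n=1$, since the only one-element partition set is $\{\epsilon\}$; this is trivially patched by first applying the diagonal expansion $[\,\bullet,(g),\bullet\,]=[C,((g,g),\mathrm{id}),C]$ so that $n\geq 2$. Worth a sentence, but it doesn't affect the correctness of the argument.
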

\begin{proof}
Let $H$ be the subgroup generated by $V$ and $\iota_G(G)$. Given any element\\ $[T,((g_1,\cdots,g_n),\sigma),S]$, we can write it as the product 
$$[T,((g_1,\cdots,g_n),\mathrm{id}),T][T,((1,\cdots,1),\sigma),S].$$
The right hand already lies in $V$, so it suffices to prove that elements of the form
$[T,((g_1,\cdots,g_n),\mathrm{id}),T]$ lies in $H$. Let $T_r$ be a finite rooted binary tree obtained by adding a finite rooted binary tree at the right leaf of $C$, then elements of the form $[T_r,((g_1,\cdots, g_n),\mathrm{id}), T_r]$ where $g_i =1$ for all but one $i$, are conjugate to elements of the form $[T_r,((g,1,\cdots, 1),\mathrm{id}), T_r]$ by elements in $V$. But $[T_r,((g,1,\cdots, 1),\mathrm{id})), T_r] =[C,((g,1),\mathrm{id}),C]$, so elements of the form  $[T_r,((g_1,\cdots, g_n),\mathrm{id})), T_r]$ where $g_i =1$ for all but one $i$ belong to $H$. Therefore elements of the form $[T_r,((g_1,\cdots, g_n),\mathrm{id})), T_r]$ also lie in $H$ since $H$ is closed under multiplication. In particular $[C,((1,g),\mathrm{id}),C] \in H$ for any $g\in G$. Now the same argument before would imply all elements of the form $[T_l,((g_1,\cdots, g_n),\mathrm{id}), T_l]$ where $T_l$ is obtained from $C$ by adding a finite rooted binary tree at its left leave. Since any elements of the form $[T,((g_1,\cdots,g_n),\mathrm{id}),T]$ can be written as a product of the form $[T_r,((h_1,\cdots, h_{n_r}),\mathrm{id}), T_r][T_l,((t_1,\cdots, t_{n_l}),\mathrm{id}), T_l] $, they must also lie in $H$.
\end{proof}

\begin{thm}\label{quasi-retract}
   The group \s{G} is a quasi-retract of \s{V(G)} when $G$ is finitely generated. In particular $\iota:G\to V(G)$ is a quasi-isometric embedding when $G$ is finitely generated.
\end{thm}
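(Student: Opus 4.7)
The plan is to apply \Cref{quasi-re} to the pair $(\iota_G, \rho_G)$ defined just before the statement. Since $\rho_G \circ \iota_G = \mathrm{id}_G$ holds by construction, the remaining task is to verify that $\iota_G$ and $\rho_G$ are both coarse Lipschitz with respect to word metrics coming from finite generating sets. By \Cref{lem:V(G)-gen-set}, $V(G)$ is generated by $V$ and $\iota_G(G)$; combining a finite generating set $S$ of $G$ with a finite generating set $Y$ of $V$ (the latter exists since $V$ is finitely generated) then gives a finite generating set $\iota_G(S) \cup Y$ of $V(G)$.

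The first observation is that $\iota_G$ is in fact a group homomorphism: a direct calculation using the multiplication rule yields $\iota_G(g)\iota_G(h) = [C,((g,1),\mathrm{id}),C]\cdot[C,((h,1),\mathrm{id}),C] = [C,((gh,1),\mathrm{id}),C] = \iota_G(gh)$. Consequently $|\iota_G(g)|_{V(G)} \leq |g|_G$ for every $g \in G$, so $\iota_G$ is $1$-Lipschitz.

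The substantive step is the Lipschitz estimate for $\rho_G$. The key input is the diagonal form of $\Delta$: a simple expansion at $u_k$ replaces the single label $g_k$ by two copies of $g_k$ on the new children, since $\Delta(g_k) = ((g_k,g_k),\mathrm{id})$. In particular the label at the leftmost (lex-smallest) leaf of the domain partition set is stable under any simple expansion, which shows $\rho_G$ is well-defined on equivalence classes. To obtain the Lipschitz bound, it then suffices to uniformly control $d_G(\rho_G(\alpha\gamma), \rho_G(\alpha))$ as $\gamma$ ranges over the finite generating set $\iota_G(S) \cup Y$. For $\gamma \in Y \subset V$ every label of $\gamma$ is trivial, so after expanding $\alpha$ and $\gamma$ until their ranges and domains match and taking the product, the labels inherited from $\alpha$ (including the leftmost one) are unchanged; hence $\rho_G(\alpha\gamma) = \rho_G(\alpha)$. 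For $\gamma = \iota_G(s)$, the expansion of $\gamma$ matching the range of $\alpha$ has label $s$ at every leaf in the cone $0\{0,1\}^{<\omega}$ and label $1$ at every leaf in the cone $1\{0,1\}^{<\omega}$, again by the diagonal form of $\Delta$. The leftmost label of the product $\alpha\gamma$ is therefore $\rho_G(\alpha)\cdot h$ for some $h \in \{s,1\}$, giving $d_G(\rho_G(\alpha\gamma), \rho_G(\alpha)) \leq |s|_G \leq 1$.

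The principal technical point is the bookkeeping in the previous paragraph: tracking the leftmost-leaf label through expansions and multiplications relies essentially on the diagonal form of $\Delta$, so a general wreath recursion would require a different argument. Once $\iota_G$ and $\rho_G$ are both shown to be Lipschitz, \Cref{quasi-re} gives that $G$ is a quasi-retract of $V(G)$; the quasi-isometric embedding property of $\iota_G$ then follows from the standard fact that any coarse Lipschitz section of a quasi-retraction is a quasi-isometric embedding.
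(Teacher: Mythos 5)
Your proof is correct and follows essentially the same approach as the paper: both establish that $\rho_G$ is non-expanding by observing that multiplying on the right by a generator of $V$ leaves the label at the leftmost domain leaf unchanged, while multiplying by $\iota_G(s)$ changes it by at most a single generator, owing to the diagonal wreath recursion. One small terminological slip: \Cref{quasi-re} (Alonso's theorem) is not what yields the quasi-retract property — you are simply verifying the \emph{definition} of a quasi-retraction once $\iota_G$, $\rho_G$ are shown coarse Lipschitz with $\rho_G\circ\iota_G=\mathrm{id}_G$; Alonso's theorem is only invoked afterwards (in \Cref{cor:fin-VG-imply-G}) to transfer finiteness properties.
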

\begin{proof}
Fix a generating set $S_G$ for $G$, $S_V$ for $V$. By \Cref{lem:V(G)-gen-set}, $S_V$ and $\iota_G(S_G)$ generates $V(G)$. We proceed to show that the map $\rho_G: V(G) \to G$ is coarse Lipschitz with respect to the word metric on $V(G)$ and $G$. Note that:
 \begin{enumerate}
    \item  $\rho_G( x ~\iota_G(s))\in \{\rho_G(x),  \rho_G(x)s\}$ for all $s\in S_G$ and $x\in V(G)$,  and 
    \item  $\rho_G(xy) = \rho_G(x)$ for any $y\in V$ and $x\in V(G)$. 
\end{enumerate}

It follows that $\rho_G$ is non-expanding and hence coarse Lipschitz.    
\end{proof}

\begin{corollary} \label{cor:fin-VG-imply-G}
    If $V(G)$ has type $F_n$, so does $G$.
\end{corollary}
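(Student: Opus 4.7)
The plan is to reduce to Alonso's theorem (\Cref{quasi-re}) applied to the set-theoretic retraction $\rho_G : V(G) \to G$ from the proof of \Cref{quasi-retract}, which is already shown there to be a quasi-retraction whenever $G$ is finitely generated. The only missing ingredient is that $V(G)$ being of type $F_n$ (in particular being finitely generated) forces $G$ itself to be finitely generated; after that, the corollary is immediate.

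To obtain finite generation of $G$ from that of $V(G)$, I would pick a finite generating set $x_1,\dots,x_k$ of $V(G)$, choose a representative $G$-matrix for each $x_i$, and let $G_0 \le G$ be the subgroup generated by the finitely many labels appearing in these representatives. Thus $G_0$ is finitely generated. The inclusion $j:G_0 \hookrightarrow G$ induces, by functoriality together with the injectivity-preserving property of $V(-)$ established earlier, an injective homomorphism $V(j):V(G_0) \hookrightarrow V(G)$ whose image contains every $x_i$ by construction, hence equals all of $V(G)$. So $V(j)$ is an isomorphism.

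Next I would exploit $\rho_G$. Although it is not a group homomorphism, a direct inspection of $G$-matrices shows the intertwining identity $\rho_G \circ V(j) = j \circ \rho_{G_0}$, while $\rho_G \circ \iota_G = \mathrm{id}_G$ yields $\rho_G(V(G)) = G$. Combining these gives
\[
G \;=\; \rho_G(V(G)) \;=\; \rho_G\bigl(V(j)(V(G_0))\bigr) \;=\; j\bigl(\rho_{G_0}(V(G_0))\bigr) \;=\; j(G_0) \;=\; G_0,
\]
so $G = G_0$ is finitely generated.

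With finite generation of $G$ in hand, \Cref{quasi-retract} exhibits $G$ as a quasi-retract of $V(G)$, and \Cref{quasi-re} then upgrades type $F_n$ of $V(G)$ to type $F_n$ of $G$. I expect the only real (and still rather minor) obstacle to be the non-homomorphism nature of $\rho_G$, which is handled cleanly by the isomorphism $V(G_0) \cong V(G)$ combined with the intertwining identity above; the well-definedness of $\rho_G$ on equivalence classes of $G$-matrices, required both in the paper and in the intertwining identity, uses crucially that the diagonal wreath recursion $\Delta$ preserves the first label under simple expansions.
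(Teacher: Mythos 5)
Your proposal is correct. For $n\ge 2$ (once $G$ is known to be finitely generated) it follows the paper exactly: $\rho_G$ is a quasi-retraction by \Cref{quasi-retract}, and \Cref{quasi-re} then gives type $F_n$ for $G$. The genuine difference lies in the $n=1$ base case (i.e., proving $G$ is finitely generated), which is also the step the paper must supply separately since \Cref{quasi-re} presupposes both groups finitely generated.

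The paper argues by contraposition: if $G$ is not finitely generated, write $G$ as a proper increasing union $\bigcup_i G_i$, whence $V(G)=\bigcup_i V(G_i)$ is a proper increasing union, so $V(G)$ cannot be finitely generated. You instead work forward: from a finite generating set of $V(G)$, harvest the finitely many labels appearing in chosen matrix representatives, form $G_0$, observe that $V(j):V(G_0)\to V(G)$ is injective (functor sends injections to injections) and surjective (its image contains the generators), then push through the intertwining relation $\rho_G\circ V(j)=j\circ\rho_{G_0}$ together with $\rho_G\circ\iota_G=\mathrm{id}_G$ to deduce $G=G_0$. Both routes are sound. Your version is slightly longer but more self-contained, and in particular it makes explicit the point the paper leaves implicit: that each $V(G_i)$ is a \emph{proper} subgroup of $V(G)$, which in the paper's phrasing also ultimately rests on the label map $\rho_G$ (or an equivalent observation). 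Your approach also sidesteps the small cardinality quibble in the phrase "proper infinite increasing union $\bigcup_{i=1}^\infty G_i$" for possibly uncountable $G$, where one should really take the directed family of all finitely generated subgroups. In short: same high-level structure, a different and arguably cleaner handling of the base case.
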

\proof 
When $n\ge 2$, this follows from \Cref{quasi-re} and \Cref{quasi-retract}. For $n=1$, suppose $G$ is not finitely generated, then $G$ can be written as a proper infinite increasing union $\bigcup_{i=1}^\infty G_i$. So $V(G)$ can also be written as a proper infinite increasing union $\bigcup_{i=1}^\infty V(G_i)$ which implies $V(G)$ is not finitely generated.
\qed

\begin{remark}\label{rem:RN_gp}
    Our proof relies on the existence of the quasi-retraction map $\rho_G$ which does not exist for general injective wreath recursions $\phi$. In fact, if one takes $G$ to be the Grigorchuk group which is only finitely generated, the corresponding R\"over--Nekrashevych group is of type $F_\infty$ \cite{BM16}.  
\end{remark}

\subsection{Relation with Thompson's Splinter group}\label{sec:rel-spli} We show that the labeled Thompson group $V(G)$ is isomorphic to Thompson's Splinter group \cite{Tho80} in this subsection. Recall that we identified the Cantor set $\mathfrak{C}$ with $\{0,1\}^\omega$ and Thompson group $V$ is a subgroup of the homeomorphism group of $\mathfrak{C}$. Given a set $X$, let $\mathrm{Perm}(X)$ denote the permutation group of $X$.

\begin{definition}\cite[Definition 2.1]{Tho80}
     Let \s{G} be a subgroup of $\mathrm{Perm}(X)$. Then \emph{the splinter group} $\mathrm{Sp}_X(G, \mathfrak{C})$, is the subgroup of $\mathrm{Perm}{(X\times \mathfrak{C})}$ generated by the following two types of elements: 
\begin{enumerate}
    \item $\bar{g}$ where $g\in G$ and $\bar{g}$ acts on $(x,w)\in X\times \mathfrak{C}$ by  $ (x,w)\bar{g} = ((x)g,w)$ if $w\in 01\{0,1\}^\omega$, otherwise $(x,w)\bar{g} = (x,w)$. 
    \item $\bar{v}$ where $v$ is an element of the Thompson group $V$ and $\bar{v}$ acts on $(x,w)\in X \times \mathfrak{C}$ by $(x,w)\bar{v} =(x,(w)v)$. 
\end{enumerate}
\end{definition}

Note that in the above definition, only the group structure of $G$ plays a role, the role of $X$ is not so important. In fact, for any set $X$ which $G$ acts faithfully, the corresponding splinter group $\mathrm{Sp}_X(G, \mathfrak{C})$ is always isomorphic to the labeled Thompson group $V(G)$.

\begin{thm}\label{thm:split-vs-lab}
    For any set $X$ and group $G\leq \mathrm{Perm}(X)$,  $\mathrm{Sp}_X(G, \mathfrak{C})$ is isomorphic to $V(G)$.
\end{thm}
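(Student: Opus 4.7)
The plan is to construct an explicit isomorphism $\Phi\colon V(G)\to \mathrm{Sp}_X(G,\mathfrak{C})$ by realising $G$-matrices as permutations of $X\times\mathfrak{C}$. I will first define $\Phi$ by sending $[T_u,((g_1,\ldots,g_n),\sigma),T_v]$ to the bijection $(x,u_iw)\mapsto ((x)g_i,v_{\sigma(i)}w)$. Because the diagonal wreath recursion duplicates a label $g_k$ on the two halves $u_k0$ and $u_k1$ under a simple expansion, the formulas for the expanded and unexpanded matrices agree on each cone, so $\Phi$ is well-defined on equivalence classes. The multiplication rule in $V(G)$ (and the argument of \Cref{vg-well-defined}) is then exactly what is needed to check that $\Phi$ is a group homomorphism into $\mathrm{Perm}(X\times\mathfrak{C})$.

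I will then verify that the image of $\Phi$ lies inside $\mathrm{Sp}_X(G,\mathfrak{C})$. By \Cref{lem:V(G)-gen-set}, $V(G)$ is generated by $V$ together with $\iota_G(G)$. For $v\in V\subset V(G)$ all $G$-labels are trivial, so $\Phi(v)=\bar v$. For a generator $\iota_G(g)=[C,((g,1),\mathrm{id}),C]$, the image $\Phi(\iota_G(g))$ acts by $g$ on the fibres over $0\{0,1\}^\omega$ and trivially on $1\{0,1\}^\omega$. Splitting $0\{0,1\}^\omega=00\{0,1\}^\omega\sqcup 01\{0,1\}^\omega$, the action over $01\{0,1\}^\omega$ is exactly $\bar g$, while the action over $00\{0,1\}^\omega$ equals $\bar v^{-1}\bar g\bar v$ for any $v\in V$ that swaps those two subcones and fixes $1\{0,1\}^\omega$. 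Hence $\Phi(\iota_G(g))\in\mathrm{Sp}_X(G,\mathfrak{C})$, and the whole image lies in the splinter group.

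Surjectivity is then immediate: every $\bar v$ is $\Phi(v)$, and $\bar g=\Phi([T,((1,g,1),\mathrm{id}),T])$ where $T$ is the tree with leaves $\{00,01,1\}$. For injectivity, suppose $\Phi(\alpha)=\mathrm{id}$ for $\alpha=[T_u,((g_1,\ldots,g_n),\sigma),T_v]$. Matching the second coordinate forces $v_{\sigma(i)}w=u_iw$ for every infinite word $w$, hence $v_{\sigma(i)}=u_i$ for every $i$, so the two partitions and the permutation coincide trivially. Matching the $X$-coordinate forces $(x)g_i=x$ for every $x\in X$, and since $G\leq \mathrm{Perm}(X)$ acts faithfully, $g_i=1$ for every $i$. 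Thus $\alpha$ is the identity in $V(G)$ and $\Phi$ is an isomorphism. The only step that requires any real care is the generator calculation in the middle paragraph --- expressing $\Phi(\iota_G(g))$ in terms of $\bar g$ and $\bar V$ --- and this is where one uses the transitivity of $V$ on dyadic cones.
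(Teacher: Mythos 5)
Your proof is correct and follows essentially the same construction as the paper: send a $G$-labeled tree diagram to the permutation $(x,u_iw)\mapsto((x)g_i,v_{\sigma(i)}w)$ of $X\times\mathfrak{C}$, check well-definedness under the diagonal expansion rule, verify it is a homomorphism, identify the image with $\mathrm{Sp}_X(G,\mathfrak{C})$, and use faithfulness of the $G$-action on $X$ for injectivity. The one place where you diverge is in pinning down the image: the paper introduces $a_g=[T_0,((1_G,g,1_G),\mathrm{id}),T_0]$ with $T_0$ having leaves $\{00,01,1\}$, notes that $F_{a_g}=\bar g$ and $F_v=\bar v$, and observes that the proof of \Cref{lem:V(G)-gen-set} shows $\{a_g\}\cup V$ generates $V(G)$ --- so both containments fall out at once. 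You instead invoke \Cref{lem:V(G)-gen-set} as stated (generating set $\iota_G(G)\cup V$) and decompose $\Phi(\iota_G(g))=\bar g\cdot(\bar\tau^{-1}\bar g\bar\tau)$, where $\tau\in V$ swaps the $00$- and $01$-cones, to land inside $\mathrm{Sp}_X(G,\mathfrak{C})$, and then verify $\bar g=\Phi(a_g)$ separately for surjectivity. Both routes are valid; the paper's version, by switching to the generating set $\{a_g\}$, avoids the decomposition and is marginally tighter.
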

\begin{proof}
We define a group isomorphism $F$ from $V(G)$ to $\mathrm{Sp}_X(G, \mathfrak{C})$ as the following. For each element $a=[T,((g_1,\cdots,g_n),\sigma),S] \in V(G)$, we define the permutation $F_a$ on $X \times \mathfrak{C}$ as follows: $(x,u_i w)F_a \coloneqq (xg_i, (u_i)\sigma w)$ for $x \in X$, $u_i \in \Sigma(T)$ and $w \in \{0,1\}^\omega$. It is straightforward to verify that $F_a$ is independent of the choice of representative $(T,((g_1,\cdots,g_n),\sigma),S)$ by checking that $F_a$ is invariant under simple expansions. For $a=[T,((g_1,\cdots,g_n),\sigma),S]$, $b=[S,((h_1,\cdots,h_n),\tau),R]$ and $ab=[T,((g_1h_{(1)\sigma},\cdots,g_nh_{(n)\sigma}),\sigma\tau), R]$ in $V(G)$, we have that$$(x,u_i w)F_{ab} = (xg_ih_{(i)\sigma}, (u_i)\sigma\tau w)= (x,u_i w)F_{a}F_b $$ for $x \in X$, $u_i \in \Sigma(T)$. So the assignment $F$ sending $a$ to $F_a$ is a group homomorphism from $V(G)$ to $\mathrm{Perm}{(X\times \mathfrak{C})}$.

We now proceed to show that $F$ is injective and $F(V(G))=\mathrm{Sp}_X(G, \mathfrak{C})$. For every element $g\in G$, let $a_g=[T_0 ,((1_G,g,1_G);id), S_0]\in V(G)$ where $T_0=S_0=\{00\{0,1\}^{\omega}, 01\{0,1\}^{\omega}, 1\{0,1\}^{\omega}\}$  and let $V$ denote the subgroup of $V(G)$ with trivial labeling. Then $F_{a_g}=\bar{g}$ and $F_v=\bar{v}$, for $g\in G$ and $v\in V$. The exact same proof in \Cref{lem:V(G)-gen-set} implies that $V(G)$ is generated by $\{a_g \colon g \in G\}$ and $V$. Consequently, the image of $V(G)$ under $F$ indeed lies in the Splinter group $\mathrm{Sp}_X(G,\mathfrak{C})$ and the map $F \colon a \rightarrow F_a$ defines a surjective homomorphism from $V(G)$ to $\mathrm{Sp}_X(G, \mathfrak{C})$. For each $a=[T,((g_1,\cdots,g_n),\sigma),S] \in V(G)$ such that $F_a=id_{\mathrm{Sp}_X(G, \mathfrak{C})}$, we have $(x,u_i w)F_a = (xg_i, (u_i)\sigma w)=(x,u_i w)$ for $x\in X$, $u_i \in \Sigma(T)$, $i=1, \cdots, n$ and $w\in \{0,1\}^{\omega}$. Then $g_i=1_G$ for $i=1,\cdots,n$, since $(x)g_i =x $ holds for all $x \in X$ and $G$ acts faithfully on $X$. Also, $S=T$ and $\sigma=id$, since $(u_i)\sigma=u_i$ for $i=1, \cdots , n$. So $a=[T,(1,\cdots,1),id), T]=1_{V(G)}$ and $F$ is an injection, hence an isomorphism.
\end{proof} 

Recall that an embedding of a group $G$ into a group $H$ is a \emph{Frattini embedding} if whenever two elements of $g, g^{\prime} \in G$ are conjugate in $H$, they are already conjugate in $G$. By \cite[Theorem 2.6 and Proposition 2.7]{Tho80}, we have the following corollaries:

\begin{corollary}
    The embedding from $G$ to $V(G)$ is a Frattini embedding.
\end{corollary}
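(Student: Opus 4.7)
The plan is to deduce this corollary directly from Theorem \ref{thm:split-vs-lab} together with the Frattini property of the natural embedding $G \hookrightarrow \mathrm{Sp}_X(G, \mathfrak{C})$ proved by Thompson in \cite[Theorem 2.6 and Proposition 2.7]{Tho80}. The only thing to verify is that the map $\iota_G : G \to V(G)$ corresponds, under the isomorphism $F$ of Theorem \ref{thm:split-vs-lab}, to (a conjugate of) Thompson's embedding in the splinter group.

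Concretely, first I would take $X$ to be any faithful $G$-set, for instance $X = G$ with its regular action, so that Theorem \ref{thm:split-vs-lab} gives an isomorphism $F : V(G) \xrightarrow{\cong} \mathrm{Sp}_X(G, \mathfrak{C})$. Next I would unpack $F(\iota_G(g))$ for $g \in G$: since $\iota_G(g) = [C, ((g,1),\mathrm{id}), C]$ with $\Sigma(C) = \{0,1\}$, the definition of $F$ gives that $F(\iota_G(g))$ acts on $X \times \mathfrak{C}$ as the identity on $X \times 1\{0,1\}^\omega$ and as multiplication by $g$ on the first coordinate over $X \times 0\{0,1\}^\omega$. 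Thompson's generator $\bar g$ acts the same way but on the cone $X \times 01\{0,1\}^\omega$ instead, so $F(\iota_G(g)) = \bar v\, \bar g\, \bar v^{-1}$ for a fixed element $\bar v$ coming from any $v \in V$ that maps $0\{0,1\}^\omega$ homeomorphically onto $01\{0,1\}^\omega$ (and swaps the appropriate complementary cones).

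The final step is then purely formal. Suppose $\iota_G(g)$ and $\iota_G(g')$ are conjugate in $V(G)$. Applying $F$ and then conjugating by $\bar v^{-1}$, we obtain that $\bar g$ and $\bar{g'}$ are conjugate in $\mathrm{Sp}_X(G, \mathfrak{C})$, so by \cite[Theorem 2.6 and Proposition 2.7]{Tho80} the elements $g$ and $g'$ are already conjugate in $G$. Conversely, conjugation in $G$ clearly yields conjugation in $V(G)$ via $\iota_G$, so the embedding is Frattini.

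I do not expect any genuine obstacle: Theorem \ref{thm:split-vs-lab} does all the heavy lifting, and the identification of $F \circ \iota_G$ with a $V$-conjugate of Thompson's embedding is a direct unwinding of the definition of $F$. The only mild subtlety is keeping track of which cone of $\mathfrak{C}$ is used to house the $G$-action, which is resolved by the conjugation by $\bar v$.
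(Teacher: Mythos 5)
Your proposal is correct and takes essentially the same route as the paper: deduce the corollary from \Cref{thm:split-vs-lab} together with Thompson's \cite[Theorem 2.6 and Proposition 2.7]{Tho80}. The paper leaves implicit the verification that $F\circ\iota_G$ agrees with Thompson's generator $\bar{g}$ up to conjugation by a fixed element of $V$ (since the cone used is $0\{0,1\}^\omega$ rather than $01\{0,1\}^\omega$), which you have correctly supplied; this is exactly the right check and the conjugating element you describe works.
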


\begin{corollary}\label{cor:wps-vg}
     Let $G$ be a finitely generated group, then the word problem for $G$ is solvable if and only if it is solvable for $V(G)$.
\end{corollary}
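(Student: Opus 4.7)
The plan is to deduce both implications from the explicit $G$-matrix calculus for $V(G)$ together with the semidirect product decomposition $V(G)\cong\lim_{\mathcal T} G\rtimes V$ established in Section \ref{sec:V(G)}. Both directions rely on the finite generation of $V(G)$ by $S_V\cup\iota_G(S_G)$ (\Cref{lem:V(G)-gen-set}), where $S_G$ and $S_V$ are fixed finite generating sets for $G$ and $V$ respectively.

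For the easier direction, $V(G)$ solvable $\Rightarrow$ $G$ solvable, the homomorphism $\iota_G\colon G\to V(G)$ carries a word over $S_G$ to the same word read over $\iota_G(S_G)\subset S_V\cup\iota_G(S_G)$. Since $\iota_G$ is injective, $w=1$ in $G$ iff $\iota_G(w)=1$ in $V(G)$, which is decidable by hypothesis.

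For the converse, every generator of $V(G)$ admits an explicit $G$-matrix representation. Using the multiplication rule (\Cref{vg-well-defined}) and the diagonal wreath recursion $\Delta(g)=((g,g),\mathrm{id})$---which under simple expansion merely duplicates labels---one can effectively compute a product $G$-matrix $[T,((w_1,\ldots,w_n),\sigma),S]$ representing any word in the generators of $V(G)$, with each $w_i$ a concrete word over $S_G$. Using the split $V(G)\cong\lim_{\mathcal T}G\rtimes V$, this element is trivial iff (i) the $V$-component $[T,\sigma,S]$ is trivial in $V$, which is decidable because $V$ has solvable word problem, and (ii) the $\lim_{\mathcal T}G$-component---the continuous function $f\colon\mathfrak{C}\to G$ given by $f|_{u_i\{0,1\}^\omega}=w_i$---is the constant $1$ function, i.e.\ each $w_i=1$ in $G$, which is decidable by hypothesis.

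The main obstacle will be to justify criterion (ii): if some $w_i\neq 1$ in $G$, then $[T,((w_1,\ldots,w_n),\sigma),S]$ cannot represent the identity. Under $\Delta$, a simple expansion replaces $(u_k,g_k,v_k)$ by $(u_k 0,g_k,v_k 0)$ and $(u_k 1,g_k,v_k 1)$, so labels are preserved under expansion, and dually, reducing two sibling columns requires equal labels in $G$. Hence any $G$-matrix equivalent to the trivial one-leaf matrix $[\bullet,((1),\mathrm{id}),\bullet]$ must already have every label trivial in $G$, yielding the desired triviality criterion. Alternatively, one may invoke \Cref{thm:split-vs-lab} together with the Frattini embedding corollary and reduce to the corresponding results for splinter groups in Thompson's original paper.
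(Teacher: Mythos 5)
Your argument is correct, but it is genuinely different from the paper's proof. The paper establishes \Cref{thm:split-vs-lab} identifying $V(G)$ with Thompson's splinter group $\mathrm{Sp}_X(G,\mathfrak{C})$ and then derives the statement entirely by citing Thompson's Theorem~2.6 and Proposition~2.7 in \cite{Tho80}; your remark at the end that one could ``invoke \Cref{thm:split-vs-lab}\ldots and reduce to\ldots Thompson's original paper'' is precisely what the paper does. Your main argument instead gives a self-contained algorithm from the $G$-matrix calculus: starting from the generating set $S_V\cup\iota_G(S_G)$, you compute a representing $G$-matrix by repeated common expansion, which is effective because the diagonal wreath recursion $\Delta$ merely duplicates labels; you then decide triviality through the embedding $i_G\colon V(G)\hookrightarrow G^{\mathfrak{C}}\rtimes V$ by testing the $V$-coordinate $[T,\sigma,S]$ (solvable word problem of $V$) and each label $w_i$ (solvable word problem of $G$). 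The key lemma you flag --- that a $G$-matrix equivalent to $[\bullet,((1),\mathrm{id}),\bullet]$ must have all labels trivial because both expansion and reduction under $\Delta$ preserve labels --- is correct and is exactly what makes the label test sound. The easy direction is also fine: $\iota_G$ is an injective homomorphism sending $S_G$ into the chosen generating set of $V(G)$, so solvability passes down to the finitely generated subgroup $G$. Your approach buys an explicit decision procedure internal to the paper's framework, whereas the paper's buys brevity by leaning on the splinter-group literature already quoted.
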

\begin{remark}
    Thompson uses crucially the Splinter group, in particular, Corollary \ref{cor:wps-vg} in his proof of the Boone--Higman--Thompson  Theorem \cite{BH74, Tho80}:  a
finitely generated group has solvable word problem if and only if it embeds into a finitely generated simple subgroup of a finitely presented group. 
\end{remark}

\section{Normal subgroups of \s{V(G)} and its uniform perfection}

We study the normal subgroup structure and uniform perfection of $V(G)$ in this section. Note that the normal subgroup structure of $F(G)$ has already been studied in \cite[Section 4]{Tan16}. Recall that we  view $V$ as the subgroup of $V(G)$ consisting of all elements with trivial labeling.

    Given \s{g\in G,  u\in \{0, 1\}^{<\omega}}, we define an element of \s{\vdeltag} by \s{\Lambda_u(g)= \begin{scriptsize}\begin{bmatrix}  
u & u_2 & \cdots & u_n \\
g & 1 & \cdots &1 \\
u& u_2 & \cdots &u_n
\end{bmatrix}  \end{scriptsize}} where \s{\{u, u_2,  \cdots , u_n\}} is a partition set. The definition of \s{\Lambda_u(g)} does not depend on the choice of \s{u_2,  \cdots , u_n},  thus is well-defined. Recall that we have a projection map $V({p_G}): V(G)\to V$ induced by the map $p_G:G\to 1$. Let us abbreviate  $V(p_G)$ by $pr_G$. 

\begin{thm}
    Any proper normal subgroup of \s{V(G)} is a subgroup of \s{lim_{\mathcal{T}}G}. 
\end{thm}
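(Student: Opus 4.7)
The plan is to combine the simplicity of Thompson's group $V$ with a commutator calculation in the semidirect product $V(G) = \lim_{\mathcal{T}} G \rtimes V$, closed by a self-similarity trick coming from the diagonal expansion rule.

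\textbf{Reduction.} Since $V$ is simple and $pr_G(N)\trianglelefteq V$, either $pr_G(N)=1$, in which case $N\subseteq\ker(pr_G)=\lim_{\mathcal{T}}G$ and we are done, or $pr_G(N)=V$, in which case it suffices to show $\lim_{\mathcal{T}}G\subseteq N$, and hence $N=V(G)$. Since $\lim_{\mathcal{T}}G$ is generated by the elements $\Lambda_u(g)$ (for $u\in\{0,1\}^{<\omega}$ and $g\in G$), I only need to prove that each such element belongs to $N$.

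\textbf{Commutator calculation.} Pick $x\in N$ with $pr_G(x)=v\neq 1$ and write $x=\ell v$ with $\ell\in\lim_{\mathcal{T}}G$. Since $v\neq 1$ and $\ell$ is locally constant, by passing to sub-cones I may find pairwise disjoint cones $u,u',u''$ with $v$ sending $u\{0,1\}^\omega$ onto $u'\{0,1\}^\omega$ and $\ell$ constant with value $h\in G$ on $u'\{0,1\}^\omega$. For any $g\in G$,
\[
A \;:=\; [x,\Lambda_u(g)] \;=\; \ell v\Lambda_u(g)v^{-1}\ell^{-1}\Lambda_u(g)^{-1} \;=\; \Lambda_{u'}(hgh^{-1})\,\Lambda_u(g^{-1}) \;\in\; N,
\]
using that conjugation by $v$ moves the support of $\Lambda_u(g)$ from $u$ to $u'$ and pointwise conjugation by $\ell$ on $u'$ relabels the value by $h\cdot h^{-1}$. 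Now choose $w\in V$ that is the identity on $u\{0,1\}^\omega$ and sends $u'\{0,1\}^\omega$ onto $u''\{0,1\}^\omega$; by normality of $N$, $B:=wAw^{-1}=\Lambda_{u''}(hgh^{-1})\Lambda_u(g^{-1})\in N$, and the product $AB^{-1}$ collapses (the $\Lambda_u$ factors cancel because the supports are disjoint) to the pure two-cone relation
\[
\Lambda_{u'}(k)\,\Lambda_{u''}(k)^{-1} \;\in\; N, \qquad k:=hgh^{-1}.
\]
As $g$ varies over $G$ so does $k$. Because $N$ is normal and $V$ acts transitively on pairs of disjoint cones of $\{0,1\}^{<\omega}$ (any two such pairs can be extended to partition sets of equal size and interchanged by an element of $V$), conjugating further shows: for every $k\in G$ and every pair of disjoint cones $w_1,w_2$,
\[
[\Lambda_{w_1}(k)]\;=\;[\Lambda_{w_2}(k)] \quad \text{in } V(G)/N.
\]

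\textbf{Self-similarity endgame.} For any non-empty $u$ and $k\in G$, pick a cone $z$ disjoint from $u$. The diagonal expansion rule $\Delta(k)=((k,k),\mathrm{id})$ yields the identity $\Lambda_z(k)=\Lambda_{z0}(k)\Lambda_{z1}(k)$ in $V(G)$; passing to $V(G)/N$ and applying the equality from the previous paragraph to the disjoint pairs $(u,z)$, $(z0,z1)$, and $(u,z0)$, we compute
\[
[\Lambda_u(k)]\;=\;[\Lambda_z(k)]\;=\;[\Lambda_{z0}(k)][\Lambda_{z1}(k)]\;=\;[\Lambda_{z0}(k)]^2\;=\;[\Lambda_u(k)]^2,
\]
forcing $[\Lambda_u(k)]=1$, i.e. $\Lambda_u(k)\in N$. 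The empty-word case follows from $\Lambda_\emptyset(k)=\Lambda_0(k)\Lambda_1(k)$. The main obstacle is the middle paragraph: the commutator $[x,\Lambda_u(g)]$ a priori carries along unwanted pointwise conjugation by $\ell$ together with a leftover factor $\Lambda_u(g^{-1})$, and the refinement plus auxiliary $V$-conjugation needed to extract a clean two-cone relation require careful bookkeeping. Once that relation is in hand, the self-similarity trick closes the argument very cleanly by forcing $[\Lambda_u(k)]$ to be idempotent, hence trivial, in a group.
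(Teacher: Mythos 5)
Your proof is correct in outline and closes with exactly the same ``self-similarity endgame'' as the paper's: for each $k\in G$, establish that $\pi(\Lambda_r(k))$ is independent of the cone $r$ (among disjoint cones), and then use the diagonal expansion $\Lambda_s(k)=\Lambda_{s0}(k)\Lambda_{s1}(k)$ to force $\pi(\Lambda_r(k))=\pi(\Lambda_r(k))^2=1$ in $V(G)/N$. The route to the two-cone relation differs. The paper first normalizes: for any anti-chain $\{r,s\}$ it conjugates the chosen $g\in N$ (which sends the $u$-cone to the $v$-cone with label $h$) by an auxiliary $q$ that sends $u\mapsto r$ with label $h$ and $v\mapsto s$ with label $1_G$, so that $p=q^{-1}gq\in N$ sends the $r$-cone to the $s$-cone with trivial label; then $[p,\Lambda_r(f)]=\Lambda_s(f)^{-1}\Lambda_r(f)\in N$ already has the \emph{same} label $f$ on both cones, and nothing remains to cancel. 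You skip this normalization, accept the twisted label $hgh^{-1}$, and cancel the surviving factor $\Lambda_u(g^{-1})$ with an extra $V$-conjugation ($B=wAw^{-1}$) and the product $AB^{-1}$; you then invoke $V$-transitivity on pairs of disjoint cones, whereas the paper quantifies directly over all anti-chains through the choice of $q$. Both routes succeed; the paper's is a bit shorter because it never has to carry two different labels.

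One bookkeeping caution: under the paper's conventions (right action on $\{0,1\}^\omega$, $g^h=h^{-1}gh$, and the displayed computation $p^{-1}\Lambda_r(f)p=\Lambda_s(f)$ in which $p$ maps the $r$-cone to the $s$-cone), the conjugate $x\Lambda_u(g)x^{-1}=\Lambda_u(g)^{x^{-1}}$ is supported on $(u\{0,1\}^{\omega})v^{-1}$, the \emph{preimage} of the $u$-cone under $v=pr_G(x)$, not its image $u'\{0,1\}^{\omega}$. So with ``$v$ sends $u\{0,1\}^\omega$ onto $u'\{0,1\}^\omega$'' as you stipulated, the supports of $A$ and of $B=wAw^{-1}$ land on the wrong cones as written. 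This is easily repaired (interchange the roles of domain and range in the choice of $u'$ and likewise for $w$, or work with $[x^{-1},\Lambda_u(g)]$ and $w^{-1}Aw$ instead) and does not damage the strategy, but the middle paragraph needs to be adjusted to the paper's conventions before the cone identifications are literally correct.
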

\begin{proof}
   Let \s{N} be a normal subgroup of \s{V(G)},  then \s{pr_G(N)\trianglelefteq V}, hence \s{pr_G(N)=V} or \s{1}. 

Assume that \s{pr_G(N)=V}.   Then \s{N} has an element \s{g=\begin{bmatrix}
           u& \cdots \\
           h& \cdots \\
           v& \cdots 
       \end{bmatrix}} where \s{\{u, v\}} is an anti-chain, i.e., \s{u\xstar\cap v\xstar=\varnothing} and \s{u\xstar\cup v\xstar} is a proper subset in \s{\xstar}.
       For any anti-chain \s{\{r, s\}},  let \s{q=\begin{bmatrix}
           u&v& \cdots \\
           h&1_G& \cdots \\
           r&s& \cdots 
       \end{bmatrix}}, and \s{p=q^{-1}gq},  then  \s{p} has the form \s{p=\begin{bmatrix}
           r& \cdots \\
           1_G& \cdots \\
           s& \cdots 
       \end{bmatrix}}. Now for any \s{f\in G}, $$p^{-1}\Lambda_
r(f)p=\begin{bmatrix}
    s&s_2& \cdots &s_n\\
    1_G&g^{-1}_2& \cdots &g^{-1}_n\\
    r&r_2& \cdots &r_n
\end{bmatrix}
\begin{bmatrix}
  r&r_2& \cdots &r_n\\ 
  f&1_G& \cdots &1_G\\
  r&r_2& \cdots &r_n
\end{bmatrix}
\begin{bmatrix}
    r&r_2& \cdots &r_n\\
    1_G&g_2& \cdots &g_n\\
     s&s_2& \cdots &s_n
\end{bmatrix}
    =\Lambda_s(f)   $$ Hence \s{[p, \Lambda_r(f)]=\Lambda^{-1}_s(f)\Lambda_r(f)
\in N
    } for any anti-chain \s{\{r, s\}}. Let \s{\pi: G\ra G/N} be the quotient map,  then since \s{\{r, s0\}} \s{\{r, s1\}}, and \s{\{s0, s1\}} are anti-chains, we have
    $$\pi(\Lambda_r(f))=\pi(\Lambda_s(f))=\pi(\Lambda_{s0}(f))\pi(\Lambda_{s1}(f))=\pi(\Lambda_{r}(f))^2$$ Hence \s{\pi(\Lambda_r(f))=1} and \s{\Lambda_r(f)\in N}. 
Since \s{lim_{\mathcal{T}}G} is generated by elements of the form \s{\Lambda_r(f)}, we have \s{lim_{\mathcal{T}}G\leq N}. Hence \s{N=V(G)} since \s{pr_G(N)=V}. 

So if \s{N} is proper normal subgroup, \s{pr_G(N)= \{1\}} which means that \s{N\leq lim_{\mathcal{T}}G}. 
\end{proof}

\begin{remark}\label{rem:add-mach}
For a general injective wreath recursion $\phi$, the normal subgroup structure of $V_\phi(G)$ can be complicated.  For example, take $G= \langle t\rangle \cong \mathbb{Z}$ and consider the adding machine in \cite[1.7.1]{Nek05}. In our notation, the corresponding wreath recursion is \s{\phi(t)=((1,t),\sigma)}, where \s{\sigma} is the nontrivial permutation of \s{\{0,1\}}. Then by \cite[Corollary 5.4]{Nek18}, the abelization of the group \s{V_\phi(\mathbb{Z})}  is \s{\mz}.  
\end{remark}

 Since $V$ is an infinite group, we have the following corollary.

\begin{corollary}\label{cor-VG-perfect}
    \s{\vdeltag} has no proper finite index subgroup. In particular it is perfect. 
\end{corollary}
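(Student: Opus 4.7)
The plan is to derive both statements from the preceding theorem (every proper normal subgroup of $V(G)$ is contained in $\lim_{\mathcal{T}} G$) combined with the short exact sequence
\[
1 \to \lim_{\mathcal{T}} G \to V(G) \xrightarrow{pr_G} V \to 1
\]
established in Subsection \ref{sec:V(G)}, together with two standard facts about Thompson's group $V$: that $V$ is infinite and that $V$ is non-abelian (in fact simple and non-trivial).

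First I would handle the no-proper-finite-index claim. Suppose for contradiction that $H \leq V(G)$ is a proper subgroup of finite index. Its normal core $N = \bigcap_{g \in V(G)} gHg^{-1}$ is then a proper normal subgroup of $V(G)$ of finite index. By the preceding theorem, $N \leq \lim_{\mathcal{T}} G$, so the quotient map $pr_G : V(G) \twoheadrightarrow V$ factors through the finite group $V(G)/N$. This exhibits $V$ as a finite quotient of $V(G)/N$, contradicting the fact that $V$ is infinite.

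For the perfectness claim I would apply the same mechanism to the commutator subgroup $[V(G), V(G)]$, which is always normal. If it were a proper subgroup of $V(G)$, the theorem would again place it inside $\lim_{\mathcal{T}} G$, so the abelian group $V(G)/[V(G),V(G)]$ would surject onto $V(G)/\lim_{\mathcal{T}} G \cong V$. That would make $V$ abelian, contradicting the non-abelianness of $V$. Hence $[V(G), V(G)] = V(G)$.

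I do not foresee any serious obstacle: both arguments are one-line consequences of the preceding theorem once the short exact sequence and the classical properties of $V$ are invoked. The only mildly delicate point is to remember that ``no proper finite-index subgroup'' does not formally imply perfectness (an infinite divisible abelian quotient could in principle survive), which is why perfectness has to be argued separately rather than as an immediate corollary of the first assertion.
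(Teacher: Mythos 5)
Your proposal is correct and takes essentially the same route as the paper: both derive the conclusions from the preceding theorem (every proper normal subgroup lies in $\lim_{\mathcal{T}} G$) together with the quotient map to $V$ and the elementary properties of $V$ (infinite, non-abelian). The paper compresses this into the single phrase "Since $V$ is an infinite group," while you spell out the normal-core step and, sensibly, note that perfectness requires a separate (but parallel) argument via non-abelianness of $V$; that extra care is welcome but does not change the underlying argument.
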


Given $g,h$ in a group, we shall use $g^h$ to denote the conjugation  $h^{-1}gh$. We further improve \Cref{cor-VG-perfect} to the following.

\begin{thm}\label{thm:unfm-perfect}
    $\vdeltag$ is $5$-uniformly perfect.
\end{thm}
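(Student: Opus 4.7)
The plan is to exploit the semi-direct product decomposition \(V(G) = \lim_\mathcal{T} G \rtimes V\) from \Cref{sec:V(G)}. Given any \(x \in V(G)\), I would factor it as \(x = y v\) with \(y \in \lim_\mathcal{T} G\) and \(v \in V\), and bound the commutator widths of the two factors separately, with the bounds combining to \(5\). For the \(V\)-factor, I would invoke the known classical bound on the commutator width of Thompson's group \(V\). For the \(\lim_\mathcal{T} G\)-factor, I would further decompose \(y\) and apply a shift trick.

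The shift trick proceeds as follows. After a simple expansion, write \(y = y_0 y_1\) with \(y_i\) supported on the proper clopen set \(i\{0,1\}^\omega\). For \(y_0\), take the Thompson-type element \(t \in V\) given by the tree diagram mapping the domain partition \(\{00,01,1\}\) onto the range partition \(\{0,10,11\}\) (an analogue of Thompson's generator \(x_0\)), which compresses \(0\{0,1\}^\omega\) bijectively onto its sub-cone \(00\{0,1\}^\omega\). Conjugation by \(t\) then sends \(y_0\) to a compressed copy \(y_0^{t}\) supported in \(00\{0,1\}^\omega\), and the commutator \([t,y_0]=y_0^{-1}y_0^{t}\) relates \(y_0\) to this copy. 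The prototype computation is \([t,\Lambda_0(g)] = \Lambda_{01}(g)^{-1}\): using the diagonal-wreath-recursion expansion identity \(\Lambda_0(g) = \Lambda_{00}(g)\Lambda_{01}(g)\) and the disjoint-support commutativity in \(\lim_\mathcal{T} G\), the \(00\)-parts cancel and only the \(01\)-part survives. Combining with a symmetric shift \(t'\) that compresses instead into \(01\{0,1\}^\omega\), one obtains \([t,\Lambda_0(g)]\cdot[t',\Lambda_0(g)] = \Lambda_0(g)^{-1}\), realizing \(\Lambda_0(g)\) (and, by symmetry, \(\Lambda_1(g)\)) as a product of two commutators.

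The hard part will be extending this prototype from a single label \(\Lambda_0(g)\) to a general element \(y_0\in\lim_\mathcal{T} G\) supported on \(0\{0,1\}^\omega\). For a generic \(y_0\), the cancellation in the commutator \([t,y_0]\) is incomplete and leaves a residue living on nested smaller sub-cones of the form \(0^k\{0,1\}^\omega\). I would absorb this residue by iterating the shift \(t\) (exploiting that \(t\) has infinite order in \(V\) and that its iterates compress into arbitrarily deep sub-cones), writing the residue as a telescoping expression that can be packaged into one further commutator. Together with conjugation by elements of \(V\) that permute the partition, this expresses each \(y_i\) as a bounded product of commutators in \(V(G)\); a careful count combined with the classical commutator-width bound for Thompson's \(V\) yields the final bound of \(5\).
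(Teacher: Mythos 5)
Your overall strategy --- factor through the semidirect product $V(G)=\lim_{\mathcal{T}}G\rtimes V$, invoke the bound of $3$ commutators for the Thompson group $V$, and then separately handle the kernel part --- is sound and close in spirit to what the paper does. The gap is in the mechanism you propose for the kernel part, and it is a genuine one.

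Your prototype $[t,\Lambda_0(g)]\cdot[t',\Lambda_0(g)]=\Lambda_0(g)^{-1}$ works for exactly one reason: for the diagonal recursion one has $\Lambda_0(g)^t\Lambda_0(g)^{t'}=\Lambda_{00}(g)\Lambda_{01}(g)=\Lambda_0(g)$, i.e.\ the two compressed copies reassemble the original element. For a generic $y_0\in\lim_{\mathcal T}G$ supported in $0\{0,1\}^\omega$ this fails: viewing $y_0$ as a locally constant function $f\colon\{0,1\}^\omega\to G$, the product $y_0^t\,y_0^{t'}$ corresponds to the function whose restriction to each of $00\{0,1\}^\omega$ and $01\{0,1\}^\omega$ is a scaled-down copy of $f|_{0\{0,1\}^\omega}$; this is the \emph{expansion} of $f$ at the root of the cone, not $f$ itself, unless $f$ was already constant on $0\{0,1\}^\omega$. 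So the residue is not any smaller (it lives on exactly the same cone and involves the same group elements), and iterating $t$ does not resolve it. The ``telescoping'' you envisage is precisely the Eilenberg--Mazur/dilation swindle, and the swindle requires forming the infinite product $y_0\,y_0^{t}\,y_0^{t^2}\cdots$; this infinite product is \emph{not} an element of $\lim_{\mathcal T}G$, because elements of $\lim_{\mathcal T}G$ are locally constant $G$-valued functions (determined by a finite labelled tree), whereas the infinite product would have to change value at infinitely many scales near the attracting fixed point of $t$. There is no larger ambient group in which the swindle can be carried out without losing control, so the argument cannot be completed this way. A secondary issue: even in the prototype case where the trick works, you get each of $y_0,y_1$ as $2$ commutators, which yields $2+2+3=7$, not $5$.

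The missing idea --- which is specific to the \emph{diagonal} wreath recursion --- is the ``doubling'' trick the paper uses. Starting from an element $v=[T,((1,g_2,\dots,g_n),\mathrm{id}),T]$ of $\lim_{\mathcal T}G$ with one $1$-labelled leaf, one writes $v$ in two different expanded forms: $[T',((1,\dots,1,g_2,\dots,g_n),\mathrm{id}),T']$ by repeatedly adding carets at the $1$-labelled leaf, and $[T'',((1,g_2,g_2,\dots,g_n,g_n),\mathrm{id}),T'']$ by adding a single caret at each labelled leaf (the diagonal recursion doubles the label). These two label vectors are permutations of each other, and one arranges by an appropriate permutation $\alpha$ that $v\cdot\alpha v^{-1}\alpha^{-1}$ is again a permutation of $v$, hence $v$ is a single commutator. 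Each of the two $\lim_{\mathcal T}G$-pieces in the decomposition has a $1$-labelled leaf (either supplied automatically by the unlabelled half of $\{0,1\}^\omega$ in your decomposition, or arranged by splitting off the first coordinate as the paper does), so each costs $1$ commutator, and $1+1+3=5$. Replacing your shift/telescoping step with this doubling argument repairs the proof.
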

\begin{proof}
Let us switch to $G$-labeled  paired tree diagrams to represent elements in $V(G)$.
    For any element $[T_-, ((g_1, \cdots, g_n), \sigma),  T_+] \in \vdeltag$, we have $[T_-, ((g_1, \cdots, g_n), \sigma) , T_+]$
    $$=[T_-, ((1_G,g_2, \cdots, g_n), \mathrm{id}),  T_-][T_-, ((g_1, 1_G\cdots, 1_G),\mathrm{id}),  T_-][T_-, ((1_G, \cdots, 1_G), \sigma) , T_+].$$
    
    Notice that $[T_-, ((1_G, \cdots, 1_G),\sigma) , T_+]$ is an element in Thompson group $V$ and $V$ is $3$-uniformly perfect \cite[Corollary 6.6]{GG17}. So $[T_-, ((1_G, \cdots, 1_G), \sigma) , T_+]$ is a product of at most $3$ commutators in $V(G)$ since $V$ is a subgroup of $V(G)$.
    
    Let $s_{12}=(12) \in \mathrm{S}_{n}$ and $\bar{s_{12}}=((1_G, \cdots, 1_G),s_{12}) \in G \wr \mathrm{S}_n $, then we have an element  $[T_-,\bar{s_{12}}, T_-] \in \vdeltag $. Note that
    $$[T_-, ((g_1, 1_G\cdots, 1_G), \mathrm{id}),  T_-]^{[T_-,\bar{s_{12}}, T_-]}=[T_-,((1_G, g_1,1_G,  \cdots,1_G), \mathrm{id} ), T_-].$$ Thus to prove $[T_-, ((g_1, \cdots, g_n), \sigma),  T_+]$ can be written as products of $5$ commutators, it suffices to prove elements of the form $[T, ((1_G,g_2, \cdots, g_n), \mathrm{id}),  T]$ are commutators. 

   Let us prove now $v=[T, ((1_G,g_2, \cdots, g_n), \mathrm{id}),  T]$ is a commutator. Let $T^\prime$ be the tree obtained from $T$ by adding $(n-1)$ new carets such that each new caret always attaches to the left most leaf, then $v=[T^\prime, ((1_G, \cdots ,1_G,g_2,\cdots, g_n),\mathrm{id}), T^\prime]$.  Let $T^{\prime\prime}$ be the tree obtained from $T$ by adding $(n-1)$ new carets on each leaf except the left most one, we have $v$ also equals to $[T^{\prime\prime}, ((1_G, g_2,g_2, \cdots, g_n,g_n),\mathrm{id}), T^{\prime\prime}]$. Now let $\alpha \in \mathrm{S}_{2n-1}$ with $\bar{\alpha}=((1_G, \cdots, 1_G), \alpha) \in G \wr \mathrm{S}_{2n-1}$ such that $\bar{\alpha} ((1_G, \cdots ,1_G,g_2,\cdots, g_n),\mathrm{id}) \bar{\alpha}^{-1}=((1_G,g_2,1_G,g_3,\cdots,1_G,g_n,1_G),\mathrm{id})$. Then we have 
   $$vav^{-1}a^{-1}=[T^{\prime\prime}, ((1_G,1_G, g_2, 1_G, g_3, \cdots, 1_G,g_n),\mathrm{id}), T^{\prime\prime}], $$ where $a=[T^{\prime\prime}, \bar{\alpha}, T^\prime] $.
    Similarly pick $\beta \in \mathrm{S}_{2n-1}$ with $\bar{\beta}=((1_G, \cdots, 1_G), \beta) \in G ~\wr ~\mathrm{S}_{2n-1}$ such that
    $$\bar{\beta} ((1_G,1_G, g_2, 1_G, g_3, \cdots, 1_G,g_n),\mathrm{id})\bar{\beta}^{-1}=((1_G, \cdots ,1_G,g_2,\cdots, g_n),\mathrm{id}).$$ 
     Then $b(vav^{-1}a^{-1})b^{-1}=[T^\prime,((1_G, \cdots ,1_G,g_2,\cdots, g_n),\mathrm{id}),T^\prime]=v$, where $b=[T^\prime, \bar{\beta}, T^{\prime\prime}]$. Thus we have $v$ is a commutator and we finish the proof.   
\end{proof}

\section{Finiteness properties of  labeled Thompson groups}\label{Finiteness of vg}
The goal of this section is to prove the following theorem:

\begin{thm}\label{finiteness-main}
    Let $\phi: G \to G\wr \mathrm{S}_2$ be any injective wreath recursion. If \s{G} is of type \s{F_m}, so is \s{V_\phi(G)}.
\end{thm}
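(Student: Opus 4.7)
The plan is to apply Brown's criterion to a Stein--Farley-type cube complex $X$ on which $V_\phi(G)$ acts, generalizing the classical construction for Thompson's group $V$ by carrying the $G$-labels. First I would set up the poset $\mathcal{P}$ of $G$-labeled paired tree diagrams ordered by elementary expansion (simultaneous simple expansions at disjoint leaves on each side) and form the associated Stein--Farley cube complex $X$, whose vertices correspond to reduced diagrams and whose cubes encode independent expansion moves. The action of $V_\phi(G)$ on $X$ is induced by left multiplication on the tree-diagram data as in \Cref{subsection:tree-diagram-G-matx}.

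Second, I would show that $X$ is contractible. This follows by the standard argument: the poset $\mathcal{P}$ is directed, because any two $G$-matrices admit a common expansion (as already used in \Cref{vg-well-defined}), so its order complex is contractible, and one then deformation retracts it onto its Stein--Farley cubical subcomplex via a standard collapsing argument. Cell stabilizers of the action are, up to finite symmetric-group extensions, direct products of finitely many copies of $G$; since type $F_m$ is preserved by finite direct products and finite-index extensions, every cell stabilizer inherits type $F_m$ from $G$.

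Third, I would filter $X$ by the cocompact subcomplexes $X_{\le n}$ spanned by vertices admitting a representative with at most $n$ leaves; cocompactness holds because there are only finitely many $V_\phi(G)$-orbits of such vertices. By Brown's criterion, it then suffices to show that the connectivity of the descending links at the new vertices of height $n+1$ tends to infinity with $n$, or equivalently that these links become $(m-1)$-connected once $n$ is large enough in terms of $m$.

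The hard part will be this connectivity estimate. For $V$ itself the descending links are matching complexes on arc systems, known to be highly connected by Bj\"orner--Lov\'asz--Vre\'cica--\v Zivaljevi\'c and subsequent refinements. The injectivity of $\phi$ is essential here: a caret of a $G$-matrix is reducible precisely when the labels on its two children form the pair $\phi(g)$ for some (necessarily unique) $g \in G$, so the combinatorics of reduction at a vertex faithfully mirrors the unlabeled case. This lets me transport the $V$-level connectivity bound to the descending links in $X$, and Brown's criterion then delivers type $F_m$ for $V_\phi(G)$ whenever $G$ is of type $F_m$. The remark about the non-injective case (e.g.\ $\phi$ vanishing, where $V_\phi(G) \cong V$ regardless of $G$) shows that some injectivity hypothesis really is needed, and it is precisely injectivity that underwrites the identification of the descending links with their unlabeled counterparts.
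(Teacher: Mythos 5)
Your roadmap is the same as the paper's: build a $G$-labeled Stein--Farley cube complex from a poset of labeled paired tree (or forest) diagrams, show it is contractible via directedness and a collapsing argument, filter by number of leaves, check cocompactness and that cell stabilizers are finite-index subgroups of $G\wr\mathrm{S}_n$ and hence of type $F_m$, and then invoke Brown's criterion together with a connectivity estimate on descending links. Up to that point the proposal is essentially correct and matches the paper.

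The gap is in the descending-link step, which is where all the real work is. You claim that injectivity of $\phi$ lets one ``identify the descending links with their unlabeled counterparts'' because ``the combinatorics of reduction at a vertex faithfully mirrors the unlabeled case.'' That is not true, and it is not how the paper argues. The descending link at a height-$n$ vertex is a complex $\mathcal{E}_n(G,\phi)$ whose vertices are (orbits of) elementary merges together with a $G\wr\mathrm{S}_n$-label, and when $G$ is infinite this complex has infinitely many vertices and is genuinely larger than the matching complex $\mathcal{M}_n$. There is no isomorphism with the unlabeled descending link; only a forgetful \emph{simplicial surjection} $\pi\colon \mathcal{E}_n(G,\phi)\to\mathcal{M}_n$. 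The paper's actual argument (\Cref{lem:join}, \Cref{lem:comp-join}, \Cref{connect}) shows that $\pi$ is a \emph{complete join} in the sense of Hatcher--Wahl, and then transfers the weak Cohen--Macaulay bound of $\mathcal{M}_n$ to $\mathcal{E}_n(G,\phi)$ via \cite[Proposition 3.5]{HW10}. That complete-join lemma is the heart of the proof and cannot be replaced by an identification with the unlabeled complex; your proposal skips exactly this step.

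Your account of where injectivity enters is also off. Injectivity of $\phi$ is used to guarantee that each equivalence class of $G$-labeled paired forest diagrams has a \emph{unique reduced} representative, so that the groupoid $\mathscr{F}(G,\phi)$ and hence the poset and cube complex are well-defined (the paper flags this explicitly right after the definition of reduced diagrams). It is not what makes the descending link look unlabeled, and indeed the complete-join argument does not use injectivity in any special way. Moreover, the vanishing wreath recursion example does not show the hypothesis is needed for the implication as stated: there $V_\phi(G)\cong V$ is of type $F_\infty$, so the conclusion holds for every $G$. The paper in fact handles the non-injective case via \Cref{thm:inj-iso}, replacing $G$ by the quotient $\hat G$ with an induced injective recursion; the injectivity hypothesis in \Cref{finiteness-main} is a technical normalization enabled by that reduction, not an obstruction revealed by the vanishing example.
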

\begin{remark}
\begin{enumerate}

  \item   If the wreath recursion $\phi$ is not injective, one could first use Theorem \ref{thm:inj-iso} to reduce it to an injective one, then apply Theorem \ref{finiteness-main} to get finiteness property of $V_\phi(G)$ from that of $\hat{G}$ in Theorem \ref{thm:inj-iso}. However, it should be mentioned that it can be very difficult to determine the finiteness property of $\hat{G}$. 
     \item We remind the reader that for a general injective wreath recursion, $V_\phi(G)$ might have better finiteness property than $G$, see Remark \ref{rem:RN_gp}.  
\end{enumerate}   
\end{remark}

The proof of Theorem \ref{finiteness-main} follows the standard approach to determine finiteness properties of Thompson-like groups using paired forest diagrams, see for example \cite{BFM+16,Fa03,SW23,SWZ19, SZ21}. The standing assumption of this section is that $\phi$ is an injective wreath recursion.

\subsection{Labeled Thompson groups and paired forest diagrams} \label{subsection:lT-ptd} Recall in Subsection \ref{subsection:tree-diagram-G-matx}, we have explained how to define a $\phi$-labeled Thompson group using $G$-labeled paired tree diagrams. We expand this now to $G$-labeled paired forest diagram.

Recall that a \emph{finite rooted binary tree} is
a finite tree such that every vertex has degree 3, except the \emph{leaves}, which have degree 1, and
the \emph{root}, which has degree 2 (or degree 0 if the root is also a leaf). We will drop the word finite whenever it is clear from the context. And we will draw such trees
with the root at the top and the nodes descending from it, down to the leaves. A non-leaf vertex
together with the two vertices directly below it is called a \emph{caret}. If the leaves of a caret in \s{T} are
leaves of \s{T} , we will call the caret\emph{ elementary}.
A \emph{forest} is a finite linearly ordered union of  rooted binary trees. If each tree in the forest is either a single root or a caret, we call the forest \emph{elementary}. 

\begin{definition}
     A \emph{$G$-labeled paired forest  diagram} is  a triple \s{(F_-,(\vec{g},\sigma),F_+)} consisting of forests \s{F_-, F_+} with the same number of leaves, say $n$,  and \s{(\vec{g},\sigma)\in G \wr \mathrm{S}_n}. When \s{F_-,F_+} both have only one component, it is the \emph{$G$-labeled paired tree diagram}.
\end{definition}

We can define the expansion and reduction rule for $G$-labeled paired forest  diagrams just as we did for $G$-labeled paired tree diagrams.  Given a $G$-labeled paired forest  diagram $\left(F_{-}, (\vec{g},\sigma), F_{+}\right)$, and some $1 \leq i \leq n$, suppose $\vec{g} = (g_1,\cdots, g_n)$ and $\phi(g_i) = ((g_{i0},g_{i1}), \sigma_{g_i})$. Then one first replaces $F_{ \pm}$ with forests $F_{ \pm}^{\prime}$ obtained from $F_{-}$ (resp. $F_+$) by adding a caret at the  $i$-th leaf (resp. $(i)\sigma$-th leaf) and  
 $((g_1,\cdots,g_i,\cdots,g_n),\sigma)$ with    $((g_1,\cdots,g_{i-1},g_{i0},g_{i1},\cdots,g_n) ,\sigma' )$. Here $\sigma'$ is obtained from $\sigma$ as follows: first we replace the set $\{1,\cdots n\}$ (which we identify with the set of leaves of $F_-$) by $\{1,\cdots, i0,i1,\cdots n\}$ where $i$ is replaced by $i0$ and $i1$. And similarly, replace $\{1,\cdots n\}=\{(1)\sigma,\cdots (n)\sigma\}$ by $\{(1)\sigma,\cdots,{(i)\sigma 0},{(i)\sigma 1},\cdots (n)\sigma\}$.  Define now $(j)\sigma' = (j)\sigma$ for $j\neq i$, and $(ik)\sigma' = (i)\sigma(k)\sigma_{g_i}$ for $k\in \{0,1\}$. Of course, to make $\sigma'\in \mathrm{S_{n+1}}$, one has to first order the leaves of $F_-'$ and $F_+'$ lexicographically (since each leave is a word in $\{0,1\}$, we can order the leaves of a finite rooted binary tree lexicographically assuming $0<1$), and identify them with $\{1,\cdots, n+1\}$ according to the order, the bijection between the leaves now gives the actual element $\sigma'\in \mathrm{S_{n+1}}$. Let $\vec{g}' = (g_1,\cdots,g_{i-1},g_{i0},g_{i1},\cdots,g_n)$, the triple $\left(F_{-}', (\vec{g}',\sigma'), F_{+}'\right)$ is called the \emph{expansion} of $\left(F_{-}, (\vec{g},\sigma), F_{+}\right)$ at the $i$-th leaf. Conversely,  $\left(F_{-}, (\vec{g},\sigma), F_{+}\right)$ is  a \emph{reduction} of $\left(F_{-}^{\prime}, (\vec{g}',\sigma'), F_{+}^{\prime}\right)$. Two paired forest diagrams are \emph{equivalent} if one is obtained from the other by
a sequence of reductions or expansions. The equivalence class of \s{(F_-,(\vec{g},\sigma),F_+)} is denoted by \s{[F_-,(\vec{g},\sigma),F_+]}.  See \cref{fig:expansion_of_tree_diagram} for an example.

\begin{figure}[h]
    \centering
\def\svgwidth{0.5\columnwidth}
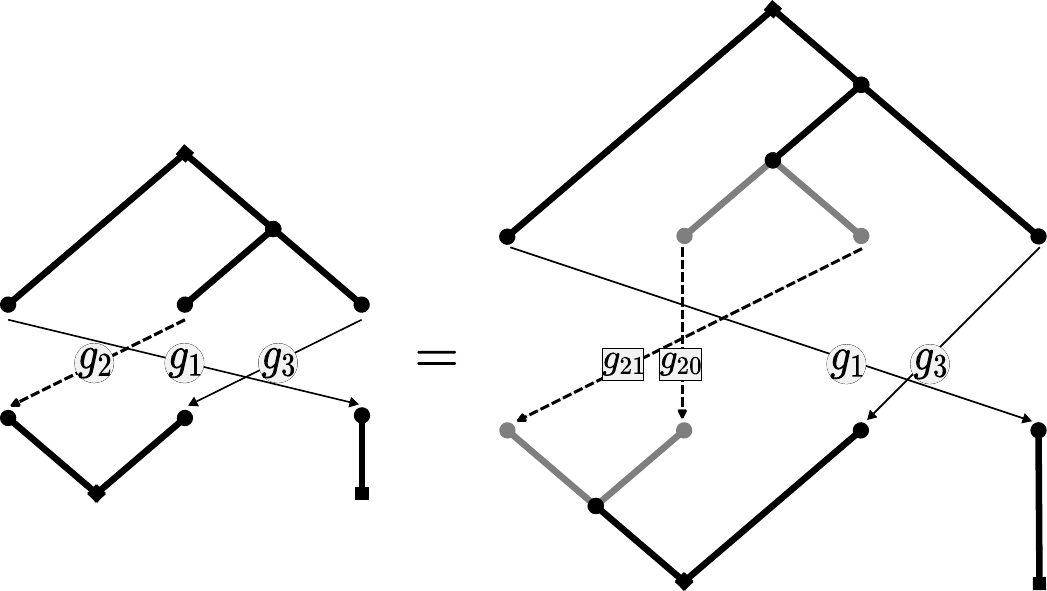
\caption{Two equivalent G-labeled paired forest diagrams where $\phi(g_2)=((g_{20},g_{21}),(1,2))$.}
\label{fig:expansion_of_tree_diagram}
\end{figure}

\begin{figure}[h]
    \centering
\def\svgwidth{\columnwidth}
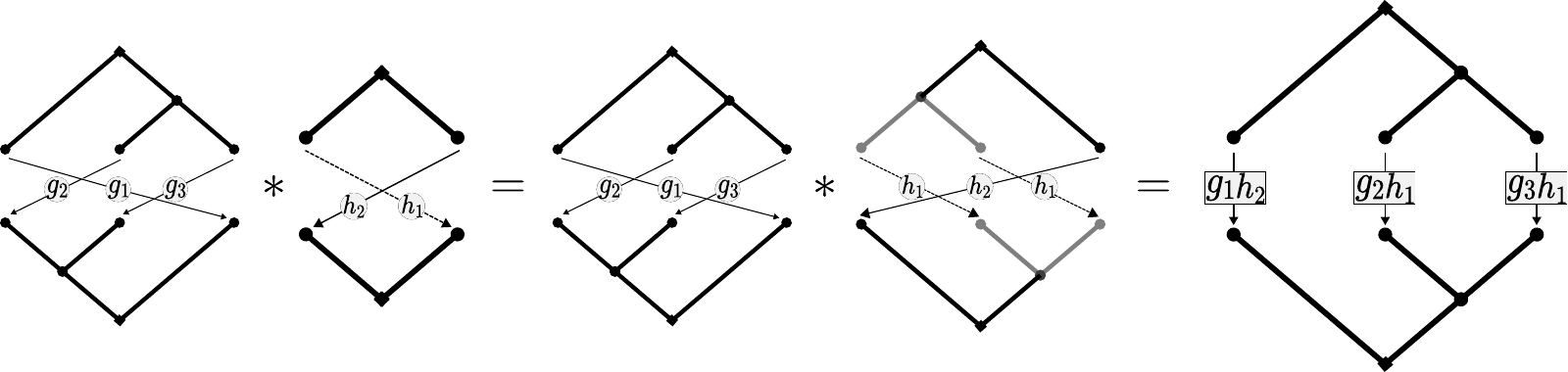
\caption{An example of the multiplication of two elements in $V(G)$.}
\label{fig:multiplication_of_tree_diagram}
\end{figure}
    
    We call a $G$-labeled paired forest diagram \emph{reduced} if no reduction can be performed. We check that every $G$-labeled paired forest diagram is equivalent to a unique reduced $G$-labeled paired forest diagram. Note that we need to use our assumption that the wreath recursion $\phi$ is injective. We can define a groupoid structure on the equivalence classes of $G$-labeled forest diagrams just as in all Thompson-like groups.  In fact, given \s{[F_-, (\vec{g},\sigma), F_+], [F_-', (\vec{g},\sigma'), F_+']} such that $F_+$ and $F_-'$ have the same number of components. When $F_+ = F_-'$, we declare 
    $$[F_-, (\vec{g},\sigma), F_+] \ast [F_-', (\vec{g}',\sigma'), F_+'] = [F_-, (\vec{g},\sigma)(\vec{g}',\sigma'), F_+']$$
    where the multiplication of $(\vec{g},\sigma)$ and $(\vec{g}',\sigma')$ uses the group structure on $G \wr \mathrm{S}_n$. In general, $F_+$ is not necessarily equal to $F_-'$, but since they have the same number of components, we could always make them the same after finitely many expansions. Thus, we could choose different representatives of $[F_-, (\vec{g},\sigma), F_+]$ and $ [F_-', (\vec{g}',\sigma'),F_+']$ using expansions, and define the multiplication.  See \cref{fig:multiplication_of_tree_diagram}. We call the equivalences classes of $G$-labeled paired forest diagrams equipped with such multiplication structure \emph{the $G$-labeled Thompson groupoid  \s{\mathscr{F}(G,\phi)}}.

    Define further $\mathscr{F}_{m,n}(G,\phi)$ to be the full subgroupoid of $\mathscr{F}(G,\phi)$ consisting of $G$-labeled paired tree diagrams $(F_-,(\vec{g},\sigma), F_+)$ such that $F_-$ has $m$ components and $F_+$ has $n$ components.  Note that \s{\mathscr{F}_{n,n}(G,\phi)} is a group and $\mathscr{F}_{1,1}(G,\phi)$ 
 is the $\phi$-labeled Thompson group $V_\phi(G)$.  But when $m\neq n$, $\mathscr{F}_{m,n}(G,\phi)$ is just a set.

\subsection{Brown's criterion and discrete Morse theory} To determine the finiteness property of $V_\phi(G)$, we will apply Brown's criterion \cite{Br87}. However, we shall use a version that is combined with discrete Morse theory. Recall that given a piecewise Euclidean cell complex $X$, a map
$h$ from the set of vertices of $X$ to the integers is called a
\emph{height function}, if each cell has a unique vertex maximizing~$h$.   For $t\in\mathbb{Z}$, define $X^{\leq t}$ to be
the full subcomplex of~$X$ spanned by vertices~$x$ satisfying $h(x)\leq
t$.  Similarly, define $X^{< t}$ and $X^{=t}$.  The \emph{descending star} $\dst(x)$ of a
vertex $x$ is defined to be the open star of~$x$ in~$X^{\le h(x)}$.  The
\emph{descending link} $\dlk(x)$ of $x$ is given by the set of ``local
directions'' starting at $x$ and pointing into $\dst(x)$. More details can be found in Section \ref{subsec:conn-dlink}.  We will use the following version of Brown's criterion \cite{Br87} which is combined with discrete Morse theory; see, for example, \cite[Theorem 6.2]{BZ22}.

\begin{thm}\label{lemma:Morse-brown-cri}
  Let $\Gamma$ be a group acting cellularly on a contractible piecewise Euclidean cell complex $X$.  Let $h:  X \rightarrow \mathbb{R}$ be a $\Gamma$-equivariant height function. Given \s{m\ge 1},  then $\Gamma$ is of type $\mathrm{F}_{m}$ provided that the following conditions are satisfied: 

  \begin{enumerate}
      \item  For each $q \in \mathbb{R}$ the sublevel set $X^{\leq q}$ is $\Gamma$-cocompact. 

\item  Each cell stabilizer is of type $\mathrm{F}_{m}$. 

\item For each $n \leq m$ there exists $t \in \mathbb{R}$ such that for all $x \in X^{(0)}$ with $h(x) \geq t$ we have that the descending link $\dlk (x)$ is $n$-connected.  

  \end{enumerate}
\end{thm}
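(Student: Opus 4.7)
The plan is to apply the classical Brown criterion \cite{Br87} to the $\Gamma$-equivariant filtration of $X$ by sublevel sets $\{X^{\le q}\}_{q\in\mathbb{Z}}$, and to use discrete Morse theory to convert the descending-link hypothesis (3) into essential $(m-1)$-connectivity of that filtration. Because $h$ is $\Gamma$-equivariant, each $X^{\le q}$ is a $\Gamma$-subcomplex; $\Gamma$-cocompactness on each level is hypothesis (1) and the type $F_m$ stabilizer condition is hypothesis (2). Thus the only non-trivial input to Brown's criterion is to show that for every $i\le m-1$ and every $q$ there exists $q'\ge q$ such that the inclusion $X^{\le q}\hookrightarrow X^{\le q'}$ is trivial on $\pi_i$.

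The connectivity will be controlled by a Bestvina--Brady style Morse lemma. The unique-maximum hypothesis on $h$ means that the cells having $x$ as top vertex assemble into the closed descending star $\overline{\dst}(x)$, which is canonically a cone on $\dlk(x)$ with apex $x$, and $\overline{\dst}(x)\cap X^{<h(x)}$ deformation retracts onto $\dlk(x)$. Distinct vertices at the same height have cellularly disjoint descending stars (their closures intersect only in $X^{<t}$), so $X^{\le t}$ is obtained from $X^{<t}$ by simultaneously coning off $\dlk(x)$ for every vertex $x$ with $h(x)=t$. A standard cofiber-sequence argument then yields: if each such $\dlk(x)$ is $n$-connected, then the inclusion $X^{<t}\hookrightarrow X^{\le t}$ is $(n+1)$-connected.

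I now fix $i\le m$ and apply hypothesis (3) to obtain a threshold $t_i$ above which every descending link is $i$-connected. Iterating the Morse lemma across heights strictly larger than $t_i$ shows that the inclusion $X^{\le t'}\hookrightarrow X^{\le t''}$ is $(i+1)$-connected for all $t''\ge t'\ge t_i$. Passing to the colimit, the map $X^{\le t_i}\hookrightarrow X=\varinjlim_t X^{\le t}$ is $(i+1)$-connected, and since $X$ is contractible by assumption, $X^{\le t_i}$ is itself $i$-connected. Applying this for each $i\le m-1$ delivers essential $(m-1)$-connectivity of the filtration, and Brown's criterion then concludes that $\Gamma$ is of type $F_m$.

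The main obstacle is making the Morse lemma fully rigorous in the piecewise Euclidean cellular setting: one must verify that attaching $\overline{\dst}(x)$ to $X^{<h(x)}$ along the link-of-$x$-in-$\overline{\dst}(x)$ is, up to homotopy, a cone attachment along $\dlk(x)$, so that $n$-connectedness of $\dlk(x)$ upgrades to $(n+1)$-connectedness of the pair $(X^{\le t},X^{<t})$. This is essentially the Bestvina--Brady Morse Lemma; in the present generality I would follow the treatment given in \cite[Section 6]{BZ22}, and the simultaneous-vertex case requires no extra work since descending stars at a common height are cellularly disjoint outside $X^{<t}$.
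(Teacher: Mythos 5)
Your proposal is correct and follows the standard argument: the paper does not prove this statement but cites it as \cite[Theorem 6.2]{BZ22}, and your reconstruction (sublevel filtration, Bestvina--Brady Morse lemma converting $n$-connected descending links into $(n+1)$-connected inclusions $X^{<t}\hookrightarrow X^{\le t}$, colimit comparison with the contractible $X$, then classical Brown) is exactly the proof given there. Your observation that closed descending stars of distinct vertices at the same height meet only inside $X^{<t}$, which justifies the simultaneous coning, is the correct justification of the one point that needs care.
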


\subsection{The \s{G}-labeled Stein--Farley complex} We  build the \s{G}-labeled Stein-Farley complex that $V_\phi(G)$ acts on now. Note that the group $G \wr \mathrm{S}_n$ embeds in the groupoid $\mathscr{F}(G,\phi)$ by sending $(\vec{g},\sigma)$ to \s{[1_n,(\vec{g},\sigma), 1_n]}, where $1_n$ denotes the elementary forest consisting of only $n$ roots. With this embedding, we have a right action of $G \wr \mathrm{S}_n$ on \s{\mathscr{F}_{m,n}(G,\phi)} by right multiplication.   Denote the orbit of an element $\alpha\in \mathscr{F}_{m, n}(G,\phi)$ under this action by $[\alpha]$.  Abuse of notation,  we may sometimes denote the orbit of an equivalence class  $[F_-,(\vec{g},\sigma),F_+]$ also by $[F_-,(\vec{g},\sigma),F_+]$. Note that if $\alpha \in \mathscr{F}_{m,n}(G,\phi)$ and $\beta_1,\beta_2 \in \mathscr{F}_{n,l}(G,\phi)$ with $[\alpha \ast \beta_1]=[\alpha \ast \beta_2]$, then by the groupoid structure we have $[\beta_1] = [\beta_2]$. We will refer to this as \emph{left cancellation} which will be helpful later for example in determining the descending link of a vertex; see Section \ref{subsec:conn-dlink}. 

Now let  $\mathcal{P}(G,\phi) =\bigcup_{n=1}^\infty\mathscr{F}_{1, n}(G,\phi)/ (G \wr \mathrm{S}_n)$. We have a well-defined height function \s{h:\mathcal{P}(G,\phi)\ra \mn} by sending elements in $\mathscr{F}_{1, n}(G,\phi)/ (G \wr \mathrm{S}_n)$ to $n$.

There is also a poset structure on $\mathcal{P}(G,\phi)$. Given $x,y \in \mathcal{P}(G,\phi)$ with a representative $[T,(\vec{g},\sigma),F]$ of $x$, say that $x\leq y $ if there exists a forest $F'$ with $m$ leaves such that $y$ lies in the same orbit as $[T,(\vec{g},\sigma),F] \ast [F', ( \vec{1}_G,\mathrm{id}), 1_m]$ where $\vec{1}_G$ is the identity element in $G^m$. When $F'$ is not equal to $1_m$,  $y$ is called a \emph{splitting} of $x$. Note that $h(y)> h(x)$ in this case. It is easy to check that this is a partial ordering on $\mathcal{P}(G,\phi)$ and independent of the choice of representative of $x$. If the forest $F'$ happens to be elementary, we denote the relation by $x\preceq y$ and call $y$ an \emph{elementary splitting} of $x$. If $x\preceq y$ but $x\neq y$, then we write $x\prec y$. Note that $\preceq$ and $\prec$ are not transitive, though it is true that if $x\preceq z$ and $x\leq y\leq z$, then $x\preceq y$ and $y\preceq z$.

  Note that  \s{V_\phi(G)}  acts on \s{\bigcup_{n=1}^\infty\mathscr{F}_{1, n}(G,\phi)} by left multiplication and this action commutes with the right action of \s{G \wr \mathrm{S}_n} and (elementary) splitting, hence \s{V_\phi(G)}
  left acts on \s{\mathcal{P}(G,\phi)} and preserves the relations $\leq$ and $\preceq$. Moreover the height function $h$ is \s{V_\phi(G)}-invariant.

Let \s{ |\mathcal{P}(G,\phi)| } be the geometric realization of the poset \s{(\mathcal{P}(G,\phi),\leq)}. 
Define \s{ \mathcal{P}(G,\phi) ^{\leq n}:=\{x\in  \mathcal{P}(G,\phi) : h(x)\leq n\}}, similarly  $ \mathcal{P}(G,\phi) ^{=n}:=\{x\in  \mathcal{P}(G,\phi) : h(x)= n\} $ and  $\mathcal{P}(G,\phi) ^{\geq n}:=\{x\in  \mathcal{P}(G,\phi) : h(x)\geq n\}$. 
Then  $| \mathcal{P}(G,\phi)^{\leq n} |$ is the full subcomplex of \s{| \mathcal{P}(G,\phi) |} spanned by all the vertices \s{x} with \s{h(x)\leq n}.
 
 \begin{definition} [$G$-labeled Stein--Farley complex]
      Let \s{\stein} be the subcomplex of \s{ |\mathcal{P}(G,\phi)| } consisting of all simplicies \s{[x_0<\cdots<x_n]} satisfying \s{x_0\preceq x_n}.
 \end{definition}

Note that $X_{G,\phi}$ and $ |\mathcal{P}(G,\phi)|$ has the same vertex set. Similarly, define \s{\stein^{\leq n}: = | \mathcal{P}(G,\phi)^{\leq n} |\cap \stein}. The complexes \s{\stein}, \s{| \mathcal{P}(G,\phi)^{\leq n} |, \stein^{\leq n}} all admit a \s{V_\phi(G)}-action. Exactly the same proof as for \cite[Proposition 2.1]{BFM+16} shows that   any two elements in \s{ \mathcal{P}(G,\phi)} have a least upper bound and if they have a common lower bound, they have a greatest lower bound. In particular, we have the following:

\begin{lemma}
    \s{( \mathcal{P}(G,\phi) , \leq)} is directed, \ie, every two elements in \s{\mathcal{P}(G,\phi)} has a common upper bound with respect to the order \s{\leq}. Thus \s{| \mathcal{P}(G,\phi) |} is contractible.
\end{lemma}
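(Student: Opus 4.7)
The plan is to prove directedness of $(\mathcal{P}(G,\phi), \leq)$ first, then deduce contractibility of $|\mathcal{P}(G,\phi)|$ from the standard fact that the geometric realization of a directed poset is contractible.

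For directedness, I would take two elements $x, y \in \mathcal{P}(G,\phi)$ and pick representatives $[T_i, (\vec{g}_i, \sigma_i), F_i]$ for $i = 1, 2$. Since any two finite rooted binary trees admit a common expansion, I can first expand both representatives so that their bottom (single-tree) components agree with a common tree $T$, say with $m$ leaves. After this expansion the top forests $F_1', F_2'$ both have $m$ leaves, though their numbers of components (equal to $h(x)$ and $h(y)$ respectively) may differ. My candidate common upper bound is the $G \wr \mathrm{S}_m$-orbit $z$ represented by $[T,(\vec{g}_1',\sigma_1'),1_m]$. The inequality $x \leq z$ follows directly from the definition of $\leq$ by taking the splitting forest to be $F_1'$: the groupoid product $[T,(\vec{g}_1',\sigma_1'),F_1'] \ast [F_1',(\vec{1}_G,\mathrm{id}),1_m]$ evaluates via the multiplication rule to $[T,(\vec{g}_1',\sigma_1'),1_m]$. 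Performing the analogous construction for $y$ yields $y \leq [T,(\vec{g}_2',\sigma_2'),1_m]$. The key point is that this latter element lies in the same $G \wr \mathrm{S}_m$-orbit as $z$, because right multiplication by an arbitrary element of $G \wr \mathrm{S}_m \hookrightarrow \mathscr{F}_{m,m}(G,\phi)$ modifies the middle label--permutation data of $[T,\cdot,1_m]$ by right multiplication in $G \wr \mathrm{S}_m$, and this is clearly transitive.

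For the contractibility, I would use the standard compactness argument for directed posets. Any continuous map $f: S^n \to |\mathcal{P}(G,\phi)|$ has compact image contained in a finite subcomplex, whose finite vertex set admits a common upper bound $z$ by iterated directedness. The full subcomplex spanned by $\{p \in \mathcal{P}(G,\phi) : p \leq z\}$ is then a cone with apex $z$ (since $z$ is a maximum), hence contractible, and it contains the image of $f$. So $f$ is null-homotopic, and by Whitehead's theorem applied to the CW complex $|\mathcal{P}(G,\phi)|$, the realization is contractible.

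The only point requiring care is the final orbit identification in the directedness argument, which amounts to transitivity of the right $G \wr \mathrm{S}_m$-action on the middle data of $[T,\cdot,1_m]$; this is immediate from the groupoid multiplication rule. I expect no serious obstacle here — the argument is a standard adaptation of the directedness proof for classical Stein--Farley complexes, with the wreath-product labels playing no essential new role once the bottom trees are matched by a common expansion.
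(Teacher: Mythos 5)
Your proof is correct and follows essentially the same underlying idea as the paper's (referenced) argument, which cites \cite[Proposition 2.1]{BFM+16}: expand both representatives to a common domain tree $T$ with $m$ leaves, observe that the orbit of $[T,\cdot\,,1_m]$ is a common upper bound since the right $G\wr \mathrm{S}_m$-action is transitive on such elements, and deduce contractibility by the standard compactness argument for directed posets. The paper's citation actually establishes the stronger fact that $\mathcal{P}(G,\phi)$ admits least upper bounds (and greatest lower bounds when a lower bound exists), whereas you only construct some common upper bound without a minimality claim — but that is all the lemma requires, so your argument is complete.
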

      
In \s{\mathcal{P}(G,\phi)}, for $x \leq y$ define the closed interval $[x, y]:=\{z \mid x \leq z \leq y\}$, and $|[x, y]|$ to be its geometric realization. Similarly, define the open and half-open intervals $(x, y),(x, y]$ and $[x, y)$. Note that if $x \preceq y$, then the closed interval $[x, y]$ is a Boolean lattice  (0 means root, 1 means caret in an elementary forest), and so the simplices in the geometric realization fit together into a cube. The top of the cube is $y$ and the bottom is $x$. In order to describe the cube clearly, let us introduce some notation first. Fix an integer $n$ and let $J$ be a subset of $[n] = \{1,2,\cdots,n\}$, denote $F_J^{(n)}$ the elementary forest obtained from $1_n$ by attaching a caret at every $j\in J$.   Furthermore, let $F_j^{(n)} := F_{\{j\}}^{(n)}$ and $\lambda_J^{(n)}:= [F_J^{(n)}, (\vec{1_G},\mathrm{id}), 1_{n+|J|}]$.

Now we want to show that \s{\stein} admits a cubical structure preserved by \s{V_\phi(G)}. In fact, the cubical cells are of the form \s{[x,y]} for \s{x\preceq y},    
    and the function \s{h} on the vertices of \s{\stein} extends to a Morse function. 
Also, the intersection of cubes is either empty or is itself a cube; in fact if $[x, y] \cap[z, w] \neq \emptyset$, then $y$ and $w$ have a lower bound, and we get that $[x, y] \cap[z, w]=[\sup (x, z), \inf (y, w)]$
where \s{\sup (x, z)} is the least upper bound of \s{x,z} and \s{\inf (y, w)} is the greatest lower bound of \s{y,w}. Therefore we can regard \s{\stein} as a cube complex \cite[p. 112, Definition 7.32]{BH99} on which \s
{V_\phi(G)} acts cubically.

\begin{proposition}\label{cocompact}
    For each \s{n\geq 1}, \s{\stein^{\leq n}} is \s{V_\phi(G)}-cocompact.
\end{proposition}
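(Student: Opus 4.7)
My plan is to prove cocompactness by showing that $V_\phi(G)$ has only finitely many orbits of cubes in $\stein^{\leq n}$. Since every cube is of the form $[x, y]$ with $x \preceq y$ and both $h(x), h(y) \leq n$, the argument proceeds in two steps: first, show that $V_\phi(G)$ acts transitively on vertices at each height $k \leq n$; and second, show that only finitely many $x \in \mathcal{P}(G,\phi)$ satisfy $x \preceq y$ for any fixed $y$.

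For the first step, I would start from an arbitrary vertex $v = [T, (\vec{g}, \sigma), F]/(G\wr \mathrm{S}_k)$ of height $k$. Left-multiplication by $[T, (\vec{g}^{-1}, \sigma^{-1}), T] \in V_\phi(G)$ reduces $v$ to a representative $[T, (\vec{1}_G, \mathrm{id}), F]$ with trivial labels and trivial permutation. Then, given two such trivially-labeled vertices $v_i = [T_i, (\vec{1}_G, \mathrm{id}), F_i]$ of height $k$ (for $i = 1, 2$), I would pick a component-wise common expansion $F^\ast$ of $F_1$ and $F_2$, which always exists since each pair of corresponding binary-tree components admits a common expansion, and expand both paired diagrams correspondingly. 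Since $\phi(1_G) = ((1_G, 1_G), \mathrm{id})$, the expansion preserves trivial labels and trivial permutation, yielding representatives $[T_i^\ast, (\vec{1}_G, \mathrm{id}), F^\ast]$. The element $[T_2^\ast, (\vec{1}_G, \mathrm{id}), T_1^\ast] \in V_\phi(G)$ then carries $v_1$ to $v_2$. This shows there are exactly $n$ vertex orbits in $\stein^{\leq n}$.

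For the second step, recall that $x \preceq y$ means $y = x \ast \lambda_J^{(h(x))}$ for some subset $J$ of components of $x$. Viewed from the $y$ side, this amounts to selecting a subset of \emph{sibling pairs} among the components of $y$ to be collapsed upon reducing to $x$. Since $y$ has at most $n$ components, there are at most $2^n$ such choices, hence only finitely many $x$ satisfy $x \preceq y$. Combining both steps, each $V_\phi(G)$-orbit of cubes in $\stein^{\leq n}$ has a representative $[x, y]$ with $y$ one of $n$ standard vertices and $x$ one of finitely many choices. Therefore the total number of cube orbits is bounded by $n \cdot 2^n$, and cocompactness follows.

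The main obstacle lies in the first step: the key point is that expansion preserves the trivial labels precisely because $\phi(1_G)$ is trivial, which is what gives the left $V_\phi(G)$-action enough flexibility (via the expansion equivalence) to merge all vertices of the same height into a single orbit.
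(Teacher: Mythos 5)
Your overall strategy---transitivity on vertices of each height, then finiteness of cubes attached to a single vertex---matches the paper's. Step~1, the transitivity argument, is correct: using the right $G\wr\mathrm{S}_k$-quotient and/or left multiplication by $[T, (\vec g,\sigma)^{-1}, T]$ to trivialize labels, then taking a common expansion of the range forests, is a slight rearrangement of the paper's argument, which instead assumes $F_1=F_2$ up to expansion and directly writes $[T_1,a_1,F_1]=[T_1,a_1a_2^{-1},T_2]\ast[T_2,a_2,F_1]$.

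Step~2, however, contains a genuine error. You fix the \emph{top} vertex $y$ and claim that there are only finitely many $x$ with $x\preceq y$, bounded by $2^n$ ``choices of sibling pairs to collapse.'' This is false when $G$ is infinite. The descending link of $y$ at height $m$ is isomorphic to $\mathcal{E}_m(G,\phi)$, and its vertices carry the form $[1_m,(\vec g,\sigma),F_i^{(m-1)}]$ modulo the right $G\wr\mathrm{S}_{m-1}$-action; the fibers of the projection $\pi\colon\mathcal{E}_m(G,\phi)\to\mathcal{M}_m$ are infinite whenever $G$ is, since the residual labeling freedom is not killed by the right quotient. Concretely, for $\phi=\Delta$, $G=\mathbb{Z}$, and $y$ represented by a single caret paired with $1_2$, the vertices $x=[\text{caret},(k,0),\text{caret}]$ for $k\in\mathbb{Z}$ are pairwise distinct elements of $\mathcal{P}(G,\phi)$ with $x\prec y$. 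The error is that ``collapsing a sibling pair'' depends on the choice of representative of $y$ in its $G\wr\mathrm{S}_m$-orbit, and varying that choice produces genuinely new bottom vertices.

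The remedy is to fix the \emph{bottom} $x$ rather than the top, as the paper does. Because the splitting relation $x\preceq y$ is defined by right multiplication by $\lambda_J^{(k)}=[F_J^{(k)},(\vec 1_G,\mathrm{id}),1_{k+|J|}]$ with \emph{trivial} labels, a fixed bottom $x$ at height $k$ has exactly the tops parameterized by subsets $J\subseteq[k]$---no labeling freedom enters---so there are at most $2^k$ cubes with $x$ as bottom inside $X_{G,\phi}^{\leq n}$. Combining this with vertex-transitivity at each height yields finitely many cube orbits. (Alternatively, one could keep the top fixed but argue that the stabilizer $G\wr\mathrm{S}_m$ of $y$ acts transitively on the $k$-simplices of $\mathcal{E}_m(G,\phi)$, which the paper establishes later in the connectivity section; but that is a heavier tool than needed here.)
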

\begin{proof}

Note first that for each $k\geq 1$, $V_\phi(G)$ acts transitively on $\mathcal{P}(G,\phi) ^{= k}$. In fact, given two elements $[T_1, a_1,F_1]$ and $[T_2,a_2,F_2]$ in $\mathcal{P}(G,\phi) ^{= k}$ we can first assume $F_1$ and $F_2$ are the same using the equivalence relation. Then $[T_1, a_1,F_1] = [T_1, a_1a_2^{-1}, T_2]\ast [T_2,a_2,F_1]$. Thus there exists for each $1\leq k \leq n$ one orbit of vertices $x \in \stein^{\leq n}$ with $h(x)=k$. Given a vertex $x$ with $h(x)=k$, there are only finitely many cubes $C_1,\cdots, C_r$  in the sublevel set $\stein^{\leq n}$ that has $x$ as their bottom. If $C$ is a cube in $\mathcal{P}(G,\phi) ^{\leq n}$ such that its bottom is in the same orbit as $x$, then the cube $C$ must be in the same orbit as $C_i$ for some $1\leq i\leq r$.  It follows that there can only be finitely many orbits of cubes in \s{\stein^{\leq n}}. 
\end{proof}

\begin{proposition} \label{prop:inter-cont}
    For \s{x\leq y} with $x \not \prec y $, the complex \s{|(x, y)|} is contractible.
\end{proposition}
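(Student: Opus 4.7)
My plan is to find a canonical element $z_0 \in (x,y)$ and exhibit $(x,y)$ as deformation-retracting onto the subposet $(x, z_0]$, which is contractible because $z_0$ is its maximum. The key preliminary is parametrizing $[x,y]$ concretely: fixing a forest $F'$ whose $n$ roots match the $n = h(x)$ bottom components of $x$ such that $y$ lies in the orbit of $x \ast [F', (\vec{1}_G, \mathrm{id}), 1_k]$, I would use left cancellation in $\mathscr{F}(G, \phi)$ together with injectivity of $\phi$ to identify $[x, y]$ with the poset of subforests $F_z \subseteq F'$ containing every root of $F'$, ordered by inclusion of caret sets. Under this identification, the hypothesis $x \not\prec y$ is exactly the statement that $F'$ itself is not elementary, so some tree in $F'$ has depth $\geq 2$.

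I would then define $F_0 \subseteq F'$ as the elementary subforest placing a caret at root $j$ precisely when the $j$-th tree of $F'$ is non-trivial, and let $z_0$ be the corresponding element of $[x, y]$. Since $y > x$, the forest $F'$ contains at least one caret, so $F_0 \neq \varnothing$ and $z_0 > x$; and since $F'$ is not elementary, $F_0 \subsetneq F'$ and $z_0 < y$. Thus $z_0 \in (x, y)$. The paper's result that any pair with a common lower bound has a greatest lower bound then lets me define $f(z) := z \wedge z_0$, which under the subforest identification is simply $F_z \cap F_0$. To verify $f$ maps $(x, y)$ into itself, I would note that $f(z) \leq z_0 < y$ is immediate, and $f(z) > x$ follows because $F_z$ is non-empty (as $z > x$) and the ``subforest-containing-all-roots'' property forces every connected component of $F_z$ with any caret to contain a top-level caret of $F'$, which lies in $F_0$ by construction.

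Finally, $f$ is order-preserving, fixes $(x, z_0]$ pointwise, and satisfies $f(z) \leq z$ for every $z \in (x, y)$; by the standard fact that pointwise comparable order-preserving poset maps induce homotopic maps on order complexes, this makes $f$ a deformation retraction $|(x, y)| \to |(x, z_0]|$. Since $(x, z_0]$ has a maximum element, $|(x, z_0]|$ is a cone and hence contractible, yielding the claim.

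The main obstacle is carefully establishing the subforest parametrization of $[x, y]$: one has to unpack the orbit quotient and the expansion/reduction equivalence to check that distinct subforests of $F'$ give inequivalent orbits, which is precisely where the injectivity of the wreath recursion $\phi$ (the standing assumption of this section) enters. Once this identification is in hand, the rest of the argument is the standard Stein--Farley style retraction onto the maximal elementary splitting.
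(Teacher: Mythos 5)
Your proof is correct and takes essentially the same approach as the paper: the paper applies Quillen's conical contraction via the map $c(z) = $ largest elementary splitting of $x$ lying below $z$ (which is exactly your $z \wedge z_0$), with $y_0 = c(y)$ playing the role of your $z_0$, i.e., the unique maximal elementary splitting of $x$ below $y$, which lies strictly below $y$ precisely because $x \not\prec y$. Your formulation via the meet $z \wedge z_0$ and a deformation retraction onto $(x, z_0]$ is simply a repackaging of the same zigzag $\mathrm{id} \geq c \leq z_0$, and your concrete subforest parametrization supplies the details the paper delegates to its cited analogues.
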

\begin{proof}
The proof uses Quillen's conical contraction \cite[Section 1.5]{Qui78} just as \cite[Lemma 2.3]{BFM+16} or \cite[Lemma 4.4]{SW23}. For that we need to define a function $c:(x,y) \to (x,y)$ such that there is $y_0$ in $(x,y)$ such that $z\geq c(z)\leq y_0$. In fact, for any $z\in (x,y)$, let $c(z)$ be the largest element of $(x,z)$ satisfying $x\preceq c(z)$, and let $y_0=c(y)$. One checks that $c$ is well-defined and the condition  $z\geq c(z)\leq y_0$ is satisfied for any $z\in (x,y]$. Therefore \s{|(x, y)|} is contractible.
\end{proof}

\begin{proposition}
    \s{\stein} is homotopy equivalent to \s{\pg}, hence \s{\stein} is also contractible. 
\end{proposition}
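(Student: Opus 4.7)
Since the preceding lemma establishes that $|\mathcal{P}(G,\phi)|$ is contractible, it suffices to show that the inclusion $X_{G,\phi}\hookrightarrow|\mathcal{P}(G,\phi)|$ is a homotopy equivalence; in fact, I aim to exhibit $X_{G,\phi}$ as a deformation retract of $|\mathcal{P}(G,\phi)|$. By definition, the simplices of $|\mathcal{P}(G,\phi)|$ missing from $X_{G,\phi}$ are the chains $\sigma=[x_0<\cdots<x_n]$ with $x_0\not\preceq x_n$; I will call these \emph{bad} simplices, and refer to $(x_0,x_n)$ as the bad pair of $\sigma$. The plan is to perform a Stein--Farley-style collapse of the bad simplices.

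I would package the collapse as an acyclic discrete Morse matching on the bad simplices in the sense of Forman. For a bad simplex $\sigma$ as above, let $i^\ast$ be the largest index with $x_0\preceq x_{i^\ast}$, which exists because $x_0\preceq x_0$ and satisfies $i^\ast<n$ because $\sigma$ is bad. Setting $x^\ast=x_{i^\ast}$, the sub-poset $\{z:x^\ast\preceq z,\ z\leq x_{i^\ast+1}\}$ is a Boolean lattice (a sub-cube of the lattice of elementary expansions of $x^\ast$) and contains a unique maximum $z^\ast(\sigma)$ lying strictly between $x^\ast$ and $x_{i^\ast+1}$. A short verification shows that $z^\ast(\sigma)$ is comparable to every element of $\sigma$ and depends only on the pair $(x^\ast,x_{i^\ast+1})$. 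The matching then pairs $\sigma$ with the bad simplex $\sigma\mathbin{\triangle}\{z^\ast(\sigma)\}$ obtained by inserting or deleting $z^\ast(\sigma)$, which is a genuine involution whose unmatched cells are exactly the simplices of $X_{G,\phi}$.

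The main obstacle is verifying acyclicity of the matching. A hypothetical directed Hasse cycle must consist of bad simplices sharing the common pair $(x_0,x_n)$, along which $z^\ast$ is alternately added and deleted while some other element is removed in each non-matching step; exploiting the Boolean-lattice structure of the elementary intervals $[x^\ast,z^\ast]$, one argues that any such traverse strictly decreases a lexicographic weight such as $(h(x_n)-h(x^\ast),\,i^\ast)$, precluding closure. Proposition~\ref{prop:inter-cont} enters at this point as the geometric input controlling the non-elementary open subintervals that arise in the analysis of alternative matching steps. Once acyclicity is established, Forman's discrete Morse theorem delivers the required deformation retraction of $|\mathcal{P}(G,\phi)|$ onto $X_{G,\phi}$, and the contractibility of $X_{G,\phi}$ follows immediately.
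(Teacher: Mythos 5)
Your approach via a discrete Morse matching is genuinely different from the paper's argument, but as written it contains a gap. You claim that $z^\ast(\sigma)=\max\{z: x^\ast\preceq z,\ z\leq x_{i^\ast+1}\}$ lies \emph{strictly} between $x^\ast$ and $x_{i^\ast+1}$; this holds only when $x^\ast\not\preceq x_{i^\ast+1}$, and since $\preceq$ is explicitly \emph{not} transitive, that can fail. Concretely, let $\sigma=[x_0<x_1<x_2]$ where $x_1$ is a simple expansion of $x_0$ at a leaf $\ell$ and $x_2$ is a simple expansion of $x_1$ at one of the two new leaves created below $\ell$. Then $x_0\preceq x_1$ and $x_1\preceq x_2$, but $x_0\not\preceq x_2$ (the forest from $x_0$ to $x_2$ carries a depth-two tree at $\ell$, so it is not elementary). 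Thus $\sigma$ is bad, $i^\ast=1$, $x^\ast=x_1$, and $z^\ast(\sigma)=x_2\in\sigma$; your rule $\sigma\mapsto\sigma\mathbin{\triangle}\{z^\ast(\sigma)\}$ then produces $[x_0<x_1]$, which is a \emph{good} simplex, so the proposed involution does not preserve the set of bad simplices and the matching is ill-defined. Separately, the acyclicity step --- ``one argues that any such traverse strictly decreases a lexicographic weight'' --- is not yet an argument: this is the delicate part of any discrete Morse proof, and the asserted role of \Cref{prop:inter-cont} in controlling it is left unspecified.

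The paper's proof has a different shape. Rather than matching bad simplices, it builds $|\mathcal{P}(G,\phi)|$ back up from $X_{G,\phi}$ by attaching the closed intervals $|[x,y]|$ for bad pairs $x<y$ with $x\not\preceq y$, in increasing order of $h(y)-h(x)$. Each $|[x,y]|$ is a cone (hence contractible) and is glued along $|[x,y)\cup(x,y]|$, which is the unreduced suspension of $|(x,y)|$; by \Cref{prop:inter-cont} the latter is contractible, hence the attaching locus is contractible, so every attachment is a homotopy equivalence and contractibility of $X_{G,\phi}$ follows from that of $|\mathcal{P}(G,\phi)|$. If you want to pursue the discrete Morse route, the natural source of the matching is the conical contraction $c$ used in the proof of \Cref{prop:inter-cont}, applied fiberwise over the bad endpoint pair $(x_0,x_n)$ rather than over the local pair $(x^\ast,x_{i^\ast+1})$; but then you must verify that the chosen $z^\ast$ is comparable to the whole interior of the chain --- which is not automatic --- and give a genuine proof of acyclicity. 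These are real obstacles, not formalities.
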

\begin{proof}
We again follow the standard proof here via building up $  \pg  $  from  $\stein$ by attaching new subcomplexes in such a way that the homotopy type stays the same. Given a closed interval $[x,y]$, define $r([x,y]) = h(y)-h(x)$. We attach the contractible complexes $|[x,y]|$ for $x\not \npreceq y$ to $X_{G,\phi}$ in increasing order of $r$ value, attaching $|[x,y]|$ along $|[x,y)\bigcup (x,y]|$ which is a suspension of $ (x,y)$. But $(x,y)$ is contractible by  \Cref{prop:inter-cont}. So the attaching process does not change the homotopy type of the complexes. As being contractible is closed under taking union, we have \s{\stein} is homotopy equivalent to \s{\pg}.
\end{proof}

We proceed to analyze the finiteness property of the stabilizers. We remind the reader that an element lies in the stabilizer of a cube if it maps the cube to itself. Since a cube has only finitely many vertices, its stabilizer has a finite index subgroup fixing the cube pointwise.

\begin{lemma}\label{lemma:v-stab}
    Let \s{x\in \mathcal{P}(G,\phi)} with \s{h(x)=n}, then the stabilizer $\mathrm{Stab}_{V_\phi(G)}(x)$ of \s{x} in \s{V_\phi(G)} is isomorphic to \s{G \wr \mathrm{S}_n}.
\end{lemma}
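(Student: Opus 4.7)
The plan is to construct an explicit isomorphism $\Psi: G \wr \mathrm{S}_n \to \mathrm{Stab}_{V_\phi(G)}(x)$ via the groupoid structure on $\mathscr{F}(G,\phi)$. Fix a representative $\tilde{x} = [T, (\vec{1}_G, \mathrm{id}), 1_n] \in \mathscr{F}_{1,n}(G,\phi)$ of $x$, where $T$ is a finite rooted binary tree with $n$ leaves. As a morphism in the groupoid, $\tilde{x}$ is invertible with inverse $\tilde{x}^{-1} = [1_n, (\vec{1}_G, \mathrm{id}), T] \in \mathscr{F}_{n,1}(G,\phi)$, since $\tilde{x} \ast \tilde{x}^{-1}$ reduces to the identity of $V_\phi(G)$ and $\tilde{x}^{-1} \ast \tilde{x}$ reduces to the identity $[1_n, (\vec{1}_G, \mathrm{id}), 1_n]$ of $\mathscr{F}_{n,n}(G,\phi)$. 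I then conjugate the natural embedding $(\vec{g}, \sigma) \mapsto [1_n, (\vec{g}, \sigma), 1_n]$ of $G \wr \mathrm{S}_n$ into $\mathscr{F}_{n,n}(G,\phi)$ by $\tilde{x}$ to obtain
\[
\Psi(\vec{g}, \sigma) := \tilde{x} \ast [1_n, (\vec{g}, \sigma), 1_n] \ast \tilde{x}^{-1} = [T, (\vec{g}, \sigma), T] \in V_\phi(G).
\]

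That $\Psi$ is a group homomorphism is immediate from the groupoid multiplication combined with the wreath product structure on $G \wr \mathrm{S}_n$. Its image lies in $\mathrm{Stab}_{V_\phi(G)}(x)$ because a direct computation in the groupoid gives
\[
\Psi(\vec{g},\sigma) \ast \tilde{x} = \tilde{x} \ast [1_n, (\vec{g}, \sigma), 1_n],
\]
which is manifestly in the right $G \wr \mathrm{S}_n$-orbit of $\tilde{x}$, so $\Psi(\vec{g}, \sigma)\cdot x = x$. Conversely, given any $g \in \mathrm{Stab}_{V_\phi(G)}(x)$, by definition of the action on $x$ we have $g \ast \tilde{x} = \tilde{x} \ast [1_n, (\vec{g}_0, \sigma_0), 1_n]$ for some $(\vec{g}_0, \sigma_0) \in G \wr \mathrm{S}_n$. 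Multiplying on the right by $\tilde{x}^{-1}$ then yields $g = \Psi(\vec{g}_0, \sigma_0)$, proving surjectivity.

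For injectivity, by cancellation in the groupoid (valid since every morphism is invertible) it suffices to show that the embedding $(\vec{g}, \sigma) \mapsto [1_n, (\vec{g}, \sigma), 1_n]$ is injective. This follows because a diagram of the form $[1_n, (\vec{g}, \sigma), 1_n]$ is already reduced---the forest $1_n$ has no carets, so no reduction can be performed on either side---and by the uniqueness of the reduced form of a $G$-labeled paired forest diagram, distinct pairs $(\vec{g},\sigma)$ yield distinct equivalence classes. This last uniqueness statement is precisely where the injectivity of $\phi$ enters, and it is the only nontrivial ingredient; everything else is formal groupoid manipulation.
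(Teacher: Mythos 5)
Your strategy is the same as the paper's: conjugate the canonical copy $[1_n,(\vec{g},\sigma),1_n]$ of $G\wr \mathrm{S}_n$ in $\mathscr{F}_{n,n}(G,\phi)$ by a chosen representative of $x$, and check that this lands onto the stabilizer. The surjectivity and injectivity arguments are fine, and your observation that injectivity ultimately rests on uniqueness of reduced diagrams (hence on the injectivity of $\phi$) is a useful elaboration of the paper's terse ``obviously.''

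However, the opening step has a genuine error. You cannot in general ``fix a representative $\tilde x=[T,(\vec 1_G,\mathrm{id}),1_n]$'' of an arbitrary $x$ with $h(x)=n$. The right $G\wr\mathrm{S}_n$-orbit of such an element consists of diagrams whose range forest is still $1_n$ and whose domain tree is still $T$, so it certainly does not exhaust $\mathscr{F}_{1,n}(G,\phi)/(G\wr\mathrm{S}_n)$. Concretely, for $n=1$ the only $T$ with one leaf is the trivial tree, so $[T,(\vec 1_G,\mathrm{id}),1_1]$ is the identity of $V_\phi(G)$ and the only $x$ of height $1$ you can represent this way is the orbit of the identity; yet $\mathcal P(G,\phi)^{=1}=V_\phi(G)/G$ is huge. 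The repair is immediate and brings you exactly to the paper's proof: take an arbitrary $\alpha\in\mathscr{F}_{1,n}(G,\phi)$ with $[\alpha]=x$ in place of $\tilde x$ (dropping the explicit formula $\Psi(\vec g,\sigma)=[T,(\vec g,\sigma),T]$, which was specific to your special form) -- every subsequent manipulation you perform goes through verbatim. Alternatively, you could first invoke the transitivity of the $V_\phi(G)$-action on $\mathcal P(G,\phi)^{=n}$ (established in the proof of Proposition~\ref{cocompact}) to reduce to the single vertex you can handle, but then you should say so explicitly.
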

\begin{proof}
    Fix $\alpha \in \mathscr{F}(G,\phi)$ with $[\alpha]= x$ and suppose $f \in V_\phi(G)$ fixes $x$. Then $[f \ast \alpha] = [\alpha]$ which means $\alpha^{-1} \ast f\ast \alpha = [1_n,(\vec{g},\sigma),1_n)]$ for some $(\vec{g},\sigma) \in  G\wr S_{n}$. We can further map elements of this form to $G\wr \mathrm{S}_n$ by sending $[1_n,(\vec{g},\sigma),1_n)]$ to $(\vec{g},\sigma)$, hence we have a map from $\mathrm{Stab}_{V_\phi(G)}(x)$ to $ G\wr S_{n}$. It has an inverse  $(\vec{g},\sigma) \mapsto \alpha \ast [1_n,(\vec{g},\sigma),1_n)] \ast \alpha^{-1}$. Obviously, this map is a group homomorphism hence an isomorphism from $\mathrm{Stab}_{V_\phi(G)}(x)$ to $G \wr \mathrm{S}_n$.
\end{proof}

\begin{lemma}\label{lemma:cell-stab}
    Let $[x,y]$ be a cube in $X_{G,\phi}$ with $h(x) = n$. Then the stabilizer of $[x,y]$ is a finite index subgroup of $G \wr \mathrm{S}_n$. In particular, if $G$ is of type $F_m$, so is the stabilizer of $[x,y]$. 
\end{lemma}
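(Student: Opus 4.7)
The plan is to identify $\mathrm{Stab}_{V_\phi(G)}([x,y])$ as the simultaneous stabilizer of the extremal vertices $x$ and $y$, and then compute it explicitly via Lemma \ref{lemma:v-stab}. Since the height function $h$ is $V_\phi(G)$-invariant and $x$ (respectively $y$) is the unique vertex of the cube $[x,y]$ with minimum (respectively maximum) height, any element of $\mathrm{Stab}([x,y])$ must fix both $x$ and $y$. Conversely, the action preserves the partial order $\leq$, so any element fixing $x$ and $y$ sends the interval $[x,y]$ to itself; hence $\mathrm{Stab}([x,y]) = \mathrm{Stab}(x) \cap \mathrm{Stab}(y)$.

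Next, since $x \preceq y$, I would choose a representative $\alpha$ of $x$ so that $y = [\alpha \ast \lambda_J^{(n)}]$ for some subset $J \subseteq [n]$ indexing the leaves at which carets are added. By Lemma \ref{lemma:v-stab}, $\mathrm{Stab}(x) \cong G \wr \mathrm{S}_n$ via $(\vec{g},\sigma) \leftrightarrow \alpha \ast [1_n,(\vec{g},\sigma),1_n] \ast \alpha^{-1}$. Using left cancellation in the groupoid $\mathscr{F}(G,\phi)$, such an element fixes $y$ if and only if $[(\vec{g},\sigma) \ast \lambda_J^{(n)}] = [\lambda_J^{(n)}]$ in $\mathcal{P}(G,\phi)$. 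To unpack this, I would expand $[1_n,(\vec{g},\sigma),1_n]$ by adding carets at positions $j \in J$ on the right, which forces carets at positions $\sigma^{-1}(j)$ on the left with labels updated via the wreath recursion $\phi$. Multiplying the expanded diagram by $\lambda_J^{(n)}$ yields a reduced representative of the form $[F_{\sigma^{-1}(J)}^{(n)},(\vec{g}',\sigma'),1_{n+|J|}]$, whose orbit under the right $G \wr \mathrm{S}_{n+|J|}$-action equals that of $\lambda_J^{(n)} = [F_J^{(n)},(\vec{1},\mathrm{id}),1_{n+|J|}]$ precisely when $F_{\sigma^{-1}(J)}^{(n)} = F_J^{(n)}$, equivalently $\sigma(J) = J$. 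The uniqueness of the reduced form, which rests on the injectivity of $\phi$, is essential here.

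This identifies $\mathrm{Stab}([x,y])$ with $\{(\vec{g},\sigma) \in G \wr \mathrm{S}_n : \sigma(J) = J\} \cong (G \wr \mathrm{S}_{|J|}) \times (G \wr \mathrm{S}_{n-|J|})$, a subgroup of index $\binom{n}{|J|}$ in $G \wr \mathrm{S}_n$. For the ``in particular'' clause, since $G$ is of type $F_m$ each finite product $G^k$ is of type $F_m$; hence so is $G \wr \mathrm{S}_k$ (as a finite-index overgroup of $G^k$), and therefore the direct product $(G \wr \mathrm{S}_{|J|}) \times (G \wr \mathrm{S}_{n-|J|})$ is of type $F_m$. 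The main obstacle is the computation in the second paragraph: one must carefully track the interaction between the expansion rule and the right $G \wr \mathrm{S}_{n+|J|}$-action, and invoke the injectivity of $\phi$ to rule out spurious identifications among reduced diagrams.
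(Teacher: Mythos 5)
Your proof is correct and follows essentially the same route as the paper's: identify the stabilizer of the cube with $\mathrm{Stab}(x)\cap\mathrm{Stab}(y)$, conjugate by a representative $\alpha$ of $x$ to reduce to $G\wr\mathrm{S}_n$ via Lemma~\ref{lemma:v-stab}, and then unwind the condition $[(\vec{g},\sigma)\ast\lambda_J^{(n)}]=[\lambda_J^{(n)}]$ to obtain $\sigma(J)=J$. You supply somewhat more detail than the paper does (the explicit identification with $(G\wr\mathrm{S}_{|J|})\times(G\wr\mathrm{S}_{n-|J|})$, the index $\binom{n}{|J|}$, and the justification of the ``in particular'' via commensurability), but the underlying argument is the same.
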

\begin{proof}

First observe that an element of $V_\phi(G)$ fixes the cube $[x,y]$ if and only if it fixes both $x$ and $y$. Suppose $x = [\alpha]$, $h(x) =n$ and $f$ fixes $x$. By \Cref{lemma:v-stab}, we first have $\alpha^{-1}\ast f\ast \alpha $ lies in the subgroup of \s{\mathscr{F}_{n,n}(G,\phi)} which isomorphic to  $G \wr \mathrm{S}_n$.  Since $x \prec y$, there exists an elementary forest $F_J^{(n)}$ such that $y = [\alpha \ast \lambda_J^{(n)}]$, where $J$ is a subset of $[n]$. Now the element $f$ also fixes $y$, so $[f\ast \alpha \ast \lambda_J^{(n)}] = [\alpha \ast \lambda_J^{(n)}]$, which means $[\alpha^{-1} \ast f\ast \alpha \ast \lambda_J^{(n)}] =  [\lambda_J^{(n)}]$. If we write $\alpha^{-1} \ast f\ast \alpha = (\vec{g},\sigma)$,  to make $[\alpha^{-1} \ast f\ast \alpha \ast \lambda_J^{(n)}] =  [\lambda_J^{(n)}]$, we just need  $\sigma(J) = J$. So the stabilizer of $[x,y]$  can be identified with the subgroup $\{(\vec{g},\sigma) \in G \wr \mathrm{S}_n \mid \sigma(J) = J\}$ which is a finite index subgroup of $G \wr \mathrm{S}_n$. 
\end{proof}

\subsection{Connectivity of the descending links} \label{subsec:conn-dlink} Let us give a description of the descending link first.
Given a vertex $x$ in $\mathcal{P}(G,\phi)$. The \text{descending star} $\dst$ of $x$ in $X_{G,\phi}$ consists of all the cubes of the form $[y,x]$ where $y\preceq x$. For such a cube $C=[y,x]$ let $\mathrm{bot}(C) :=y$ be the map giving the bottom vertex. This is a bijection from the set of such cubes to the set $D(x):= \{y \in \mathcal{P}(G,\phi) \mid y\preceq x\}$. The cube $[y',x]$ is a face of $[y,x]$ if and only if $y'\in [y,x]$, if and only if $y'\geq y$. Thus a cube $C'$ is a face of $C$ if and only if $\mathrm{bot} (C') \geq \mathrm{bot}(C)$ and $\mathrm{bot}$ is an order reversing map. Therefore, a simplex in $\dlk (x)$ can be identified with an element $y$ in $\mathcal{P}(G,\phi)$ such that $y\prec x$, where the dimension of the simplex plus one is just the number of elementary splits needed to get from $y$ to $x$. And face relation is reverse of the relation $<$ on $D(x) \setminus x$. Since $X_{G,\phi}$ is a cube complex, $\dlk(x)$ is a simplicial complex.

Now for each vertex $x\in \mathcal{P}(G,\phi)$ with $h(x) =n$ and a simplex in the descending link corresponding to a cube $[y,x]$ with $x$ as top, we pick representatives $y_1$ and $x_1$ in $\mathscr{F}(G,\phi)$ for $y$ and $x$. Then there must exist an elementary forest $F_J^{(n-|J|)}$ and $(\vec{g},\sigma)$ in $G \wr \mathrm{S}_n$ such that $x_1=y_1\ast [F_J^{(n-|J|)}, (\vec{1_G}, id) , 1_n ]\ast [1_n, (\vec{g},\sigma), 1_n]=y_1\ast [F_J^{(n-|J|)},(\vec{g},\sigma), 1_n]$. So every bottom vertex $y$ has the form $y=[y_1]=[x_1\ast [1_n,(\vec{g},\sigma), F_J^{(n-|J|)}]]$ and every vertex of the form of right hand side gives a bottom vertex.  By left multiplying with $x_1^{-1}$ (or use left cancellation), $\dlk(x)$ is isomorphic to the following simplicial complex: $[1_n, (\vec{g},\sigma), F_J^{(n-|J|)}]$ for any $J$ nonempty forms a $(|J|-1)$-simplex, and face relation given by reversing of the ordering $\leq$ in $\mathcal{P}(G)$. In particular,  $\dlk(x)$ is a flag complex. 

%Now if $h(x) =n$, by left cancellation, $\dlk(x)$ is isomorphic to the following simplicial complex $\mathcal{E}_n (G,\phi)$: $[1_n, (\vec{g},\sigma), F_J^{(n-|J|)}]$ for $J$ nonempty, and face relation given by reversing of the ordering $\leq$ in $\mathcal{P}(G,\phi)$. 

Note that when $G$ is trivial, we denote the descending link by \s{\mathcal{E}_n(1)}. The simplices of \s{\mathcal{E}_n(1)} are of the form $[1_n, \sigma, F_J^{(n-|J|)}]$. But up to the equivalence relation on $\mathscr{F}_{n,n-|J|}(G,\phi)$ (i.e. right action by $S_{n-|J|}$), a vertex in $\mathcal{E}_n(1)$ is a pair $(r_1,r_2)$ (order matters here) of two different roots in $1_n$, and $k+1$ vertices form a $k$-simplex if the roots of the vertices are pairwise disjoint. This complex is closely related to the matching complex $\mathcal{M}_n$, see for example \cite[Section 3.2]{BFM+16}. Recall that $\mathcal{M}_n$ can be defined as follows: a vertex in $\mathcal{M}_n$ is a subset $\{r_1,r_2\}$ (order does not matter here) of two roots in $1_n$, and $k+1$ vertices form a $k$-simplex if they are pairwise disjoint. There is a natural surjection map from $u: \mathcal{E}_n(1) \to \mathcal{M}_n$ by forgetting the order. We are now ready to define a forgetful map $\pi:\mathcal{E}_n (G,\phi)\ra \mathcal{M}_n$ by defining it on vertices: given a vertex in $\mathcal{E}_n (G,\phi)$, we first represent it by $[1_{n}, (\vec{g},\sigma), 
 F ]$ in $\mathscr{F}_{n,n-|J|}(G,\phi)$,  map it first to $[1_{n}, \sigma, 
 F ]$, then to $u([1_{n}, \sigma, 
 F ])$. One checks that $\pi$ is well defined. Note that  the map $\mathcal{E}_n (G,\phi)\ra \mathcal{E}_n(1)$ sending $[1_{n}, (\vec{g},\sigma), 
 F ]$ to $[1_{n}, \sigma, 
 F ]$ is actually not well-defined  due to the right action of $G\wr S_n$ on $\mathscr{F}_{n,n-|J|}(G,\phi)$.

In summary, the forgetful map $\pi$ from $\mathcal{E}_n (G,\phi)$ to $\mathcal{M}_n$ maps representative $[1_{n}, (\vec{g},\sigma), 
 F ]$  to the collection of pairwise disjoint subsets:
 $$\{\{i,j\}\subseteq [n]:\text{there exist a caret in } F \text{ such that } (i)\sigma,(j)\sigma \text{ are leaves of this caret}\}.$$
 Since the positions of carets do not affect the map $\pi$,  this map is invariant under the right action of $G\wr S_{n-|J|}$ on representatives, hence it is well-defined. And this map is also a simplicial map since the face relation in $\mathcal{M}_n$ is the inclusion relation of sets. Also, this map is surjective by noting that the map $u$ is surjective.

We want to show that the map $\pi$ is a complete join so we can apply \cite[Proposition 3.5]{HW10} to show that  $\mathcal{E}_n (G,\phi)$ is highly connected. Let us recall the definition here.

\begin{definition}
A surjective simplicial map $\pi: Y\to X$ is called a \emph{complete join} if it satisfies the following properties:
\begin{enumerate} [label=(\arabic*)]
    \item $\pi$ is injective on individual simplices. 
    \item For each $p$-simplex $\sigma = \langle v_0,\cdots,v_p\rangle$ of $X$, $\pi^{-1}(\sigma)$ is the join $\pi^{-1}(v_0)\ast \pi^{-1}(v_1)\ast\cdots\ast \pi^{-1}(v_p)$.
 \end{enumerate}
\end{definition}

\begin{lemma}\label{lem:join}
    Let $[1_{n}, (\vec{g}_1,\sigma_1),   F_{i}^{(n-1)} ]$ and $[1_{n}, (\vec{g}_2,\sigma_2),   F_{J}^{(n-k-1)} ]$ be two simplicies of dimension \s{0} and \s{k} in \s{\mathcal{E}_n (G,\phi)}. Suppose that their images under the map $\pi$ are contained in a simplex of $\mathcal{M}_n$. Then there exists a simplex in $\mathcal{E}_n (G,\phi)$ containing both of them.
\end{lemma}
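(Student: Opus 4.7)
The plan is to build a $(k+1)$-simplex $C$ of $\mathcal{E}_n(G,\phi)$ realizing the combined matching $\pi(A)\cup\pi(B)$ and to verify that both $A$ and $B$ are faces of $C$. Write $\pi(A) = \{\{a_1,a_2\}\}$ and $\pi(B) = \{\{b_1^{(\ell)},b_2^{(\ell)}\}:1\le\ell\le k+1\}$; by hypothesis the $k+2$ pairs are pairwise disjoint subsets of $[n]$. (The degenerate case $\pi(A)\in\pi(B)$ can be dealt with separately by invoking that $\pi$ is forced to be injective on individual simplices, forcing $A$ to coincide with the corresponding vertex of $B$ and $C=B$.)

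First I would fix the combinatorial skeleton of $C$. Take $J'=\{1,2,\dots,k+2\}\subseteq[n-k-2]$ so that $F_{J'}^{(n-k-2)}$ has its $k+2$ carets at the first $k+2$ root positions, pairing leaves in the adjacent lex-blocks $(1,2),(3,4),\dots,(2k+3,2k+4)$. Pick $\sigma_C\in\mathrm{S}_n$ sending $\{a_1,a_2\}\mapsto\{1,2\}$ and $\{b_1^{(\ell)},b_2^{(\ell)}\}\mapsto\{2\ell+1,2\ell+2\}$ for each $\ell$, extended arbitrarily to a bijection on the remaining coordinates. For a labeling $\vec{g}_C\in G^n$ to be chosen below, set $C:=[1_n,(\vec{g}_C,\sigma_C),F_{J'}^{(n-k-2)}]$. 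By construction $\pi(C)=\pi(A)\cup\pi(B)$ and $C$ is a $(k+1)$-simplex of $\mathcal{E}_n(G,\phi)$.

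Next I would verify that $A$ and $B$ are faces of $C$. Under the isomorphism between $\dlk(x)$ and the left-cancelled complex, this face relation is equivalent to two inequalities in $\mathcal{P}(G,\phi)$, which unwind to finding unlabeled elementary forests $F_A'$ and $F_B'$ with $k+1$ and $1$ carets respectively (placed at the caret-positions of $J'$ complementary to those matching the other simplex), together with elements $(\vec{h}_A,\tau_A)\in G\wr\mathrm{S}_{n-1}$ and $(\vec{h}_B,\tau_B)\in G\wr\mathrm{S}_{n-k-1}$, such that the groupoid compositions $C\ast[F_A',(\vec{h}_A,\tau_A),1_{n-1}]$ and $C\ast[F_B',(\vec{h}_B,\tau_B),1_{n-k-1}]$ represent $A$ and $B$ respectively. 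These compositions are evaluated by bringing $F_{J'}^{(n-k-2)}$ and the source of each right factor into a common elementary expansion using the wreath recursion $\phi$, and then collapsing to a standard form.

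The main obstacle is choosing a single $\vec{g}_C$ that simultaneously accommodates both face relations. Here the disjointness of $\pi(A)$ and $\pi(B)$ is essential: the coordinates of $\vec{g}_C$ constrained by the $A$-composition sit at the $\pi(B)$-positions of the source, while those constrained by the $B$-composition sit at the $\pi(A)$-positions, so the two constraint systems decouple. Specifically, one selects $\vec{g}_C$ so that the labels at each paired position arise as wreath-expansions $\phi(h)$ of suitable $h\in G$ (making the associated reductions valid), and absorbs any remaining permutation or label discrepancy into the free parameters $(\vec{h}_A,\tau_A)$ and $(\vec{h}_B,\tau_B)$. A careful bookkeeping of how $\phi$ propagates labels through each simple expansion then completes the verification that $A$ and $B$ are both faces of $C$, proving the claim.
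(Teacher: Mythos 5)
Your overall strategy — building a $(k+1)$-simplex $C$ whose $\pi$-image is $\pi(A)\cup\pi(B)$ and then arguing both $A$ and $B$ appear among its faces — is a reasonable plan, but you never actually complete the step you yourself identify as the main obstacle. Everything hinges on exhibiting a concrete $(\vec{g}_C,\sigma_C)\in G\wr\mathrm{S}_n$ for which both face computations (which go through the $\phi$-expansion in the groupoid composition and then modding out by the right $G\wr\mathrm{S}_{n-1}$ and $G\wr\mathrm{S}_{n-k-1}$ actions) land on $A$ and $B$ respectively, and that is exactly what ``a careful bookkeeping of how $\phi$ propagates labels through each simple expansion then completes the verification'' does not do. Moreover, your description of the decoupling is garbled: you write that the $A$-face constrains the $\pi(B)$-coordinates of $\vec{g}_C$ and vice versa, but taking the face of $C$ at the $A$-caret removes the $B$-carets, turning the $\pi(B)$-positions into single roots in the target, where the labels can be absorbed by the right $G\wr\mathrm{S}_{n-1}$-action — so those entries are in fact unconstrained for the $A$-face, while the nontrivial constraint sits at the $\pi(A)$-positions (up to the $\phi$-twist coming from the caret root). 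As written, the decoupling claim is stated backwards and, more seriously, is asserted rather than proved.

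What makes this step genuinely delicate is that both face relations are up to the right action of different wreath products, and these actions interact with $\vec{g}_C$ through $\phi$ in a nontrivial way. The paper sidesteps the issue by first using the \emph{transitivity} of the $G\wr\mathrm{S}_n$-action on $k$-simplices to move $B$ into the standard form $[1_n,(\vec{1}_G,\mathrm{id}),F_{[k+1]}^{(n-k-1)}]$; then the disjointness hypothesis forces $\sigma_1^{-1}(i),\sigma_1^{-1}(i+1)\notin\{1,\dots,2k+2\}$, and after a further right $G\wr\mathrm{S}_{n-1}$-normalization of $A$ (making $\vec{g}_1$ trivial and $\sigma_1$ the identity on $\{1,\dots,2k+2\}$) the element $[1_n,(\vec{g}_1,\sigma_1),F_{[k+1]\cup\{i-k-1\}}^{(n-k-2)}]$ visibly has both $A$ and $B$ as faces without any further case analysis. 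If you want to keep your direct-construction route, you must (i) correctly identify which entries of $\vec{g}_C$ are pinned down by each face (the $\pi(A)$-entries and the $\pi(B)$-entries respectively, modulo $\phi$-expansion at the caret roots), (ii) show that the two constraint sets are indeed at disjoint coordinates so that a simultaneous solution exists, and (iii) check that the residual permutation constraints on $\sigma_C$ coming from $\sigma_1$ and $\sigma_2$ are compatible; none of these are currently in the proposal. I would suggest adopting the normalization trick, which eliminates all three of these checks at once.
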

\begin{proof}

   We may assume that $[1_{n}, (\vec{g}_1,\sigma_1),   F_{i}^{(n-1)} ]$ is not a face of $[1_{n}, (\vec{g}_2,\sigma_2),   F_{J}^{(n-k-1)} ]$. 
   There is an action of $G \wr \mathrm{S}_n$ on $\mathcal{E}_n (G,\phi)$ given by
$$
(\vec{g}',\sigma')[1_{n}, (\vec{g},\sigma), 
 E ]=[1_{n}, (\vec{g}',\sigma')(\vec{g},\sigma), 
 E ].
$$
Recall first that by definition $[1_{n}, (\vec{g},\sigma), 
 E ]$ represents the equivalence class under the right action of $G \wr S_{n-|J|}$. One checks now for each $k \geq 0$, the action is transitive on the $k$-simplices of $\mathcal{E}_n (G,\phi)$. We can therefore assume without loss of generality that $[1_{n}, (\vec{g}_2,\sigma_2),   F_{J}^{(n-k-1)}]=[1_{n}, ( \vec{1}_G,\mathrm{id}), F_{[k+1]}^{(n-k-1)}]$ where \s{F_{[k+1]}^{(n-k-1)}} is the forest with the first \s{k+1} components being  carets and the other \s{n-2k-2} components all single roots.

Since the image of $[1_{n}, ( \vec{1}_G,\mathrm{id}),   F_{[k+1]}^{(n-k-1)} ]$ under $\pi$ is $\{\{1,2\},\{3,4\},\cdots, \{2k+1,2k+2\}\}$,
and the image of $[1_{n}, (\vec{g}_1,\sigma_1),   F_{i}^{(n-1)} ]$ and $[1_{n}, ( \vec{1}_G,\mathrm{id}),   F_{[k+1]}^{(n-k-1)} ]$ under $\pi$ are different, then $\sigma^{-1}_1(i),\sigma^{-1}_1(i+1)$ must be disjoint from \s{\{1,2,\cdots, 2k+2\}}.
%Since the image of $[1_{n}, (\vec{g}_1,\sigma_1),   F_{i}^{(n-1)} ]$ and $[1_{n}, ( \vec{1}_G,\mathrm{id}),   F_{[k]}^{(n-k-1)} ]$ under $\pi$ are different,  $\sigma^{-1}_1(i),\sigma^{-1}_1(i+1)$ must be disjoint from \s{\{1,2,\cdots, 2k+2\}}.
Up to equivalence (i.e. right action of $G\wr S_{n-1}$) we can also assume that \s{i,i+1} are disjoint from \s{\{1,2,\cdots, 2k+2\}} and the first $2k+2$ coordinates of $\vec{g}_1$ are $1_G$, in particular we can assume that \s{\sigma_1} leaves \s{\{1,2,\cdots, 2k+2\}} vertex-wise fixed. Now it is easy to see that \s{[1_n,(\vec{g}_1,\sigma_1), F_{[k+1]}^{(n-k-1)}]=[1_n, (\vec{1}_G,\mathrm{id}), F_{[k+1]}^{(n-k-1)}]}, so \s{[1_n,(\vec{g}_1,\sigma_1), F_{[k+1]\cup \{i-k-1\}}^{(n-k-2)}]} is a simplex containing $[1_n, (\vec{1}_G,\mathrm{id}), F_{[k+1]}^{(n-k-1)}]$ and \s{[1_n,(\vec{g}_1,\sigma_1), F_i^{(n-1)}]} as faces.
\end{proof}

\begin{lemma}\label{lem:comp-join}
    \s{\pi} is a complete join.
\end{lemma}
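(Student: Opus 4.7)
The plan is to directly verify the two defining conditions of a complete join, using the explicit description of $\pi$ already established and using Lemma \ref{lem:join} as the key technical input.

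First I would handle condition (1), injectivity on simplices. A $(k-1)$-simplex of $\mathcal{E}_n(G,\phi)$ is represented by $[1_n,(\vec{g},\sigma),F_J^{(n-|J|)}]$ with $|J|=k$, and its vertices correspond bijectively to the $k$ carets of $F_J^{(n-|J|)}$. Under $\pi$ the caret with leaves labelled $\ell, \ell+1$ in $F_J^{(n-|J|)}$ is sent to the pair $\{(\ell)\sigma^{-1},(\ell+1)\sigma^{-1}\}\subseteq[n]$. Since distinct carets have disjoint pairs of leaves and $\sigma^{-1}$ is a bijection, these $k$ pairs are distinct vertices of $\mathcal{M}_n$. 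Hence $\pi$ is injective on each simplex.

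Next I would establish condition (2). Fix a $p$-simplex $\tau=\langle v_0,\dots,v_p\rangle$ of $\mathcal{M}_n$ and write $\pi^{-1}(v_i)$ for the set of vertices of $\mathcal{E}_n(G,\phi)$ mapping to $v_i$. The inclusion $\pi^{-1}(\tau)\subseteq \pi^{-1}(v_0)\ast\cdots\ast\pi^{-1}(v_p)$ is immediate from condition (1): any $p$-simplex of $\mathcal{E}_n(G,\phi)$ lying over $\tau$ has $p+1$ distinct vertices which, by injectivity on simplices, must map bijectively to $\{v_0,\dots,v_p\}$, so it decomposes as a join of one vertex from each $\pi^{-1}(v_i)$.

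For the reverse inclusion I would argue by induction on $p$ that any choice of vertices $w_i\in\pi^{-1}(v_i)$, $i=0,\dots,p$, spans a $p$-simplex of $\mathcal{E}_n(G,\phi)$. The base case $p=0$ is trivial. For the inductive step, suppose $w_0,\dots,w_{p-1}$ already span a $(p-1)$-simplex $\rho$ of $\mathcal{E}_n(G,\phi)$; by condition (1) we have $\pi(\rho)=\{v_0,\dots,v_{p-1}\}$. Since $v_0,\dots,v_p$ are pairwise disjoint, $\pi(w_p)$ together with $\pi(\rho)$ lies in the simplex $\tau$ of $\mathcal{M}_n$. Lemma \ref{lem:join} therefore produces a simplex of $\mathcal{E}_n(G,\phi)$ containing both $w_p$ and $\rho$, which shows $w_0,\dots,w_p$ span a $p$-simplex. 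Combining the two inclusions gives $\pi^{-1}(\tau)=\pi^{-1}(v_0)\ast\cdots\ast\pi^{-1}(v_p)$, as required. The only nontrivial ingredient is Lemma \ref{lem:join}, which has already been proved; once that is in hand the complete-join property is essentially bookkeeping about injectivity and induction.
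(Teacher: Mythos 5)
Your proof is correct and follows essentially the same route as the paper: you verify injectivity on simplices, derive the inclusion $\pi^{-1}(\tau)\subseteq \pi^{-1}(v_0)\ast\cdots\ast\pi^{-1}(v_p)$ from that injectivity, and prove the reverse inclusion by induction on $p$ with \Cref{lem:join} supplying the key step. The only difference is that you spell out the caret-counting argument for why $\pi$ is injective on individual simplices, which the paper dismisses as immediate from the definition.
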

\begin{proof}
    By definition, it is easy to see that the map $\pi$ is surjective and injective on individual simplices. Let $e$ be a $k$-simplex in $\mathcal{M}_n$ with vertices $v_{0}, \ldots, v_{k}$. To prove $\pi$ is a complete join, it just remains to show

$$
\pi^{-1}(e)=\underset{0\leq j\leq k}{*}\pi^{-1} (v_{j} )
$$

`` $\subseteq$ ": For every $k$-simplex in $\pi^{-1}(e)$, it's different vertices lie in different $\pi^{-1} (v_{j} )$ for $0\leq j\leq k$. Hence, all such $k$-simplex is contained in the right hand side.
%This inclusion just says that vertices in $\pi^{-1}(e)$ that are connected by an edge map to distinct vertices under $\pi$ which is clear.

``$\supseteq$ ": We prove this by induction on $k$. Suppose $\pi^{-1}(e) \supseteq \underset{0\leq j\leq k}{*} \pi^{-1} (v_{j} )$ for $k=r$. Now given an $(r+1)$-simplex $e'= \langle v_{0}, \cdots, v_{r+1} \rangle$ which is a join of $e= \langle v_{0}, \cdots, v_{r} \rangle$ and $v_{r+1}$. We just need to show for any simplex $\bar{e}, \bar{v}_{r+1} \in \mathcal{E}_n (G,\phi)$ such that $\pi(\bar{e})=e, \pi (\bar{v}_{r+1})=v_{r+1}$, we have a $(r+1)$-simplex that contains both $\bar{e}$ and $v_{r+1}$. But this follows immediately from \Cref{lem:join} so we complete the proof.
\end{proof}

\begin{corollary}\label{connect}
     $\mathcal{E}_n(G,\phi)$ is  \s{(\lfloor\frac{n+1}{3}\rfloor-2) } -connected.
\end{corollary}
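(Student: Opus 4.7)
The plan is to transfer connectivity from the matching complex $\mathcal{M}_n$ to $\mathcal{E}_n(G,\phi)$ via the forgetful map $\pi$ of \Cref{lem:comp-join}.

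First, I would invoke the classical connectivity estimate for the matching complex: it is a standard result (due to Bouc, and also appearing in work of Athanasiadis and others; see e.g.\ the discussion in \cite[Section 3.2]{BFM+16}) that $\mathcal{M}_n$ is $(\lfloor\frac{n+1}{3}\rfloor-2)$-connected. Since this is exactly the connectivity bound we want to conclude for $\mathcal{E}_n(G,\phi)$, the strategy is then to lift connectivity along $\pi$.

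Next, I would apply \cite[Proposition 3.5]{HW10}, which states that if $\pi\colon Y \to X$ is a complete join of simplicial complexes, then $Y$ is at least as connected as $X$. By \Cref{lem:comp-join}, $\pi\colon \mathcal{E}_n(G,\phi) \to \mathcal{M}_n$ is a complete join, so this proposition applies directly and yields that $\mathcal{E}_n(G,\phi)$ is $(\lfloor\frac{n+1}{3}\rfloor-2)$-connected.

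There is no genuine obstacle left at this stage; all the real work has been done in establishing that $\pi$ is a complete join (\Cref{lem:join} and \Cref{lem:comp-join}). The only care needed is to make sure that the connectivity bound quoted for $\mathcal{M}_n$ matches the conventions used here (in particular that an empty or disconnected complex satisfies the appropriate convention for low $n$, where $\lfloor\frac{n+1}{3}\rfloor-2$ is $-1$ or $-2$ and the statement becomes vacuous or reduces to non-emptiness).
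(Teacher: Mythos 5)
Your overall strategy matches the paper's: quote the connectivity of the matching complex $\mathcal{M}_n$ and push it through the complete join $\pi$ using \cite[Proposition 3.5]{HW10}. However, you have misstated that proposition, and the misstatement hides a genuine gap. Proposition 3.5 of Hatcher--Wahl does \emph{not} say that a complete join over an $n$-connected complex is $n$-connected; that statement is false. For example, if $X$ is a single edge (hence contractible) and $\pi^{-1}$ of each endpoint consists of two vertices, the resulting complete join is a $4$-cycle, which is not simply connected. What \cite[Proposition 3.5]{HW10} actually says is that a complete join over a \emph{weakly Cohen--Macaulay} complex of dimension $n$ is again weakly Cohen--Macaulay of dimension $n$. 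The wCM hypothesis is essential: it controls the connectivity of links, which is exactly what compensates for the fact that the fibers $\pi^{-1}(v)$ can be arbitrarily large.

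So to make your argument work, you must first verify that $\mathcal{M}_n$ is wCM of dimension $\lfloor\frac{n+1}{3}\rfloor-1$, which means checking both that $\mathcal{M}_n$ is $(\lfloor\frac{n+1}{3}\rfloor-2)$-connected \emph{and} that the link of every $k$-simplex is $(\lfloor\frac{n+1}{3}\rfloor-k-3)$-connected. The paper does this by observing that the link of a $k$-simplex in $\mathcal{M}_n$ is isomorphic to $\mathcal{M}_{n-2k}$, and then applying the same connectivity estimate to that smaller matching complex. Only after this wCM bookkeeping can you apply \cite[Proposition 3.5]{HW10} to conclude that $\mathcal{E}_n(G,\phi)$ is wCM of the same dimension, and in particular $(\lfloor\frac{n+1}{3}\rfloor-2)$-connected. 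Please add that intermediate step; as written your invocation of the key lemma does not apply.
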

\begin{proof}
The matching complex $\mathcal{M}_n$ is \s{(\lfloor\frac{n+1}{3}\rfloor-2) }-connected, see for example \cite[Proposition 3.6]{BFM+16}. And observe that the link of a $k$-simplex is isomorphic to  $\mathcal{M}_{n-2k}$ , which is \s{(\lfloor\frac{n-2k+1}{3}\rfloor-2) } -connected. Hence \s{\mathcal{M}_n} is wCM of dimension \s{(\lfloor\frac{n+1}{3}\rfloor-1) }. Recall that a simplicial complex  is wCM of dimension $n$ if it is $(n-1)$-connected and the link of each $k$-simplex is $(n-k-2)$-connected.  Now by \cite[Proposition 3.5]{HW10}, \s{\mathcal{E}_n (G,\phi)} is also wCM of dimension \s{(\lfloor\frac{n+1}{3}\rfloor-1) }, in particular, it is \s{(\lfloor\frac{n+1}{3}\rfloor-2) } -connected.
\end{proof}

\begin{thm}
    $X_{G,\phi}$ is an infinite dimensional complete CAT(0) cube complex.
\end{thm}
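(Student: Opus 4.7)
The proof naturally breaks into three parts: infinite-dimensionality, the CAT(0) property, and completeness.

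For infinite-dimensionality, the plan is to exhibit cubes of arbitrary dimension. For each $n \geq 1$, pick any finite rooted binary tree $T$ with $n$ leaves and set $x = [T,((1_G,\ldots,1_G),\mathrm{id}),T] \in \mathcal{P}(G,\phi)$, so $h(x) = n$. Let $y$ be the elementary splitting of $x$ obtained by attaching a caret at every leaf. Then $x \preceq y$ and $[x,y]$ is an $n$-cube of $X_{G,\phi}$, so $X_{G,\phi}$ contains cubes of every dimension.

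For the CAT(0) property, I will apply Gromov's link condition (Bridson--Haefliger, Chapter II.5): a simply connected cube complex is CAT(0) if and only if the link of every vertex is a flag simplicial complex. Simple connectedness follows from the contractibility of $X_{G,\phi}$ already established in the previous proposition. For the link condition, the key observation is that each cube $[x,y]$ of $X_{G,\phi}$ has a unique bottom $x$ and top $y$ with respect to $\preceq$, and any intermediate vertex $v$ sits in a unique ``lower face'' containing $x$ and ``upper face'' containing $y$ whose product reconstructs the cube. This gives a join decomposition $\Lk(v,X_{G,\phi}) = \alk(v) * \dlk(v)$. The descending link was already identified as a flag complex in Section~\ref{subsec:conn-dlink}. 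A symmetric analysis handles $\alk(v)$: its simplices correspond to elementary upward splittings of $v$, which form a Boolean lattice, and a set of such splittings spans a simplex precisely when they are pairwise compatible, so $\alk(v)$ is flag by the same nerve-theoretic argument. The join of two flag complexes is flag, giving the link condition.

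For completeness, the plan is to exploit the exhaustion $X_{G,\phi} = \bigcup_{n \geq 1} X_{G,\phi}^{\leq n}$ together with Bridson's completeness theorem for $M_{\kappa}$-polyhedral complexes with finitely many shapes (Bridson--Haefliger, Theorem I.7.19). Each sublevel complex $X_{G,\phi}^{\leq n}$ consists of cubes $[x,y]$ with $h(y) \leq n$ and hence has dimension at most $n-1$; it therefore has only finitely many cell-isometry types and is complete. Any Cauchy sequence in $X_{G,\phi}$ has bounded diameter and, by a straightforward argument using that cubes through a fixed basepoint of bounded metric diameter involve only finitely many carets of splittings, eventually lies in some $X_{G,\phi}^{\leq n}$, hence converges. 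The main obstacle I anticipate is verifying the join decomposition $\Lk(v) = \alk(v) * \dlk(v)$ at an interior vertex $v$ of a cube: this requires checking that the collection of cubes through $v$ correctly factors through upward and downward elementary splittings, which uses the fact that every cube through $v$ is determined uniquely by its upper and lower halves relative to $v$.
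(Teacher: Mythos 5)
The central step of your CAT(0) argument is wrong. You claim that the link of a vertex $v$ decomposes as a join $\Lk(v) = \alk(v) \ast \dlk(v)$. This is not true, and the error is precisely the subtle point in this kind of proof. Given a single cube $[x,y]$ through $v$, it does factor as $[x,v]\times[v,y]$, so the link of $v$ \emph{inside that one cube} is a join. But you cannot globalize this: given an arbitrary simplex $\tau_-\in\dlk(v)$ (a downward reduction) and $\tau_+\in\alk(v)$ (an upward elementary splitting), the cube $[x,y]$ with $[x,v]$ realizing $\tau_-$ and $[v,y]$ realizing $\tau_+$ need only exist if $x\preceq y$, and $\preceq$ is not transitive. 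Concretely, write $v = [T,(\vec g,\sigma),F]$ with $F$ having $n$ feet. A vertex in $\dlk(v)$ corresponds (via $\pi$) to a \emph{pair} of trees of $F$ being merged, and a vertex in $\alk(v)$ to a \emph{single} tree of $F$ being split; these two vertices span an edge in $\Lk(v)$ only when the pair and the singleton are disjoint. For example, if $h(v)=2$, the unique pair $\{1,2\}$ in $\dlk(v)$ cannot be joined with either vertex of $\alk(v)$, so $\Lk(v)$ is a disjoint union, not a join. The paper instead observes directly that the condition for any collection of vertices of $\Lk(v)$ (mixing ascending and descending ones) to span a simplex is \emph{pairwise} disjointness of the associated trees and pairs of trees, which is exactly the flag condition; no join is needed or available. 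Your conclusion is right, but your justification would not survive scrutiny.

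Your completeness argument is also on shakier ground than the paper's. You exhaust $X_{G,\phi}$ by the sublevel complexes $X_{G,\phi}^{\le n}$, each of finite dimension, apply Bridson's finitely-many-shapes theorem to each, and then assert that any Cauchy sequence eventually lies in some $X_{G,\phi}^{\le n}$. That last step is not ``straightforward'': the $\ell^2$ metric ball $B(v_0,R)$ around a vertex is \emph{not} contained in any $X_{G,\phi}^{\le n}$ (an $n$-cube with bottom $v_0$ contains points within $\ell^2$-distance $1$ of $v_0$ for every $n\le h(v_0)$, and the height of vertices traversed by a CAT(0) geodesic is not obviously controlled by the $\ell^2$ length), and even if it were, the intrinsic metric of $X_{G,\phi}^{\le n}$ need not agree with the restriction of the metric of $X_{G,\phi}$ unless you check convexity. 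The paper sidesteps all of this by invoking Leary's Theorem A.6 (the same reference whose Appendix B.8 supplies the infinite-dimensional Gromov link criterion, which you should also be citing in place of Bridson--Haefliger): for an infinite-dimensional CAT(0) cube complex, completeness follows if every ascending chain of cubes terminates, and here the dimension of a cube through a vertex $v$ is bounded by $h(v)$. That one-line observation replaces your entire exhaustion argument.

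Finally, a small error in your infinite-dimensionality step: for $x = [T,((1_G,\dots,1_G),\mathrm{id}),T]$ with $T$ a tree, the range forest is the single tree $T$, so $h(x)=1$, not $n$. You want $x = [T,(\vec 1_G,\mathrm{id}),1_n]$, whose range is the trivial forest $1_n$; then $h(x)=n$ and splitting all $n$ feet gives an $n$-cube. The idea is right, but the formula as written computes the wrong height.
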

\proof 
We have seen that $X_{G,\phi}$ is a cube complex. We first show that $X_{G,\phi}$ is CAT(0). Note that the dimension of cubes in $X_{G,\phi}$ is not bounded.  So by Gromov's link condition \cite[Appendix B.8]{Lea13} for infinite dimensional CAT(0) cube complex, it suffices to show that the link at each vertex $X_{G,\phi}$ is a flag complex.

Given a vertex $x$ in $X_{G,\phi}$, the vertices in the link $\lk(x)$ either lie in the descending link $\dlk(x)$ or the ascending link $\alk(x)$. Let us represent $x$ by $[T, (\vec{g},\sigma), F]$ where the tree $T$ and the forest $F$ have the same number of leaves, and $F$ has $n$ feet.  Now using the isomorphism between $\dlk(x)$ and $\mathcal{E}_n(G,\phi)$ and the map $\pi$, we can associate with each vertex in $\dlk(x)$ a pair of trees in $F$. By Lemma \ref{lem:comp-join}, given any $k+1$ vertices in $\dlk(x)$, they form a $k$-simplex if and only if their associated tree pairs in $F$ are pairwise disjoint. Similarly, a vertex in $\alk(x)$ can be obtained by performing  an elementary splitting at some tree in $F$. So a vertex in $\alk(x)$ can be identified with a tree in $F$. And any $k+1$ trees in $F$ form a $k$-simplex in $\alk(x)$. In particular, we see that $\alk(x)$ is an $(n-1)$-simplex.  Notice now that for a $k_1$-simplex in $\dlk(x)$ and a $k_2$-simplex in $\alk(x)$, they form a $(k_1+k_2+1)$-simplex if and only if the trees associated with them are pairwise disjoint. Here we use the fact that the top vertex of a cube in $X_{G,\phi}$ can be obtained from its bottom vertex via an elementary splitting. Therefore, $\lk(x)$ is a flag complex.

Now by \cite[Theorem A.6]{Lea13}, to show that $X_{G,\phi}$ is complete, we only need to prove that every ascending sequence of cubes terminates. But given a vertex $x$ in $X_{G,\phi}$, the maximal dimension of cubes containing $x$ is bounded by its height which is finite. Hence, every ascending sequence of cubes must terminate.

\qed

\begin{corollary}
    $V_{\phi}(G)$ does not have property (T).
\end{corollary}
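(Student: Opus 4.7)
The plan is to combine the CAT(0) cube complex structure on $X_{G,\phi}$ established in the preceding theorem with the classical rigidity result (due to Niblo--Reeves in the finite-dimensional case, extended by Niblo--Roller and others to the general setting) stating that a group with Kazhdan's property (T) acting cellularly on a complete CAT(0) cube complex must have a global fixed point. It therefore suffices to show that $V_\phi(G)$ has no global fixed point on $X_{G,\phi}$.

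Recall that the height function $h$ on $\mathcal{P}(G,\phi)$ is $V_\phi(G)$-invariant, and by \Cref{lemma:v-stab} the stabilizer of any vertex at height $n$ is isomorphic to $G\wr \mathrm{S}_n$. Since $V_\phi(G)$ acts transitively on the infinite set of height-$n$ vertices (as observed in the proof of \Cref{cocompact}), each such stabilizer is a proper subgroup of $V_\phi(G)$. Hence no vertex of $X_{G,\phi}$ is fixed by the whole of $V_\phi(G)$.

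To rule out a fixed point lying in the interior of some higher-dimensional cube, I would argue as follows. If $p\in X_{G,\phi}$ were fixed by $V_\phi(G)$, then the unique smallest closed cube $[x,y]$ containing $p$ would be setwise preserved. Because $h$ is $V_\phi(G)$-invariant and attains its minimum on the vertex set of $[x,y]$ uniquely at the bottom vertex $x$, this $x$ would itself be globally fixed -- contradicting the previous paragraph.

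The main technical point is to invoke the rigidity theorem in a form that applies to the infinite-dimensional (but complete) setting; the completeness of $X_{G,\phi}$ is precisely what the preceding theorem supplies, and the $V_\phi(G)$-action is cellular. With these inputs the contradiction yields that $V_\phi(G)$ cannot have property (T).
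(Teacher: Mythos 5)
Your proof is correct and takes essentially the same approach as the paper: exploit the action of $V_\phi(G)$ on the complete CAT(0) cube complex $X_{G,\phi}$, show there is no global fixed point using the stabilizer computation and transitivity on the infinite set of height-$n$ vertices, and then invoke the cube-complex fixed-point rigidity for Kazhdan groups. The paper phrases the final step in terms of unbounded orbits: no fixed point implies unbounded orbits by the Bruhat--Tits fixed point theorem \cite[II.2.7]{BH99}, and then \cite[Theorem 1.2]{CDH10} (Chatterji--Dru\c{t}u--Haglund) gives the contradiction with property (T); the CDH result is also the correct reference for the infinite-dimensional cube-complex setting that you flag as the ``main technical point'' (Niblo--Roller is about the Haagerup property, not (T), so that attribution is off). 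One nice addition in your write-up is the explicit argument that a fixed point cannot lie in the interior of a cube, using $h$-invariance and the uniqueness of the bottom vertex of the carrier cube -- the paper passes over this silently, so your version is in this respect more complete.
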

\proof 
By our construction so far, $V_{\phi}(G)$ acts on the infinite-dimensional complete CAT(0) cube complex $X_{G,\phi}$. By \Cref{lemma:cell-stab}, the stabilizers are finite index subgroups of groups of the form $G\wr S_n$. In particular,  $V_{\phi}(G)$ acts on $X_{G,\phi}$ without a global fixed point. This further implies that $V_{\phi}(G)$ acts on $X_{G,\phi}$ with unbounded orbits \cite[II.2.7]{BH99}.   Hence $V_{\phi}(G)$ cannot have property (T) \cite[Theorem 1.2]{CDH10}. 
\qed
\begin{remark}
When the corresponding action of $G$ on the binary tree is trivial, one has a surjection from $V_\phi(G) \to V$. And one can conclude that $V_{\phi}(G)$ does not have property (T) from the fact that $V$ already does not have this \cite{Fa03b}. But once the action is nontrivial, it might be hard for  $V_\phi(G)$ to surject to $V$. For example, take $G$ to be  Grigorchuk’s first group, the corresponding group is R\"over’s group and is simple \cite{Ro99}. 
\end{remark}

\begin{proof}[Proof of Theorem \ref{finiteness-main}]
    We apply \Cref{lemma:Morse-brown-cri}. Consider the action \s{V_\phi(G)} on the cube complex \s{X_{G,\phi}} with the height function $h$. By \cref{cocompact}, the action of $V_\phi(G)$ on $X_{G,\phi}$ is cocompact on each skeleton;  by \Cref{lemma:cell-stab}, the stabilizer of a cell \s{[z,y]} is of type \s{F_m} since $G$ is of type $F_m$; and by \cref{connect} the high connectivity of the descending link of a vertex  is guaranteed when its height is big enough. So by \cref{lemma:Morse-brown-cri}, \s{V_\phi(G)} is of type \s{F_m}.
\end{proof}

Combining Theorem \ref{finiteness-main} with \Cref{cor:fin-VG-imply-G}, we have:

\begin{corollary} \label{thm:fin-VG}
    \s{V(G)} is of type \s{F_m} if and only if \s{G} is of type \s{F_m}.
\end{corollary}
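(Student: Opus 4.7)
The plan is to deduce this corollary by combining two results already established in the paper. The corollary concerns the specific wreath recursion $\Delta$ (the diagonal one), with $V(G) = V_\Delta(G)$, so both directions reduce to invoking earlier machinery.

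For the forward direction, suppose $G$ is of type $F_m$. Since $\Delta(g) = ((g,g),\mathrm{id}_{\mathrm{S}_2})$, the map $\Delta$ is manifestly injective (if $\Delta(g)$ is the identity then in particular $g=1$). I would first record this injectivity in one short sentence, then apply \Cref{finiteness-main} to the injective wreath recursion $\Delta$, concluding that $V(G) = V_\Delta(G)$ is of type $F_m$.

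For the reverse direction, assume $V(G)$ is of type $F_m$. This is exactly the content of \Cref{cor:fin-VG-imply-G}, which was proved using the quasi-retraction $\rho_G : V(G) \to G$ constructed from the labeled tree diagrams (via \Cref{quasi-retract} and Alonso's quasi-retract theorem for $m\ge 2$, together with the union argument for $m=1$).

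Neither direction poses a real obstacle, since the two nontrivial ingredients have already been assembled earlier in the paper; the only thing to verify explicitly is the injectivity of $\Delta$, which is immediate. I would therefore present the argument as two or three lines, citing \Cref{finiteness-main} and \Cref{cor:fin-VG-imply-G} directly.
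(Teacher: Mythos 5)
Your proposal is correct and matches the paper's argument exactly: the paper states the corollary as an immediate consequence of combining \Cref{finiteness-main} (applied to the injective wreath recursion $\Delta$) with \Cref{cor:fin-VG-imply-G}. Your extra sentence verifying that $\Delta$ is injective is a sensible explicit check of the hypothesis of \Cref{finiteness-main}, which the paper leaves implicit.
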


 The  Corollary \ref{thm:fin-VG} does not hold for arbitrary injective wreath recursions, see Remark \Cref{rem:RN_gp}. In general, given an injective wreath recursion of the form  $\phi:G\to G\times G$ (in this case of  action of $G$ on the  infinite rooted binary tree is trivial), Tanushevski also defines $\phi$-labeled Thompson group $F_{\phi}(G)$ and $T_{\phi}(G)$ and studies finiteness properties of $F_\phi(G)$. Given a presentation of $G$, he  determine a presentation of $F_{\phi}(G)$ \cite{Tan16}. In particular,
he proves that $F_\phi(G)$ is finitely generated (resp. finitely presented) if G is finitely generated (resp. finitely
presented).  We record the following improvement of his  result.

    \begin{thm}\label{thm-fin-FT-G}
        Let $\phi$ be an injective wreath recursion of the form $\phi:G\to G\times G$. If $G$ is of type $F_n$, so are $F_\phi(G)$ and $T_\phi(G)$. 
    \end{thm}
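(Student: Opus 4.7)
The plan is to adapt the proof of \Cref{finiteness-main} to $F_\phi(G)$ and $T_\phi(G)$, building analogous Stein--Farley cube complexes and verifying the hypotheses of the Morse-theoretic Brown's criterion (\Cref{lemma:Morse-brown-cri}). Since $\phi$ has trivial $\mathrm{S}_2$-part, the subgroup $F_\phi(G) \leq V_\phi(G)$ (resp.\ $T_\phi(G)$) consists of those elements whose underlying permutation of leaves is order-preserving (resp.\ cyclic-order-preserving), which is exactly Tanushevski's definition.

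First, I would construct cube complexes $X^F_{G,\phi}$ and $X^T_{G,\phi}$ mimicking the construction of $X_{G,\phi}$: the vertex set is $\bigcup_{n\ge 1} \mathscr{F}_{1,n}(G,\phi)/H_n$, where $H_n = G^n$ in the $F$-case and $H_n = G^n \rtimes C_n$ in the $T$-case, with $C_n$ the cyclic subgroup of $\mathrm{S}_n$ generated by an $n$-cycle. The poset orders $\le$ and $\preceq$ and the resulting cubical structure are defined verbatim. Contractibility follows from the same directedness argument as for $X_{G,\phi}$, since any two height-$n$ vertices still admit a common splitting lying in the respective subgroup. Cocompactness of the sublevel sets holds because $F_\phi(G)$ (resp.\ $T_\phi(G)$) still acts transitively on vertices of each fixed height — to translate $[T,\vec{g},1_n]$ to $[T',\vec{g}',1_n]$ one uses $[T',\vec{g}'\vec{g}^{-1},T]$, which automatically has the required permutation type. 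By the same calculation as in \Cref{lemma:cell-stab}, the stabilizer of a cube whose bottom vertex has height $n$ is a finite-index subgroup of $G^n$ (resp.\ $G\wr C_n$), hence of type $F_m$ whenever $G$ is.

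The main obstacle, as in the $V$-case, is establishing the connectivity of the descending links. The forgetful projection analogous to the map $\pi$ of \Cref{subsec:conn-dlink} now lands in a smaller target: since in $F_\phi(G)$ one can only contract carets whose two feet are consecutive in the natural linear order (resp.\ cyclically consecutive for $T_\phi(G)$), the matching complex $\mathcal{M}_n$ should be replaced by the independence complex of the path graph $P_{n-1}$ in the $F$-case, and by the independence complex of the cycle $C_n$ in the $T$-case. I expect the complete-join argument of \Cref{lem:comp-join} to transfer directly, with the reduced residual $H_n$-action playing the role of $G\wr\mathrm{S}_n$ in verifying transitivity on simplices over a fixed image. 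The hardest part will be verifying the precise connectivity bound, but by classical results on independence complexes of path and cycle graphs, both targets are wCM with connectivity growing roughly linearly in $n$ (of order $n/3$), which combined with \cite[Proposition 3.5]{HW10} yields the growing descending-link connectivity required by Brown's criterion and closes the argument.
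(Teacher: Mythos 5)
Your proposal matches the paper's sketch essentially verbatim: the paper also defines ordered (resp.\ cyclic) $G$-labeled paired forest diagrams, builds the analogous Stein--Farley complexes, and reduces the descending-link connectivity via a complete-join argument to linear (resp.\ cyclic) matching complexes, which are precisely the independence complexes of path (resp.\ cycle) graphs you invoke. The paper cites \cite[Proposition 11.16]{Koz08} and \cite[Lemma 3.19]{Wi19} for the required connectivity of those targets, as you correctly anticipated.
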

\begin{proof}[Sketch of Proof]
    As the proof is parallel to the $V_\phi(G)$ case, we only give a sketch. One first defines the \emph{cyclic $G$-labeled paired forest  diagram} (resp. ordered $G$-labeled paired forest  diagram) to be a $G$-labeled paired forest  diagram of the form $(F_-,(\vec{g},\sigma),F_+)$ where $\sigma$ is a cyclic (resp. trivial) permutation. Then one defines the corresponding groupoids and complexes. The only essential difference then appears in the analysis of the connectivity of the descending links. But one again uses a complete join argument, reducing the connectivity of descending links to that of cyclic matching complexes (resp. linear matching complexes). The connectivity of these complexes is well known; see, for example, \cite[Lemma 3.19]{Wi19} (resp. \cite[Proposition 11.16]{Koz08}).    
\end{proof}

\section{Tools to compute the bounded cohomology of groups}
 We collect some tools that can be used to show that a group is boundedly acyclic in this section. The readers should consult \cite{Fri17} for the basics of bounded cohomology. The following cohomological techniques are developed in \cite{LLM22, Mo22, MN23, MR23} to compute the bounded cohomology of groups.

\begin{definition}\label{generic}
Let $X$ be a set.  We call a binary relation $\perp$ \emph{generic} if  for every given finite set $Y \subseteq X$,  there is an element $x \in X$ such that $y \perp x$ for all $y \in Y$. 
\end{definition}
Recall that any generic relation on $X$ gives rise to a semi-simplicial set $X_{\bullet}^{\perp}$ in the following way:  we define $X_{n}^{\perp}$ to be the set of all $(n+1)$-tuples $\left(x_{0},  \ldots,  x_{n}\right) \in X^{n+1}$ for which $x_{i} \perp x_{j}$ holds for all $0 \leq j<i \leq n$.  The face maps $\partial_{n}\colon  X_{n}^{\perp} \rightarrow X_{n-1}^{\perp}$ are the usual simplex face maps,  \ie,
$$
\partial_{n}\left(x_{0},  \ldots,  x_{n}\right)=\sum_{i=0}^{n}(-1)^{i}\left(x_{0},  \ldots,  \widehat{x}_{i},  \ldots,  x_{n}\right)
$$
where $\widehat{x}_{i}$ means that $x_{i}$ is omitted. 

\begin{lemma}\cite[Proposition 3.2]{MN23}\label{lemma:generic-ba}
    If the binary relation $\perp$ is generic, then the complex $X_{\bullet}^{\perp}$ is boundedly acyclic (and connected).
\end{lemma}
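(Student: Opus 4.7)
My plan is to produce an explicit bounded nullhomotopy of any bounded cocycle using an ultrafilter manufactured from the generic relation; this is essentially the strategy of \cite{Mo22,MN23}.

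First, I would pack the genericity into an ultrafilter. For each finite $Y \subset X$ set $F_Y := \{x \in X : y \perp x \text{ for all } y \in Y\}$, which is non-empty by hypothesis. Since $F_{Y_1} \cap F_{Y_2} \supseteq F_{Y_1 \cup Y_2}$, the family $\{F_Y\}$ has the finite intersection property, so I can fix an ultrafilter $\mathcal{U}$ on $X$ containing every $F_Y$.

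Second, given a bounded cocycle $\alpha \in \ell^\infty(X_n^\perp)$ with $n \geq 1$, define $\beta \in \ell^\infty(X_{n-1}^\perp)$ by the $\mathcal{U}$-ultralimit
$$\beta(x_0, \ldots, x_{n-1}) := \lim_{x \to \mathcal{U}} \alpha(x, x_0, \ldots, x_{n-1}),$$
with $\alpha$ extended by $0$ off $X_n^\perp$. Because $F_{\{x_0, \ldots, x_{n-1}\}} \in \mathcal{U}$, on a $\mathcal{U}$-large set of $x$'s the tuple $(x, x_0, \ldots, x_{n-1})$ genuinely lies in $X_n^\perp$; crucially, the new coordinate must be \emph{prepended} in position $0$, since genericity supplies $x$ with $y \perp x$ (not the reverse). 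Hence $\beta$ is a well-defined bounded function with $\|\beta\|_\infty \leq \|\alpha\|_\infty$.

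Third, I would verify $d\beta = \alpha$ from the cocycle identity. For any $(x_0, \ldots, x_n) \in X_n^\perp$ and any $x \in F_{\{x_0, \ldots, x_n\}}$, the relation $(d\alpha)(x, x_0, \ldots, x_n) = 0$ expands as
$$0 = \alpha(x_0, \ldots, x_n) - \sum_{i=0}^{n} (-1)^{i} \alpha(x, x_0, \ldots, \widehat{x}_i, \ldots, x_n).$$
Taking the ultralimit as $x \to \mathcal{U}$ (which is linear and continuous on bounded functions) yields $\alpha(x_0, \ldots, x_n) = \sum_{i=0}^n (-1)^i \beta(x_0, \ldots, \widehat{x}_i, \ldots, x_n) = (d\beta)(x_0, \ldots, x_n)$. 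This proves bounded acyclicity in all positive degrees; applying the same ultralimit construction to the augmented complex $\mathbb{R} \to \ell^\infty(X_0^\perp) \to \cdots$ shows that any bounded $0$-cocycle equals the constant produced by the ultralimit (equivalently, connectedness of $X_\bullet^\perp$ follows by applying genericity first to $Y = \varnothing$ to produce a vertex, and then to $Y = \{x_0, x_1\}$ to produce a zig-zag path).

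There is no real structural obstacle; the only point requiring care is the asymmetry of $\perp$. Once one recognises that the generic element has to be inserted in coordinate $0$, the signs in the simplicial boundary telescope correctly and the rest is automatic.
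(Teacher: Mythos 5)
Your proof is correct, and it reproduces faithfully the argument of the cited source: the paper itself gives no proof of this lemma, simply quoting it from Monod--Nariman \cite[Proposition 3.2]{MN23}, and the ultrafilter construction you describe is exactly the one used there. You correctly identified the one subtle point, namely that the definition of $X_n^\perp$ requires $x_i \perp x_j$ for $j<i$, so that a generic element $x$ (satisfying $y\perp x$ for all $y\in Y$) must be \emph{prepended} as coordinate $0$; inserting it anywhere else would leave some of the required relations unverified, and then the sign bookkeeping in the coboundary would not telescope. Your treatment of degree $0$ via the augmented complex and the zig-zag through a common generic vertex is also fine. The only cosmetic remark is that one should note explicitly that the ultralimit is taken of a bounded real-valued net, so it exists and is a norm-nonincreasing linear functional on $\ell^\infty(X)$; you implicitly use both linearity and the bound $\|\beta\|_\infty \le \|\alpha\|_\infty$, and it is worth stating these as the reason the contracting homotopy stays inside the bounded complex.
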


\begin{proposition}\cite[Theorem 3.3 + Remark 3.4]{MN23}\label{prop:BA-crit}
     Let $\Gamma$ be a group acting on a boundedly acyclic connected semi-simplicial set $X_\bullet$. Suppose further that for every $n\ge0$ we have:
\begin{enumerate}[label=(\roman*)]
    \item The stabilizers of any point in $X_{n}$ are boundedly acyclic.
    \item The  \s{\Gamma}-action on $X_{n}$ has finitely many orbits.
   % \item Given $q>0$ with $p+q<N$, the q-th vanishing moduli of all those stabilisers are uniformly bounded.
\end{enumerate}
Then there is an isomorphism $\mathrm{H}_{b}^{p}(\Gamma,\mathbb{R}) \cong \mathrm{H}_{b}^{p}\left(X_{\bullet} / \Gamma,\mathbb{R}\right)$ for every $p\geq 0$.
\end{proposition}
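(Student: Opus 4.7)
The plan is to prove this by a standard double complex (spectral sequence) argument, using the hypotheses to force both spectral sequences of the total complex to collapse onto the two sides of the claimed isomorphism. Concretely, I would consider the double complex
\[
C^{p,q} \;=\; \ell^\infty\bigl(\Gamma^{p+1} \times X_q,\, \mathbb{R}\bigr)^{\Gamma},
\]
where $\Gamma$ acts diagonally, the horizontal differential $d_h$ is the usual bar differential coming from the simplicial structure on $\Gamma^{\bullet+1}$, and the vertical differential $d_v$ is induced by the face maps of the semi-simplicial set $X_\bullet$. The total complex $\mathrm{Tot}(C^{\bullet,\bullet})$ admits two column/row filtrations, giving two spectral sequences converging to the same target.

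Filtering one way (compute $d_v$ first, then $d_h$): for each fixed $p$, the semi-simplicial set $X_\bullet$ is boundedly acyclic and connected by hypothesis, and since $\ell^\infty(\Gamma^{p+1} \times -)^\Gamma$ is sufficiently exact with respect to the relevant seminorms, the vertical cohomology is concentrated in degree zero and equals $\ell^\infty(\Gamma^{p+1})^\Gamma$. Applying $d_h$ then recovers $\mathrm{H}_b^{\ast}(\Gamma,\mathbb{R})$. So this spectral sequence degenerates at the $E_2$-page on the axis and yields $\mathrm{H}_b^{\ast}(\Gamma,\mathbb{R})$ as the abutment.

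Filtering the other way (compute $d_h$ first): for each fixed $q$, condition (ii) decomposes $X_q$ into finitely many orbits $X_q = \bigsqcup_{i=1}^{k_q} \Gamma\cdot x_i$, giving $\ell^\infty(X_q) \cong \prod_i \ell^\infty(\Gamma/\Gamma_{x_i})$. Finiteness of $k_q$ lets this product coincide with a direct sum and commute with $\mathrm{H}_b^{\ast}$. By Shapiro's lemma in bounded cohomology, $\mathrm{H}_b^{p}(\Gamma, \ell^\infty(\Gamma/\Gamma_{x_i})) \cong \mathrm{H}_b^{p}(\Gamma_{x_i},\mathbb{R})$, which vanishes for $p \geq 1$ by condition (i) and equals $\mathbb{R}$ for $p=0$. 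Hence the horizontal cohomology collapses to $\ell^\infty(X_q/\Gamma)$, and applying $d_v$ afterwards yields $\mathrm{H}_b^{\ast}(X_\bullet/\Gamma,\mathbb{R})$.

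The main obstacle will be a careful justification of all the bookkeeping needed to make the two spectral sequence arguments rigorous in the Banach setting: namely, verifying that $\ell^\infty(-)$ preserves bounded acyclicity of the augmented complex from $X_\bullet$ (so the first collapse is valid), that Shapiro's isomorphism really holds at the level of $\ell^\infty$-induced modules for arbitrary stabilizer subgroups $\Gamma_{x_i}$, and that cohomology commutes with the (now finite, thanks to (ii)) product over orbits. Once these technicalities are set up, the finiteness-of-orbits hypothesis is exactly what lets the pointwise Shapiro isomorphisms assemble into a global one, and comparing the two abutments of the total complex gives $\mathrm{H}_b^{p}(\Gamma,\mathbb{R}) \cong \mathrm{H}_b^{p}(X_\bullet/\Gamma,\mathbb{R})$ for every $p \geq 0$.
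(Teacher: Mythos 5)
The paper does not prove this proposition; it cites it directly from Monod--Nariman \cite[Theorem~3.3 and Remark~3.4]{MN23}, and your double-complex argument with $C^{p,q}=\ell^\infty(\Gamma^{p+1}\times X_q)^\Gamma$ and the two collapsing spectral sequences is exactly the proof given there. The two points you flag as technical obstacles are precisely what Monod--Nariman verify: the column-wise collapse rests on the fact that the diagonal $\Gamma$-action on $\Gamma^{p+1}\times X_q$ is free, so these modules are coinduced from the trivial group and relatively injective, while the finitely-many-orbits hypothesis is what lets the pointwise Shapiro isomorphisms assemble compatibly across the row complexes.
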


Combining Lemma \ref{lemma:generic-ba} and Proposition \ref{prop:BA-crit}, we have the following corollary.

\begin{corollary}\label{cor:BA-gene}
    Let $\Gamma$ be a group acting on a set $X$ and there is a generic relation $\perp$ on $X$ preserved by $\Gamma$.  If for every $n \geq 0$,  the action of \s{\Gamma} on \s{X^\perp_n} is transitive  and the stabilizers are boundedly acyclic, then \s{\Gamma} is boundedly acyclic.
\end{corollary}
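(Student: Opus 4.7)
The plan is to combine Lemma \ref{lemma:generic-ba} with Proposition \ref{prop:BA-crit} in the most direct way possible. First I would observe that since $\perp$ is a $\Gamma$-invariant generic relation on $X$, the semi-simplicial set $X_\bullet^\perp$ inherits a $\Gamma$-action, and by Lemma \ref{lemma:generic-ba} it is connected and boundedly acyclic. Thus the hypotheses on $X_\bullet$ in Proposition \ref{prop:BA-crit} are satisfied.

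Next I would verify the two conditions (i) and (ii) of Proposition \ref{prop:BA-crit} for this action. Condition (i) holds by assumption, since the stabilizer of any point in $X_n^\perp$ is boundedly acyclic. Condition (ii) is even stronger than required: transitivity of the $\Gamma$-action on $X_n^\perp$ means there is exactly one orbit, in particular finitely many. Applying Proposition \ref{prop:BA-crit} therefore yields the isomorphism
\[
\mathrm{H}_b^p(\Gamma, \mathbb{R}) \cong \mathrm{H}_b^p(X_\bullet^\perp/\Gamma, \mathbb{R})
\]
for every $p \geq 0$.

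Finally I would unpack the right-hand side. Because $\Gamma$ acts transitively on each $X_n^\perp$, the quotient semi-simplicial set $X_\bullet^\perp/\Gamma$ has exactly one simplex in each dimension; in other words, it is the semi-simplicial set of a point. Consequently its bounded cohomology vanishes in every positive degree, so $\mathrm{H}_b^p(\Gamma,\mathbb{R}) = 0$ for all $p \geq 1$, which is precisely the statement that $\Gamma$ is boundedly acyclic.

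There is no real obstacle here: the corollary is essentially a bookkeeping consequence of the two cited results, and the only thing to be slightly careful about is confirming that transitivity of the action on each $X_n^\perp$ makes the quotient trivial in every degree (rather than merely finite), so that its bounded cohomology vanishes.
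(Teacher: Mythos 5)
Your proposal is correct and follows exactly the same route as the paper: apply \Cref{lemma:generic-ba} to get that $X_\bullet^\perp$ is connected and boundedly acyclic, verify conditions (i) and (ii) of \Cref{prop:BA-crit} using the hypotheses, and then observe that transitivity makes the quotient $X_\bullet^\perp/\Gamma$ the one-simplex-per-dimension semi-simplicial set. The only (cosmetic) difference is that the paper spells out the last step explicitly, noting $\ell^\infty(X_n^\perp/\Gamma)\cong\mathbb{R}$ with coboundary alternating between $0$ and $\mathrm{id}$, whereas you just assert that this quotient has vanishing positive-degree bounded cohomology; both are fine.
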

\begin{proof}
    From Lemma \ref{lemma:generic-ba}, We have $X_{\bullet}^{\perp}$ is boundedly acyclic and connected. Since $\Gamma$ is transitive on each $X_n^{\perp}$ and all stabilizers are boundedly acyclic, by proposition \ref{prop:BA-crit}, we get
    $$\mathrm{H}_{b}^{p}(\Gamma,\mathbb{R}) \cong \mathrm{H}_{b}^{p}\left(X_{\bullet}^{\perp} / \Gamma,\mathbb{R}\right),\forall p\geq0.$$
    Moreover we know
    $$\ell^\infty\left(X_{n}^{\perp} / \Gamma\right)\cong\mathbb{R}\quad\text{and}\quad\delta_n=\begin{cases}
        \text{id}_\mathbb{R} & 2\not\mid n\\
        0 & 2\mid n
    \end{cases},$$
   
    This immediately implies that  $X_{\bullet}^{\perp} / \Gamma$ is boundedly acyclic, so is $\Gamma$ . 
\end{proof}
  
%Thus to compute the bounded cohomology of \s{G}, we can define a \s{G}-set \s{X} where the diagonal \s{G}-action on \s{X^{n+1}} has finitely many orbits, every stabilizer is bounded acyclic and a generic relation \s{\perp} on \s{X} preserved by \s{G}.  

We also need the following criterion to prove the boundedly acyclicity for stabilizers.

\begin{thm}\label{coame-to-wreath} \cite[Corollary 5]{Mo22}
    Let $\Gamma$ be a group and $\Gamma_{0}<\Gamma$ a subgroup with the following two properties: 
    \begin{enumerate}
        \item Every finite subset of $\Gamma$ is contained in some $\Gamma$-conjugate of $\Gamma_{0}$ , 
        \item $\Gamma$ contains an element $g$ such that the conjugates of $\Gamma_{0}$ by $g^{p}$ and by $g^{q}$ commute for all $p \neq q$ in $\mathbb{Z}$ . 
    \end{enumerate}
Then $\mathrm{H}_{\mathrm{b}}^{n}(\Gamma,  \mathbb{R})=0$ holds for all $n>0$. 
\end{thm}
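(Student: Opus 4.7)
The plan is to exhibit inside $\Gamma$ a lamplighter subgroup of the form $\Gamma_0 \wr \mathbb{Z}$, invoke Monod's theorem that such lamplighters are boundedly acyclic, and then promote this vanishing from the subgroup to all of $\Gamma$ using condition (1) as a strong co-amenability property. So the proof splits naturally into building the lamplighter (from (2)), knowing the lamplighter is boundedly acyclic (from \cite{Mo22}), and transferring vanishing to $\Gamma$ (from (1)).

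For the construction, define $\Gamma_p := g^p \Gamma_0 g^{-p}$ for each $p \in \mathbb{Z}$. Condition (2) says that $\Gamma_p$ and $\Gamma_q$ commute for $p \neq q$, so the subgroup $N := \langle \Gamma_p : p \in \mathbb{Z}\rangle$ is a quotient of $\bigoplus_{p \in \mathbb{Z}} \Gamma_0$. One has to check injectivity of this surjection: given a hypothetical nontrivial relation $\prod \gamma_{p_i} = 1$ with $\gamma_{p_i} \in \Gamma_{p_i} \setminus \{1\}$, one conjugates by a suitable power of $g$ and uses commutativity of the factors to reach a contradiction (this is the step that is most model-dependent, but it works abstractly from (2)). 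Granted injectivity, conjugation by $g$ implements the shift automorphism of $\bigoplus_{p \in \mathbb{Z}} \Gamma_0$, so $\langle N, g\rangle \cong \Gamma_0 \wr \mathbb{Z}$. By Monod's lamplighter theorem (the main result of \cite{Mo22}), any group of the form $H \wr \mathbb{Z}$ is boundedly acyclic, so in particular $\langle N, g\rangle$ is boundedly acyclic as an abstract subgroup of $\Gamma$.

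For the transfer step, which is the main obstacle, I would argue at the cochain level. Given a bounded $n$-cocycle $c \colon \Gamma^{n+1} \to \mathbb{R}$ ($n \geq 1$), I want to realize $c$ as a coboundary. Condition (1) tells us that every finite tuple $(\gamma_0, \dots, \gamma_n)$ lies in some $\Gamma$-conjugate of $\Gamma_0$, hence in a conjugate of the lamplighter subgroup $\langle N, g\rangle$. Since that subgroup is boundedly acyclic, the restriction of $c$ to such a conjugate is (after normalization) a bounded coboundary. The task is to glue these local primitives into a single bounded $(n-1)$-cochain on $\Gamma$. This is achieved using an algebraic-mean argument on bounded cochains: the space $\ell^\infty(\Gamma^n)$ carries an invariant mean structure with respect to the action by conjugation that is compatible with (1), so one can average the local primitives over the collection of conjugating elements witnessing (1) and extract a $\Gamma$-invariant bounded primitive for $c$.

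The technical heart is making the averaging in this third paragraph precise, in particular ensuring that the chosen primitives on the various conjugates of $\langle N, g\rangle$ can be selected with uniformly bounded norm (so that an $L^\infty$-limit survives) and compatibly on overlaps (so that the coboundary relation is preserved under averaging). Monod's mechanism for this is to work not with pointwise primitives but directly in the bar complex of essentially bounded cochains, replacing strict coboundary relations by ones valid up to a weak-$\ast$ averaging functional; condition (1) then exactly provides the density needed for the averaging to be well-defined. I expect this averaging/selection step to absorb essentially all the analytic content of the proof, with the group-theoretic content (the existence of the lamplighter) already packaged by (2).
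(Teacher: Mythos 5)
The paper does not prove this statement; it cites it directly from Monod's work \cite[Corollary 5]{Mo22}, so there is no ``paper proof'' to compare against. I will therefore assess the proposal on its own merits.

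There is a genuine gap at the first step, and it is not a cosmetic one. You claim that the natural surjection $\bigoplus_{p\in\mathbb{Z}}\Gamma_0 \twoheadrightarrow N := \langle \Gamma_p : p\in\mathbb{Z}\rangle$ is injective, and that one can derive a contradiction from a hypothetical relation $\prod\gamma_{p_i}=1$ by conjugating and using commutativity. This is false in general. Condition (2) only says that the conjugates $\Gamma_p$ pairwise commute; it gives no control over their intersections. The simplest counterexample: take $\Gamma_0$ abelian and $g$ centralizing $\Gamma_0$ (this satisfies (2) vacuously, since all $\Gamma_p=\Gamma_0$ commute with themselves). Then $N=\Gamma_0$ and $\langle N,g\rangle$ is nothing like $\Gamma_0\wr\mathbb{Z}$. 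Less degenerately, whenever $\Gamma_0\cap\Gamma_1$ contains a nontrivial $\gamma$, the word $\gamma\cdot\gamma^{-1}=1$ already witnesses failure of injectivity with $\gamma\in\Gamma_0\setminus\{1\}$, $\gamma^{-1}\in\Gamma_1\setminus\{1\}$; conjugating by a power of $g$ merely shifts indices, and reordering commuting factors produces no contradiction. One would also need $g$ to have infinite order and $\langle g\rangle\cap N=1$ to get the genuine wreath product, none of which is guaranteed. Since Monod's lamplighter theorem is specifically about $H\wr\mathbb{Z}$ and not arbitrary quotients of it, the argument breaks here.

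The transfer step as you describe it is also not a proof: the glueing-and-averaging mechanism is left entirely unspecified, and there is no invariant mean on the set of conjugating elements readily supplied by (1). That said, this part of the plan \emph{can} be repaired cleanly: condition (1) says that for every finite $F\subset\Gamma$ the coset $t\Gamma_0\in\Gamma/\Gamma_0$ is fixed by $F$ for some $t$, so an ultralimit of point masses produces a $\Gamma$-invariant mean on $\Gamma/\Gamma_0$; thus $\Gamma_0$ is co-amenable in $\Gamma$, and so is any subgroup of $\Gamma$ containing $\Gamma_0$. Then \Cref{Lem:coame-bcohg} gives injectivity of the restriction $\mathrm{H}_b^{\bullet}(\Gamma)\to\mathrm{H}_b^{\bullet}(L)$ for $L$ any intermediate subgroup, so bounded acyclicity of $L$ transfers to $\Gamma$ with no averaging needed. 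This is in fact the right formulation of your third paragraph. The real obstruction is therefore only the first one: you would need $L=\langle N,g\rangle$ to be boundedly acyclic, and your identification of $L$ with a lamplighter does not go through. Monod's actual argument bypasses this by working directly at the level of the bar complex $\ell^\infty(\Gamma^{\bullet+1})^\Gamma$, constructing a contracting homotopy from the commuting-conjugates structure together with a mean coming from (1), rather than first realizing a lamplighter subgroup and restricting to it.
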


We also use the following powerful generalization of  \cref{coame-to-wreath} to prove bounded acyclicity.

\begin{definition}\cite[Definition 1.1]{CFLM23}\label{def of cc}
     A group $\Gamma$ has \emph{commuting cyclic conjugates} if for every finitely generated subgroup $H \leq \Gamma$ there exist $t \in \Gamma$ and $k \in \mathbb{N}_{\geq 2} \cup\{\infty\}$ such that:
\begin{enumerate}
    \item $\left[H, t^{p} H t^{-p}\right]=1$ for all $1 \leq p<k$ ,
    \item $\left[H, t^{k}\right]=1$ .
\end{enumerate}
Here we read that $t^{\infty}=1$ .
\end{definition}

\begin{thm}\cite[Theorem 1.3]{CFLM23}\label{commuting conjugates}
    Let $\Gamma$ be a group with commuting cyclic conjugates. Then  \s{\Gamma} is boundedly acyclic.
\end{thm}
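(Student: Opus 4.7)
The plan is to apply the semi-simplicial machinery of Proposition~\ref{prop:BA-crit} to a carefully chosen $\Gamma$-set equipped with a generic relation. I would take $X$ to be the collection of finitely generated subgroups of $\Gamma$, on which $\Gamma$ acts by conjugation, and declare $H \perp K$ if and only if $[H,K]=1$ in $\Gamma$. The commuting cyclic conjugates hypothesis is tailored to make $\perp$ generic in the sense of Definition~\ref{generic}: given $H_1,\dots,H_n\in X$, set $H=\langle H_1,\dots,H_n\rangle$, which is finitely generated; applying the hypothesis produces $t\in\Gamma$ and $k\geq 2$ with $[H,tHt^{-1}]=1$ (taking $p=1$, which is allowed since $k\geq 2$). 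Then $K:=tHt^{-1}\in X$ satisfies $[H_i,K]\leq [H,K]=1$, hence $H_i\perp K$ for every $i$. Lemma~\ref{lemma:generic-ba} then shows the associated semi-simplicial set $X_\bullet^\perp$ is boundedly acyclic and connected.

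The next step is to control the $\Gamma$-orbits and stabilizers in order to apply Proposition~\ref{prop:BA-crit}. Because $X$ itself has far too many orbits, I would pare it down to a $\Gamma$-invariant subset: fix a single $H_0\in X$ and take $X'$ to be the $\Gamma$-orbit of $H_0$, enlarging it (by adjoining further orbits) only as much as is needed to preserve genericity. On the $0$-skeleton $X'_0$ the action is transitive by construction; on higher simplices $X'^\perp_n$ one must argue that any two pairwise commuting $(n{+}1)$-tuples of conjugates of $H_0$ are $\Gamma$-equivalent, which in general requires choosing the starting family $\{H_0\}$ carefully (for instance, by letting $H_0$ vary over a preferred list of ``model'' commuting tuples so that finitely many orbits suffice in each degree).

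The main obstacle, and the heart of the argument, is verifying that the stabilizers are boundedly acyclic. The stabilizer of a simplex $(H_0,\dots,H_n)\in X'^\perp_n$ contains $\bigcap_i C_\Gamma(H_i)=C_\Gamma\bigl(\langle H_0,\dots,H_n\rangle\bigr)$ as a large subgroup, and I would aim to show that this centralizer inherits the commuting cyclic conjugates property: for any finitely generated $H'\subseteq C_\Gamma(\langle H_0,\dots,H_n\rangle)$, applying the hypothesis to the enlarged subgroup $\langle H',H_0,\dots,H_n\rangle$ supplies an element $t$ whose positive powers commute with $H'$, and a suitable modification of $t$ produces such an element inside the stabilizer itself. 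This allows an inductive/self-referential argument on bounded cohomological degree.

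The main technical subtlety lies in the finite-$k$ case. When $k=\infty$ the conclusion reduces cleanly to Monod's Theorem~\ref{coame-to-wreath}, via the embedding of an $H\wr\mathbb{Z}$-type subgroup into $\langle H,t\rangle$ and the bounded acyclicity of lamplighter groups. When $k<\infty$ one only obtains a $\mathbb{Z}/k$-wreath-like structure together with a central $t^k$, and I expect the hard part to be promoting this periodic tower to an $\mathbb{Z}$-indexed commuting family; this likely requires iterating the hypothesis (replacing $H$ successively by $\langle H,t\rangle$) to manufacture an infinite-order auxiliary element whose powers commute with $H$, thereby reducing the finite-$k$ case to the $k=\infty$ case and closing the induction.
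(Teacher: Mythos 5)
This statement is cited from~\cite[Theorem~1.3]{CFLM23} and the paper you're reading does not supply its own proof, so there is no ``paper proof'' to compare against line by line. Your proposal attempts to reprove it via the Monod--Nariman semi-simplicial machinery of Proposition~\ref{prop:BA-crit} on the conjugation action on finitely generated subgroups, which is a genuinely different strategy from the one in~\cite{CFLM23}. The genericity observation is correct: given $H_1,\dots,H_n$, apply the hypothesis to $H=\langle H_1,\dots,H_n\rangle$ and take $p=1$ (legal since $k\geq 2$) to produce $K=tHt^{-1}$ commuting with all the $H_i$. However, the rest of the argument has serious gaps that go beyond ``technical subtleties,'' and I don't see how to repair them inside this framework.

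First, Proposition~\ref{prop:BA-crit} requires finitely many $\Gamma$-orbits on each $X_n^\perp$, and your proposed fix (restrict to one orbit and ``enlarge only as much as is needed to preserve genericity'') does not work: the element $t$ furnished by the hypothesis depends on $H$ and conjugates $H$ to a potentially new isomorphism/conjugacy type, so closing up under genericity will typically sweep in infinitely many orbits; and even in degree $0$ the conjugation action on finitely generated subgroups is almost never finite-to-one on orbits. Second, the stabilizer argument is circular: the stabilizer of a simplex is a subgroup of $\Gamma$, and you propose to establish its bounded acyclicity by inheriting the commuting cyclic conjugates property and rerunning the argument, which is exactly what you are trying to prove for $\Gamma$ itself; there is no well-founded induction here because Proposition~\ref{prop:BA-crit} needs the stabilizers to be boundedly acyclic \emph{in all degrees}, so a degree-by-degree induction does not close. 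Moreover the $t$ supplied by the hypothesis has no reason to lie in the stabilizer, so the centralizer $C_\Gamma(\langle H_0,\dots,H_n\rangle)$ need not inherit the property at all. Third, the reduction of finite $k$ to $k=\infty$ by ``iterating the hypothesis to manufacture an infinite-order element'' is unsubstantiated and is in fact the crux of~\cite{CFLM23}: Theorem~\ref{coame-to-wreath} (Monod) handles the $\mathbb{Z}$-indexed commuting tower with a \emph{single} cocovering subgroup $\Gamma_0$, whereas the commuting cyclic conjugates condition allows both finite $k$ and a different conjugator $t$ for each finitely generated $H$, and relaxing both at once is precisely what makes~\cite[Theorem~1.3]{CFLM23} a genuine strengthening rather than a corollary. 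If you want a proof you can actually carry out, the right move is not the semi-simplicial route but the transfer-and-averaging strategy underlying Monod's argument, adapted to tolerate a finite cyclic (rather than infinite cyclic) commuting tower and $H$-dependent witnesses, which is what~\cite{CFLM23} does.
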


%\textcolor{green}{Note that \cite[Theorem 1.3]{CFLM23} actually showed that \s{\text{H}^n_\mathrm{b}(\Gamma, E)=0, \forall n\geq 1} for  every separable dual
%Banach \s{\Gamma}-module \s{E}, following the spirit of \cite[Corollary 5]{Mo22}, which proved the bounded acyclicity with separable dual
%Banach \s{\Gamma}-module coefficients, not just \s{\mr}.} 
Recall that a subgroup \s{H\leq \Gamma} is \emph{co-amenable} in \s{\Gamma} if there is a \s{\Gamma}-invariant mean on \s{\Gamma/H}. Since \s{\Gamma/[\Gamma,\Gamma]} is abelian hence amenable, \s{[\Gamma,\Gamma]} is co-amenable in \s{\Gamma}. The following lemma is proved in \cite[Proposition 8.6.6]{Mon01}, see also \cite{MP03}.

\begin{lemma}\label{Lem:coame-bcohg}
    Let $\Gamma$ be a group, $\Gamma_{0}<\Gamma$ a co-amenable subgroup. Then the restriction map
$$
\mathrm{H}_{\mathrm{b}}^{\bullet}(\Gamma, \mathbb{R}) \longrightarrow \mathrm{H}_{\mathrm{b}}^{\bullet}\left(\Gamma_{0}, \mathbb{R}\right)
$$
is injective.
\end{lemma}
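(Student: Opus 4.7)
The plan is to construct an explicit transfer (averaging) map at the level of bounded cochains that is a one-sided left inverse to restriction; injectivity on cohomology will then follow immediately. I would work with the standard homogeneous bar model, so that $\mathrm{H}_{\mathrm{b}}^{\bullet}(\Gamma, \mathbb{R})$ is computed by $\ell^\infty(\Gamma^{\bullet+1})^{\Gamma}$ and $\mathrm{H}_{\mathrm{b}}^{\bullet}(\Gamma_0, \mathbb{R})$ by $\ell^\infty(\Gamma^{\bullet+1})^{\Gamma_0}$, both with the simplicial coboundary $\delta$ and diagonal $\Gamma$-action. In this model the restriction map is literally the inclusion of subspaces of invariants.

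Co-amenability supplies a $\Gamma$-invariant mean $m$ on $\Gamma/\Gamma_0$. I would use $m$ to define, for $f \in \ell^\infty(\Gamma^{n+1})^{\Gamma_0}$,
$$
\tau^n(f)(g_0, \ldots, g_n) \;=\; m\!\left(g\Gamma_0 \mapsto f(g^{-1}g_0, \ldots, g^{-1}g_n)\right).
$$
The formula is legitimate because the $\Gamma_0$-invariance of $f$ makes the integrand factor through $\Gamma \to \Gamma/\Gamma_0$, and boundedness of $f$ ensures it lies in $\ell^\infty(\Gamma/\Gamma_0)$, so that pairing with $m$ makes sense.

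Three routine checks then finish the argument: (i) $\tau^n(f)$ is $\Gamma$-invariant, using the substitution $g \mapsto \gamma g$ on $\Gamma/\Gamma_0$ together with $\Gamma$-invariance of $m$; (ii) $\tau^\bullet$ commutes with $\delta$, because $\delta$ is a finite alternating sum that pulls out of $m$ by linearity; (iii) when $f$ is already $\Gamma$-invariant, the integrand is the constant function $f(g_0, \ldots, g_n)$, yielding $\tau^n \circ \mathrm{res}^n = \mathrm{id}$ on cochains. Passing to cohomology gives a left inverse to $\mathrm{res}^\bullet$ in every degree, and hence $\mathrm{res}^\bullet$ is injective. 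The construction is entirely formal; the only real care needed is the coset/diagonal-action bookkeeping in step (i), which is a notational rather than a conceptual obstacle, and this is why the result is a standard tool in bounded cohomology.
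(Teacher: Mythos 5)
Your argument is correct and is the standard transfer-map proof that the paper invokes by citing \cite[Proposition 8.6.6]{Mon01} and \cite{MP03}: a $\Gamma$-invariant mean on $\Gamma/\Gamma_0$ averages $\Gamma_0$-invariant homogeneous cochains back to $\Gamma$-invariant ones, giving a norm non-increasing chain map that is a left inverse to restriction. The only point worth making explicit when writing this up is why $\ell^\infty(\Gamma^{\bullet+1})^{\Gamma_0}$ computes $\mathrm{H}_b^\bullet(\Gamma_0,\mathbb{R})$ with the restriction map realized as the inclusion of invariants, which is standard since $\ell^\infty(\Gamma^{\bullet+1})$ remains a strong, relatively injective resolution over $\Gamma_0$.
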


\begin{remark}
Some of the results we cited here work for more general coefficients. For example, \Cref{coame-to-wreath}, \Cref{commuting conjugates} and \Cref{Lem:coame-bcohg} work for all separable dual Banach $G$-modules. But the proof of \Cref{prop:BA-crit} uses a spectral sequence argument, so \Cref{cor:BA-gene} and it are only available for constant real coefficients so far. It is also worth mentioning that after our paper came out, Bogliolo \cite{Bo25} proves new boundedly acyclic results with respect to $\mathfrak{X}^{\text {ssep }}$ coefficients. She then uses this to obtain some new embedding results \cite[Theorem D]{Bo25} (see also \cite[Remark 8.3]{Bo25}). In contrast with our embedding results,  her host groups will have many finite index subgroups.
\end{remark}

\section{Bounded acyclicity of  $\phi$-labeled Thompson groups}\label{lable ba}

In this section, we prove that the $\phi$-labeled Thompson groups are all boundedly acyclic  based on ideas in \cite{An22,Mo22,MN23}.

Given a group \s{G} with wreath recursion \s{\phi}, denote the corresponding labeled Thompson group by $V_\phi(G)$. By \Cref{thm:inj-iso}, we can assume from now on that $\phi$ is injective. 
A \emph{dyadic interval} is a subset of \s{\mc = \{0,1\}^\omega} of the form \s{u\xinf} for some \s{u\in \xstar}. Recall that  dyadic intervals are clopen and the set of dyadic intervals form a topological basis for \s{\mc}.  A \emph{dyadic neighborhood} of \s{x\in \mc} is a dyadic interval containing \s{x}.

\begin{definition}
     Given a $G$-matrix
    $$\mathcal{M}=\begin{scriptsize}\begin{pmatrix}  
u & u_2 & \cdots & u_n \\
g & g_2 & \cdots &g_n \\
v& v_2 & \cdots &v_n
\end{pmatrix}  \end{scriptsize} $$ we define its \emph{label} at the dyadic interval $U=u\xinf$ to be $g$ . For any element $\alpha\in V_{\phi}(G)$, we define the \emph{label} of $\alpha$ at $U$ is $l(_U|\alpha)=g$ if there is a $G$-matrix $\mathcal{M}$ with $[\mathcal{M}]=\alpha$ and \emph{label of $\mathcal{M}$} at $U$ is $g$ .
\end{definition}

\begin{remark}\label{rem:noninjw-lbl}
    When $\phi$ is not an injective wreath recursion, the label for a given element in $ V_{\phi}(G)$ may not be uniquely defined. In fact, consider the wreath recursion in Example \ref{ex:ninj-wr}, where $\phi(g) = ((1,1),\mathrm{id})$. Then for any $g,h\in G$, $\begin{scriptsize}\begin{pmatrix}  
u & u_2 & \cdots & u_n \\
g & g_2 & \cdots &g_n \\
v& v_2 & \cdots &v_n
\end{pmatrix}  \end{scriptsize} $ and $\begin{scriptsize}\begin{pmatrix}  
u & u_2 & \cdots & u_n \\
h & g_2 & \cdots &g_n \\
v& v_2 & \cdots &v_n
\end{pmatrix}  \end{scriptsize} $ represents the same element in $V_\phi(G)$. In particular, the label is not well defined at $u$. 
\end{remark}

%Note that the label depends on the choice of dyadic intervals and even if the label at a dyadic interval is not \s{1_G}, when we restrict to a subinterval it might be \s{1_G}. Therefore the label at a point is not well-defined unless the label is \s{1_G} at some neighborhood of this point. 

Note that the label depends heavily on the choice of dyadic intervals and even if the label at a dyadic interval is not \s{1_G}, when we restrict to a dyadic sub-interval it might be \s{1_G}. On the other hand, if the label at a dyadic interval $U$ is $1_G$, then restricted to any dyadic sub-interval of $U$, it must also be $1_G$. 

\begin{example}
    Consider the right wreath recursion: \s{\phi_r\colon g\ra{((1, g),\mathrm{id}_{\text{S}_2})}}. Given $g_0\in G$, let \s{\Lambda_u(g_0)= \begin{scriptsize}\begin{bmatrix}  
u & u_2 & \cdots & u_n \\
g_0 & 1 & \cdots &1 \\
u& u_2 & \cdots &u_n
\end{bmatrix}  \end{scriptsize}} where \s{\{u, u_2,  \cdots , u_n\}} is a partition set. Then for any dyadic interval $W$ not containing the point ${u}{11}\cdots$, we have $l(_{W}|\Lambda_u(g_0))=1_G$ .
\end{example}

 %Recall from \cref{vg right action} that \s{\alpha=\begin{scriptsize}
 %   \begin{bmatrix}
 %       u&...\\
 %       g&...\\
  %      v&...   
  %  \end{bmatrix}
%\end{scriptsize}} acts on \s{\xinf} by mapping \s{uw} to \s{v(w)\phi^*(g)} for all \s{w\in\xinf}, and we use \s{(w)\alpha} to denote the image of $w$ under the action of $\alpha$. In particular the image $U\alpha$ is again a dyadic interval. Given two elements $\alpha,\beta$, if $l(\alpha|_U) =g$ and $l(\beta|_{U\alpha}) =h$, we have \s{l(\alpha\beta|_U)=gh}. 

 Recall from \cref{vg right action} that $\alpha=\begin{scriptsize}
    \begin{bmatrix}
        u&...\\
        g&...\\
        v&...   
    \end{bmatrix}
\end{scriptsize}\in V_{\phi}(G)$ acts on \s{\xinf} by mapping \s{uw} to \s{v(w)\phi^*(g)} for all \s{w\in\xinf}. In particular $\alpha$ maps the dyadic interval $U = u\xinf$ to another dyadic interval $(U)\alpha=v\xinf$ . Given two elements $\alpha,\beta$ , if $l(_{U}|\alpha) =g$ and $l(_{U\alpha}|\beta) =h$, then $l(_U|\alpha\beta)=gh$. 
%\begin{definition}
 %   The \emph{labeled support} of \s{\alpha \in V_\phi(G)} denoted by %\s{\mathrm{lsupp}(\alpha)} is defined as the complement set of the union of all dyadic neighborhoods \s{U} satisfying \s{l(\alpha|_U)=1_G} and the action of \s{\alpha} restricting to \s{U} is the identity map.
%\end{definition} 
\begin{definition}
    The \emph{labeled support} of \s{\alpha \in V_\phi(G)} denoted by \s{\mathrm{lsupp}(\alpha)} is defined as the complement set of the union of all dyadic intervals \s{U} in all $G$-matrices
    $$\mathcal{M}=\begin{scriptsize}\begin{pmatrix}  
u & u_2 & \cdots & u_n \\
g & g_2 & \cdots &g_n \\
u& v_2 & \cdots &v_n
\end{pmatrix}  \end{scriptsize} $$
    with $[\mathcal{M}]=\alpha$ and $g=1_G$ (\ie\ \ $\alpha$ satisfies $l(_U|\alpha)=1_G$ and $\alpha$ maps $U$ to $U$ identically).
\end{definition} 
\begin{remark}
\begin{enumerate}
     \item Since dyadic intervals are open, their union is also open, one sees that the labeled support of an element is closed. But in general, it does not need to be open; see \Cref{ex:lsupp-notop}(\ref{caser}).
    \item The usual support of $\alpha$ defined using its action on $\xinf$ is contained in $\mathrm{lsupp}(\alpha)$. 
    \item When $\phi$ is not an injective wreath recursion, the labeled support of an element is not well defined since its label at a dyadic interval is not well-defined; see Remark \ref{rem:noninjw-lbl}.
\end{enumerate}
\end{remark}

 Note that the labeled support of an element strongly depends on the wreath recursion.

%\begin{example}
%    Again consider \s{\Lambda_u(g_0)= \begin{scriptsize}\begin{bmatrix}  
%u & u_2 & \cdots & u_n \\
%g_0 & 1 & \cdots &1 \\
%u& u_2 & \cdots &u_n
%\end{bmatrix}  \end{scriptsize}}. For the diagonal wreath recursion, \s{\Delta: g\ra{((g, g),id_{\text{S}_2})}}, $\mathrm{lsupp}(\Lambda_u(g_0)) = u\xinf$. But for the right wreath recursion, \s{\phi_r: g\ra{((1, g),id_{\text{S}_2})}}, $\mathrm{lsupp}(\Lambda_u(g_0)) = \{{u}{11}\cdots\}$ which is closed but not open.
%\end{example}

\begin{example}\label{ex:lsupp-notop}
    Consider again $\Lambda_u(g_0)$. 
    
    \begin{enumerate}
        \item For the diagonal wreath recursion, $$\Delta\colon g\ra{((g, g),\mathrm{id}_{\text{S}_2})}\implies\mathrm{lsupp}(\Lambda_u(g_0)) = u\xinf.$$
        The action of $\Lambda_u(g_0)$ on $u\xinf$ in this case is trivial.
        
        \item \label{caser} For the right wreath recursion, $$\phi_r\colon g\ra{((1, g),\mathrm{id}_{\text{S}_2})}\implies\mathrm{lsupp}(\Lambda_u(g_0)) = \{{u}{11}\cdots\}$$ which is closed but not open.
        The action of $\Lambda_u(g_0)$ in this case is again trivial.

        \item Consider the wreath recursion corresponding to the adding machine in Remark \ref{rem:add-mach}, where $G = \langle t\rangle \cong \mathbb{Z}$, $\sigma$ is the permutation of $\{0,1\}$, and $$\phi(t) = ((1,t),\sigma).$$
        Then $\mathrm{lsupp}(\Lambda_u(t) )= u\xinf$. For convenience, let $u$ be the empty word, then the action of $t$ on $\xinf$ corresponds to adding 1 to a dyadic integer; see \cite[1.7.1]{Nek05} for more details. 
    \end{enumerate}
\end{example}

The labeled support of a subset of $V_\phi(G)$ is defined as the union of the labeled support of all elements in this set.

\begin{lemma}\label{commute}
Two elements with disjoint labeled supports commute.
\end{lemma}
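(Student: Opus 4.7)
Write $K_\alpha = \mathrm{lsupp}(\alpha)$ and $K_\beta = \mathrm{lsupp}(\beta)$. The plan is to exhibit $G$-matrix representatives of $\alpha$ and $\beta$ with compatible partition sets so that the multiplication rule of \Cref{vg-well-defined} yields literally the same $G$-matrix for $\alpha\beta$ and $\beta\alpha$.

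Since $K_\alpha$ and $K_\beta$ are disjoint closed (hence compact) subsets of the Cantor set $\{0,1\}^\omega$ and dyadic intervals form a clopen basis, one obtains a clopen decomposition $\{0,1\}^\omega = V_P \sqcup V_Q$ with $K_\alpha \subseteq V_P$ and $K_\beta \subseteq V_Q$. By the definition of the labeled support, $V_Q$ is covered by finitely many dyadic intervals on which $\alpha$ is the identity with label $1_G$, and symmetrically $V_P$ is covered by such intervals for $\beta$; in particular, $\alpha$ acts trivially with label $1_G$ on every sufficiently fine dyadic subdivision of $V_Q$, and dually for $\beta$ on $V_P$.

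Now choose partition sets $\Sigma_P^{(1)}, \Sigma_P^{(2)}$ of $V_P$ refining, respectively, the domain and range of a reduced $G$-matrix representative of $\alpha$, and similarly $\Sigma_Q^{(1)}, \Sigma_Q^{(2)}$ of $V_Q$ for $\beta$. Thanks to the triviality of $\alpha$ on $V_Q$, we may represent $\alpha$ with \emph{any} prescribed partition of $V_Q$ via identity columns, and dually for $\beta$ on $V_P$; this flexibility lets us write
\begin{align*}
\alpha &= \bigl[\Sigma_P^{(1)}\sqcup\Sigma_Q^{(1)};\ g_\alpha\text{ on }V_P,\ 1_G\text{ on }V_Q;\ \Sigma_P^{(2)}\sqcup\Sigma_Q^{(1)}\bigr],\\
\beta  &= \bigl[\Sigma_P^{(2)}\sqcup\Sigma_Q^{(1)};\ 1_G\text{ on }V_P,\ g_\beta\text{ on }V_Q;\ \Sigma_P^{(2)}\sqcup\Sigma_Q^{(2)}\bigr],
\end{align*}
so that the range of the $\alpha$-representative coincides with the domain of the $\beta$-representative. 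Applying the multiplication rule of \Cref{vg-well-defined} (labels multiply position-wise, and trivial labels are absorbed) gives $\alpha\beta$ the $G$-matrix with domain $\Sigma_P^{(1)}\sqcup\Sigma_Q^{(1)}$, range $\Sigma_P^{(2)}\sqcup\Sigma_Q^{(2)}$, and labels $g_\alpha$ on $V_P$, $g_\beta$ on $V_Q$. The symmetric construction—representing $\beta$ with range $\Sigma_P^{(1)}\sqcup\Sigma_Q^{(2)}$ and then $\alpha$ with domain $\Sigma_P^{(1)}\sqcup\Sigma_Q^{(2)}$—computes $\beta\alpha$ as exactly the same $G$-matrix, so $\alpha\beta=\beta\alpha$ in $V_\phi(G)$.

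The main technical point I expect is the simultaneous choice of partitions in the third paragraph: the $V_P$-partitions $\Sigma_P^{(i)}$ must faithfully capture both the domain and range of $\alpha|_{V_P}$ (which may well differ, so one genuinely needs two different partitions of $V_P$ for the two sides of the $G$-matrix of $\alpha$), while the partitions on $V_Q$ can be freely prescribed because $\alpha$ is trivial there, and symmetrically for $\beta$. Once these representatives are built, the identification of $\alpha\beta$ and $\beta\alpha$ follows directly from position-wise label multiplication.
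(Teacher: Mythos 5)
Your proof is correct and follows essentially the same strategy as the paper: both arguments exploit the freedom to choose the trivially-labeled identity columns of a $G$-matrix over \emph{any} dyadic partition of the region disjoint from the labeled support, so that the range of the first factor can be made to coincide with the domain of the second, and then a position-wise label multiplication gives literally the same $G$-matrix for $\alpha\beta$ and $\beta\alpha$. The paper states this more tersely (writing a single pair of aligned $G$-matrices and then ``rotating'' the trivial columns), while you make the clopen decomposition $V_P\sqcup V_Q$ and the four partition sets $\Sigma_P^{(i)},\Sigma_Q^{(i)}$ explicit; there is no substantive difference.
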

\begin{proof}
  Given two elements \s{\alpha,\beta} with disjoint labeled supports, then after some expansions they can be written as $$
\alpha=\begin{scriptsize}\begin{bmatrix}   
    u_1&\cdots &u_k&v_1&\cdots &v_{k'}\\
    1_G&\cdots&1_G&g_1&\cdots&g_{k'}\\
    u_1&\cdots &u_k&w_1&\cdots &w_{k'}
  \end{bmatrix}\end{scriptsize}, 
\beta=\begin{scriptsize}
\begin{bmatrix}  w_1&\cdots &w_{k'}&u_1&\cdots &u_{k}\\
1_G&\cdots&1_G&h_1&\cdots&h_{k}\\
w_1&\cdots &w_{k'}&t_1&\cdots &t_{k}
\end{bmatrix}\end{scriptsize},
$$  
where these \s{ u_i\xinf} together with \s{v_j\xinf} form a dyadic partition of \s{\mc}.

So $$\alpha\beta=\begin{scriptsize}
    \begin{bmatrix}
        u_1&\cdots&u_k&v_1&\cdots&v_{k'}\\
        h_1&\cdots&h_k&g_1&\cdots&g_{k'}\\
        t_1&\cdots&t_k&w_1&\cdots&w_{k'}
        
    \end{bmatrix}
\end{scriptsize}.$$

Note we can also write

$$
\alpha=\begin{scriptsize}\begin{bmatrix}   
    t_1&\cdots &t_k&v_1&\cdots &v_{k'}\\
    1_G&\cdots&1_G&g_1&\cdots&g_{k'}\\
    t_1&\cdots &t_k&w_1&\cdots &w_{k'}
  \end{bmatrix}\end{scriptsize},
\beta=\begin{scriptsize}
\begin{bmatrix}  v_1&\cdots &v_{k'}&u_1&\cdots &u_{k}\\
1_G&\cdots&1_G&h_1&\cdots&h_{k}\\
v_1&\cdots &v_{k'}&t_1&\cdots &t_{k}
\end{bmatrix}\end{scriptsize}.
$$  

One checks easily now that $\alpha \beta =\beta \alpha$.
\end{proof}

\begin{definition}
Let $\omega_0\stackrel{\text{def}}{=}000\cdots$ be the constant \s{0} word in \s{\mc}.
Two elements \s{\alpha_1, \alpha_2\in \vg} are called \emph{equivalent} if
there is a dyadic neighborhood $U$ of \s{\omega_0} such that 
 \s{(w)\alpha_1=(w) \alpha_2} for all \s{w\in U} and
$l(_U|\alpha_1)= l(_U|\alpha_2)$ . 
A \emph{labeled germ} is an equivalence class of $V_{\phi}(G)$ under this relation. For any $\alpha\in V_{\phi}(G)$ , we use \s{[\alpha]} to denote its equivalence class. 
Denote the set of labeled germs  by \s{\germ}. 
\end{definition}

Define a right action of \s{\vg}  on \s{\germ} by $[\alpha] \beta\stackrel{\text{def}}{=}[\alpha \beta]$.
This is a well-defined action: assume \s{\alpha_1} equivalent to \s{\alpha_2}, let the corresponding dyadic neighborhood $U$ small enough so that the label of \s{\beta} at $U\alpha_1=U\alpha_2$ is also well-defined, then \s{(w)\alpha_1\beta=(w)\alpha_2\beta} for all $w\in U$ and 
$$l(_{U}|\alpha_1\beta)=l(_{U}|\alpha_1)l(_{U\alpha_1}|\beta)=l(_{U}|\alpha_2)l(_{U\alpha_2}|\beta)=l(_{U}|\alpha_2\beta).$$

We proceed to define a binary relation $\perp$ on $\germ$.
\begin{definition}
    Given two labeled germs \s{[\alpha_1], [\alpha_2]},  we say \s{[\alpha_1]\perp[\alpha_2]} if \s{(\omega_0)\alpha_1\neq (\omega_0)\alpha_2}.
\end{definition}

\begin{remark}
    If \s{(\omega_0)\alpha_1\neq (\omega_0)\alpha_2}, we obviously have $[\alpha_1]\neq [\alpha_2]$. But the converse is not necessarily true.
\end{remark}

\begin{lemma}
   ``$\perp$" defines a generic relation on \s{\germ} and \s{\vg} preserves the  relation.  
\end{lemma}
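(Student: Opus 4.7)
The plan is to verify both halves of the statement directly from the definitions, taking advantage of the fact that the natural projection $V_\phi(G)\to V$ lets us freely move $\omega_0$ around $\{0,1\}^\omega$, and that every $\gamma\in V_\phi(G)$ acts as a bijection on $\{0,1\}^\omega$.

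\textbf{Genericity.} Given a finite family $[\alpha_1],\ldots,[\alpha_n]\in\germ$, I would consider the finite set of points $S=\{(\omega_0)\alpha_1,\ldots,(\omega_0)\alpha_n\}\subseteq\{0,1\}^\omega$. The Thompson group $V\subseteq V_\phi(G)$ (embedded as the elements with trivial $G$-labels) already acts on $\{0,1\}^\omega$ with infinite orbit through $\omega_0$: in fact the orbit of $\omega_0$ under $V$ contains every eventually-$0$ infinite word, hence is infinite. So one can pick $\beta\in V\subseteq V_\phi(G)$ with $(\omega_0)\beta\notin S$, and then $[\alpha_i]\perp[\beta]$ for every $i$ by definition of $\perp$.

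\textbf{Preservation.} For any $\gamma\in V_\phi(G)$, the right action $[\alpha]\gamma=[\alpha\gamma]$ on $\germ$ was already checked to be well-defined. From Remark \ref{vg right action}, the induced map $w\mapsto (w)\gamma$ on $\{0,1\}^\omega$ is a bijection (it has $\gamma^{-1}$ as inverse). Since
\[
(\omega_0)(\alpha_i\gamma)=((\omega_0)\alpha_i)\gamma,
\]
the hypothesis $(\omega_0)\alpha_1\neq(\omega_0)\alpha_2$ implies $(\omega_0)(\alpha_1\gamma)\neq(\omega_0)(\alpha_2\gamma)$, i.e.\ $[\alpha_1\gamma]\perp[\alpha_2\gamma]$. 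Thus $V_\phi(G)$ preserves $\perp$.

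There is really no substantial obstacle here — the only point that needs a sentence of justification is that the $V_\phi(G)$-orbit of $\omega_0$ is infinite (so that one actually can dodge finitely many prescribed points), and this follows immediately from transitivity of $V$ on the dyadic rationals. Both parts are two-line arguments once unwound.
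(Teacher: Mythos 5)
Your proof is correct and follows the same approach as the paper's: the paper simply notes that preservation is obvious (since the right action is by bijections on $\{0,1\}^\omega$) and that genericity follows from the infinitude of the $V_\phi(G)$-orbit of $\omega_0$, which is exactly what you spell out.
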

\begin{proof}
    The action of \s{\vg} obviously preserves the relation. To show that  ``$\perp$" defines a generic relation on \s{\germ} it suffices to observe that the orbit of the element $\omega_0$ under the action of \s{\vg} on $\{0,1\}^\omega$ has infinitely many elements.
\end{proof}

Let \s{\germ^\perp_n} denote the set of generic \s{(n+1)}-tuples of \s{\germ}. Each element $\gamma$ of the Thompson group $V$ is regarded as an element of $V_{\phi}(G)$ with trivial labels, then its labeled support equals to its support as a homeomorphism on the Cantor set.

\begin{lemma}\label{vigorous}   \cite[Section 1.1]{Co08}
For any clopen sets \s{A,B,C} in the Cantor set \s{\mc} with \s{B,C} properly contained in \s{A}, there exists a \s{\gamma\in V} such that \s{\mathrm{supp}(\gamma)\subset A} and \s{B\gamma  \subset C}.
\end{lemma}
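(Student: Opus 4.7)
The plan is to construct $\gamma$ explicitly as a dyadic rearrangement of $\mathfrak{C}$ that fixes the complement of $A$. The trivial cases $B=\emptyset$ (take $\gamma=\mathrm{id}$) and $C=\emptyset$ (which forces $B=\emptyset$) can be dispatched immediately, so I may assume both are non-empty. The key background fact is that every clopen subset of $\mathfrak{C}$ is a finite disjoint union of dyadic intervals, since it is open (hence a union of basis elements) and compact. Using this, I would first write $B=B_1\sqcup\cdots\sqcup B_n$ as a disjoint union of dyadic intervals, pick any non-empty dyadic interval $D\subseteq C$, and then split $D$ into $n$ dyadic pieces $D=D_1\sqcup\cdots\sqcup D_n$; any dyadic interval $u\{0,1\}^\omega$ can be subdivided into any prescribed number $n\geq 1$ of dyadic sub-intervals by choosing a finite rooted binary tree with $n$ leaves rooted at $u$.

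Next I would handle the complements. Since $B\subsetneq A$ and $D\subseteq C\subsetneq A$, both $A\setminus B$ and $A\setminus D$ are non-empty clopen sets, hence admit dyadic partitions, say $A\setminus B=E_1\sqcup\cdots\sqcup E_p$ and $A\setminus D=F_1\sqcup\cdots\sqcup F_q$. Halving a single dyadic interval increases the piece count by one without changing the underlying set, so by splitting pieces on whichever side has fewer of them I can arrange $p=q$. Similarly, I partition $\mathfrak{C}\setminus A$ (possibly empty) into dyadic intervals $G_1,\ldots,G_r$. This produces two dyadic partitions of $\mathfrak{C}$ of the same total size, namely $\{B_i\}\cup\{E_j\}\cup\{G_l\}$ and $\{D_i\}\cup\{F_j\}\cup\{G_l\}$.

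Finally, the bijection $B_i\mapsto D_i$, $E_j\mapsto F_j$, $G_l\mapsto G_l$ between these two partitions defines an element $\gamma\in V$. By construction $\gamma$ restricts to the identity on each $G_l$, so $\mathrm{supp}(\gamma)\subseteq A$, while $B\gamma=D_1\sqcup\cdots\sqcup D_n=D\subseteq C$. There is no real obstacle in this argument; it is a routine bookkeeping exercise with dyadic partitions, the only point needing verification being the elementary fact that a dyadic interval can be refined into a prescribed number of sub-intervals and that the number of pieces in a dyadic decomposition of a non-empty clopen set can always be increased by one.
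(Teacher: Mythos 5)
The paper gives no proof of this lemma; it simply cites \cite[Section 1.1]{Co08}, so there is no in-paper argument to compare against. Your construction is a correct, self-contained elementary proof: decompose $B$ into dyadic intervals $B_1,\dots,B_n$, fit a single dyadic interval $D$ inside $C$, subdivide $D$ into $n$ dyadic pieces, balance the piece counts of dyadic partitions of $A\setminus B$ and $A\setminus D$ by repeatedly halving an interval on the smaller side, adjoin the identity on a dyadic partition of $\mathfrak{C}\setminus A$, and let $\gamma\in V$ be the element determined by the resulting pair of equal-cardinality dyadic partitions. The proper containments $B\subsetneq A$ and $C\subsetneq A$ are used exactly where needed: they guarantee $A\setminus B$ and $A\setminus D$ are both nonempty, so the halving trick never starts from zero pieces; and they are necessary, since if $B=A$ then any $\gamma$ supported in $A$ satisfies $B\gamma=A\not\subseteq C$. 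One small phrasing quibble: $C=\emptyset$ is not ``forced'' by the hypotheses to give $B=\emptyset$; rather the conclusion $B\gamma\subseteq C$ is simply unattainable when $C=\emptyset$ and $B\neq\emptyset$, so the lemma implicitly assumes $C\neq\emptyset$ (as in the paper's sole application, where $C=10\{0,1\}^\omega$). After restricting to $B,C$ nonempty, the remaining bookkeeping is sound.
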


\begin{lemma}\label{Lemma:vg-transitive}
    \s{\vg} acts transitively on \s{\germ^\perp_n} for all \s{n\geq 0}. 
\end{lemma}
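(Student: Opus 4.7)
Given two generic tuples $([\alpha_0],\dots,[\alpha_n])$ and $([\beta_0],\dots,[\beta_n])$ in $\germ^\perp_n$, my plan is to construct a single element $g\in V_\phi(G)$ directly as a $G$-matrix so that $[\alpha_i]g=[\beta_i]$ for every $i$; this is enough because it exhibits any generic tuple in one orbit.

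First I would choose representatives and shrink. For each $i$, pick a representative $\alpha_i, \beta_i \in V_\phi(G)$. Because a $G$-matrix only involves finitely many dyadic pieces, for a small enough dyadic neighborhood $U=u\xinf$ of $\omega_0$ each of the finitely many elements $\alpha_0,\dots,\alpha_n,\beta_0,\dots,\beta_n$ restricts on $U$ to a single column of its $G$-matrix, with single labels $a_i,b_i\in G$, and maps $U$ onto dyadic intervals $V_i=v_i\xinf$, $W_i=w_i\xinf$ (dyadic neighborhoods of $x_i=(\omega_0)\alpha_i$ and $y_i=(\omega_0)\beta_i$ respectively) by the formula $uz\mapsto v_i(z)\phi^*(a_i)$ and $uz\mapsto w_i(z)\phi^*(b_i)$. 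Since the $x_i$ are pairwise distinct, a further shrinking of $U$ makes the collection $\{V_0,\dots,V_n\}$ pairwise disjoint, and likewise for $\{W_0,\dots,W_n\}$; so both can be completed to partition sets $\{v_0,\dots,v_n,p_1,\dots,p_m\}$ and $\{w_0,\dots,w_n,q_1,\dots,q_{m'}\}$. By performing simple expansions on the auxiliary pieces (which does not affect the $v_i,w_i$) I can arrange $m=m'$.

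Next I would write down the element
$$g=\begin{scriptsize}\begin{bmatrix} v_0 & \cdots & v_n & p_1 & \cdots & p_m \\ a_0^{-1}b_0 & \cdots & a_n^{-1}b_n & 1_G & \cdots & 1_G \\ w_0 & \cdots & w_n & q_1 & \cdots & q_m \end{bmatrix}\end{scriptsize}\in V_\phi(G).$$
To finish the proof I would check that $[\alpha_i g]=[\beta_i]$ for each $i$. On $U$ both elements are represented by a single column; composing the labels gives $a_i\cdot a_i^{-1}b_i=b_i$, and the induced action sends $uz\mapsto v_i(z)\phi^*(a_i)\mapsto w_i(z)\phi^*(a_i)\phi^*(a_i^{-1}b_i)=w_i(z)\phi^*(b_i)$, which is exactly the action of $\beta_i$ on $U$ with its label $b_i$. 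Thus $\alpha_ig$ and $\beta_i$ define the same labeled germ, as required.

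The only mild subtlety is the bookkeeping to ensure that after shrinking $U$ and completing to partition sets the two ``complementary'' lists can be matched; this is a standard expansion argument and there is no genuine obstacle. Injectivity of $\phi$ (assumed throughout this section) is used implicitly, since it guarantees that the labels $a_i,b_i$ of the germs are well defined. Lemma \ref{vigorous} is not needed here, although it would give an alternative proof by first using $V$ alone to align the images $(\omega_0)\alpha_i\to(\omega_0)\beta_i$ and then adjusting labels by elements of the form $\Lambda_{w_i}(a_i^{-1}b_i)$.
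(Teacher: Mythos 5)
Your proof is correct and takes essentially the same approach as the paper: shrink a dyadic neighborhood $U$ of $\omega_0$ so that the images $\{U\alpha_i\}$ and $\{U\beta_i\}$ become families of pairwise disjoint dyadic intervals, complete each family to a dyadic partition of $\mathfrak{C}$ of a common size, and then write down a single explicit $G$-matrix with labels $a_i^{-1}b_i$ that carries $[\alpha_i]$ to $[\beta_i]$. The paper's proof is the mirror image (it sends $[\beta_i]$ to $[\alpha_i]$ with labels $l(_U|\beta_i)^{-1}l(_U|\alpha_i)$) but is the same construction; your extra care about matching the sizes $m=m'$ of the complementary partition pieces just spells out a step the paper leaves implicit.
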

\begin{proof}
    Given two generic tuples \s{([\alpha_0],  \cdots , [\alpha_n])}, \s{([\beta_0],  \cdots , [\beta_n])\in \germ^\perp_n}, we want to find an element in  \s{\vg} that maps $[\beta_i]$ to $[\alpha_i]$. Note first that \s{(\omega_0)\alpha_0,  \cdots , (\omega_0)\alpha_n} are  pairwise distinct by definition, and the same holds for \s{(\omega_0)\beta_0,  \cdots , (\omega_0)\beta_n}. Choose a small dyadic neighborhood $U$ of \s{\omega_0} such that both finite sets \s{ \{U \alpha_0,  \cdots ,  U \alpha_n\}} and \s{ \{U \beta_0,  \cdots ,  U \beta_n\}}  are  pairwise disjoint dyadic intervals. Then there is an element \s{\gamma\in \vg} such that \s{l(_{ U \beta_i}|\gamma)=l(_U|\beta_i)^{-1}l(_U|\alpha_i)} and \s{(w)\beta_i\gamma=(w)\alpha_i} for all \s{w\in U}. In more detail, we can find two dyadic partitions $\{u_0,\cdots,u_k\}$ and $\{v_0,\cdots,v_k\}$  for some $k\geq n$ such that $u_i\xinf = U\alpha_i$  and $v_i\xinf = U\beta_i$   for $0\leq i\leq n$.
Let 
$$\gamma \stackrel{\text{def}}{=}\begin{scriptsize}
     \begin{bmatrix}
         v_0&\cdots &v_n&v_{n+1}&\cdots&v_k\\
         h_1&\cdots&h_n&1_G&\cdots&1_G\\
         u_0&\cdots &u_n&u_{n+1}&\cdots&u_k
     \end{bmatrix}
 \end{scriptsize}$$
 where $h_i\stackrel{\text{def}}{=}l(_U|\beta_i)^{-1}l(_U|\alpha_i)$ . It's easy to check that $[\beta_i]\curvearrowleft\gamma$ is equal to $[\alpha_i]$.
\end{proof}

\begin{lemma}\label{Lemma:l_thompson-Stab-ba}
    The stabilizers for the \s{\vg}-action on \s{\germ^\perp_n} are boundedly acyclic for all \s{n\geq 0}. 
\end{lemma}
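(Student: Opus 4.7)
First I would concretely identify the stabilizer. For a generic tuple $([\alpha_0],\ldots,[\alpha_n])\in\germ^\perp_n$, the points $p_i := (\omega_0)\alpha_i\in\mc$ are pairwise distinct. Unwinding the definition, an element $\beta\in V_\phi(G)$ fixes $[\alpha_i]$ iff there is a dyadic neighborhood $U$ of $\omega_0$ on which $(w)\alpha_i\beta = (w)\alpha_i$ and $l(_U|\alpha_i\beta) = l(_U|\alpha_i)$; since $l(_U|\alpha_i\beta) = l(_U|\alpha_i)\cdot l(_{(U)\alpha_i}|\beta)$, this is equivalent to $\mathrm{lsupp}(\beta)$ being disjoint from the dyadic neighborhood $(U)\alpha_i$ of $p_i$. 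Thus the stabilizer $\mathrm{Stab}$ is exactly the subgroup of $V_\phi(G)$ whose elements have labeled support avoiding some (element-dependent) dyadic neighborhood of each $p_i$.

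The plan is to verify the commuting cyclic conjugates criterion (\Cref{def of cc}) for $\mathrm{Stab}$ with $k=\infty$ and then invoke \Cref{commuting conjugates}. Given a finitely generated subgroup $H = \langle h_1,\ldots,h_m\rangle \leq \mathrm{Stab}$, I would first check that $\mathrm{lsupp}(\cdot)$ is sub-multiplicative and invariant under inversion---a direct consequence of the definition---so $\bigcup_j \mathrm{lsupp}(h_j)$ contains $\mathrm{lsupp}(h)$ for every $h \in H$. This closed subset of $\mc$ is disjoint from some fixed open neighborhoods of the $p_i$, hence by compactness of the Cantor set I can enlarge it to a clopen set $C$---a finite union of dyadic intervals---that is disjoint from dyadic neighborhoods $W_0,\ldots,W_n$ of $p_0,\ldots,p_n$.

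The core step is to construct $t \in V \subseteq V_\phi(G)$ (with trivial labels) such that (a) $\mathrm{supp}(t) \cap W_i = \emptyset$ for all $i$, so that $t \in \mathrm{Stab}$, and (b) the iterates $\{Ct^p : p \geq 0\}$ are pairwise disjoint. Set $A := \mc \setminus \bigcup_i W_i$, a clopen neighborhood of $C$. Using \Cref{vigorous} I can first conjugate inside $V$ by an element supported in $A$ to move $C$ into a dyadic corner $a01\xinf$ sitting inside a dyadic interval $a\xinf \subseteq A$; then the standard Thompson-group shift on $a\xinf$ given by the domain partition $\{a00,a01,a1\}$ mapped to the range partition $\{a0,a10,a11\}$ via $a00 \mapsto a0$, $a01 \mapsto a10$, $a1 \mapsto a11$ (identity outside $a\xinf$) sends $a01\xinf$ through the pairwise disjoint chain $a10\xinf, a110\xinf, a1110\xinf,\ldots$ converging to the fixed point $a1^{\omega}$. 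Conjugating back yields the desired $t$.

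With such $t$ in hand, for any $h \in H$ and $p \geq 1$ the right-action identity $\mathrm{lsupp}(t^p h t^{-p}) = \mathrm{lsupp}(h)\cdot t^{-p} \subseteq C t^{-p}$ together with property (b) yields $\mathrm{lsupp}(H) \cap \mathrm{lsupp}(t^p H t^{-p}) = \emptyset$; \Cref{commute} then gives $[H, t^p H t^{-p}] = 1$ for every $p \geq 1$, verifying \Cref{def of cc} with $k = \infty$, and \Cref{commuting conjugates} implies $\mathrm{Stab}$ is boundedly acyclic. The main obstacle is the construction in step three: although analogous shift elements have been used in $V$ for bounded-cohomology computations (cf.\ \cite{Mo22,An22}), here one must simultaneously respect the support constraint (a) imposed by membership in $\mathrm{Stab}$ and realize infinitely many disjoint translates (b) of a potentially complicated clopen set $C$, which forces the preliminary step of moving $C$ into a standardized position inside $A$.
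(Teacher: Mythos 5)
Your proof is correct, and it takes the alternative route that the paper itself flags in the remark immediately following the lemma: you verify the commuting cyclic conjugates criterion (\Cref{def of cc}) and invoke \Cref{commuting conjugates}, whereas the paper instead applies Monod's two-condition criterion \Cref{coame-to-wreath}. The substantive difference is global versus local. The paper first uses transitivity (\Cref{Lemma:vg-transitive}) to place all the points $(\omega_0)x_i$ inside $00\xinf$, then fixes once and for all the single clopen set $\Omega = 10\xinf$ and a single explicit shift $\gamma_2$ satisfying $\Omega\gamma_2^k \cap \Omega = \emptyset$, and shows every finite subset of the stabilizer conjugates into $G_\Omega$. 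You instead work with an arbitrary generic tuple and, for each finitely generated $H \leq \mathrm{Stab}$, build a fresh clopen container $C \supseteq \mathrm{lsupp}(H)$ and a fresh shift $t$ adapted to $C$. This buys you freedom from the normalization step, at the cost of re-running the shift construction for every $H$; both routes rest on the same geometric engine, namely \Cref{vigorous} plus a standard Thompson shift producing infinitely many disjoint translates, and both ultimately pass through \Cref{commute}.

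One technical point you should pin down. As written, $C$ is chosen disjoint from the $W_i$, which only gives $C \subseteq A := \mc\setminus\bigcup_i W_i$; but \Cref{vigorous} requires $C$ to be \emph{properly} contained in $A$, and $a01\xinf$ must also sit properly inside $A$. This is easily arranged: since $\mathrm{lsupp}(H)$ is closed and disjoint from the finite set $\{p_0,\ldots,p_n\}$, choose dyadic intervals $U_i \ni p_i$ disjoint from $\mathrm{lsupp}(H)$, then choose strictly smaller dyadic intervals $W_i \subsetneq U_i$, and set $C := \mc\setminus\bigcup_i U_i$; then $C \subsetneq A$. A second small point: your identity $\mathrm{lsupp}(t^p h t^{-p}) = \mathrm{lsupp}(h)\cdot t^{-p}$ is used for $t \in V$ (trivial labels), for which it holds cleanly because small dyadic intervals are carried to dyadic intervals with trivial label; it is worth saying explicitly that $t$ is taken in $V$, as you did, since the corresponding statement for general $\gamma \in V_\phi(G)$ requires a little more care with labels. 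With these clarifications, your argument is a complete proof.
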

\begin{proof}
  
  Let \s{([x_0],  \cdots [x_n])\in \germ^\perp_n}  where each \s{(\omega_0)x_i} belongs to \s{00\xinf} for all \s{0\leq i\leq n}. Since the action of \s{\vg} on \s{\germ^\perp_n} is transitive, it suffices to prove that the stabilizer \s{G_n} of \s{([x_0],  \cdots [x_n])} is boundedly acyclic. By definition, it's easy to see that \s{G_e=\{\gamma\in \vg:\omega_0\notin \mathrm{lsupp}(\gamma)\}} where \s{G_e=\mathrm{Stab}_{\vg}([e])} and $e$ is the identity element in $V_{\phi}(G)$. Hence we have \s{G_n=\bigcap_{i=0}^{n} G_e^{x_i}=\{\gamma\in \vg:(\omega_0)x_i\notin \mathrm{lsupp}(\gamma) \text{ for each } 0\leq i\leq n\}}.  Now let \s{\Omega=10\xinf}, 
  denote by \s{G_\Omega} the subgroup consisting of elements with labeled support in \s{\Omega}, so \s{G_{\Omega}\leq G_n}. We claim the pair \s{(G_n,G_\Omega)} satisfies the conditions in \cref{coame-to-wreath}, hence \s{G_n} is boundedly acyclic.

First, we claim that every finite subset $\text{F}$ of \s{G_n}
   can be conjugated by an element of \s{V\cap G_n} into \s{G_{\Omega}}.
  In fact, \s{\mathrm{lsupp}(\text{F})} is a  closed hence proper subset of \s{\mc-\{(\omega_0)x_0,\cdots ,(\omega_0)x_n\}}, so we could choose clopen subsets $A,B$ of $\mc-\{(\omega_0)x_0,\cdots ,(\omega_0)x_n\}$ such that $\mathrm{lsupp}(\text{F}) \subsetneq B\subsetneq A$ and $\Omega \subsetneq A$. Now by \cref{vigorous}  we can pick a \s{\gamma_1\in V\cap G_n} such that \s{B\gamma_1\subset \Omega  }, hence $\mathrm{lsupp}(\text{F})\gamma_1\subset \Omega$  and \s{\text{F}^{\gamma_1}\subset G_{\Omega }}. 

Next, we  define an element \s{\gamma_2\in V} as follows:
   $$ (uw)\gamma_2  =\begin{cases}
    00w   &  \text{ if } u=00\\
    010w  &  \text{ if } u=01\\
    011w   &  \text{ if } u=10\\
    1w   &  \text{ if } u=11\\
\end{cases}  $$   
Note that  \s{\gamma_2\in V\cap G_n} and \s{  \Omega\gamma_2^k \cap \Omega =\varnothing} for all \s{k\in\mz-\{0\}}. Therefore, by \cref{commute} the conjugation subgroup of \s{G_\Omega} by \s{\gamma_2^p} and by \s{\gamma_2^q} always commute if \s{p\neq q}. 
   \end{proof}

\begin{remark}
    One could also use \Cref{commuting conjugates} to prove \Cref{Lemma:l_thompson-Stab-ba}. 
\end{remark}

Now combining \Cref{thm:inj-iso}, \Cref{Lemma:vg-transitive} and \Cref{Lemma:l_thompson-Stab-ba} with \cref{cor:BA-gene} we have: 
\begin{thm}\label{Thm:l-Thompson-ba}
    \s{\vg} is boundedly acyclic for any  wreath recursion $\phi$. 
\end{thm}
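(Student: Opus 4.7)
The plan is to deduce the theorem directly from the machinery assembled in the preceding subsection, invoking \Cref{cor:BA-gene}. First I would reduce to the injective case: by \Cref{thm:inj-iso}, the quotient $\hat\pi\colon G\to\hat G$ together with the induced injective wreath recursion $\hat\phi$ gives an isomorphism $V_\phi(G)\cong V_{\hat\phi}(\hat G)$, so it is enough to treat injective $\phi$, where the notion of label at a dyadic interval (and hence the set of germs $\mathcal{G}$) is well defined.

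Next I would package the ingredients already present. The set $\mathcal{G}$ of labeled germs at $\omega_0$ carries a right $V_\phi(G)$-action and a binary relation $\perp$ defined by $(\omega_0)\alpha_1\neq(\omega_0)\alpha_2$; this relation is generic, since for any finite collection of germs one can choose a group element moving $\omega_0$ to a point distinct from finitely many prescribed ones (any infinite orbit under the $V$-action on $\{0,1\}^\omega$ already suffices). The action preserves $\perp$. By \Cref{Lemma:vg-transitive} the induced $V_\phi(G)$-action on $\mathcal{G}_n^\perp$ is transitive for every $n\geq 0$, and by \Cref{Lemma:l_thompson-Stab-ba} each point stabilizer is boundedly acyclic.

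With these ingredients in hand, the conclusion follows by direct application of \Cref{cor:BA-gene}: transitivity plus boundedly acyclic stabilizers forces the quotient semi-simplicial set $\mathcal{G}_\bullet^\perp/V_\phi(G)$ to have one orbit in each degree, so its bounded cochain complex is the alternating sum complex $\mathbb{R}\to\mathbb{R}\to\mathbb{R}\to\cdots$, whose cohomology vanishes in positive degrees. Combined with the boundedly acyclic connected semi-simplicial set $\mathcal{G}_\bullet^\perp$ coming from the generic relation (\Cref{lemma:generic-ba}) and the spectral sequence of \Cref{prop:BA-crit}, this yields $\mathrm{H}_b^p(V_\phi(G),\mathbb{R})=0$ for all $p\geq 1$.

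In other words, the proof is essentially a one-line assembly of results, and no further technical obstacle arises — all the real work (checking genericity, transitivity, and bounded acyclicity of stabilizers via co-amenability and the doubling trick of \Cref{coame-to-wreath}) has already been carried out in Lemmas \ref{Lemma:vg-transitive} and \ref{Lemma:l_thompson-Stab-ba}. The only subtlety worth flagging is the initial reduction to injective wreath recursions, since without it the notion of label (and hence of labeled germ and labeled support) is ill-defined, as pointed out in \Cref{rem:noninjw-lbl}; once that is handled, the theorem is immediate.
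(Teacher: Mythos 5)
Your proposal is correct and follows exactly the paper's argument: reduce to injective $\phi$ via \Cref{thm:inj-iso}, then combine \Cref{Lemma:vg-transitive} and \Cref{Lemma:l_thompson-Stab-ba} with \Cref{cor:BA-gene} applied to the generic relation $\perp$ on labeled germs. Nothing to add.
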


\section{Bounded cohomology of some topological full groups}
In this section, we  prove  topological full groups with extremely proximal actions are boundedly acyclic. 
Recall given a subgroup \s{\Gamma\subseteq \operatorname{Homeo}(\mc)}, its \emph{topological full group} is defined as:$$\llbracket \Gamma \rrbracket=\left\{g \in \operatorname{Homeo}(\mc): \begin{array}{l}
\text { for each } x \in \mc \text { there exists a neighbour- } \\
\text { hood } U \text { of } x \text { and } \gamma \in \Gamma \text { such that } ~_U| g=~_U|\gamma
\end{array}\right\}$$

Note by definition, we have $\llbracket \llbracket\Gamma \rrbracket \rrbracket = \llbracket\Gamma \rrbracket $ and a subgroup $\Gamma\subset \operatorname{Homeo}(\mc)$ is a topological full group if and only if $\llbracket\Gamma\rrbracket=\Gamma$.

\begin{definition}
    Let $\Gamma$ be a group acts on the Cantor set, the action is \emph{extremely proximal} if for any nonempty and proper clopen sets \s{V_1,V_2\subsetneq \mc} there exists \s{\gamma\in \Gamma} such that \s{\gamma(V_2)\subsetneq V_1}. 
\end{definition}

Inspired by the uniform simplicity results of topological full groups in \cite[Section 5]{GG17}, in this section we will prove the following.

\begin{thm}\label{thm:tofull-ba}
    Let $\Gamma \leq \mathrm{Homeo}(\mc)$ be a topological full group  such that the action is extremely proximal. Then \s{\Gamma} is boundedly acyclic.    
\end{thm}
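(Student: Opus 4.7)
The plan is to adapt the strategy of Section~\ref{lable ba} used for the $\phi$-labeled Thompson groups to the label-free topological setting. Fix a basepoint $\omega_0 \in \mc$ and define $\mathcal{G}$ to be the set of germs of elements of $\Gamma$ at $\omega_0$: formally, the quotient of $\Gamma$ by the equivalence $\alpha_1 \sim \alpha_2$ iff $\alpha_1$ and $\alpha_2$ agree on some open neighborhood of $\omega_0$. Equip $\mathcal{G}$ with the binary relation $\perp$ defined by $[\alpha_1] \perp [\alpha_2]$ iff $(\omega_0)\alpha_1 \neq (\omega_0)\alpha_2$. The right action $[\alpha]\cdot\gamma = [\alpha\gamma]$ of $\Gamma$ on $\mathcal{G}$ preserves $\perp$, so the goal is to verify the hypotheses of Corollary~\ref{cor:BA-gene}: genericity of $\perp$, transitivity of $\Gamma$ on each $\mathcal{G}_n^\perp$, and bounded acyclicity of the stabilizers.

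For genericity, extreme proximality implies that the $\Gamma$-orbit of $\omega_0$ in $\mc$ is infinite, so given finitely many germs one can always exhibit another germ sending $\omega_0$ to a fresh point. For transitivity on $\mathcal{G}_n^\perp$, given two tuples $([\alpha_i])_{i=0}^n$ and $([\beta_i])_{i=0}^n$, shrink a clopen neighborhood $U$ of $\omega_0$ so that both $\{U\alpha_i\}$ and $\{U\beta_i\}$ are collections of pairwise disjoint proper clopens. On each $U\alpha_i$ the desired map is prescribed by $\alpha_i^{-1}\beta_i \in \Gamma$; the topological full group property, combined with extreme proximality to realize a homeomorphism on the leftover region, lets us glue these local data into a single $\gamma \in \Gamma$ with $[\alpha_i]\cdot\gamma = [\beta_i]$ for every $i$.

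The crux is the bounded acyclicity of stabilizers. Let $G_n$ be the stabilizer of $([x_0],\ldots,[x_n])$ and set $p_i = (\omega_0)x_i$; concretely $G_n$ consists of the $\gamma \in \Gamma$ whose support avoids a neighborhood of each $p_i$. We verify the commuting cyclic conjugates criterion (Theorem~\ref{commuting conjugates}). Given a finitely generated $H \leq G_n$, choose clopens $C \supseteq \overline{\mathrm{supp}(H)}$ and $B \supseteq \{p_0,\ldots,p_n\}$ with $B \cap C = \emptyset$. Using extreme proximality, pick a clopen $C_0 \supsetneq C$ contained in $\mc \setminus B$ and build an element $t \in G_n$ that fixes $B$ pointwise and satisfies $t(C_0) \subsetneq C_0 \setminus C$; the topological full group property is used to realize the resulting piecewise map as an honest element of $\Gamma$. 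The nested clopens $C_j := t^j(C_0)$ then form a strictly decreasing family as $j$ increases, and the annular regions $C_j \setminus C_{j+1}$ are pairwise disjoint as $j$ ranges over $\mz$. Since $C \subseteq C_0 \setminus C_1$, the translates $t^p(C) \subseteq C_p \setminus C_{p+1}$ are pairwise disjoint for $p \in \mz$, so $t^p H t^{-p}$ and $H$ have disjoint supports and commute for every $p \neq 0$ (by the evident analog of Lemma~\ref{commute}). This yields commuting cyclic conjugates with $k = \infty$, hence the bounded acyclicity of $G_n$, and combining the three ingredients with Corollary~\ref{cor:BA-gene} gives the theorem.

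The main obstacle is the construction of the element $t$ above: it must display the prescribed contracting dynamics on $C_0$, be the identity on $B$, and, crucially, lie in $\Gamma$. Extreme proximality only supplies homeomorphisms of $\mc$ with the desired behavior on $C_0$ and no control on the complement, so realizing the modified map as a genuine element of $\Gamma$ will require a careful appeal to the topological full group property and a filling-in argument ensuring the pieces assemble into a single homeomorphism in $\Gamma$. A parallel gluing difficulty appears in the transitivity step.
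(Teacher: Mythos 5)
Your overall architecture (germ complex at a basepoint, the generic relation $\perp$, and Corollary~\ref{cor:BA-gene}) matches the paper's. The genuine gap is in the stabilizer step, and you have correctly located it but underestimated its severity: you want an \emph{infinite-order} element $t \in \Gamma$ that fixes $B$ pointwise and contracts $C_0$ strictly into $C_0 \setminus C$, so as to get commuting cyclic conjugates with $k=\infty$. Such a $t$ need not exist in $\Gamma$. The obstruction is not a routine ``filling-in'': since $t(C_0)\subsetneq C_0\setminus C$ and $t$ fixes $B$, the complement $\mc\setminus(C_0\cup B)$ must be mapped \emph{onto} the strictly larger clopen $\mc\setminus(t(C_0)\cup B)$, and neither extreme proximality (which only gives one-sided compressions $\gamma(V_2)\subsetneq V_1$) nor the topological full group axiom gives any way to realize this expansion by pieces of $\Gamma$-elements; there is no equidecomposability statement available in this generality. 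A similar unjustified gluing is hiding in your transitivity step, where you need a single $\gamma\in\Gamma$ restricting to the prescribed maps $\alpha_i^{-1}\beta_i$ on the pieces $U\alpha_i$ and filling in the rest.

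The paper avoids both problems with a single device: for disjoint clopens $A,B$ and $\tau\in\Gamma$ with $\tau(A)=B$, the \emph{swap} $\gamma_{\tau,A,B}$ (equal to $\tau$ on $A$, $\tau^{-1}$ on $B$, identity elsewhere) is automatically in $\Gamma$ by the topological full group property, has support $A\cup B$, and has order $2$. In the stabilizer argument one takes a clopen $U'\supsetneq F$ inside $U$, uses extreme proximality once to get $\eta\in\Gamma$ with $(\mc\setminus U)\eta\subset U\setminus U'$, and forms $t:=\gamma_{\eta,\,\mc\setminus U,\,(\mc\setminus U)\eta}$. Then $t\in\Gamma_n$, $t^2=1$, and $tHt^{-1}$ has support disjoint from that of $H$, so Definition~\ref{def of cc} is satisfied with $k=2$; no infinite-order contracting element is needed. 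The same swap elements also carry the induction in the transitivity proof. You should replace your attempted construction of $t$ by this order-two swap; with that change the rest of your argument goes through.
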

\begin{proof}
    First let \s{p\in\mc}, we define \s{X_\Gamma:=\Gamma/\sim} where \s{\gamma_1\sim \gamma_2} when they agree on a neighborhood of \s{p}, let \s{[\gamma]\in X_\Gamma} denote the equivalence class of \s{\gamma}. View \s{X_\Gamma} as a  \s{\Gamma}-set where \s{\Gamma} acts by \s{[\gamma_1]\gamma_2:=[\gamma_1\gamma_2]}. Then we define a relation in \s{X_\Gamma} by setting \s{[\gamma_1]\perp[\gamma_2]} if \s{(p)\gamma_1\neq (p)\gamma_2}. Now it's  easy to see that the right \s{\Gamma} action preserves the relation \s{\perp} and \s{\perp} is actually generic in the sense of \cref{generic}. So we get a boundedly acyclic \s{\Gamma}-semi-simplicial set \s{(X_\Gamma)_\bullet^\perp} by \Cref{lemma:generic-ba}. For disjoint clopen sets \s{A, B} and \s{\tau\in\Gamma} maping \s{A} to \s{B} homeomorphically, define a homeomorphism $$(x)\gamma_{\tau, A,B}:=\begin{cases}
        (x) \tau,&x\in A\\
        (x)\tau^{-1},&x\in B\\
        x, &otherwise
    \end{cases}$$ 
    By definition \s{\gamma_{\tau, A,B}\in \Gamma} and is of order $2$.

We want to apply \cref{cor:BA-gene} to show $\Gamma$ is boundedly acyclic. For that  we first prove that for each \s{n\geq 0} the \s{\Gamma-} action on the set \s{(X_\Gamma)_n^\perp} is transitive. Let us prove it by induction on \s{n}. For \s{n=0} it is trivial, since $(X_\Gamma)_0^\perp=X_\Gamma$ which is $\Gamma$-transitive by definition. Assume that  the actions are transitive up to $n-1$. Given \s{([\gamma_0],\cdots,[\gamma_n]), ([\gamma'_0],\cdots,[\gamma'_n])\in (X_\Gamma)_n^\perp}, by the induction hypothesis there is \s{\gamma\in\Gamma} with \s{[\gamma_i]\gamma=[\gamma'_i]} for \s{i<n}. We first claim that \s{\gamma} can be chosen such that \s{(p)\gamma_n\gamma\neq (p)\gamma_n'}. In fact, if \s{(p)\gamma_n\gamma= (p)\gamma_n'}, then we can modify \s{\gamma} in the following way.  Choose $A_1$ to be a 
 small clopen subset disjoint from $(p)\gamma_0\gamma,\cdots,(p)\gamma_{n-1}\gamma$ but containing $(p)\gamma_n\gamma$, and $B_1$ to be another clopen subset disjoint from $(p)\gamma_0\gamma,\cdots,(p)\gamma_{n-1}\gamma$ and $A_1$. If we replace \s{\gamma} with \s{\gamma\gamma_{\tau_1, A_1, B_1}}, then  \s{(p)\gamma_n\gamma\neq (p)\gamma_n'} and \s{[\gamma_i]\gamma=[\gamma'_i]} for \s{i<n}. Now let  \s{\alpha:=\gamma_n\gamma}, \s{\beta:=\alpha^{-1}\gamma'_n} , they are elements of \s{\Gamma}. Note that \s{(p)\alpha \neq (p)\alpha\beta}. Pick a small clopen set \s{A_2} containing \s{(p)\alpha}  but disjoint from \s{(p)\gamma'_0,\cdots,(p)\gamma'_{n-1}}, such that its image $B_2 = A_2 \beta$ is also disjoint from \s{(p)\gamma'_0,\cdots,(p)\gamma'_{n-1}}. Note now that the element \s{\gamma_{\beta, A_2, B_2}\in \Gamma}, fixes \s{\{(p)\gamma'_0,\cdots,(p)\gamma'_{n-1}\}} pointwise and maps \s{(p)\alpha} to \s{(p)\alpha\beta}.
One  checks also that \s{([\gamma_0],\cdots,[\gamma_n])\gamma\gamma_{\beta, A_2, B_2}= ([\gamma'_0],\cdots,[\gamma'_n])}.

    Next we prove that for each \s{n\geq 0} the stabilizers of \s{\Gamma}-action on \s{(X_\Gamma)_n^\perp} is boundedly acyclic. Let \s{([\gamma_0],\cdots,[\gamma_n])\in (X_\Gamma)_n^\perp} and \s{\Gamma_n} be its stabilizer, let \s{F:=\{(p)\gamma_0,\cdots,(p)\gamma_n\}}. Then \s{\Gamma_n} consists of homeomorphisms that fix a neighborhood of \s{F} pointwise. We will prove that \s{\Gamma_n} is boundedly acyclic by showing that it  has commuting cyclic conjugates (see \cref{def of cc}). For a given finitely generated group \s{H\leq \Gamma_n}, there is a neighborhood \s{U} of \s{F} that is fixed by every element of \s{H} pointwise. Pick a clopen set \s{U'} with \s{F\subsetneq U'\subsetneq U}. Since the \s{\Gamma}-action on \s{\mc} is extremely proximal, there is \s{ \eta\in \Gamma} such that \s{(\mc-U)\eta\subset U-U'}. 
   Note that \s{\gamma_{\eta,\mc-U,(\mc-U)\eta}\in \Gamma_n} and is of order 2, and \s{H} commutes with its conjugate \s{H^{\gamma_{\eta,\mc-U,(\mc-U)\eta}}} since they have disjoint support. Therefore  \s{\Gamma_n} has the commuting cyclic conjugates for \s{k=2} in \cref{def of cc}.  Hence by \cref{commuting conjugates}, \s{\Gamma_n} is boundedly acyclic. This finishes the proof.
\end{proof}

    In \cite[Section 5]{GG17}, Gal and Gismatullin proved that topological full groups acting on $\mc$  extremely proximally are uniformly simply. Recall that a group is \emph{uniformly simple} if for any given conjugacy class $C$, every element of the group can be written as the product of bounded number of elements from $C^{\pm}$. They checked that the commutator subgroup of the \emph{Neretin groups} and the commutator subgroups of  the \emph{Higman--Thompson groups} satisfy these assumptions, hence they are also boundedly acyclic by our theorem. But a group is always coamenable with its commutator subgroup, hence  we have the following corollary by \Cref{Lem:coame-bcohg}. 

\begin{corollary}
     Neretin groups  (viewed as discrete groups) and  Higman--Thompson groups are boundedly acyclic.
\end{corollary}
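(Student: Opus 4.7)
The plan is to combine Theorem \ref{thm:tofull-ba} with the Gal--Gismatullin construction and the coamenability criterion \Cref{Lem:coame-bcohg}. The key observation is that both the Neretin groups and the Higman--Thompson groups $V_{n,r}$ act on the Cantor set $\mc$ in such a way that their commutator subgroups are themselves topological full groups, and the underlying action is extremely proximal. This is precisely what Gal and Gismatullin verified in \cite[Section 5]{GG17} as the hypothesis for their uniform simplicity result, so we may import it as a black box.

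Given this, the first step is to apply Theorem \ref{thm:tofull-ba} to the commutator subgroup $[\Gamma,\Gamma]$ of either $\Gamma = $ a Neretin group or $\Gamma = $ a Higman--Thompson group. Since $[\Gamma,\Gamma]$ is a topological full subgroup of $\mathrm{Homeo}(\mc)$ whose induced action on $\mc$ remains extremely proximal, the hypotheses of Theorem \ref{thm:tofull-ba} are met and we conclude that $[\Gamma,\Gamma]$ is boundedly acyclic, \emph{i.e.} $\mathrm{H}_b^i([\Gamma,\Gamma],\mathbb{R}) = 0$ for all $i \geq 1$.

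The second step is to promote bounded acyclicity from $[\Gamma,\Gamma]$ to $\Gamma$. The quotient $\Gamma/[\Gamma,\Gamma]$ is abelian, hence amenable, so $[\Gamma,\Gamma]$ is a coamenable subgroup of $\Gamma$. By \Cref{Lem:coame-bcohg}, the restriction map
\[
\mathrm{H}_b^{\bullet}(\Gamma,\mathbb{R}) \longrightarrow \mathrm{H}_b^{\bullet}([\Gamma,\Gamma],\mathbb{R})
\]
is injective in every positive degree. Combining this with the vanishing obtained in the first step forces $\mathrm{H}_b^i(\Gamma,\mathbb{R}) = 0$ for all $i \geq 1$, which is exactly the claim.

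There is essentially no hard obstacle: the heavy lifting has been absorbed into Theorem \ref{thm:tofull-ba} (bounded acyclicity for extremely proximal topological full groups) and into the Gal--Gismatullin verification that the relevant commutator subgroups fall into this class. The only subtlety worth emphasizing in the write-up is that one must apply Theorem \ref{thm:tofull-ba} to the commutator subgroup rather than directly to $\Gamma$ itself (since it is the commutator subgroup that Gal--Gismatullin identified as a topological full group acting extremely proximally), and then use coamenability to transfer the vanishing back up to $\Gamma$.
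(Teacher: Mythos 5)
Your proposal is correct and follows essentially the same route as the paper: apply Theorem~\ref{thm:tofull-ba} to the commutator subgroups (which Gal--Gismatullin verified to be topological full groups acting extremely proximally on the Cantor set), then transfer bounded acyclicity to the full group via coamenability and \Cref{Lem:coame-bcohg}. Nothing to change.
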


It is also the case that R\"over--Nekrashevych groups are topological full groups (see for example \cite[Example 5.18]{BFG24}) and act on the Cantor set extremely
proximally since the Higman--Thompson groups already do. 

\begin{corollary} \label{cor:RNGroup-bc}
    The R\"over--Nekrashevych groups are all boundedly acyclic.
\end{corollary}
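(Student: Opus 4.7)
The plan is to apply Theorem \ref{thm:tofull-ba} directly, which requires two ingredients: first, that each R\"over--Nekrashevych group $\Gamma$ is (isomorphic to) a topological full subgroup of $\mathrm{Homeo}(\mathfrak{C})$, i.e.\ $\llbracket \Gamma \rrbracket = \Gamma$; and second, that its action on $\mathfrak{C}$ is extremely proximal.

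For the first ingredient, I would cite the realization of R\"over--Nekrashevych groups as topological full groups of \'etale groupoids associated to self-similar actions, which is recorded in \cite[Example 5.18]{BFG24}. One can also see this directly from the definition of $V_\phi(G)$ in Section \ref{Section:L-Thomps} (under the injectivity assumption that $\phi^\ast$ is faithful): any element acts in a neighborhood of a given point of $\mathfrak{C}$ by a prefix substitution composed with the tree automorphism induced by an element of $G$, and any homeomorphism of $\mathfrak{C}$ with this local form lies in $\Gamma$ itself.

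For the second ingredient, I would use the fact that extreme proximality is inherited by supergroups: if a subgroup acts extremely proximally, so does the ambient group. Every R\"over--Nekrashevych group contains (a copy of) the Higman--Thompson group $V$, and $V$ already acts extremely proximally on $\mathfrak{C}$ by Lemma \ref{vigorous}: given nonempty proper clopen subsets $V_1, V_2 \subsetneq \mathfrak{C}$, one picks any clopen $A$ with $V_2 \subsetneq A \subsetneq \mathfrak{C}$ and applies the lemma with $B = V_2$ and $C = V_1$ (after shrinking $V_1$ if necessary) to produce a Thompson element carrying $V_2$ strictly inside $V_1$.

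With these two ingredients in hand, Theorem \ref{thm:tofull-ba} yields bounded acyclicity for every R\"over--Nekrashevych group. The only subtle point is the first one, ensuring that the structural identification in the literature matches the notion of topological full group used in Theorem \ref{thm:tofull-ba}; this is handled cleanly by the reference above.
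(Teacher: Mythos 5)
Your proposal matches the paper's own proof: the paper likewise cites \cite[Example 5.18]{BFG24} for the identification of R\"over--Nekrashevych groups as topological full groups, inherits extreme proximality from the Higman--Thompson subgroup, and then invokes \Cref{thm:tofull-ba}. Your extra remarks (the local-form sanity check and the reduction of extreme proximality to \Cref{vigorous}) are just elaborations of the same two ingredients.
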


    Note that for simplicity, we only dealt with $V_\phi(G)$ in the previous sections, so Theorem \ref{Thm:l-Thompson-ba} only works for R\"over--Nekrashevych groups in the degree $2$ case. Another class of groups satisfies the assumptions of our theorem is the twisted Brin--Thompson groups of Belk and Zaremsky \cite{BZ22}. In fact, it is known that they are topological full groups, and act extremely proximally on the Cantor space, see the proof of \cite[Corollary 5.14, Proposition 5.23]{BFG24}. 

\begin{corollary}\label{cor:tbt-bc}
    The twisted Brin--Thompson groups  are all  boundedly acyclic.
\end{corollary}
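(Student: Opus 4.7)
The plan is to deduce this as an immediate application of \Cref{thm:tofull-ba}, exactly as was done for R\"over--Nekrashevych groups in \Cref{cor:RNGroup-bc}. So the strategy is to verify the two hypotheses of \Cref{thm:tofull-ba} for the twisted Brin--Thompson group $\mathrm{SV}_G$ acting on the Cantor space $\mathfrak{C}^S$: namely, that $\mathrm{SV}_G$ is a topological full group inside $\mathrm{Homeo}(\mathfrak{C}^S)$, and that its action on $\mathfrak{C}^S$ is extremely proximal.

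First, I would recall the definition of $\mathrm{SV}_G$ from \cite{BZ22} and observe directly from the construction (elements are given locally by prefix-replacement rules combined with the $G$-action twisting coordinates in $S$) that $\mathrm{SV}_G$ is closed under the topological full group operation: if a homeomorphism $g$ of $\mathfrak{C}^S$ locally agrees, on a neighborhood of each point, with some element of $\mathrm{SV}_G$, then by compactness we get a finite clopen partition of $\mathfrak{C}^S$ on which $g$ is piecewise given by elements of $\mathrm{SV}_G$, and such a piecewise assembly is itself an element of $\mathrm{SV}_G$. This is precisely the content cited from the proof of \cite[Corollary 5.14, Proposition 5.23]{BFG24}, which I would simply invoke rather than redo.

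Second, I would verify extreme proximality. Given any two nonempty proper clopen subsets $V_1, V_2 \subsetneq \mathfrak{C}^S$, one can refine them (using that dyadic bricks form a basis for the topology on $\mathfrak{C}^S$) to dyadic bricks, and then use the rich supply of elements in $\mathrm{SV}_G$ that map dyadic bricks bijectively to dyadic bricks in a prescribed way to produce an element $\gamma \in \mathrm{SV}_G$ with $\gamma(V_2) \subsetneq V_1$. Again this is explicitly verified in \cite[Proposition 5.23]{BFG24}, so I would simply cite it.

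With both hypotheses in hand, \Cref{thm:tofull-ba} applies directly and gives bounded acyclicity of $\mathrm{SV}_G$. The only potential subtlety is making sure that the formulation of ``topological full group'' and ``extremely proximal'' in \cite{BFG24} matches ours verbatim; this is the one place I would be careful, but I expect the match to be direct, so the argument reduces to a two-line proof: ``By \cite[Corollary 5.14, Proposition 5.23]{BFG24}, $\mathrm{SV}_G$ is a topological full group in $\mathrm{Homeo}(\mathfrak{C}^S)$ and its action on $\mathfrak{C}^S$ is extremely proximal; now apply \Cref{thm:tofull-ba}.''
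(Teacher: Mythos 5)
Your proposal matches the paper's own argument exactly: the paper likewise cites \cite[Corollary 5.14, Proposition 5.23]{BFG24} for the facts that $\mathrm{SV}_G$ is a topological full group acting extremely proximally on $\mathfrak{C}^S$, and then applies \Cref{thm:tofull-ba}. Your additional sketch of why those two cited facts hold is a helpful gloss but is not needed beyond the citation, just as in the paper.
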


\begin{remark}
     Gal and Gismatullin also proved the uniform simplicity for commutator subgroups of groups acting proximally and boundedly on the real line $\mathbb{R}$ in \cite[Theorem 3.1]{GG17}. Recall that an action of a group $G$ on $\mathbb{R}$ is proximal if for every $a,b,c,d \in \mathbb{R}$, such that $a < b$ and $c < d$ there is $g\in G$ satisfying
$g(a,b) \supseteq  (c,d)$; it is bounded if every element has bounded support. Note that proximal actions must have no global fixed point. Hence by \cite[Corollary 1.13]{CFLM23}, such groups are also boundedly acyclic. 
\end{remark}

    In general, topological full groups could have non-trivial bounded cohomology. 

    \begin{example}
         Let $G$ be a countable group that acts on the Cantor set $\{0,1\}^{G}$ by permuting the coordinates using the right multiplication. Then the corresponding  topological full group $\llbracket G\rrbracket$ retracts to $G$. In fact, let  $w_0 = \cdots 000\cdots$ 
 be the constant $0$ sequence in $\{0,1\}^{G}$, then it is a fixed point of $G$ hence also $\llbracket G\rrbracket$. Note that the clopen subsets of the form $\{w\in \{0,1\}^{G} \mid w_{g} =a_g, g\in F\} $ where $F$ is a finite subset of $G$ and $a_g \in \{0,1\}$  form a topological basis of $\{0,1\}^{G}$. In particular, given an element of $\llbracket G\rrbracket \to G$, one can find such a clopen neighborhood $U_0$ of $w_0$, such that the action restricted to $U_0$ determines an element of $G$. One further checks that this element does not depend on the choice of $U_0$. Thus we have a well-defined retraction map  $r:\llbracket G\rrbracket \to G$. In fact, let $\iota$ be the natural inclusion map from $G$ to  $\llbracket G\rrbracket$, we have $r\circ \iota = \mathrm{id}_G$. Hence  $r$ induces a splitting injective map on bounded cohomology. In particular, $\llbracket G\rrbracket$ is not boundedly acyclic when $G$ is not.
    \end{example}

\section{$ \ell^{2}$-invisibility}\label{sec:l2-inv}
Let $\Gamma$ be a discrete group and $\mathcal{N}(\Gamma)$ its von Neumann algebra. We prove in this section that our embedding results also work in the setting of $\ell^2$-invisibility. Recall that $\Gamma$ is called \emph{$\ell^{2}$ invisible} if $H_{i}(\Gamma ; \mathcal{N}(\Gamma))=0$ for all $i \geq 0$. If $\Gamma$ is of type $F_{\infty}$, this is equivalent to $H_{i}\left(\Gamma ; \ell^{2}(\Gamma)\right)=0$ for all $i \geq 0$ \cite[Lemmas 6.98 and 12.3]{Luc02}.
Note that $\ell^{2}$-invisibility of a group is a much stronger property than the vanishing of
its $ \ell^{2}$-Betti numbers.

A major open problem, strongly related to the zero in the spectrum conjecture, is whether there exist $\ell^{2}$-invisible groups of type $F$ \cite[Remark 12.4]{Luc02}. Lück here also asked about the existence of groups of type $F_{\infty}$ that are $\ell^{2}$-invisible. For the case of groups of type $F_{\infty}$, the first examples were provided by Sauer and Thumann in \cite{ST14}, which was later generalized by Thumann in \cite{Thu16} (finitely presented examples of a different nature, which are not of type $F_{\infty}$, were constructed by Oguni \cite{Ogu07}). The Sauer--Thumann paper covers many groups, but it seems that $V$ was the first example.

It turns out that the labeled Thompson group $V(G)$ (even the $F$ counterparts) are always $\ell^{2}$-invisible, as soon as the input group $G$ is non-amenable. 

\begin{thm}\label{l2-invi}
    If $G$ is non-amenable, then $V(G)$, $F(G)$ and $T(G)$ are $\ell^{2}$-invisible.
\end{thm}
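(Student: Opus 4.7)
The plan is to extend Sauer--Thumann's proof \cite{ST14} of $\ell^{2}$-invisibility of the Thompson groups $V$, $F$, $T$ to the labeled setting, by leveraging the structural decomposition of $V(G)$, $F(G)$, $T(G)$ as semi-direct products with base $V$, $F$, $T$ respectively. Concretely, the starting point is the split short exact sequence
$$1 \longrightarrow \lim_{\mathcal{T}} G \longrightarrow V(G) \longrightarrow V \longrightarrow 1$$
constructed in Section~\ref{sec:V(G)}, together with its analogues with $V$ replaced by $F$ or $T$. Here $\lim_{\mathcal{T}} G = \varinjlim_{n} G^{2^{n}}$ is the directed union of finite direct sums of copies of $G$ arising from the diagonal wreath recursion.

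Given this data, I would apply a Lyndon--Hochschild--Serre type spectral sequence in $\ell^{2}$-homology (see \cite[Theorem~6.54]{Luc02}):
$$E^{2}_{p,q} \;=\; H_{p}\!\bigl(V;\, H_{q}\bigl(\lim_{\mathcal{T}} G;\, \mathcal{N}(V(G))\bigr)\bigr) \;\Longrightarrow\; H_{p+q}\bigl(V(G);\, \mathcal{N}(V(G))\bigr),$$
and similarly for $F(G)$ and $T(G)$. Since $V$ (and likewise $F$, $T$) is $\ell^{2}$-invisible by Sauer--Thumann, the spectral sequence collapses once one establishes
$$H_{q}\!\bigl(\lim_{\mathcal{T}} G;\, \mathcal{N}(V(G))\bigr) \;=\; 0 \qquad \text{for every } q \geq 0,$$
and the analogous statement with $V(G)$ replaced by $F(G)$ or $T(G)$.

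The non-amenability hypothesis on $G$ enters crucially at this last step. Because $\ell^{2}$-homology commutes with directed colimits, one reduces to proving vanishing for each finite direct power $G^{2^{n}}$. For non-amenable $G$, a single copy of $G$ acting on $\mathcal{N}(V(G))$ via the inclusion $G \hookrightarrow V(G)$ has no non-zero invariant vectors in the dimension-theoretic sense, so $H_{0}(G;\, \mathcal{N}(V(G)))=0$; a K\"unneth-style propagation then gives the vanishing in all degrees and all direct powers. This is precisely the place where non-amenability of $G$ is indispensable: for amenable $G$, the trivial $G$-representation would survive the colimit and obstruct the argument (indeed in that case $V(G)$ would retract to an amenable--and hence non-$\ell^{2}$-invisible--subgroup in a way that spoils invisibility).

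The main obstacle I anticipate is the careful bookkeeping with the twisted coefficient module $\mathcal{N}(V(G))$ regarded as a module over $\mathcal{N}\bigl(\lim_{\mathcal{T}} G\bigr)$ via the inclusion: one must verify that induction and restriction preserve the vanishing along the directed colimit, and that the relevant dimension functions behave well, so that Sauer--Thumann's scheme carries over to the twisted setting without losing the collapse of the spectral sequence. The same template then handles $F(G)$ and $T(G)$ by invoking the $\ell^{2}$-invisibility of $F$ and $T$, which are also established in \cite{ST14}.
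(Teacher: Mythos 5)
Your proposal identifies the same structural input as the paper --- the split short exact sequence $1 \to K \to V(G) \to V \to 1$ with kernel $K = \lim_{\mathcal{T}} G = \varinjlim_n G^{2^n}$, together with the vanishing for finite powers of a non-amenable group and passage to the directed union --- but it runs the spectral sequence machinery by hand, which is more than is required. The paper instead invokes L\"uck's Lemma 12.11(2) (recorded here as \Cref{Lemma:ld-norsub}): if a normal subgroup $\Lambda \trianglelefteq \Gamma$ satisfies $H_i(\Lambda;\mathcal{N}(\Lambda)) = 0$ for $i \le n$, then $H_i(\Gamma;\mathcal{N}(\Gamma)) = 0$ for $i \le n$. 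This lemma packages precisely the Lyndon--Hochschild--Serre spectral sequence plus the flatness and dimension compatibility of $\mathcal{N}(\Gamma)$ over $\mathcal{N}(\Lambda)$, so all the coefficient bookkeeping you flag as ``the main obstacle'' (induction/restriction of $\mathcal{N}(V(G))$-modules along $K \hookrightarrow V(G)$, preservation of dimension along colimits) is already done for you; you only ever need to check the kernel $K$ with its own von Neumann algebra $\mathcal{N}(K)$, not the restricted module $\mathcal{N}(V(G))$. With that reduction, your argument and the paper's coincide: non-amenability gives $H_0(G;\mathcal{N}(G)) = 0$ by \Cref{Lemma:ld-non-ame}, the product lemma \Cref{Lemma:ld-prd} gives $H_i(\oplus_n G;\mathcal{N}(\oplus_n G)) = 0$ for $i < n$, and the directed union lemma \Cref{Lemma:ld-uni} passes to $K$. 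One small correction: you do not in fact need the Sauer--Thumann $\ell^2$-invisibility of $V$, $F$, $T$ at all --- once the kernel contribution vanishes in every degree (in the dimension-theoretic sense), the $E^2$ page is already zero-dimensional and the abutment vanishes regardless of the $\ell^2$-homology of the quotient, which is exactly what L\"uck's lemma expresses.
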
 
This is just a combination of known facts about $\ell^{2}$-invisibility.
\begin{lemma}\cite[Lemma 12.11(1)]{Luc02}\label{Lemma:ld-uni}
    Let $\Gamma$ be a directed union of the groups $\Gamma_{\alpha}$. If $H_{i}\left(\Gamma_{\alpha} ; \mathcal{N}\left(\Gamma_{\alpha}\right)\right)=0$ for all $i=0, \ldots, n$, then $H_{i}(\Gamma ; \mathcal{N}(\Gamma))=0$ for all $i=0, \ldots, n$.
\end{lemma}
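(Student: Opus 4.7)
The plan is to combine two standard facts: group homology commutes with directed colimits of groups when the coefficient module is fixed, and $\mathcal{N}(\Gamma)$ is flat over $\mathcal{N}(\Gamma_\alpha)$ for any subgroup inclusion $\Gamma_\alpha \leq \Gamma$.

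First, I would show that for any $\mathbb{Z}\Gamma$-module $M$ there is a natural isomorphism
$$H_i(\Gamma; M) \;\cong\; \mathrm{colim}_\alpha\, H_i(\Gamma_\alpha; M),$$
where $\Gamma_\alpha$ acts on $M$ by restriction. This is a formal consequence of the bar resolution: since $\Gamma = \bigcup_\alpha \Gamma_\alpha$ is a directed union, $\mathbb{Z}\Gamma = \mathrm{colim}_\alpha \mathbb{Z}\Gamma_\alpha$, and the (free) bar complex satisfies $B_*(\Gamma) \otimes_{\mathbb{Z}\Gamma} M = \mathrm{colim}_\alpha \bigl(B_*(\Gamma_\alpha) \otimes_{\mathbb{Z}\Gamma_\alpha} M\bigr)$ at the level of chain complexes. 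Filtered colimits of abelian groups are exact and hence commute with homology, yielding the asserted identification.

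Specializing to $M = \mathcal{N}(\Gamma)$, the problem reduces to showing $H_i(\Gamma_\alpha; \mathcal{N}(\Gamma)) = 0$ for $0 \leq i \leq n$. Here I would invoke that $\mathcal{N}(\Gamma)$ is flat as a right $\mathcal{N}(\Gamma_\alpha)$-module (this is \cite[Theorem 6.29]{Luc02}; the underlying reason is that $\ell^2(\Gamma)$ decomposes over the $\Gamma_\alpha$-cosets into copies of $\ell^2(\Gamma_\alpha)$, giving a faithful normal conditional expectation). Writing $\mathcal{N}(\Gamma) = \mathcal{N}(\Gamma_\alpha) \otimes_{\mathcal{N}(\Gamma_\alpha)} \mathcal{N}(\Gamma)$ and pulling the flat module through the bar complex gives
$$H_i(\Gamma_\alpha; \mathcal{N}(\Gamma)) \;\cong\; H_i(\Gamma_\alpha; \mathcal{N}(\Gamma_\alpha)) \otimes_{\mathcal{N}(\Gamma_\alpha)} \mathcal{N}(\Gamma).$$
The hypothesis kills the first tensor factor for each $i \leq n$, and then the colimit from the first step forces $H_i(\Gamma; \mathcal{N}(\Gamma)) = 0$ in this range.

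The main obstacle, if one were building the result from scratch, is the flatness of $\mathcal{N}(\Gamma)$ over $\mathcal{N}(\Gamma_\alpha)$: this is a nontrivial structural property of group von Neumann algebras of subgroups, and it is exactly what converts the hypothesized vanishing with $\mathcal{N}(\Gamma_\alpha)$-coefficients into vanishing with $\mathcal{N}(\Gamma)$-coefficients. Given that input, everything else is a formal exercise with resolutions and exactness of filtered colimits.
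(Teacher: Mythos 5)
The paper gives no proof of this lemma—it is quoted directly from L\"uck's book—and your argument is precisely the standard one found there: identify $H_i(\Gamma;\mathcal{N}(\Gamma))$ with $\mathrm{colim}_\alpha H_i(\Gamma_\alpha;\mathcal{N}(\Gamma))$ via exactness of filtered colimits, then use flatness of $\mathcal{N}(\Gamma)$ over $\mathcal{N}(\Gamma_\alpha)$ (L\"uck's Theorem 6.29) to rewrite each term as $H_i(\Gamma_\alpha;\mathcal{N}(\Gamma_\alpha))\otimes_{\mathcal{N}(\Gamma_\alpha)}\mathcal{N}(\Gamma)$. The proposal is correct and takes essentially the same route as the cited source.
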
  

\begin{lemma}\cite[Lemma 12.11(2)]{Luc02}\label{Lemma:ld-norsub}
     Let $\Lambda \leq \Gamma$ be a normal subgroup such that $H_{i}(\Lambda ; \mathcal{N}(\Lambda))=0$ for all $i \leq n$. Then $H_{i}(\Gamma ; \mathcal{N}(\Gamma))=0$ for all $i \leq n$.
\end{lemma}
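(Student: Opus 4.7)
The plan is to combine the Lyndon--Hochschild--Serre spectral sequence with a flat base change argument. The short exact sequence $1\to\Lambda\to\Gamma\to Q\to 1$, where $Q=\Gamma/\Lambda$, yields a first-quadrant homological spectral sequence
\[
E^2_{p,q}=H_p\bigl(Q;\,H_q(\Lambda;\mathcal{N}(\Gamma))\bigr)\Longrightarrow H_{p+q}(\Gamma;\mathcal{N}(\Gamma)).
\]
Hence it suffices to prove that $H_q(\Lambda;\mathcal{N}(\Gamma))=0$ for every $q\le n$: the entire strip $q\le n$ of the $E^2$-page would then be zero, every $E^\infty_{p,q}$ with $p+q\le n$ would vanish, and consequently $H_i(\Gamma;\mathcal{N}(\Gamma))=0$ for $i\le n$, which is the desired conclusion.

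To deduce that vanishing from the hypothesis $H_q(\Lambda;\mathcal{N}(\Lambda))=0$ for $q\le n$, I would invoke the fact that $\mathcal{N}(\Gamma)$ is flat as a (left) $\mathcal{N}(\Lambda)$-module via the canonical inclusion $\mathcal{N}(\Lambda)\hookrightarrow\mathcal{N}(\Gamma)$ induced by $\Lambda\hookrightarrow\Gamma$ --- a structural result for finite von Neumann algebras, developed in Chapter~6 of \cite{Luc02}. Choosing a projective $\mathbb{Z}[\Lambda]$-resolution $P_\bullet\to\mathbb{Z}$ and using the identity $\mathcal{N}(\Gamma)=\mathcal{N}(\Lambda)\otimes_{\mathcal{N}(\Lambda)}\mathcal{N}(\Gamma)$, flatness lets tensoring with $\mathcal{N}(\Gamma)$ commute past homology:
\[
H_q(\Lambda;\mathcal{N}(\Gamma))=H_q\bigl(P_\bullet\otimes_{\mathbb{Z}[\Lambda]}\mathcal{N}(\Lambda)\otimes_{\mathcal{N}(\Lambda)}\mathcal{N}(\Gamma)\bigr)\cong H_q(\Lambda;\mathcal{N}(\Lambda))\otimes_{\mathcal{N}(\Lambda)}\mathcal{N}(\Gamma).
\]
By hypothesis the first tensor factor vanishes for $q\le n$, yielding the required vanishing of $H_q(\Lambda;\mathcal{N}(\Gamma))$ in those degrees.

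The main obstacle, and essentially the only nontrivial ingredient, is the flatness of $\mathcal{N}(\Gamma)$ over $\mathcal{N}(\Lambda)$. This is not formal: it rests on the fact that $\mathcal{N}(\Gamma)$ is a semi-hereditary ring together with the existence of a trace-preserving conditional expectation $\mathcal{N}(\Gamma)\to\mathcal{N}(\Lambda)$, ingredients that Lück establishes systematically before invoking them in his dimension theory. Granting this, the remainder is purely formal; notice that the normality of $\Lambda$ enters only through the LHS spectral sequence itself. An alternative route --- which I would probably avoid --- would be to decompose $\mathcal{N}(\Gamma)$ directly as a $\Lambda$-module using coset representatives for $\Gamma/\Lambda$ and to analyze $H_q(\Lambda;\mathcal{N}(\Gamma))$ summand by summand via the conjugation automorphisms $c_g\colon\Lambda\to\Lambda$, but keeping proper track of the ambient von Neumann algebraic completions in that approach is considerably more delicate than simply quoting flatness.
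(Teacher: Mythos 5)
Your argument is correct and is precisely the proof of the cited result: the paper gives no proof of its own, simply quoting \cite[Lemma 12.11(2)]{Luc02}, and L\"uck's proof there is exactly your combination of the Lyndon--Hochschild--Serre spectral sequence with the (faithful) flatness of $\mathcal{N}(\Gamma)$ over $\mathcal{N}(\Lambda)$ from \cite[Theorem 6.29(1)]{Luc02} to identify $H_q(\Lambda;\mathcal{N}(\Gamma))\cong\mathcal{N}(\Gamma)\otimes_{\mathcal{N}(\Lambda)}H_q(\Lambda;\mathcal{N}(\Lambda))$. No gaps; the only nontrivial input is the flatness statement, which you correctly attribute to Chapter~6 of L\"uck's book.
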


\begin{lemma}\cite[Lemma 12.11(3)]{Luc02}\label{Lemma:ld-prd}
Let $\Gamma_{1}, \Gamma_{2}$ be such that $H_{i}\left(\Gamma_{j} ; \mathcal{N}\left(\Gamma_{j}\right)\right)=0$ for $i \leq n_{j}$. Then $H_{i}\left(\Gamma_{1} \times \Gamma_{2} ; \mathcal{N}\left(\Gamma_{1} \times \Gamma_{2}\right)\right)=0$ for $i \leq n_{1}+n_{2}+1$.
\end{lemma}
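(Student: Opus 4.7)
The plan is to use the Lyndon--Hochschild--Serre spectral sequence associated to the (split) extension
$$1 \to \Gamma_{1} \to \Gamma_{1} \times \Gamma_{2} \to \Gamma_{2} \to 1,$$
together with the standard identification $\mathcal{N}(\Gamma_{1} \times \Gamma_{2}) \cong \mathcal{N}(\Gamma_{1}) \,\bar{\otimes}\, \mathcal{N}(\Gamma_{2})$ of the group von Neumann algebra of a product with the (von Neumann) tensor product, as a $(\Gamma_{1}\times\Gamma_{2})$-bimodule in which $\Gamma_{1}$ acts only on the first tensor factor and $\Gamma_{2}$ only on the second. The spectral sequence then reads
$$E^{2}_{p,q} = H_{p}\bigl(\Gamma_{2};\, H_{q}(\Gamma_{1};\,\mathcal{N}(\Gamma_{1}\times\Gamma_{2}))\bigr) \Longrightarrow H_{p+q}(\Gamma_{1}\times\Gamma_{2};\,\mathcal{N}(\Gamma_{1}\times\Gamma_{2})).$$

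The first step is to analyze the inner homology. I would fix a projective resolution $P_{\bullet} \to \mathbb{Z}$ over $\mathbb{Z}\Gamma_{1}$. Because the $\Gamma_{1}$-action on $\mathcal{N}(\Gamma_{1}\times\Gamma_{2})$ only affects the first tensor factor, and because tensoring with $\mathcal{N}(\Gamma_{2})$ is exact (viewing $\mathcal{N}(\Gamma_{2})$ as a flat $\mathbb{C}$-module), the complex $P_{\bullet}\otimes_{\mathbb{Z}\Gamma_{1}}\bigl(\mathcal{N}(\Gamma_{1})\,\bar{\otimes}\,\mathcal{N}(\Gamma_{2})\bigr)$ can be rewritten as $\bigl(P_{\bullet}\otimes_{\mathbb{Z}\Gamma_{1}}\mathcal{N}(\Gamma_{1})\bigr) \,\bar{\otimes}\, \mathcal{N}(\Gamma_{2})$. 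Passing to homology gives
$$H_{q}(\Gamma_{1};\, \mathcal{N}(\Gamma_{1}\times\Gamma_{2})) \;\cong\; H_{q}(\Gamma_{1};\,\mathcal{N}(\Gamma_{1})) \,\bar{\otimes}\, \mathcal{N}(\Gamma_{2}),$$
which, by the hypothesis on $\Gamma_{1}$, vanishes for all $q \leq n_{1}$. Consequently $E^{2}_{p,q}=0$ in this range.

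For $q \geq n_{1}+1$, set $M_{q} := H_{q}(\Gamma_{1};\,\mathcal{N}(\Gamma_{1}))$. As a $\Gamma_{2}$-module, the factor $M_{q}$ carries trivial action (it is a homology group of $\Gamma_{1}$ with coefficients involving no $\Gamma_{2}$), while $\mathcal{N}(\Gamma_{2})$ carries the standard regular action. Running the symmetric flatness argument on the $\Gamma_{2}$-side identifies
$$E^{2}_{p,q} \cong M_{q} \,\bar{\otimes}\, H_{p}(\Gamma_{2};\,\mathcal{N}(\Gamma_{2})),$$
which vanishes for $p \leq n_{2}$ by the hypothesis on $\Gamma_{2}$. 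Combining both vanishing ranges, any potentially nonzero $E^{2}_{p,q}$ must satisfy $p \geq n_{2}+1$ \emph{and} $q \geq n_{1}+1$, hence $p+q \geq n_{1}+n_{2}+2$. Since the spectral sequence converges, the abutment must vanish in every total degree $\leq n_{1}+n_{2}+1$, yielding exactly the claim.

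The main obstacle is justifying the two flatness interchanges, namely that the relevant tensor product with $\mathcal{N}(\Gamma_{j})$ commutes with the Tor-functor computing the group homology of the other factor. This is nontrivial because the tensor product on group von Neumann algebras is a completion rather than a purely algebraic operation, so some care is required to verify exactness on projective resolutions and the Künneth-type identification for $H_{\bullet}(\Gamma_{1};\mathcal{N}(\Gamma_{1})\,\bar{\otimes}\,\mathcal{N}(\Gamma_{2}))$. In the setting of group von Neumann algebras this commutation is standard (see the relevant portions of L\"uck's monograph \cite{Luc02}), and once it is in hand the spectral sequence bookkeeping above delivers the stated bound $n_{1}+n_{2}+1$ with no further input.
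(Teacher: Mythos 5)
The paper does not actually prove this statement: it is quoted verbatim from L\"uck's monograph (Lemma 12.11(3) of \cite{Luc02}), so the only comparison available is with L\"uck's argument. Your overall strategy --- run the Lyndon--Hochschild--Serre spectral sequence for $1\to\Gamma_1\to\Gamma_1\times\Gamma_2\to\Gamma_2\to 1$ and show every nonzero $E^2_{p,q}$ has $p\ge n_2+1$ and $q\ge n_1+1$ --- is a viable route to the bound $n_1+n_2+1$. But as written there are two real problems. The first is that the objects you manipulate are not defined: $H_q(\Gamma_1;\mathcal{N}(\Gamma_1))$ is a plain $\mathcal{N}(\Gamma_1)$-module, not a Hilbert module or a von Neumann algebra, so the spatial tensor product $H_q(\Gamma_1;\mathcal{N}(\Gamma_1))\,\bar{\otimes}\,\mathcal{N}(\Gamma_2)$ has no meaning, and flatness of $\mathcal{N}(\Gamma_2)$ over $\mathbb{C}$ is not the relevant input. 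The correct statement is the algebraic isomorphism $H_q(\Gamma_1;\mathcal{N}(\Gamma_1\times\Gamma_2))\cong H_q(\Gamma_1;\mathcal{N}(\Gamma_1))\otimes_{\mathcal{N}(\Gamma_1)}\mathcal{N}(\Gamma_1\times\Gamma_2)$, which holds because $\mathcal{N}(G)$ is flat over $\mathcal{N}(H)$ for any subgroup $H\le G$ (L\"uck, Theorem 6.29); this does give $E^2_{p,q}=0$ for $q\le n_1$.

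The more serious gap is the claimed identification $E^2_{p,q}\cong M_q\,\bar{\otimes}\,H_p(\Gamma_2;\mathcal{N}(\Gamma_2))$ for $q\ge n_1+1$. Pulling $M_q\otimes_{\mathcal{N}(\Gamma_1)}(-)$ out of the homology of the $\Gamma_2$-resolution would require $M_q=H_q(\Gamma_1;\mathcal{N}(\Gamma_1))$ to be flat over $\mathcal{N}(\Gamma_1)$, and nothing guarantees that: the ``symmetric flatness argument'' is not symmetric, because on the second factor the outer coefficient is an arbitrary module rather than a group von Neumann algebra. The vanishing $E^2_{p,q}=0$ for $p\le n_2$ is still true, but it needs an extra step you omit: the complex $C_*=(Q_*\otimes_{\mathbb{Z}\Gamma_2}\mathcal{N}(\Gamma_2))\otimes_{\mathcal{N}(\Gamma_2)}\mathcal{N}(\Gamma_1\times\Gamma_2)$ is a bounded-below complex of $\mathcal{N}(\Gamma_1)$-flat modules with $H_t(C_*)=0$ for $t\le n_2$, so the hyperhomology spectral sequence $\mathrm{Tor}^{\mathcal{N}(\Gamma_1)}_s\bigl(M_q,H_t(C_*)\bigr)\Rightarrow H_{s+t}\bigl(M_q\otimes_{\mathcal{N}(\Gamma_1)}C_*\bigr)$ kills everything in total degree $\le n_2$. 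That missing step is precisely where the additivity is earned. L\"uck's own proof sidesteps the issue entirely: he tensors the two induced complexes $\mathcal{N}(\Gamma_j)\otimes_{\mathbb{Z}\Gamma_j}P^{(j)}_*$ over $\mathbb{C}$, applies the K\"unneth formula over the field $\mathbb{C}$ to get acyclicity up to degree $n_1+n_2+1$, and then base-changes along $\mathcal{N}(\Gamma_1)\otimes_{\mathbb{C}}\mathcal{N}(\Gamma_2)\to\mathcal{N}(\Gamma_1\times\Gamma_2)$, using that a bounded-below complex of projectives which is acyclic in degrees $\le N$ is split exact there, so that no flatness of $\mathcal{N}(\Gamma_1\times\Gamma_2)$ over the algebraic tensor product is needed.
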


\begin{lemma}\cite[Lemma 6.36]{Luc02}\label{Lemma:ld-non-ame}
$\Gamma$ is non-amenable if and only if $H_{0}(\Gamma ; \mathcal{N}(\Gamma))=0$.
\end{lemma}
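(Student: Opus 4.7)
The plan is to identify $H_0(\Gamma; \mathcal{N}(\Gamma))$ with the coinvariant quotient $\mathcal{N}(\Gamma) / I \cdot \mathcal{N}(\Gamma)$, where $I \subset \mathbb{C}\Gamma \subset \mathcal{N}(\Gamma)$ is the augmentation ideal spanned by the elements $L_g - 1$ for $g \in \Gamma$. The lemma then reduces to proving that $1 \in I \cdot \mathcal{N}(\Gamma)$ if and only if $\Gamma$ is non-amenable.

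For the direction ``$\Gamma$ non-amenable $\Rightarrow H_0 = 0$'', I would invoke Kesten's theorem to produce a symmetric finitely supported probability measure $\mu$ on $\Gamma$ such that the convolution operator $\hat\mu = \sum_g \mu(g) L_g \in \mathcal{N}(\Gamma)$ has operator norm strictly less than $1$. (If $\Gamma$ is not finitely generated, pass first to a finitely generated non-amenable subgroup, using that the induced inclusion of group von Neumann algebras is norm-preserving on the group algebra.) Since $\hat\mu$ is self-adjoint with spectrum bounded away from $1$, the element $1 - \hat\mu$ is invertible in $\mathcal{N}(\Gamma)$. Using $\sum_g \mu(g) = 1$, rewrite
\[
1 - \hat\mu \;=\; -\sum_g \mu(g)\bigl(L_g - 1\bigr),
\]
so that
\[
1 \;=\; -\sum_g \mu(g)\bigl(L_g - 1\bigr)\bigl(1 - \hat\mu\bigr)^{-1} \;\in\; I \cdot \mathcal{N}(\Gamma),
\]
giving $H_0(\Gamma; \mathcal{N}(\Gamma)) = 0$.

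For the converse, assuming $\Gamma$ is amenable I would construct a nonzero $\Gamma$-invariant $\mathbb{C}$-linear functional $\phi$ on $\mathcal{N}(\Gamma)$, which then automatically descends to a nonzero functional on the coinvariants. By Hulanicki's theorem, amenability implies that the augmentation $\epsilon \colon \mathbb{C}\Gamma \to \mathbb{C}$ is bounded for the reduced C*-norm (one verifies $|\epsilon(a)| \le \|a\|$ directly by testing against normalised indicator functions of a F{\o}lner sequence), and hence extends to a state on $C_r^*(\Gamma)$, and further via Hahn--Banach to a state $\tilde\epsilon$ on $\mathcal{N}(\Gamma)$ still satisfying $\tilde\epsilon(L_g) = 1$ for every $g \in \Gamma$. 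Picking a left-invariant mean $m$ on $\ell^\infty(\Gamma)$, set
\[
\phi(x) \;=\; m\!\left( g \mapsto \tilde\epsilon(L_{g^{-1}} x)\right) \qquad (x \in \mathcal{N}(\Gamma)).
\]
The function inside is bounded by $\|x\|$, so $\phi$ is well-defined; translation invariance of $m$ yields $\phi(L_h x) = \phi(x)$ for all $h$, and $\phi(1) = m(1) = 1 \ne 0$, producing the required nonzero invariant functional.

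The main obstacle lies in the second direction: one must justify that the Hahn--Banach extension of $\epsilon$ retains the property $\tilde\epsilon(L_g) = 1$ (so that $\phi(1) \ne 0$), which hinges on the fact that each $L_g$ is a unitary with $\tilde\epsilon(L_g L_g^*) = 1 = |\tilde\epsilon(L_g)|^2$ forcing the value on the generators. The first direction relies on Kesten's theorem and the invertibility argument, both of which are standard but should be invoked carefully.
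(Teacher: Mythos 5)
The paper gives no proof of this lemma---it is quoted verbatim as \cite[Lemma 6.36]{Luc02}---so there is nothing internal to compare against; your argument is a correct, self-contained proof and follows the standard route. The identification $H_0(\Gamma;\mathcal{N}(\Gamma))\cong \mathcal{N}(\Gamma)/I\cdot\mathcal{N}(\Gamma)$ is right, the Kesten-plus-Neumann-series argument for the non-amenable direction works (including the reduction to a finitely generated non-amenable subgroup, which is legitimate since the norm of an element of $\mathbb{C}H$ is the same in $\mathcal{N}(H)$ and in $\mathcal{N}(\Gamma)$), and the amenable direction correctly produces a nonzero functional killing the coinvariant relations. One remark: the ``main obstacle'' you flag at the end is not actually an obstacle. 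Each $L_g$ already lies in $C_r^*(\Gamma)$, which is the domain of the original state $\epsilon$, and a Hahn--Banach extension by definition restricts to $\epsilon$ there; so $\tilde\epsilon(L_g)=1$ is automatic and no multiplicative-domain argument is needed. (In fact, once $\tilde\epsilon$ is a state with $\tilde\epsilon(L_g)=1$, the GNS/Cauchy--Schwarz argument you allude to already gives $\tilde\epsilon(L_g x)=\tilde\epsilon(x)$ for all $x$, so even the averaging over the invariant mean $m$ is redundant---though harmless and perhaps cleaner to state.) The only other thing worth writing down carefully in a final version is the left/right module convention, so that the ideal you exhibit $1$ inside is the one actually being quotiented by in $H_0$; this is a bookkeeping point, not a gap.
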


\begin{proof}[Proof of \cref{l2-invi}]
Recall that at the end of Section \ref{sec:V(G)}, we have a short exact sequence for $V(G)$:
\[1 \rightarrow K \rightarrow V(G) \rightarrow V \rightarrow 1,\] 
where the kernel $K$ can be described by the following direct limit: $G_n = \oplus_{2^n} G$, and injection maps $G_n \to G_{n+1}$ are given by $(g_1,g_2\cdots g_{2^n}) \to (g_1,g_1,g_2,g_2,\cdots, g_{2^n}, g_{2^n})$.
By \cref{Lemma:ld-norsub}, it suffices to prove that $K$ is $\ell^{2}$-invisible. Since $G$ is non-amenable, combining \cref{Lemma:ld-prd} and \cref{Lemma:ld-non-ame}, we see that $H_{i}\left(\oplus_n G ; \mathcal{N}\left(\oplus_n G\right)\right)=0$ for all $i<n$. By \cref{Lemma:ld-uni} we conclude that $K$ is $\ell^{2}$-invisible. The exact same proof also shows that $F(G)$ and $T(G)$ are $\ell^{2}$-invisible.
\end{proof}

\begin{remark}
    Let $G$ act on an infinite set $S$, and let $H$ be a non-amenable group. The above proof also shows that any wreath product of the form $H\wr_S G$ is $\ell^{2}$-invisible. One could also use wreath products to embed groups into boundedly acyclic groups with  $\mathfrak{X}^{\text {ssep }}$ coefficients, see \cite[Remark 8.3]{Bo25} for more information. 
\end{remark}

\begin{corollary}\label{cor-l2-emb}
    Every group of type $F_n$ embeds  quasi-isometrically into an $\ell^{2}$-invisible group of type $F_n$ that has no proper finite index subgroup. 
\end{corollary}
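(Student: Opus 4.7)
The plan is to combine \Cref{l2-invi} with the embedding machinery already developed for $V(G)$ in \Cref{cor:emb-bac-fin}. The only new ingredient needed is to arrange that the ``input'' group to the labeled Thompson construction is non-amenable, so that the hypothesis of \Cref{l2-invi} can be invoked.

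So, let $H$ be a group of type $F_n$. First I would form the auxiliary group $G := H \times F_2$, where $F_2$ is the free group on two generators. Since $F_2$ is of type $F$ and $H$ is of type $F_n$, the direct product $G$ is again of type $F_n$. The natural inclusion $H \hookrightarrow H \times \{1\} \subset G$ is a quasi-isometric embedding (a direct factor is undistorted in the product with respect to the standard word metrics). Moreover, $G$ contains a free subgroup of rank $2$, hence is non-amenable.

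Next, I would apply the labeled Thompson group construction to $G$. By \Cref{thm:fin-VG}, the group $V(G)$ is of type $F_n$; by \Cref{cor-VG-perfect}, it has no proper finite index subgroups; and by \Cref{quasi-retract}, there is a quasi-isometric embedding $\iota_G : G \to V(G)$. Since $G$ is non-amenable, \Cref{l2-invi} applies and gives that $V(G)$ is $\ell^2$-invisible. The composition
\[
H \;\hookrightarrow\; G \;\xrightarrow{\iota_G}\; V(G)
\]
is then a quasi-isometric embedding of $H$ into an $\ell^2$-invisible group of type $F_n$ with no proper finite index subgroups, as required.

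There is no real obstacle in this argument; it is essentially a packaging of prior results. The only point worth emphasizing is why the auxiliary enlargement $H \rightsquigarrow H \times F_2$ is necessary: if $H$ happens to be amenable (for example, $H = \mathbb{Z}^k$), then $\ell^2$-invisibility of $V(H)$ is not provided by \Cref{l2-invi}, whose proof crucially uses non-amenability of the input in the final application of \Cref{Lemma:ld-non-ame}. Taking the direct product with $F_2$ is the cheapest way to guarantee non-amenability while preserving both the type $F_n$ hypothesis and the quasi-isometric embedding of $H$.
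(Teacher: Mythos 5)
Your proof is correct and follows essentially the same strategy as the paper's: enlarge the input group to a non-amenable group of the same finiteness type admitting a quasi-isometric embedding of the original, then apply \Cref{quasi-retract}, \Cref{cor-VG-perfect}, \Cref{thm:fin-VG}, and \Cref{l2-invi} to $V(\cdot)$. The only difference is the choice of enlargement: you use the direct product $H \times F_2$, while the paper uses the free product $G \ast \mathbb{Z}$. Both are undistorted overgroups of the original group preserving type $F_n$, and both are non-amenable (your choice is marginally more robust, since $H \times F_2$ is non-amenable even when $H$ is trivial, whereas $\{1\} \ast \mathbb{Z} = \mathbb{Z}$ is amenable — though that degenerate case is harmless since the trivial group embeds into anything). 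This is a cosmetic variation, not a genuinely different argument.
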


\begin{proof}
    Let $G$ be a group of type $F_n$, then $G$ embeds quasi-isometrically in $V(G\ast \mathbb{Z})$, hence in $V(G\ast \mathbb{Z})$ by \Cref{quasi-retract}. Since $G\ast \mathbb{Z}$ is non-amenable, we have $V(G\ast \mathbb{Z})$ is $\ell^{2}$-invisible. By \Cref{cor-VG-perfect}, $V(G\ast \mathbb{Z})$ has no proper finite index subgroup. By \Cref{thm:fin-VG}, $V(G\ast \mathbb{Z})$ is of type $F_n$.
\end{proof}

\end{document}